\documentclass[12pt]{article}
\usepackage[utf8]{inputenc}
\usepackage[all]{xy}
\usepackage{amsthm, amsmath, amssymb, enumerate,graphicx}
\usepackage{microtype}
\usepackage{fullpage}
\usepackage{tikz-cd}
\usepackage{mathtools}
\usepackage{colonequals}
\usepackage{enumitem}
\usepackage{mathrsfs}
\usepackage[colorlinks=true,backref=page]{hyperref}

\begin{document}
\title{GCD and ABC inequalities for abelian varieties over function fields}
\author{Guoquan Gao}
\maketitle

\begin{abstract}
We prove two height inequalities for rational points on abelian varieties over function fields of characteristic 0. The first is the GCD inequality, which shows that the height with respect to a closed subscheme of an abelian variety of codimension at least 2 is much smaller than the usual height function. This can be viewed as a compact and geometric analogue of a theorem of Levin. The second is a special case of the (geometric) abc conjecture proposed by Vojta, which compares the truncated counting function and the usual counting function. This is a compact analogue of a theorem of Guo, Nguyen, Sun and Wang, and also resolves a conjecture of Buium, which was only known for traceless abelian varieties or constant abelian varieties previously.

The proofs of these inequalities follow a common three-step pattern: from microscopic local estimates to local estimates, and then to global estimates. Building on the canonical Xie--Yuan heights, we control discrete counting functions by continuous integrals of Betti forms. The main new tools are uniform Nevanlinna-type estimates for families of linear entire curves on abelian varieties; combined with covering arguments based on Besicovitch theory and averaging, these handle overlaps and gaps precisely.
\end{abstract}

\theoremstyle{plain}
\newtheorem{thm}{Theorem}[section]
\newtheorem{theorem}[thm]{Theorem}
\newtheorem{cor}[thm]{Corollary}
\newtheorem{corollary}[thm]{Corollary}
\newtheorem{lem}[thm]{Lemma}
\newtheorem{lemma}[thm]{Lemma}
\newtheorem{pro}[thm]{Proposition}
\newtheorem{proposition}[thm]{Proposition}
\newtheorem{prop}[thm]{Proposition}
\newtheorem{definition}[thm]{Definition}
\newtheorem{assumption}[thm]{Assumption}
\newtheorem{conjecture}[thm]{Conjecture}
\newtheorem{sublemma}[thm]{Sub-lemma}

\theoremstyle{remark}
\newtheorem{remark}[thm]{Remark}
\newtheorem{example}[thm]{Example}
\newtheorem{remarks}[thm]{Remarks}
\newtheorem{problem}[thm]{Problem}
\newtheorem{exercise}[thm]{Exercise}
\newtheorem{situation}[thm]{Situation}
\newtheorem{Question}[thm]{Question}
\newtheorem{caution}[thm]{Caution}

\numberwithin{equation}{subsection}

\newcommand{\ZZ}{\mathbb{Z}}
\newcommand{\CC}{\mathbb{C}}
\newcommand{\QQ}{\mathbb{Q}}
\newcommand{\RR}{\mathbb{R}}
\newcommand{\HH}{\mathcal{H}}     

\newcommand{\ad}{\mathrm{ad}}            
\newcommand{\NT}{\mathrm{NT}}
\newcommand{\nonsplit}{\mathrm{nonsplit}}
\newcommand{\Pet}{\mathrm{Pet}}
\newcommand{\Fal}{\mathrm{Fal}}

\newcommand{\cs}{{\mathrm{cs}}}

\newcommand{\ZZn}{\mathbb{Z}[1/n]}
\newcommand{\ZZN}{\mathbb{Z}[1/N]}

\newcommand{\pair}[1]{\langle {#1} \rangle}
\newcommand{\wpair}[1]{\left\{{#1}\right\}}
\newcommand{\wh}{\widehat}
\newcommand{\wt}{\widetilde}

\newcommand\Spf{\mathrm{Spf}}

\newcommand{\lra}{{\longrightarrow}}

\newcommand{\matrixx}[4]
{\left( \begin{array}{cc}
  #1 &  #2  \\
  #3 &  #4  \\
 \end{array}\right)}        


\newcommand{\BA}{{\mathbb {A}}}
\newcommand{\BB}{{\mathbb {B}}}
\newcommand{\BC}{{\mathbb {C}}}
\newcommand{\BD}{{\mathbb {D}}}
\newcommand{\BE}{{\mathbb {E}}}
\newcommand{\BF}{{\mathbb {F}}}
\newcommand{\BG}{{\mathbb {G}}}
\newcommand{\BH}{{\mathbb {H}}}
\newcommand{\BI}{{\mathbb {I}}}
\newcommand{\BJ}{{\mathbb {J}}}
\newcommand{\BK}{{\mathbb {K}}}
\newcommand{\BL}{{\mathbb {L}}}
\newcommand{\BM}{{\mathbb {M}}}
\newcommand{\BN}{{\mathbb {N}}}
\newcommand{\BO}{{\mathbb {O}}}
\newcommand{\BP}{{\mathbb {P}}}
\newcommand{\BQ}{{\mathbb {Q}}}
\newcommand{\BR}{{\mathbb {R}}}
\newcommand{\BS}{{\mathbb {S}}}
\newcommand{\BT}{{\mathbb {T}}}
\newcommand{\BU}{{\mathbb {U}}}
\newcommand{\BV}{{\mathbb {V}}}
\newcommand{\BW}{{\mathbb {W}}}
\newcommand{\BX}{{\mathbb {X}}}
\newcommand{\BY}{{\mathbb {Y}}}
\newcommand{\BZ}{{\mathbb {Z}}}

\newcommand{\CA}{{\mathcal {A}}}
\newcommand{\CB}{{\mathcal {B}}}
\newcommand{\CD}{{\mathcal{D}}}
\newcommand{\CE}{{\mathcal {E}}}
\newcommand{\CF}{{\mathcal {F}}}
\newcommand{\CG}{{\mathcal {G}}}
\newcommand{\CH}{{\mathcal {H}}}
\newcommand{\CI}{{\mathcal {I}}}
\newcommand{\CJ}{{\mathcal {J}}}
\newcommand{\CK}{{\mathcal {K}}}
\newcommand{\CL}{{\mathcal {L}}}
\newcommand{\CM}{{\mathcal {M}}}
\newcommand{\CN}{{\mathcal {N}}}
\newcommand{\CO}{{\mathcal {O}}}
\newcommand{\CP}{{\mathcal {P}}}
\newcommand{\CQ}{{\mathcal {Q}}}
\newcommand{\CR }{{\mathcal {R}}}
\newcommand{\CS}{{\mathcal {S}}}
\newcommand{\CT}{{\mathcal {T}}}
\newcommand{\CU}{{\mathcal {U}}}
\newcommand{\CV}{{\mathcal {V}}}
\newcommand{\CW}{{\mathcal {W}}}
\newcommand{\CX}{{\mathcal {X}}}
\newcommand{\CY}{{\mathcal {Y}}}
\newcommand{\CZ}{{\mathcal {Z}}}

\newcommand{\ab}{{\mathrm{ab}}}
\newcommand{\Ad}{{\mathrm{Ad}}}
\newcommand{\an}{{\mathrm{an}}}
\newcommand{\Aut}{{\mathrm{Aut}}}

\newcommand{\Br}{{\mathrm{Br}}}
\newcommand{\bs}{\backslash}
\newcommand{\bbs}{\|\cdot\|}

\newcommand{\cod}{{\mathrm{cod}}}
\newcommand{\cont}{{\mathrm{cont}}}
\newcommand{\cl}{{\mathrm{cl}}}
\newcommand{\criso}{{\mathrm{criso}}}

\newcommand{\dR}{{\mathrm{dR}}}
\newcommand{\disc}{{\mathrm{disc}}}
\newcommand{\Div}{{\mathrm{Div}}}
\renewcommand{\div}{{\mathrm{div}}}

\newcommand{\Eis}{{\mathrm{Eis}}}
\newcommand{\End}{{\mathrm{End}}}

\newcommand{\Frob}{{\mathrm{Frob}}}

\newcommand{\Gal}{{\mathrm{Gal}}}
\newcommand{\GL}{{\mathrm{GL}}}
\newcommand{\GO}{{\mathrm{GO}}}
\newcommand{\GSO}{{\mathrm{GSO}}}
\newcommand{\GSp}{{\mathrm{GSp}}}
\newcommand{\GSpin}{{\mathrm{GSpin}}}
\newcommand{\GU}{{\mathrm{GU}}}
\newcommand{\BGU}{{\mathbb{GU}}}

\newcommand{\Hom}{{\mathrm{Hom}}}
\newcommand{\Hol}{{\mathrm{Hol}}}
\newcommand{\HC}{{\mathrm{HC}}}

\renewcommand{\Im}{{\mathrm{Im}}}
\newcommand{\Ind}{{\mathrm{Ind}}}
\newcommand{\inv}{{\mathrm{inv}}}
\newcommand{\Isom}{{\mathrm{Isom}}}

\newcommand{\Jac}{{\mathrm{Jac}}}
\newcommand{\JL}{{\mathrm{JL}}}

\newcommand{\Ker}{{\mathrm{Ker}}}
\newcommand{\KS}{{\mathrm{KS}}}

\newcommand{\Lie}{{\mathrm{Lie}}}

\newcommand{\new}{{\mathrm{new}}}
\newcommand{\NS}{{\mathrm{NS}}}

\newcommand{\ord}{{\mathrm{ord}}}
\newcommand{\ol}{\overline}

\newcommand{\rank}{{\mathrm{rank}}}

\newcommand{\PGL}{{\mathrm{PGL}}}
\newcommand{\PSL}{{\mathrm{PSL}}}
\newcommand{\Pic}{\mathrm{Pic}}
\newcommand{\Prep}{\mathrm{Prep}}
\newcommand{\Proj}{\mathrm{Proj}}

\newcommand{\Picc}{\mathcal{P}ic}

\renewcommand{\Re}{{\mathrm{Re}}}
\newcommand{\red}{{\mathrm{red}}}
\newcommand{\sm}{{\mathrm{sm}}}
\newcommand{\sing}{{\mathrm{sing}}}
\newcommand{\reg}{{\mathrm{reg}}}

\newcommand{\tor}{{\mathrm{tor}}}
\newcommand{\tr}{{\mathrm{tr}}}

\newcommand{\ur}{{\mathrm{ur}}}

\newcommand{\vol}{{\mathrm{vol}}}

\newcommand{\ds}{\displaystyle}

\tableofcontents

\section{Introduction}\label{1}
The main goal of this article is to prove GCD and abc inequalities on abelian varieties over function fields. We mainly work on the function fields of complex curves. The case of function fields of one variable over an arbitrary field $k$ of characteristic 0 then follows from the Lefschetz principle. At the beginning, we list the main theorems (as well as important corollaries). The relevant background and definitions (particularly several height functions) are presented in \S\ref{1.1} and \S\ref{1.2}.

The first is the GCD inequality, which compares the height function associated to a higher-codimensional subscheme (i.e. GCD height) with the usual height function.

\begin{theorem}[Geometric GCD for abelian varieties]\label{main}
Let $K$ be the function field of a smooth projective complex curve $B$. Let $A$ be an abelian variety over $K$. Let $L$ be an ample line bundle on $A$ and let $Y\subseteq A$ be a closed subscheme of $A$ of codimension $>1$. Fix height functions $h_{L}$ and $h_{Y}$ associated to $L$ and $Y$ respectively. Then for any $\varepsilon>0$, there is a constant $c>0$ such that the inequality
$$
h_{Y}(x)\leq\varepsilon h_{L}(x)+c
$$
holds for all rational points $x$ of $A$ outside a proper Zariski closed subset.
\end{theorem}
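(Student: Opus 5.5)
We follow the three-step pattern described in the abstract: microscopic local estimates, then local estimates, then global estimates. First we reduce to a convenient normalization. Since $h_Y$ is functorial and bounded by $h_{Y'}$ when $Y\subseteq Y'$, we may enlarge $Y$ to a reduced closed subscheme, after replacing $Y$ by a suitable multiple via the ideal inclusion $\mathcal I_{Y_{\red}}^N\subseteq \mathcal I_Y$. We may also assume $Y$ is irreducible of codimension exactly $2$. After a finite base change of $B$ (harmless for both sides up to a constant factor), we may assume $A$ has a semiabelian Néron model $\mathcal A\to B$ and that we are working with rational points $x\in A(K)$, i.e. sections $\sigma_x:B\to\mathcal A$. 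We replace $h_L$ by the canonical Néron--Tate height and $h_Y$ by a Xie--Yuan type adelic height. Then $h_Y(x)=\sum_{v\in B}\lambda_{Y,v}(x)+O(1)$, where $\lambda_{Y,v}$ measures the proximity of $\sigma_x(v)$ to the closure $\overline{Y}$ in the fiber over $v$.

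The strategy is to pass to the universal family. Over an open $B^\circ\subseteq B$ where $\mathcal A$ is abelian, we write $\mathcal A^{\an}|_{B^\circ}$ locally as $\mathbb C^g/\Lambda_b$ and lift $\sigma_x$ to a multivalued holomorphic map. We then use the Betti form $\omega$, the translation-invariant closed $(1,1)$-form representing $c_1(\overline L)$ that is flat along the Betti foliation. In these terms $h_L(x)=\int_B\sigma_x^*\omega$ up to boundary corrections at the bad fibers, which the Xie--Yuan theory controls. The required inequality becomes
\[
\sum_v \lambda_{Y,v}(x)\le \varepsilon \int_B \sigma_x^*\omega + c .
\]

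Microscopically, near a point $v$ where $\sigma_x(v)$ is very close to $\overline Y$, we compare $\sigma_x$ in Betti coordinates with a linear entire curve $z\mapsto a+tw$ in a fixed fiber $\mathbb C^g/\Lambda$. The key input is a uniform Nevanlinna-type estimate for the family of linear entire curves. If the Zariski closure of such a curve is not contained in a translate related to $Y$, then the mass of $\omega$ on a disc of radius $r$ is large compared with $r^{2}$ times the depth of the intersection with $Y$. Because $Y$ has codimension $\ge2$, a generic linear curve of direction $w$ meets $Y$ with small total multiplicity relative to its $\omega$-area. This is the compact analogue of the $\log$-GCD estimates, and it is uniform in $(a,w)$ by compactness of the parameter space, after excluding the directions $w$ lying in $\Lie$ of abelian subvarieties $C$ with $a+C$ meeting $Y$ in excess dimension. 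Those exceptional configurations are what produce the proper Zariski closed exceptional set. By a Bogomolov/Faltings-type argument, the points whose Betti lifts are nearly tangent to such $C$ accumulate in finitely many translates of proper abelian subvarieties.

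Locally, we cover $B$ by discs of a scale adapted to $|d\sigma_x|$ and use Besicovitch covering to bound the overlaps, so that the local estimates sum to $\varepsilon\int\sigma_x^*\omega$ plus an error controlled by the number of discs. Averaging over the disc radii handles the gaps. Globally, the points of $B\setminus B^\circ$ contribute only $O(1)$ by the Xie--Yuan boundary analysis. The main obstacle is the microscopic step: making the Nevanlinna estimate uniform over all linear curves while isolating the degenerate directions. If this fails uniformly, we would first try induction on $\dim A$, applied to the quotients $A/C$ for the exceptional abelian subvarieties $C$.
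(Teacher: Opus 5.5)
You have the right toolbox: the integral formula $\hat h_L(x)=\int\sigma_x^*\omega$, rescaling sections to linear entire curves, Yamanoi's Nevanlinna GCD estimate, and Besicovitch coverings. But two essential ideas are missing.

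\emph{First gap: how the exceptional set arises and why the local estimate applies on most of $B$.} You work point by point, excise degenerate directions, and assert that points whose Betti lifts are nearly tangent to a bad $C$ lie in finitely many translates of abelian subvarieties. As stated this is false. Take $A=E\times E$ with $E$ non-isotrivial, $P\in E(K)$ of infinite order, $Y=\{0\}$ and $C=E\times\{0\}$, so that $C\cap Y$ has excess dimension in $C$. The points $(nP,mP)$ with $0<m<\eta n$ have normalized classes near $V(C,K)$. Yet their Zariski closure contains $E\times\{mP\}$ for every $m\geq 1$, hence equals $A$. Such points must be handled by the estimates, not discarded.

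More fundamentally, for an individual $x$ nothing prevents $\sigma_x(b)$ from staying near the locus where the linear-curve estimate fails for a large part of $B$. This is exactly where genericity must enter. The paper argues by contradiction:
\begin{itemize}
\item Lemma~\ref{real} gives a generic sequence violating the inequality.
\item After passing to a subsequence with $\ell_n^{-1}s_n\to s_\infty\neq 0$, the rescaled sections near $b$ converge to linear curves with the single direction $\delta(s_\infty,v_b)$. Their leaf closure is $\mathcal{G}_b$ off a null set (Proposition~\ref{closure}). So uniformity is needed only in the starting point (Proposition~\ref{conclusion}), not in the direction as you claim.
\item The excess-intersection locus becomes a fixed proper closed subset $\mathcal{Y}'$ containing no fibre. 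Lemma~\ref{juli}, an induction on isogeny factors that uses genericity, shows that the sections spend arbitrarily small $\mu_B$-measure near $\mathcal{D}\cup\mathcal{Y}'$.
\end{itemize}
That induction requires the heights in every nontrivial quotient to tend to infinity. This forces the $K/\mathbb{C}$-trace part to be moved into a constant factor $T$, hence the relative setting $A\times T_K$ and Raynaud--Gruson flattening. Your proposal does not address the trace at all.

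\emph{Second gap: the upper bound for the GCD sum over the bad set.} The microscopic estimate holds only at points $b$ where the section is away from the degenerate locus and from the bad fibres. On the complement, which is small only in measure, you have no bound for $\sum_b \mathrm{ord}_b\sigma_x^*\mathcal{Y}$, and small measure says nothing about a sum of point masses. Besicovitch controls overlaps and averaging controls the covering, but neither touches this. Your claim that the fibres over $B\setminus B^\circ$ contribute $O(1)$ is unsupported. An error ``controlled by the number of discs'' is fatal anyway, since discs of radius $\asymp\ell^{-1}$ number $\asymp\ell^2\asymp h_L$.

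The paper's remedy is as follows.
\begin{itemize}
\item Take an ample symmetric $D$ with $\mathcal{I}_{\mathcal{D}}\subseteq\mathcal{I}_{\mathcal{Y}}$, and bound the bad $Y$-sum by the bad $D$-sum, which equals $h_D$ minus the good $D$-sum.
\item Prove a lower bound $(1-\varepsilon)\int\sigma^*\omega$ for the good $D$-sum (a defect-zero statement coming from Corollary~\ref{rm}(1)). This is done \emph{simultaneously} with the upper bound for the good $Y$-sum, on the \emph{same} good set (Proposition~\ref{key}, via the refined covering Lemma~\ref{refine}).
\item Proposition~\ref{jiben-} then makes the Xie--Yuan height of the small bad set negligible.
\end{itemize}
Without this device, even a correct microscopic estimate does not yield the global inequality.
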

As predicted by Vojta's conjecture, the exceptional set in the theorem can in fact be given a geometric description, and is therefore stable under finite field extensions $K'/K$; see \S\ref{4.6} for more discussions.

Combining with the Mordell--Lang theorem, we can derive the following corollary.
\begin{corollary}\label{main'}
Let $K$ be the function field of a smooth projective complex curve $B$. Let $A$ be an abelian variety over $K$. Let $L$ be an ample line bundle on $A$ and let $Y\subseteq A$ be a closed subscheme of $A$ of codimension $>1$. Let $\Gamma\subseteq A(\ol{K})$ be a finitely generated subgroup. Fix the height functions $h_{L}$ and $h_{Y}$ associated to $L$ and $Y$ respectively. Then for any $\varepsilon>0$, there is a constant $c>0$ and a finite union $Z$ of translates of proper abelian subvarieties of $A$ such that
$$
h_{Y}(x)\leq\varepsilon h_{L}(x)+c
$$
holds for all $x\in \Gamma\backslash Z\subseteq A(\ol{K})$.
\end{corollary}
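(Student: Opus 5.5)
We deduce Corollary \ref{main'} from Theorem \ref{main}, using Faltings' theorem for function fields (the Mordell--Lang statement in characteristic $0$, due to Hrushovski / Buium for function fields, stated as the Mordell--Lang theorem above). The plan is an induction on $\dim A$ that handles the exceptional set of Theorem \ref{main} one component at a time.

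First, reduce to the case where $\Gamma\subseteq A(K')$ for a finite extension $K'/K$. Since $\Gamma$ is finitely generated, its generators are defined over some finite extension $K'$. Heights attached to $L$ and $Y$ over $K'$ agree with the base-changed heights up to a normalization factor $[K':K]$ and a bounded error, so the inequality transfers. After replacing $K$ by $K'$, Theorem \ref{main} gives a proper Zariski closed $W\subsetneq A$ with $h_Y(x)\le\varepsilon h_L(x)+c$ for all $x\in A(K)\setminus W$, in particular for $x\in\Gamma\setminus W$.

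Next, apply Mordell--Lang to $W$. The set $W\cap\Gamma$ is contained in a finite union of cosets $x_i+B_i$ with $B_i\subseteq A$ abelian subvarieties and $x_i+B_i\subseteq W$. Since $W$ is proper, each $B_i$ is proper, and all these cosets can be put into $Z$. This already proves the statement: every $x\in\Gamma\setminus Z$ lies outside $W$ (because $W\cap\Gamma\subseteq Z$), so the inequality holds for it.

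The only delicate point is making sure that Mordell--Lang is available in the needed generality when $A$ has nontrivial $K/\mathbb{C}$-trace. In that case the conclusion involves cosets of the form $x+B$ together with a trace part $\tau(B_0(\mathbb{C}))$. However, such pieces are still contained in Zariski closures of $W\cap\Gamma$, and the Zariski closure of $W\cap\Gamma$ is a proper closed subset of $A$. Its irreducible components that meet $\Gamma$ densely are, by the function-field Mordell--Lang theorem, translates of abelian subvarieties possibly twisted by constant subvarieties of the trace. If such a twisted component is not itself a coset, I would simply enlarge $Z$ by the smallest coset of an abelian subvariety containing it. That coset is still proper whenever the component is not Zariski dense in $A$ after group-theoretic closure. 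If it were $A$, then $\Gamma\cap W$ would be dense in $A$, contradicting $W\subsetneq A$ being closed. So in all cases $Z$ can be taken to be a finite union of translates of proper abelian subvarieties.
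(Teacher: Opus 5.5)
Your first two paragraphs follow the paper's argument. The paper notes that $\Gamma\subseteq A(K')$ for some finite extension $K'/K$, applies Theorem \ref{main} after base change to $K'$, and then uses Mordell--Lang to cover $W\cap\Gamma$ by finitely many cosets lying inside $W$. As you say yourself, your second paragraph already completes the proof; the induction on $\dim A$ you announce at the start is never used.

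Your last paragraph should be deleted: it rests on a misconception, and its reasoning is invalid.
\begin{itemize}
\item For a \emph{finitely generated} $\Gamma$ there is no trace correction in characteristic $0$. Faltings' theorem (the Mordell--Lang theorem the paper invokes) holds over any algebraically closed field of characteristic $0$, in particular over $\ol{K}$. It gives $W\cap\Gamma=\bigcup_i\bigl(\gamma_i+(B_i(\ol{K})\cap\Gamma)\bigr)$ with $\gamma_i+B_i\subseteq W$, whatever the $K/\CC$-trace of $A$ is.
\item Constant ``twisted'' pieces arise only for groups that are not finitely generated, such as $A(K)$, which contains the translates by $A^{(K/\CC)}(\CC)$. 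That is the setting of the Buium/Hrushovski function-field statements you cite, and it is not the relevant citation here.
\item Even if such pieces did occur, your fix would fail. The smallest translate of an abelian subvariety containing a proper subvariety $V\subsetneq A$ can be all of $A$ (for example, a curve that generates $A$). Nothing about this forces $\Gamma\cap W$ to be Zariski dense in $A$, so the contradiction you claim does not follow.
\end{itemize}
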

Note that $\Gamma$ is contained in $A(K')$ for some finite extension $K'/K$. Therefore (after base change everything to $K'$) we can indeed apply Theorem \ref{main} to this setting. This is a geometric and compact analogue of a theorem of Levin; see \S\ref{1.2} for the statement and other GCD-type results for tori or abelian varieties. \medskip\medskip

\noindent Now we turn to an abc inequality, which compares the truncated counting function with the usual height function.

\begin{theorem}[Geometric ABC for rational points on abelian varieties]\label{main1}
Let $K$ be the function field of a smooth projective complex curve $B$. Let $A$ be an abelian variety over $K$. Let $D$ be an effective \textbf{reduced} Cartier divisor on $A$. Fix a finite subset $S\subset B(\CC)$, a height function $h_{D}$ associated to $D$, and a truncated counting function $N^{(1)}_{D,S}\colon (A\backslash{\rm Supp}\,D)(K)\to\BN$. Then for any $\varepsilon>0$, there is a constant $c>0$ such that the inequality
$$
N^{(1)}_{D,S}(x)\geq (1-\varepsilon)h_{D}(x)-c
$$
holds for all rational points $x$ of $A$ outside a proper Zariski closed subset.
\end{theorem}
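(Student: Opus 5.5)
We plan to reduce the inequality to a comparison of local multiplicities at places of $B$, and to follow the same three-step pattern as for Theorem \ref{main}: microscopic local estimates, then local estimates, then global estimates. Up to $O(1)$ and the finitely many places in $S$, we have $h_D(x)=\sum_{v\in B(\CC)}\lambda_{D,v}(x)$. We also have $N^{(1)}_{D,S}(x)=\sum_{v\notin S}\min\{1,\lambda_{D,v}(x)\}$, using integral-model intersection multiplicities. The difference is therefore
$$
h_D(x)-N^{(1)}_{D,S}(x)=\sum_{v\notin S}\bigl(\lambda_{D,v}(x)-1\bigr)^{+}+\sum_{v\in S}\lambda_{D,v}(x)+O(1).
$$
So we must show that the total excess multiplicity, together with the proximity at $S$, is at most $\varepsilon h_D(x)+c$ outside a proper closed subset. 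We may assume $A$ has a good integral model outside a finite set of places, and we may enlarge $S$ so that it contains the bad places.

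The key geometric input is that $D$ is reduced. Hence $\mathrm{Sing}(D)$ has codimension at least $2$ in $A$, and Theorem \ref{main} gives $h_{\mathrm{Sing}\,D}(x)\le\varepsilon h_L(x)+c$ off a proper closed set. At places where $x$ meets $D$ with multiplicity $\ge 2$ near the smooth locus of $D$, the section $x\colon B\to\mathcal A$ must be tangent to $D$. We will measure this tangency through the relative derivative of $x$. Concretely, we pull back to the universal cover: over a small disc around $v$, the section $x$ lifts to a holomorphic map into $\mathrm{Lie}(A)$ plus a lattice. To first order this gives a \emph{linear} entire curve $t\mapsto a+tw$, where $w$ is the Kodaira--Spencer or Manin-type derivative of $x$. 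Tangency of $x$ to $D$ at $v$ then corresponds to the linear curve in direction $w$ through $x(v)$ being tangent to $D$ there. The microscopic step is to bound the excess multiplicity at $v$ by an intersection of this linear curve with $D$ minus its truncated version. The local step will then use uniform Nevanlinna-type estimates for families of linear entire curves on $A$: for a linear curve $f_{a,w}(t)=a+tw$, the truncated counting function of $D$ is at least $(1-\varepsilon)T_{f,D}(r)$ uniformly in $(a,w)$ away from directions lying in proper abelian subvarieties stabilizing components of $D$. This is a uniform version of the abc/second main theorem for linear curves, in the spirit of Noguchi--Winkelmann--Yamanoi.

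For the global step, we express $h_D(x)$ via the canonical Xie--Yuan heights as an integral of the Betti form $\omega$ of $D$ (or of an ample $L$) pulled back along $x$ over $B$. We then cover $B$ by discs on which $x$ is approximately linear. The discrete excess counting is bounded on each disc by $\varepsilon$ times the integral of $x^*\omega$ over that disc, plus an error. To sum these bounds we use Besicovitch covering, which controls overlaps, and averaging over radii, which controls gaps. The points $x$ for which the derivative direction $w$ degenerates, for instance when it lies in $\mathrm{Lie}$ of the stabilizer of $D$ or $x$ is essentially constant relative to the $K/\CC$-trace, will form a proper Zariski closed subset. That subset is handled by translating to a quotient abelian variety or by Mordell--Lang, as in the deduction of Corollary \ref{main'}.

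The main obstacle will be the local step, namely the \emph{uniformity} of the Nevanlinna estimate across the family of linear curves. The estimate must hold with constants independent of the base point and of the direction as long as the direction stays a bounded distance from the degenerate locus. It must also degrade controllably as the direction approaches that locus, so that the Betti-form integral still dominates after summation. I would first try a compactness argument on the projectivized Lie algebra together with Yamanoi's abc for abelian varieties, applied to the closure of each linear curve, which is a translate of an abelian subvariety. I would stratify by that subvariety and argue by induction on dimension.
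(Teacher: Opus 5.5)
Your overall architecture (rescale sections to linear curves, apply Yamanoi-type estimates, glue with coverings and Xie--Yuan heights) is the paper's. However, the argument has a genuine gap exactly at the step you flag.

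You ask for a second main theorem for linear curves $a+tw$ that is uniform in the \emph{direction} $w$ and degrades controllably near degenerate directions. Nothing like this is available. Yamanoi's theorem is ineffective. Moreover, the Zariski closure of $\CF(0,w)$, hence the abelian subvariety and restricted divisor to which the theorem applies, jumps as $w$ varies. So a compactness argument on $\BP(\mathrm{Lie}\,A)$ breaks down precisely near the degenerate directions.

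Nor can the degenerate cases be put into a proper Zariski closed set. Generic sequences can have $\ell_n^{-1}s_n\to s_\infty$ with $G(s_\infty)\subsetneq A$; already on $E\times E'$ one can take $(nP,\lfloor\sqrt n\rfloor Q)$. Likewise, points that are constant in a constant factor of $A$ are Zariski dense.

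The missing mechanism is the following. Argue by contradiction along a generic sequence (Lemma \ref{real}), and normalize so that $\ell_n^{-1}s_n\to s_\infty$. At each $b$ outside a null set, freeze the direction $v=\delta(s_\infty,v_b)$, whose leaf closure is exactly $\CG_b$ (Proposition \ref{closure}). Lemma \ref{linear} then shows that the rescaled sections $z\mapsto s_n(b'+\ell_n^{-1}z)$ at \emph{every} $b'$ in a small disc around $b$ are close to linear curves in this single direction. So uniformity is only needed in the starting point (Proposition \ref{conclusion}, via compactness and Rouch\'e).

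Degeneracy is then handled on the leaf closures $x+\CG_b$, in two ways:
\begin{itemize}
\item Starting points in the locus $Y_b$, where $D|_{x+\CG_b}$ fails to be transversal off codimension $2$, are discarded by a measure estimate. This is Lemma \ref{juli-}, needed because $\bigcup_b Y_b$ need not be algebraic.
\item The trace is absorbed into a constant factor $T$. There the limit fibre $\CD_{t_\infty}$ can be non-reduced although $D$ is reduced. Then no second main theorem on the limit fibre tells you that nearby sections meet $\CD_{t_n}$ with multiplicity one, and the limit tangent space lemma is required.
\end{itemize}

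Two further steps fail as written.

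First, Theorem \ref{main} applied to ${\rm Sing}\,D$ does not control the excess where $x$ passes through ${\rm Sing}\,D$. A section through a node, transverse to ${\rm Sing}\,D$ but tangent to high order to one branch, has $\lambda_{{\rm Sing}\,D,v}=1$ and $\lambda_{D,v}$ arbitrarily large. What is needed is smallness of $\sum_{x(v)\in{\rm Sing}\,D}\lambda_{D,v}$. The paper obtains this at the level of linear curves (Remark \ref{jq}, Proposition \ref{Y}), not from Theorem \ref{main}.

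Second, by phrasing the goal as an \emph{upper} bound for the excess plus $\sum_{v\in S}\lambda_{D,v}$, you are on the hard side. Overlaps become harmless, but two contributions remain uncontrolled: the $n$-dependent gap set where no linear approximation is available, and the places in $S$. Each is a discrete sum, not small merely because the set has small measure, and averaging does not bound it. This is exactly what forces the simultaneous estimate of Proposition \ref{key} and Lemma \ref{refine} in the GCD case.

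The paper instead proves the \emph{lower} bound $N^{(1)}\geq(1-\varepsilon)\int s_n^{*}\omega$ over good discs. There gaps and $S$ cost only a small Betti integral (Proposition \ref{jiben-}); note that $\hat h_{D^{\rm sym},B(\CC)\setminus S}=\hat h_{D^{\rm sym}}$ since $S$ is null. The price is that overlaps must then have nearly constant multiplicity. Besicovitch alone does not give this; it comes from averaging over centres at a fixed radius (Lemma \ref{refine--}).
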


This proves Vojta's (geometric) abc conjecture for rational points on abelian varieties over $\CC(B)$; see Conjecture \ref{vojta} below for a review of the conjecture. It also implies the geometric Lang--Vojta conjecture for affine open subvarieties of abelian varieties. The toric analogue of Theorem \ref{main1} was established by Guo, Nguyen, Sun and Wang \cite{GNSW}. See also \S\ref{1.2} for more related works on these conjectures for tori or abelian varieties.

Moreover, by using the same method, we can also prove the following conjecture of Buium (see the second paragraph after \cite[Theorem 1]{zbMATH00703972}, or \cite[Conjecture 2.7]{zbMATH01270843}).

\begin{proposition}[Buium's conjecture]\label{buium}
Let $A$ be an abelian variety over $K=\CC(B)$ , let $D$ be an effective divisor
on $A$. Fix a height function $h_{D}$ associated to $D$, a finite subset $S\subseteq B(\CC)$ and a truncated counting function $N^{(1)}_{D,S}\colon (A\backslash{\rm Supp}\,D)(K)\to\BN$. Then there are constants $c_{1},c_{2}>0$ such that $N^{(1)}_{D,S}(x)\geq c_{1}h_{D}(x)-c_{2}$
for all $x\in (A\backslash{\rm Supp}\,D)(K)$.
\end{proposition}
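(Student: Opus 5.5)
We reduce Buium's conjecture to Theorem~\ref{main1}, and then remove the exceptional Zariski closed set by Noetherian induction.

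\textbf{Step 1 (reduction to reduced $D$).} Write $D=\sum m_iD_i$ with $D_i$ prime and let $D_{\red}=\sum D_i$. Truncated counting functions only see the support, so $N^{(1)}_{D,S}=N^{(1)}_{D_{\red},S}+O(1)$. On the height side, $h_D\le m\,h_{D_{\red}}+O(1)$ with $m=\max m_i$ (up to $O(1)$ away from the support). Hence a bound $N^{(1)}_{D_\red,S}\ge c_1h_{D_\red}-c_2$ gives the statement for $D$ with $c_1$ replaced by $c_1/m$. So we may assume $D$ is reduced.

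\textbf{Step 2 (generic inequality).} Apply Theorem~\ref{main1} with $\varepsilon=1/2$. This gives $N^{(1)}_{D,S}(x)\ge \tfrac12 h_D(x)-c$ for all $x$ outside a proper Zariski closed subset $Z\subsetneq A$.

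\textbf{Step 3 (the exceptional set).} By the Mordell--Lang theorem over function fields (Hrushovski, in the version allowing a nontrivial trace), the points of $Z(K)$ lie in finitely many sets of the form $x_j+B_j(K)$. Here $B_j\subseteq A$ are proper abelian subvarieties, except that $K/\CC$-trace parts contribute constant points. Since the statement is trivially true on sets where $h_D$ is bounded, we handle each piece separately:
\begin{itemize}
\item[(a)] \emph{Nontrivial translates.} Restrict to $A'=x_j+B_j\cong B_j$ with divisor $D'=D|_{A'}$, assuming $A'\not\subseteq \operatorname{Supp}D$. Then $N^{(1)}_{D,S}(x)\ge N^{(1)}_{D',S}(x)$ on $A'(K)$ up to $O(1)$, and $h_D|_{A'}=h_{D'}$. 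Induction on $\dim A$, running Step~1 again for $D'$ (which may be nonreduced), gives constants for each piece.
\item[(b)] \emph{Constant and trace pieces.} These have bounded height modulo the trace, and the trace part is handled by the constant-abelian-variety case, which is known, or by the same restriction argument.
\end{itemize}
The base case $\dim A=0$ is trivial. Since there are finitely many pieces, taking the minimum $c_1$ and maximum $c_2$ over them finishes the argument.

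\textbf{Main obstacle.} The delicate point is Step 3 when $D'$ fails to be ample on $B_j$. If $D|_{B_j}$ is only effective, say pulled back from a quotient $B_j\to B_j/C$, the inductive hypothesis must be applied on the quotient. This works because the statement concerns arbitrary effective $D$, not ample $D$, so the induction is set up for that generality. We also need Theorem~\ref{main1} to apply to the translate $x_j+B_j$ over $K$, which it does because that translate is a torsor trivialized by a $K$-point.

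A second subtlety is that the exceptional set in Theorem~\ref{main1} might not be a finite union of translates. We avoid needing any such description by intersecting $Z$ with $A(K)$ and invoking Mordell--Lang on $Z(K)$ directly.
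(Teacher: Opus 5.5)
Steps 1 and 2 are fine; Step 1 is the same observation the paper makes right after the statement. Step 3(a) is also fine. The gap is Step 3(b), which is exactly where a non-constant $A$ with nontrivial $K/\CC$-trace becomes hard.

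Your description of $Z(K)$ is incomplete. When the trace is nontrivial, take an irreducible $X\subseteq A$ with $X(K)$ Zariski dense. By Ueno's fibration theorem and the geometric Bombieri--Lang theorem for $K$-points, $X$ is of the form $A'\times T_K$ up to an isogeny, a finite extension $K'/K$ and normalization. Here $A'$ is the stabilizer of $X$, $T$ is a projective variety over $\CC$, and the dense points are essentially $A'(K')\times T(\CC)$.

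Concretely, take $A=A_1\times J_K$ with $A_1$ traceless of positive Mordell--Weil rank, $J$ a constant abelian surface, and $C\subset J$ a curve of genus $2$. Then $X=A_1\times C_K$ has $X(K)\supseteq A_1(K)\times C(\CC)$ Zariski dense. Such a component is not of your type (a): the smallest translate of an abelian subvariety containing it is $A$ itself, so restriction plus induction on $\dim A$ gives nothing. It is not of type (b) either. Heights on it are unbounded, even modulo the trace, because of the $A_1(K)$-direction. It is not a constant abelian variety, so Noguchi--Winkelmann does not apply.

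Nothing in Theorem \ref{main1} prevents $Z$ from containing such a component. You could slice it into translates $A_1\times\{c\}$ with $c\in C(\CC)$ and use induction on each slice. That yields constants $c_1(c),c_2(c)$ with no uniformity over an uncountable family, so ``finitely many pieces, take min and max'' breaks down.

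The missing ingredient is an abc inequality that is uniform in the constant parameter. The paper argues by contradiction:
\begin{enumerate}
\item It takes a sequence violating the inequality, passes to an irreducible component $X$ of its Zariski closure, and extracts a generic subsequence.
\item It uses Ueno, Chevalley--Weil and the resolved geometric Bombieri--Lang conjecture to lift the points to $(s_n,t_n)\in A'(K')\times T(\CC)$.
\item It applies Proposition \ref{V}(2) on $A'\times T_K$ to the pullback $D'_{\rm red}$. This divisor is relatively ample over $T_K$ once $D$ has first been reduced to the ample case.
\end{enumerate}
This is precisely why the constant part $T$ is built into Proposition \ref{V}. Theorem \ref{main1} concerns a single abelian variety and leaves the exceptional set unspecified, so it is not enough to finish.
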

Note that this inequality for general $x\in A(K)$ (outside a proper Zariski closed subset) is just a direct corollary of Theorem \ref{main1}. In fact, take an integer $M$ such that $D\leq M\cdot D_{\rm red}$. Then we can take $c_{1}$ to be any constant $0<c_{1}<\frac{1}{M}$, and then take appropriate $c_{2}>0$ and an exceptional subset. To prove Buium's conjecture for all points $x\in (A\backslash{\rm Supp}\,D)(K)$, we then need to make more effort on the exceptional locus.

Buium himself proved this conjecture when $A$ has trivial $K/\CC$-trace in \cite[Theorem 1]{zbMATH00703972}. Noguchi and Winkelmann \cite[Main Theorem]{NW} proved the constant case. The general case remained open previously.

Both Theorem \ref{main1} and Proposition \ref{buium} can be generalized to the case when $S$ is (Euclidean) closed of measure zero (with respect to a K\"{a}hler form on $B(\CC)$). See Remark \ref{jiaqiang} in \S\ref{4.5}.

\subsection{Classical GCD and ABC problems}\label{1.1}
Both GCD and abc inequalities originate from inequalities in elementary number theory. We briefly review them.

In 2003, Bugeaud, Corvaja and Zannier \cite{zbMATH01922486} began a new line of results related to the inequalities for greatest common divisors:
\begin{theorem}[{\cite[Theorem 1]{zbMATH01922486}}]\label{bcz}
Let $a,b\in\BZ$ be multiplicatively independent integers. Then for every $\varepsilon>0$,
\begin{align*}
\log\gcd(a^{n}-1,b^{n}-1)\leq\varepsilon n
\end{align*}
for all but finitely many positive integers $n$.
\end{theorem}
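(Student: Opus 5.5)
The statement is the Bugeaud--Corvaja--Zannier theorem, and I would follow their strategy: reduce to the Subspace Theorem with a finite set of places. Suppose the claim fails. Then there is an infinite set $\mathcal{N}$ of exponents $n$ with $\log\gcd(a^{n}-1,b^{n}-1)>\varepsilon n$. For each such $n$ put $d_{n}=\gcd(a^{n}-1,b^{n}-1)$. Let $S$ be the set consisting of the archimedean place and the primes dividing $ab$; then $a^{n},b^{n}$ are $S$-units. The idea is to produce many $S$-integral linear forms that are simultaneously small at the archimedean place and at the primes dividing $d_{n}$. The Subspace Theorem then forces the relevant points into finitely many hyperplanes.

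\textbf{Step 1 (construction).} Fix a large integer $h$, with $h$ depending on $\varepsilon$. I would look at the vectors of monomials
$$\mathbf{x}_{n}=\bigl(a^{in}b^{jn}\bigr)_{0\le i+j\le h}.$$
Both $a^{n}\equiv1$ and $b^{n}\equiv1 \pmod{d_{n}}$, so every monomial satisfies $a^{in}b^{jn}\equiv1\pmod{d_{n}}$. Hence every difference of two coordinates is divisible by $d_{n}$. More precisely, we choose linear forms $L_{v,k}$ with integer coefficients for the places $v$ in $S$ together with the primes dividing $d_{n}$. For the infinite place we can simply use the coordinate forms. A cleaner route is to treat $d_{n}$ through the product formula: consider
$$\mathbf{y}_{n}=\Bigl(\tfrac{a^{in}b^{jn}-1}{d_{n}}\Bigr)$$
or, following BCZ, divide by $d_{n}$ and use that this yields integral points whose height is lowered by $\log d_{n}$. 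Then apply the Subspace Theorem to suitable forms in the coordinates $1$ and $a^{in}b^{jn}$.

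\textbf{Step 2 (applying the Subspace Theorem).} For each $n\in\mathcal{N}$ the double product
$$\prod_{v\in S}\prod_{k}|L_{v,k}(\mathbf{x}_{n})|_{v}$$
gains a factor of roughly $d_{n}^{-c(h)}$ from the divisibility. Here $c(h)$ grows like $h^{3}$, while the height of $\mathbf{x}_{n}$ grows like $h\cdot n\log\max(|a|,|b|)$ and the number of coordinates like $h^{2}$. The counting must show that for $h$ large, $\varepsilon n\cdot c(h)$ beats $\delta\cdot h^{2}\cdot H$ for some fixed $\delta>0$, where $H$ is the height. Then the Subspace Theorem yields finitely many hyperplanes containing all $\mathbf{x}_{n}$ for $n\in\mathcal{N}$.

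\textbf{Step 3 (degenerate case).} Infinitely many $n$ then satisfy a fixed nontrivial relation $\sum c_{ij}a^{in}b^{jn}=0$. This is an $S$-unit equation in the exponential-polynomial variable $n$. By the Skolem--Mahler--Lech theorem, or the $S$-unit theorem for vanishing sums, some subsum must vanish identically along an arithmetic progression. That forces $a^{i}b^{j}=a^{i'}b^{j'}$ for distinct pairs, or a root of unity relation, contradicting multiplicative independence.

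I expect the main obstacle to be the bookkeeping in Step 2: choosing $h$ and the forms so that the gcd savings dominate the Subspace error, uniformly in $n$. This is where the precise BCZ choice of forms is needed, and I would follow their counting of monomials of degree at most $h$.
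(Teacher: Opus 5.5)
The paper only quotes this result from Bugeaud--Corvaja--Zannier, so there is no in-paper proof to compare with. Judged on its own, your sketch has a genuine gap exactly where the proof lives: how $d_n$ enters the Subspace inequality.

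The set $S$ has to be fixed. The primes dividing $d_n$ vary with $n$ and are all coprime to $ab$, so they cannot carry linear forms. Take $S=\{\infty\}\cup\{p\mid ab\}$ and your vector $\mathbf{x}_n=(a^{in}b^{jn})$. Then divisibility of a nonzero integer $\Phi(\mathbf{x}_n)$ by $d_n^{e}$ works against you. By the product formula, $\prod_{v\in S}|\Phi(\mathbf{x}_n)|_v=\prod_{v\notin S}|\Phi(\mathbf{x}_n)|_v^{-1}\ge d_n^{e}$. For example, at $\infty$ the form obtained by expanding $(X-1)^{\alpha}(Y-1)^{\beta}$ takes the value $(a^n-1)^{\alpha}(b^n-1)^{\beta}$.

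So the factor $d_n^{-c(h)}$ in your double product does not exist. With coordinate forms at every $v\in S$ the product is exactly $1$, so for $\mathbf{x}_n$ the Subspace Theorem is borderline and the gcd cannot tip it. The gcd can only enter as a drop of $\log d_n$ in the height of a vector all of whose coordinates are divisible by $d_n$. Your $\mathbf{y}_n$ is such a vector. But you then revert to ``forms in the coordinates $1$ and $a^{in}b^{jn}$'', which are not linear forms in $\mathbf{y}_n$, and no forms are ever specified. The ``bookkeeping'' you defer is the whole argument.

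Here is what is missing. Take $\mathbf{y}_n=\bigl((a^{in}b^{jn}-1)/d_n\bigr)_{0<i+j\le h}\in\BZ^{N-1}$ with $N=\binom{h+2}{2}$. Linear forms in $\mathbf{y}_n$ are the numbers $P(a^n,b^n)/d_n$ with $P\in V=\{P\in\BQ[X,Y]:\deg P\le h,\ P(1,1)=0\}$.

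At $\infty$, use the coordinate forms; each has size at most $2|a^ib^j|^n/d_n$.

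At each $p\mid ab$, use a basis of $V$ with integer coefficients, adapted to the filtration $W_t=V\cap\mathrm{span}\{X^iY^j: i\,\mathrm{ord}_p(a)+j\,\mathrm{ord}_p(b)\ge t\}$. Since $W_t$ has codimension at most $1$ in the monomial span, each level loses at most one dimension relative to the $S$-unit case. The total loss is at most $p^{nh\max(\mathrm{ord}_p a,\,\mathrm{ord}_p b)}$.

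By the product formula for the $S$-units $a^{in}b^{jn}$, everything else cancels, and
\[
\prod_{v\in S}\prod_{k}|L_{v,k}(\mathbf{y}_n)|_v\le 2^{N}\,|ab|^{hn}\,d_n^{-(N-1)},\qquad \|\mathbf{y}_n\|\le 2\max(|a|,|b|)^{hn}.
\]
If $\log d_n>\varepsilon n$ and $h>4\log|ab|/\varepsilon$, the left side is $<\|\mathbf{y}_n\|^{-1}$ for large $n$. The gain is about $\tfrac12h^2\varepsilon n$, against a main-term loss of order $hn\log|ab|$; it is not $h^3$ against a $\delta$-error, as your count suggests.

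The Subspace Theorem then places infinitely many $\mathbf{y}_n$ in one hyperplane, i.e.\ $P(a^n,b^n)=0$ for a fixed $0\ne P\in V$. Your Step 3 then finishes the proof. It is even elementary here: $|a|$ and $|b|$ are multiplicatively independent, so the $|a^ib^j|$ are pairwise distinct, and the dominant term of $\sum\gamma_{ij}(a^ib^j)^n$ cannot be cancelled.
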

Theorem \ref{bcz} was then generalized by Corvaja and Zannier \cite{CZ03} and, independently, Hern\'{a}ndez and Luca \cite{HL}, allowing $\gcd(a^{n}-1,b^{n}-1)$ to be replaced by $\gcd(u-1,v-1)$ for multiplicatively independent $S$-unit integers $u$ and $v$. Corvaja and Zannier \cite{CZ05} further generalized the theorem to the setting where $u$ and $v$ lie in a fixed finitely generated subgroup of $\ol{\QQ}^{*}$, and they replaced $u-1$ and $v-1$ by more general pairs of polynomials in $u$ and $v$. Building on these works, Levin \cite{zbMATH07023767} extended the result to the multivariate setting, yielding the following significant theorem.

\begin{theorem}[{\cite[Theorem 1.1]{zbMATH07023767}}]\label{levin-1}
Let $n$ be a positive integer, $\Gamma\subseteq\BG_{m}^{n}(\ol{\QQ})$ a finitely generated group, and $f(x_{1},\dots,x_{n}),g(x_{1},\dots,x_{n})\in\ol{\QQ}[x_{1},\dots,x_{n}]$ nonconstant coprime polynomials such that not both of them vanish at $(0,0,\dots,0)$. Let $h(\alpha)$ denote the (absolute logarithmic) height of an algebraic number $\alpha$. For all $\varepsilon>0$, there exists a finite union $Z$ of translates of proper algebraic subgroups of $\BG_{m}^{n}$ such that
\begin{align*}
\log\gcd(f(u_{1},\dots,u_{n}),g(u_{1},\dots,u_{n}))<\varepsilon\max\{h(u_{1}),\dots,h(u_{n})\}
\end{align*}
for all $(u_{1},\dots,u_{n})\in\Gamma\backslash Z$.
\end{theorem}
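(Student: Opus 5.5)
Levin's theorem is quoted here as background; the paper uses it but does not prove it. If I had to prove it, I would reduce it to Vojta's main conjecture for blow-ups of $\BG_{m}^{n}$ along codimension-two subschemes, and then to the Schmidt subspace theorem. The plan follows Levin's strategy.

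\emph{Reformulation.} Set $Y=\{f=g=0\}\subseteq\BG_{m}^{n}$. Coprimality makes $Y$ of codimension at least $2$. Up to an error $O(\varepsilon h(u))$ coming from archimedean and denominator contributions, we have $\log\gcd(f(u),g(u))\le h_{Y}(u)$. Here $h_{Y}$ is the height attached to the closed subscheme $Y$ of a suitable projective compactification $X\supseteq\BG_{m}^{n}$, restricted to the $S$-unit points $u\in\Gamma$. The hypothesis that $f,g$ do not both vanish at the origin controls the part of $Y$ at the boundary $X\setminus\BG_{m}^{n}$. Thus the target is an inequality
\[
h_{Y}(u)\le\varepsilon h(u)+O(1)\qquad(u\in\Gamma\setminus Z).
\]

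\emph{Reduction to the subspace theorem.} Let $\pi\colon\tilde X\to X$ be the blow-up along $Y$, with exceptional divisor $E$, so that $h_{Y}=h_{E}\circ\pi$. For large $N$, I would consider the space $V_{N}$ of Laurent polynomials of degree at most $N$. Inside it sits the filtration by the ideal powers $(f,g)^{k}$. Its dimensions satisfy
\[
\dim\bigl(V_{N}\cap(f,g)^{k}\bigr)\approx\dim V_{N}-c\,k^{2}N^{n-2}
\]
(Hilbert-function estimate; this uses codimension $\ge 2$). Choose a basis of $V_{N}$ adapted to this filtration, place by place for $v\in S$. Apply the Schmidt subspace theorem, in the Evertse--Ferretti / Corvaja--Zannier filtration form, to the vectors of monomials evaluated at $u$. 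Since the coordinates of $u$ are $S$-units, the monomials themselves are $S$-units. Hence the counting function relative to $S$ vanishes, and the subspace theorem gives
\[
\sum_{v\in S}\lambda_{Y,v}(u)\cdot(\text{filtration weight})\le(1+\delta)\dim V_{N}\cdot N h(u)
\]
outside a finite union of hyperplanes. The filtration weight is the $\sum k$ gained, which is of order $N^{n}\cdot N/\text{small}$. Comparing the leading terms shows that $h_{Y}(u)\le\varepsilon h(u)$ once $N$ is large in terms of $\varepsilon$.

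\emph{Exceptional set.} The exceptional hyperplanes pull back to proper subvarieties of $\BG_{m}^{n}$ containing infinitely many points of $\Gamma$. By Laurent's theorem (Mordell--Lang for tori), $\Gamma$ meets each such subvariety in a finite union of translates of subgroups. I would then induct on the dimension, restricting $f,g$ to each translate and noting that it is again a torus. When the restrictions fail to be coprime, or both vanish identically, that translate goes into $Z$.

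\emph{Main obstacle.} The hard step is the combinatorial one: for each $v$ and each $u$, one must produce a single basis of $V_{N}$ that is simultaneously compatible with the filtration by $(f,g)^{k}$ and with the local size of $\max(|f(u)|_{v},|g(u)|_{v})$. One must then prove the sharp lower bound for the resulting weight sum, namely $\gtrsim N^{n+1}\lambda_{Y,v}/\varepsilon$ relative to $\dim V_{N}\cdot N$. I would first try Levin's choice of basis built from products $f^{i}g^{j}\cdot(\text{monomials})$ modulo the monomial ideal. This gives a gain quadratic in $k$, coming from codimension two, against a linear loss; that imbalance is what produces the arbitrarily small $\varepsilon$.
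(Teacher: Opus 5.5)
The paper only quotes this theorem from Levin and does not prove it, so your sketch has to stand on its own. Its central step does not work, for two reasons.

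\textbf{Only the places of $S$ appear.} As you set it up, the subspace theorem only sees the places of $S$. The monomial vector $\Phi(u)=(u^{\mathbf a})_{|\mathbf a|\le N}$ has $\|\Phi(u)\|_w=1$ for every $w\notin S$, and the inequality you obtain bounds only $\sum_{v\in S}\lambda_{Y,v}(u)$. For $S$-unit points, however, the gcd lives mainly outside $S$. For $\gcd(2^m-1,3^m-1)$ with $S=\{2,3,\infty\}$ it lives entirely outside $S$. The counting function that vanishes on $\Gamma$ is that of the toric boundary, not $N_{Y,S}(u)$, and bounding $N_{Y,S}(u)$ is the actual content of the theorem.

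\textbf{The filtration cannot give a small $\varepsilon$.} Even for the $S$-part your mechanism fails. A nonzero polynomial in $(f,g)^k$ vanishes to order $k$ at the points of $Y$, so $V_N\cap(f,g)^k=0$ for $k>N$. Hence the total filtration weight is at most $N\dim V_N$, not of order $N^{n+1}/\text{small}$. Your own inequality then gives at best $m_{Y,S}\le C\,h$ with $C\ge 1$, a bound of the kind valid for arbitrary points. It never uses quantitatively that the monomials are $S$-units.

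\textbf{The missing idea.} This is the Corvaja--Zannier device on which Levin's proof is built. Take as coordinates $\mathbf x=(\phi_1(u),\dots,\phi_{M'}(u))$ for a basis $\phi_i$ of $V_N\cap(f,g)$. Every coordinate is then divisible by $\gcd(f(u),g(u))$ at the places outside $S$, so $h(\mathbf x)\le\sum_{v\in S}\log\|\mathbf x\|_v-\log\gcd_{\notin S}+O(1)$. On the blow-up these are sections of $N\pi^*H-E$, so $h_E$ enters as a global height; you introduce $E$ but never use it.

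At each $v\in S$, take the linear forms to be a basis of $V_N\cap(f,g)$ adapted to the ordering of the monomials by $|u^{\mathbf a}|_v$; there are finitely many choices. By the product formula $\prod_{v\in S}\prod_{\mathbf a}|u^{\mathbf a}|_v=1$. Since $V_N\cap(f,g)$ has codimension $O(N^{n-2})$ in $V_N$ (this is where codimension $2$ enters), these forms lose only $O(N^{n-1})h(u)$. The subspace theorem then gives $M'\log\gcd_{\notin S}\le(\delta N+O(N^{n-1}))h(u)$ with $M'\asymp N^n$. So the small $\varepsilon$ comes from codimension $N^{n-2}$ against dimension $N^n$, not from a quadratic-versus-linear imbalance in $k$.

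\textbf{The places of $S$ and your reformulation.} The places of $S$ need a separate argument, and that is where the hypothesis at the origin is used. For $f=x_1$, $g=x_2$, $u=(2^m,2^m)$, the entire gcd sits at $2\in S$. Your reformulation does not dispose of these places, because the hypothesis says nothing about points at infinity. Take $f=1+x_1x_2$ and $g=1+x_1x_2+x_2$, which are coprime with $f(0)\ne0$. Their homogenizations both vanish at $[0:1:0]$ and $[0:0:1]$. Let $\Gamma$ be generated by $(2,1)$ and $(1,3)$, and $Y=\{F=G=0\}\subseteq\BP^{2}$. On the Zariski-dense family $u=(2^{-m},3^k)$ with $k\log 3\le m\log 2$, one finds $\lambda_{Y,2}(u)=m\log 2=h(u)$, while $\log\gcd(f(u),g(u))=0$. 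So the target $h_Y\le\varepsilon h+O(1)$ is false for the natural compactification, and the discrepancy is not an archimedean or denominator effect.

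Finally, your induction over translates is unnecessary. Laurent's theorem applied to the finitely many exceptional hypersurfaces already produces $Z$.
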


Here for two algebraic numbers $\alpha,\beta\in\ol{\QQ}$, not both zero, define
$$
\log\gcd(\alpha,\beta)\colonequals -\sum\limits_{v\in M_{K}}\log^{-}\max\{|\alpha|_{v},|\beta|_{v}\},
$$
where $K$ is a number field containing both $\alpha$ and $\beta$, $M_{K}$ is the set of places of $K$, and $\log^{-}x\colonequals \min\{0,\log x\}$.

\medskip\medskip

\noindent The famous abc conjecture was proposed by Oesterl\'{e} and Masser in 1985, which remains open. The statement is as follows.

\begin{conjecture}[abc conjecture]\label{abc}
For every $\varepsilon>0$, there exist only finitely many triples $(a, b, c)$ of coprime positive integers, with $a + b = c$, such that
$$
\log c>(1+\varepsilon)\log({\rm rad}(abc)).
$$
Here ${\rm rad}(abc)$ is the product of all different prime divisors of $abc$.
\end{conjecture}

In the next subsection, we will reformulate these classical inequalities using height functions, which can be carried out on projective varieties over either number fields or function fields.
\subsection{Height functions and reformulation of the problems}\label{1.2}
In this subsection we recall the definition of height functions (as well as some related functions). For simplicity, we focus on the case of function fields. All of these can be defined similarly over number fields.\medskip

\noindent\textbf{Convention.} In this article, a variety is an integral separated scheme of finite type over a field. A curve is a 1-dimensional variety. We write tensor products of line bundles additively. \medskip

Let $K$ be the function field of a smooth projective curve $B$ over a field $k$. Denote by $|B|$ the set of all closed points on $B$. Let $X$ be a projective variety over $K$, and let $D$ be an effective Cartier divisor on $X$.

Fix any integral model $\CX$ of $X$ over $B$, i.e. a projective flat morphism $\pi\colon\CX\to B$ whose generic fibre is isomorphic to $X$. Let $\CD$ be an effective Cartier divisor on $\CX$ extending $D$. For a rational point $x\in X(K)$, by taking Zariski closure of $x$ in $\CX$, we can view $x$ as a section $x\colon B\to\CX$ of $\pi$. Then for any $x\in(X\backslash{\rm Supp}\,D)(K)$ and any closed point $b\in |B|$, we define
\begin{itemize}
\item Local height function (or local Weil function):\quad $\lambda_{D,b}(x)\colonequals {\rm ord}_{b}x^{*}\CD\cdot[k(b)\colon k]$.

\end{itemize}
For a subset $S\subseteq|B|$, we can then define the following functions:
\begin{itemize}
\item Height function:\quad $h_{D}(x)\colonequals \sum\limits_{b\in |B|}\lambda_{D,b}(x)=\sum\limits_{b\in |B|}{\rm ord}_{b}x^{*}\CD\cdot[k(b)\colon k]$.

\item Proximity function:\quad $m_{D,S}(x)\colonequals \sum\limits_{b\in S}\lambda_{D,b}(x)=\sum\limits_{b\in S}{\rm ord}_{b}x^{*}\CD\cdot[k(b)\colon k]$.

\item Counting function:\quad $N_{D,S}(x)\colonequals \sum\limits_{b\in |B|\backslash S}\lambda_{D,b}(x)=\sum\limits_{b\in |B|\backslash S}{\rm ord}_{b}x^{*}\CD\cdot[k(b)\colon k]$.

\item Truncated counting function: \quad $N^{(m)}_{D,S}(x)\colonequals \sum\limits_{b\in |B|\backslash S}\min\{m,{\rm ord}_{b}x^{*}\CD\}\cdot[k(b)\colon k]$,\\
    where $m\geq1$ is an integer.
\end{itemize}
More generally, for a point $x\in (X\backslash{\rm Supp}\,D)(\ol{K})$, let $K(x)$ be the residue field of $x$ and let $f\colon B'\to B$ be the finite map between smooth projective curves over $k$ which corresponds to the finite extension $K(x)/K$. Then $x$ corresponds to a morphism $B'\to\CX$, which we still denote by $x$. For a closed point $b\in|B|$, define
\begin{align*}
\lambda_{D,b}(x)\colonequals \frac{1}{\deg(x)}\sum\limits_{b'\in f^{-1}(b)}{\rm ord}_{b'}x^{*}\CD\cdot[k(b')\colon k].
\end{align*}
Then we can define the four functions above for $x$ in the same way.

All the functions above depend on the choice of $\CX$ and $\CD$. However, it can be shown that they are unique up to a bounded function on $(X\backslash{\rm Supp}\,D)(\ol{K})$. Moreover, it is obvious from the definition that $h_{D}=m_{D,S}+N_{D,S}+O(1)$. This is called the \emph{first main theorem} in Vojta's dictionary. Finally, note that for the fixed model $(\CX,\CD)$ of $(X,D)$, the height function $h_{D}$ is just the intersection number
$$
\deg_{B}x^{*}\CD=\deg_{B}x^{*}\CO_{\CX}(\CD).
$$
Thus up to a bounded function, $h_{D}$ depends only on the line bundle $L=\CO_{X}(D)$ associated to $D$. In particular, it can be extended to a function on $X(\ol{K})$.

\begin{remark}\label{S}
As a definition, the subset $S\subseteq B(\CC)$ can be arbitrary, and the functions $m$ and $N$ actually play symmetric roles (by replacing $S$ by $B(\CC)\backslash S$). However, in practice, we always take $S$ to be ``small'' and therefore $m$ seems much smaller than $N$.

In fact, $S$ is typically taken to be finite (in the number field case, it is further required to contain all archimedean places). In our applications, we may take $S$ to be infinite, but it is still ``small'' (in the sense that its measure (with respect to a K\"{a}hler form on $B(\CC)$) is very small compared to $B(\CC)$). See, for example, Remark \ref{jiaqiang}.
\end{remark}

The abc conjecture can then be formulated in the following form, which is due to Vojta.

\begin{conjecture}[{\cite[Conjecture 2.3]{zbMATH01270843}}]\label{vojta}
Let $K$ be a number field or a function field of characteristic $0$. Let $X$ be a projective variety over $K$. Let $D$ be a normal crossing divisor on $X$. Let $K_{X}$ be the canonical divisor on $X$ and let $A$ be a big Cartier divisor on $X$. Let $S$ be a finite set of places of $K$ containing all archimedean places. Then for any $\varepsilon>0$, there is a proper closed subset $Z$ of $X$ such that
$$
N^{(1)}_{D,S}(x)\geq h_{K_{X}+D}(x)-\varepsilon h_{A}(x)-O(1)
$$
for all $x\in (X\backslash Z)(K)$.
\end{conjecture}
Conjecture \ref{abc} can be deduced from Conjecture \ref{vojta} by taking $K=\QQ$, $S=\{\infty\}$, $X=\BP^{1}$ and $D=[0]+[1]+[\infty]$ on $\BP^{1}$. See the discussion in \cite{zbMATH01270843} after Conjecture 2.4.

Vojta actually formulated the conjecture for all algebraic points with fixed degree (with an additional term of discriminant on the left hand side); we only consider the rational points in this article. This conjecture can be viewed as a (conjectural) second main theorem. Our main Theorem \ref{main1} confirms this conjecture for abelian varieties over function fields (with $D$ only required to be reduced). Note that this conjecture also has a non-truncated version, in which one uses $N_{D,S}$ instead of $N^{(1)}_{D,S}$; this is the classical version of Vojta's conjecture; see \cite[Conjecture 2.1]{zbMATH01270843}. We will first prove this non-truncated version for abelian varieties over function fields (with $D$ only required to be effective), and then pass to the truncated version. These two versions correspond respectively to Proposition \ref{V}(1), (2) below.

Note that the non-truncated version for abelian varieties over number fields was already proved by Faltings \cite{Fal}. Subsequently, Buium--Voloch \cite{BV} proved the analogue over function fields, and Vojta \cite{Voj99} generalized it to semi-abelian varieties. Nevertheless, our proof of the non-truncated version for abelian varieties over function fields still exhibits a new phenomenon: unlike previous Diophantine approximation arguments, it uses complex-analytic methods to obtain finer distribution. In particular, we can weaken the assumption ``$S\subseteq B(\CC)$ is finite'' to ``$S$ is a Euclidean closed subset of measure 0''; see Remark \ref{jiaqiang}.

Conjecture \ref{vojta} also implies the (geometric) Lang--Vojta conjecture, where the geometric version asserts that if $K_{X}+D$ is of log general type, then there is a proper Zariski closed subset $Z$ of $X$ and a constant $c>0$ such that for any finite subset $S$ of $B(\CC)$ and any $(D,S)$-integral point $x$ outside $Z$, the inequality
$$
h_{D}(x)\leq c\max\{1,2g(B)-2+\sharp S\}
$$
holds for $x$. Here $g(B)$ denotes the genus of $B$. In fact, fix any finite subset $S_{0}$ of $B(\CC)$. Then $x$ is a $(D,S)$-integral point implies that $N^{(1)}_{D,S_{0}}(x)\leq \sharp (S
\backslash S_{0})\leq\sharp S$. Applying Conjecture \ref{vojta} to $S_{0}$, we see that when $g(B)\geq1$, then we can just take $c=1+\varepsilon$ for any $\varepsilon>0$. When $g(B)=0$, then we have $\sharp S\leq 3(2g(B)-2+\sharp S)$ for $\sharp S\geq 3$. For those $\sharp S\leq 2$, Conjecture \ref{vojta} also implies that $h_{D}(x)$ is bounded by some constant $c'>0$ outside a proper closed subset. Then take $c=\max\{3,c'\}$.

Similarly, Buium's conjecture (Proposition \ref{buium}) also implies the geometric Lang--Vojta conjecture when $X$ is an abelian variety. The previous known cases of Buium's conjecture are listed before \S\ref{1.1}. One can also consult \cite{Ph,Dol} for more discussions. None of these results yield the optimal constant $c>0$ above, and consequently fail to establish the abc conjecture (Theorem \ref{main1}), where we need $c$ to be $1+\varepsilon$ for any $\varepsilon>0$. There are also several results on Vojta's abc conjecture and Lang--Vojta conjecture for tori over function fields; see \cite{Voj00,CZ08,CZ13,Tur,CT,GGW}. In particular, \cite[Theorem 1]{GNSW} is a full analogue of our Theorem \ref{main1}.

Besides, (the algebraic points version of) Conjecture \ref{vojta} was proved for curves over function fields by McQuillan \cite{MQ} and Yamanoi \cite[Theorem 5]{Yam04b} independently; see also \cite{Gas} for a survey.
\medskip\medskip

\noindent\textbf{GCD height function}\medskip

\noindent All the functions above are associated to a Cartier divisor $D$ on $X$. In fact, they can be generalized to the functions associated to a closed subscheme $Y$ of $X$ in the same way. This was first done by Silverman \cite{Sil}. Namely, let $\CY$ be a closed subscheme of $\CX$ extending $Y$ (for example, we can take $\CY$ to be the Zariski closure of $Y$ in $\CX$). Then for a point $x\in (X\backslash Y)(K)$, define the local height function (or local Weil function)
\begin{align*}
\lambda_{Y,b}(x)\colonequals {\rm ord}_{b}x^{*}\CY\cdot[k(b)\colon k],\quad \forall\, b\in|B|
\end{align*}
and the height function
\begin{align*}
h_{Y}(x)\colonequals \sum\limits_{b\in |B|}\lambda_{Y,b}(x)=\sum\limits_{b\in |B|}{\rm ord}_{b}x^{*}\CY\cdot[k(b)\colon k].
\end{align*}
Here ${\rm ord}_{b}x^{*}\CY$ is defined as follows: Write $\CY=\CD_{1}\cap\dots\cap \CD_{l}$ as an intersection of effective Cartier divisors $\CD_{i}$. Then let ${\rm ord}_{b}x^{*}\CY\colonequals\min_{1\leq i\leq l}{\rm ord}_{b}x^{*}\CD_{i}$, which is finite since $x(B)\nsubseteq \CY$. This definition is independent of the choice of $\CD_{i}$. Moreover, up to a bounded function, the function $h_{Y}$ is independent of the choice of $\CX$ and $\CY$.

The other three functions associated to $Y$ are defined similarly. These functions can also be defined for all algebraic points on $X\backslash Y$ similarly as before. We omit the details here. When the codimension of $Y$ in $X$ is at least 2, the function $h_{Y}$ is called the \emph{GCD height function}. By using this function (in the number field case), we can reformulate Levin's theorem as follows.
\begin{theorem}[{\cite[Theorem 1.16]{zbMATH07023767}}]\label{levin-2}
Let $Y$ be a closed subscheme of $\BP^{n}$ of codimension at least $2$ defined over a number field. Suppose that $Y$ does not contain any of the points $P_{0}=[1:0:\dots0],\dots,P_{n}=[0:0:\dots:0:1]$. Let $L=\CO_{\BP^{n}}(1)$. Let $\Gamma\subseteq \BG_{m}^{n}(\ol{\QQ})$ be a finitely generated subgroup. Identify $\BG_{m}^{n}$ with $\BP^{n}\backslash\cup_{i=0}^{n}H_{i}$, where $H_{0},H_{1},\dots,H_{n}$ are the $(n+1)$ coordinate hyperplanes. Then for all $\varepsilon>0$, there exists a finite union $Z$ of translates of proper algebraic subgroups of $\BG_{m}^{n}$ such that
$$
h_{Y}(P)\leq\varepsilon h_{L}(P)
$$
for all $P\in\Gamma\backslash Z\subseteq\BP^{n}(\ol{\QQ})$.
\end{theorem}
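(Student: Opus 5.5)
The statement just given is Levin's theorem (Theorem \ref{levin-2}), quoted from \cite{zbMATH07023767}, and the plan is to derive it from the multivariate gcd statement, Theorem \ref{levin-1}, by a reduction to pairs of polynomials. Since $Y$ has codimension at least $2$, we can choose two homogeneous polynomials $F,G$ in the ideal of $Y$ with no common factor. Then $V(F,G)\supseteq Y$ has codimension $2$, and by functoriality $h_{Y}\leq h_{V(F,G)}+O(1)$.

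The first issue is the hypothesis of Theorem \ref{levin-1} that not both polynomials vanish at the origin. In the affine chart $x_0=1$ this is the point $P_0$, and analogously for the other $P_i$ in the other charts. Since $Y$ avoids every $P_i$, we can choose $F,G$ generically in the ideal of $Y$ (of a large common degree $d$) so that $V(F,G)$ still avoids all the $P_i$ and $F,G$ remain coprime. This is possible because the ideal of $Y$ generates, in high degree, a base-point-free system away from $Y$.

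Next we compare heights. For $P=(1:u_1:\dots:u_n)$ with $u\in\Gamma$, the local Weil function of $V(F,G)$ at a place $v$ is
$$-\log^{-}\frac{\max\{|F(P)|_v,|G(P)|_v\}}{\max_i|x_i|_v^{d}}+O_v(1),$$
with $O_v(1)=0$ for almost all $v$. Summing over $v$, this is bounded by $\log\gcd(f(u),g(u))$, where $f,g$ are the dehomogenizations, plus correction terms at the places where $\max_i|u_i|_v>1$. Those corrections are the real difficulty. I expect to handle them by splitting into the $n+1$ charts according to which coordinate is $v$-adically largest, and applying Theorem \ref{levin-1} in each chart. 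In chart $i$ the point is $u/u_i$, which still lies in a finitely generated group, and the condition $P_i\notin V(F,G)$ supplies exactly the nonvanishing-at-origin hypothesis. Summing the $n+1$ inequalities, each bounded by $\varepsilon\max_j h(u_j)\leq\varepsilon\, n\, h_L(P)$, and rescaling $\varepsilon$ gives the result.

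The main obstacle is that the chart-splitting is place by place, while Theorem \ref{levin-1} bounds a full sum over places. So one would need a version of Levin's theorem summed over an arbitrary partition of the places, which in Levin's proof comes from the Schmidt subspace theorem with moving sets of places. If this fails, the honest fallback is simply to cite \cite[Theorem 1.16]{zbMATH07023767}, since the paper records this result from the literature rather than proving it.
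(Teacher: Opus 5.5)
The paper gives no proof of this statement. It quotes it from \cite{zbMATH07023767}, with only the remark that it is slightly weaker than Theorem \ref{levin-1}. Your reduction to Theorem \ref{levin-1} makes that remark explicit, and it does close. The step you call the main obstacle is not an obstacle, so you need neither a moving-places subspace theorem nor the fallback to citation.

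Put $u_0=1$ and $u^{(i)}=(u_j/u_i)_{j\neq i}$. Let $f_i,g_i$ be the dehomogenizations of $F,G$ at $x_i=1$. Let $M_i(P)$ be the set of places $v$ with $|u_i|_v=\max_j|u_j|_v$, ties broken arbitrarily, so the sets $M_i(P)$ partition the places. For $v\in M_i(P)$, homogeneity gives
$$\max\{|F(P)|_v,|G(P)|_v\}/\max_j|u_j|_v^{d}=\max\{|f_i(u^{(i)})|_v,|g_i(u^{(i)})|_v\}.$$
So the local Weil function of $V(F,G)$ at $v$ is at most $-\log^{-}\max\{|f_i(u^{(i)})|_v,|g_i(u^{(i)})|_v\}+O_v(1)$, with $O_v(1)=0$ for almost all $v$.

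What you overlooked is that every term $-\log^{-}(\cdot)$ is nonnegative. Hence the sum over $v\in M_i(P)$ is bounded by the sum over all places, which is $\log\gcd(f_i(u^{(i)}),g_i(u^{(i)}))$. Summing over $i$,
$$h_Y(P)\leq h_{V(F,G)}(P)+O(1)\leq\sum_{i=0}^{n}\log\gcd\bigl(f_i(u^{(i)}),g_i(u^{(i)})\bigr)+O(1).$$
Apply Theorem \ref{levin-1} in each chart and use $h(u_j/u_i)\leq h_L(P)$, since $\max\{|u_i|_v,|u_j|_v\}\leq\max_k|u_k|_v$. This gives $h_Y(P)\leq(n+1)\varepsilon h_L(P)+O(1)$ for $P$ outside $Z=\bigcup_i\psi_i^{-1}(Z_i)$. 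Here $\psi_i\colon u\mapsto u^{(i)}$ is a monomial automorphism of $\BG_m^n$. So $\psi_i(\Gamma)$ is finitely generated, and $\psi_i^{-1}(Z_i)$ is again a finite union of translates of proper algebraic subgroups.

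Two loose ends remain.

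First, Theorem \ref{levin-1} in chart $i$ also needs $f_i,g_i$ nonconstant and coprime.
\begin{itemize}
\item Coprimality is inherited: a common factor of $f_i,g_i$ homogenizes to a common factor of $F,G$.
\item Nonconstancy holds once you choose $F,G$, as your genericity argument allows, not vanishing at any $P_j$. This rules out $F=c\,x_i^{d}$.
\end{itemize}

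Second, the statement carries no additive constant, and rescaling $\varepsilon$ does not remove the $O(1)$. Instead, use that $\Gamma\subseteq\BG_m^n(k)$ for some number field $k$. By Northcott, only finitely many $P\in\Gamma$ have $h_L(P)\leq C$, and you can add these to $Z$ as translates of the trivial subgroup.
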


Theorem \ref{levin-2} is slightly weaker than Theorem \ref{levin-1}, but still powerful; see \cite{zbMATH07023767} for a detailed explanation of their relationship. This theorem also has several generalizations; see \cite{WY,HL22,HLX,WX}. See also \cite{Ya25,Ya} for more GCD-type inequalities.

There are some function field analogues of Levin's theorem. Ailon--Rudnick \cite{AR} proved that for two multiplicatively independent polynomials $f,g\in\CC[t]$, there is a polynomial $h\in\CC[t]$ such that $\gcd(f^{n}-1,g^{n}-1)$ divides $h$ for all $n\geq1$. This is analogous to Theorem \ref{bcz}, with a stronger conclusion. Corvaja--Zannier \cite[Corollary 2.3]{CZ08} proved an explicit GCD inequality on $\BG_{m}^{2}$ over function fields, which implies that the GCD height is bounded by a quantity of order at most the $\frac{2}{3}$-th power of the usual height, which is also stronger than the linear order $\varepsilon h$ we consider.

Our Corollary \ref{main'} can be viewed as a compact and geometric analogue of Theorem \ref{levin-2}. Let us mention some previous progress on this case. Let $A$ be an abelian variety of dimension at least 2 over a field $K$, which is either a number field or a function field. Silverman \cite{Sil04,Sil05} proposed some conjectures concerning the GCD height function associated to the point $0\in A$ on a rank-one Zariski dense subgroup $\BZ\cdot P$ of $A(K)$ and proved the case where $A$ is the product of two isotrivial elliptic curves over a function field. The inequality in part (a) of \cite[Conjecture 1]{Sil04} has the same form as our Theorem \ref{main}. In fact, Silverman proved in \cite[Proposition 9]{Sil05} the arithmetic version of our Theorem \ref{main} under Vojta's conjecture for blow up of $A$ at $0$, and then deduced part (a) of \cite[Conjecture 1]{Sil04} from this arithmetic version. Other conclusions of these conjectures are stronger and can not be deduced from the present article. Silverman's result over function fields was later generalized by Ghioca--Hsia--Tucker \cite[Proposition 4.3]{GHT} to products of elliptic curves and by Barroero--Capuano--Turchet \cite[Theorem 1.3]{BCT} to split semi-abelian varieties.

Recently, Corvaja--Zannier--Zucconi \cite[Theorem 1.3]{CZZ} also proved the GCD inequality for constant abelian surfaces that are either of Picard rank one or products of elliptic surfaces (with some mild extra assumptions) by using algebraic geometry of surfaces. They also give a refined bound of GCD height for all algebraic points with at most one exception.

By contrast to the above results, our method is complex analytic in nature, and is based on the notion of Xie--Yuan height\footnote{This notion was introduced in \cite{XY1}, where it was called \emph{partial height}. In this article, we refer to it as the Xie--Yuan height, after the two authors of that paper.} below. Our Theorem \ref{main} requires neither additional assumptions on the abelian variety $A$ nor that the closed subscheme $Y$ be a single point. We consider general rational points on $A$ without restricting to rank-one subgroups, but we do not treat general algebraic points.

\medskip\medskip
\noindent\textbf{Integral expression of the height and the Xie--Yuan height}\medskip

\noindent Here we recall the notion of Xie--Yuan height defined in \cite{XY1}, which plays an important role in the proof. We assume $K=\CC(B)$ in the following.

The key observation is that the height function above can also be expressed by integration. Explicitly, we have
\begin{align*}
h_{D}(x)\colonequals \deg_{B}x^{*}\CD=\int_{B(\CC)}x^{*}\omega_{\CD}.
\end{align*}
Here $\omega_{\CD}$ is a real $(1,1)$-form representing the first Chern class $c_{1}(\CD)$ of $\CD$. This integral expression can be viewed as a continuous analogue of the summation expression above. Using this, we can naturally define the Xie--Yuan height as follows, which turns out to be a continuous analogue of the proximity function. Fix a real $(1,1)$-form $\omega_{\CD}$ representing the first Chern class $c_{1}(\CD)$ of $\CD$. For any measurable subset $S$ of $B(\CC)$, define
\begin{align*}
h_{D,S}(x)\colonequals \int_{S}x^{*}\omega_{\CD}.
\end{align*}
We sometimes also denote this Xie--Yuan height by $h_{\omega_{\CD},S}$ or $h_{L,S}$, where $L=\CO_{X}(D)$. Note that in \cite[\S 2.1]{XY1}, they denote it by $h_{(S,\omega_{\CD})}$; here we instead use the above notation to be consistent with the form of the proximity and counting functions.

The continuity is one of the advantages of Xie--Yuan height. For example, the Xie--Yuan height with respect to a subset $S$ of measure 0 is always zero, while the proximity function with respect to a subset $S$ with only one point may be very large. We will use the estimate of Xie--Yuan height on abelian varieties (see Proposition \ref{jiben-}) to estimate the proximity function. These two functions can also be viewed as an analogue of the characteristic function and the counting function in Nevanlinna theory (see \S\ref{2.3}).
\subsection{Ideas of the proof}\label{1.3}
Now we introduce the idea to prove the main theorems. We can reduce the problems to the following proposition. Recall that an infinite sequence $\{x_{n}\}_{n\geq1}$ of rational points on a variety $X$ is called \emph{generic} if every infinite subsequence of it is Zariski dense in $X$.
\begin{proposition}\label{V}
Let $K$ be the function field of a smooth projective complex curve $B$. Let $A$ be an abelian variety over $K$, and let $T$ be a projective variety over $\CC$. Let $D$ be an effective Cartier divisor on $A\times T_{K}$ which is relatively ample over $T_{K}$. Let $\{(s_{n},t_{n})\}_{n\geq1}\subseteq A(K)\times T_{K}(K)$ be a generic sequence of rational points on $A\times T_{K}$ such that the height of $s_{n}$ (with respect to an ample line bundle on $A$) tends to infinity as $n\to\infty$ and $t_{n}\in{\rm Im}(T(\CC)\to T_{K}(K))$ for all $n$. Let $\varepsilon>0$ be a constant. Then

\begin{enumerate}[label=(\arabic*),leftmargin=*]
\item For any subset $U\subseteq B(\CC)$ whose boundary $\partial U$ has measure $0$, the inequality
\begin{align*}
(1-\varepsilon)\hat{h}_{D^{\rm sym},U}((s_{n},t_{n}))\leq m_{D,U}((s_{n},t_{n}))\leq \hat{h}_{D^{\rm sym},U}((s_{n},t_{n}))+\varepsilon \hat{h}_{D^{\rm sym}}((s_{n},t_{n}))
\end{align*}
holds for $n$ large enough. Here $D^{\rm sym}=\frac{1}{2}(D+([-1]_{A}\times{\rm id}_{T_{K}})^{*}D)$ is the symmetrization of $D$ over $T_{K}$, and $\hat{h}_{D^{\rm sym}},\hat{h}_{D^{\rm sym},U}$ are the canonical (Xie--Yuan) height functions induced by the Betti form associated to $D^{\rm sym}$, see \S\ref{betti} for more details. When $U$ does not have measure zero, we can strengthen the right-hand side to $(1+\varepsilon)\hat{h}_{D^{\rm sym},U}((s_{n},t_{n}))$.

Moreover, when $U$ is (Euclidean) open (resp. closed), then the left-hand (resp. right-hand) inequality above also holds for $n$ large enough.

\item If moreover $D$ is reduced, then we can replace the term $m_{D,U}((s_{n},t_{n}))=N_{D,B(\CC)\backslash U}((s_{n},t_{n}))$ above by the truncated counting function $N^{(1)}_{D,B(\CC)\backslash U}((s_{n},t_{n}))$.

\item Let $Y$ be a closed subscheme of $A\times T_{K}$ of codimension $>1$. Then
\begin{align*}
h_{Y}((s_{n},t_{n}))\leq\varepsilon h_{D}((s_{n},t_{n}))
\end{align*}
holds for $n$ large enough.
\end{enumerate}
\end{proposition}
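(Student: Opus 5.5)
Because the sequence is generic, the plan is to pass to subsequences freely and argue in three steps, as the abstract suggests: microscopic local estimates, then local estimates, then global ones. First I would realize everything analytically. Spread $A$ out to a family $\mathcal A\to B^{\circ}=B(\CC)\setminus\Sigma$ of complex tori, where $\Sigma$ is the finite bad-reduction locus. Each section $s_n$ then lifts locally on simply connected discs $\Delta\subset B^{\circ}$ to a holomorphic map into $\mathrm{Lie}(\mathcal A)$. Since $\hat h(s_n)\to\infty$, the rescaled lifts $\tilde s_n/\sqrt{\hat h(s_n)}$ converge along a subsequence, uniformly on compacta of $\Delta$, after subtracting the period lattice contribution. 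At a point $b$, the behaviour of $s_n$ near $b$ in the fibre $A_b$ is therefore modelled by a \emph{linear} entire curve $z\mapsto a+z v$ in $A_b$, where $v=v(b)$ is the limiting derivative and the radius scales like $\sqrt{\hat h(s_n)}$. Genericity ensures the limiting linear curves are Zariski dense in $A_b\times\{t\}$ for almost every $b$: otherwise the $s_n$ would accumulate on a translate of a proper abelian subvariety, contradicting genericity via Mordell--Lang. I would also use that $t_n$ is constant, so $T$ only contributes a compact parameter.

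The microscopic step is a uniform Nevanlinna estimate for linear curves $f_{a,v,t}(z)=a+zv$ in $A_b\times\{t\}$, with $(a,v,t)$ ranging over a compact family of Zariski-dense directions. For (1), the claim is that $N_{D}(r,f)=T_{D}(r,f)+o(T(r,f))$ uniformly. For linear curves, $T_D(r,f)$ is just $\int_{|z|<r}f^*\omega_{D^{\rm sym}}$ up to bounded error, because the Betti form is translation invariant and the Green function is bounded. The proximity term is small because a dense linear curve equidistributes, so $\log\|s_D\|^{-1}$ averages to a bounded quantity. For (2), I need a truncated version: if $D$ is reduced, the multiplicity of $f$ against $D$ exceeds $1$ only where $f$ is tangent to $D$. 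Those points are zeros of $\partial_v s_D$ on $D$, a codimension-$2$ condition, so for dense linear curves their count is $o(T(r))$ uniformly. For (3), a dense linear curve meets a codimension-$\geq 2$ subscheme $Y$ with counting function $o(T(r))$ uniformly. The justification is Ax--Lindemann type closure together with the uniform estimates, using the fact that $\int_{|z|<r}$ of a positive current supported near $Y$ is controlled by $r^{2}\cdot\mathrm{vol}(\text{tube})$.

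The local step transfers these estimates from discs of radius $r\sim\sqrt{\hat h(s_n)}$ in the model to small discs in $B$. By the Besicovitch covering theorem I cover $U$ up to small measure by disjoint small discs on which the microscopic estimate applies. The overlaps are handled by Besicovitch's bounded multiplicity, and the gaps by averaging over random centres and radii. The canonical Xie--Yuan height $\hat h_{D^{\rm sym},U}$ is exactly the integral of the Betti form, so summing over the discs gives $m_{D,U}$ (the zeros inside $U$) comparable to $\hat h_{D^{\rm sym},U}$ up to $\varepsilon\hat h$. The boundary $\partial U$ has measure $0$, so discs meeting it contribute $o(\hat h)$. This also explains the open and closed refinements: one-sided approximation by compact subsets of $U$, or by open neighbourhoods of it, removes the boundary term on one side. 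Points of $\Sigma$ and the finitely many bad discs are handled by the fact that any fixed measure-zero set carries a proportion of $\hat h$ tending to $0$. The global step takes $U=B(\CC)$, or uses $h_Y=\sum_b\lambda_{Y,b}$ with each local count bounded by $\varepsilon$ times the Betti mass of a neighbourhood.

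I expect the main obstacle to be the uniformity in the microscopic step. The limiting directions $v(b)$ vary with $b$ and may degenerate, either becoming non-dense or tending to $0$, on a set that must be shown to have small measure. To control this, I would first show that for each $\delta>0$ the set of $b$ where $v(b)$ lies within $\delta$ of a degenerate direction (one tangent to a proper abelian subvariety) has measure tending to $0$. This would come from the real-analyticity of $b\mapsto v(b)$, a harmonic-type limit of the rescaled sections, together with genericity. On the complement, compactness gives uniform Nevanlinna constants.
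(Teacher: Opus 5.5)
\textbf{Genuine gap: the limiting linear curves need not be Zariski dense.} Your microscopic step rests on the claim that genericity forces the limiting linear curves to be Zariski dense in $A_b\times\{t\}$ for almost every $b$. This is false.

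The limiting direction at $b$ is $\delta(s_\infty,v_b)$, where $s_\infty$ is a limit of $\ell_n^{-1}s_n$ in $V(A,K)$. By Xie--Yuan (Proposition \ref{closure}), for almost every $b$ the Zariski closure of the leaf is the fibre $\CG_b$ of $G=G(s_\infty)$, the smallest abelian subvariety with $s_\infty\in V(G,K)$. Genericity of $\{s_n\}$ does not control this leading-order direction.

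For instance, take $A=E_1\times E_2$ with non-isotrivial factors and non-torsion points $P\in E_1(K)$, $Q\in E_2(K)$, and set $s_n=(n^2P,nQ)$. This sequence is generic: by Mordell--Lang, and because its image under any nonzero homomorphism to an elliptic curve has N\'eron--Tate height growing like $n^2$ or $n^4$. Its images in all nontrivial quotients also have unbounded height, so the paper's reductions do not exclude it. Yet with $\ell_n=n^2$ one gets $\ell_n^{-1}s_n\to(P,0)$, so $G=E_1\times 0$ and every limiting curve lies in a translate of $E_{1,b}\times 0$.

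In this example your proposed remedy fails for every $b$: you wanted the set of $b$ where $v(b)$ is close to a degenerate direction to have small measure, but here $v(b)$ is degenerate everywhere. This is why the paper never assumes density. Near each good $b$ it compares $s_n$ with linear curves in the single fibre $\CA_b$, in the fixed direction $\delta(s_\infty,v_b)$ (Lemma \ref{linear}). It then applies the Nevanlinna estimates to these curves inside $x+\CG_b$, uniformly only in the starting point $x$ over a compact set.

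\textbf{Consequences: bad sets depending on $n$.} Once $G$ may be proper, the microscopic estimates genuinely fail near certain starting points. For (1) these are the $x$ with $x+\CG_b\subseteq\CD$. For (3) they are the points of $\CY'$, where $Y$ meets $x+\CG_b$ in codimension $\leq 1$; take $Y$ a point in the example above. The resulting sets of bad $b$ depend on $n$ and have positive measure, so your appeal to fixed measure-zero sets does not cover them.

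The missing ingredient is Lemma \ref{juli}. For a small neighbourhood $\CU$ of a subscheme containing no fibre $\CA\times\{t\}$, the set $(s_n,t_n)^{-1}(\CU)$ has small measure for all large $n$. The paper proves this by induction through $A/G$. It combines this with a Rouch\'e argument placing every point close to $\CE$ within $O(\ell_n^{-1})$ of one of the $O(\ell_n^2)$ actual intersection points.

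\textbf{Upper bounds on the bad sets.} Even with Lemma \ref{juli}, upper bounds cannot be proved locally on these bad sets. The right-hand side of (1) is obtained by applying the lower bound to the complement. For (3), the paper first arranges $\CI_{\CD}\subseteq\CI_{\CY}$. Then the $Y$-count on the bad set is bounded by the $D$-count there, i.e.\ by total degree minus the good $D$-sum. This forces proving the GCD upper bound and the $D$ lower bound simultaneously on the same good set (Lemma \ref{refine}). Your global step for (3) has nothing playing this role.

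\textbf{Truncation in (2).} Your tangency count overlooks that the limit fibre $\CD_{b,t_\infty}$ can be non-reduced even though $D$ is reduced. In that case, transversality of the limiting curve to $(\CD_{b,t_\infty})_{\rm red}$ does not by Rouch\'e yield simple zeros of $(s_n,t_n)^*\CD$. The paper handles this with a limit tangent space lemma, the $b$-dependent loci $W_b,Y_b$, and the refined Lemma \ref{juli-}.
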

Here the role of $T$ is the same as in \cite[Proposition 1.4]{arXiv:2511.17010}: it serves as the ``constant part''. As explained in \cite[Remark 1.5(1)]{arXiv:2511.17010}, even if we are only interested in rational points on abelian varieties (i.e. the case where $T$ is trivial), when the $K/\CC$-trace of $A$ is nontrivial, it is sometimes necessary to reduce the problem to the case where $T$ is nontrivial. See the beginning of \S\ref{4.1} for the detailed reduction process.

The symmetrization $D^{\rm sym}$ here is performed because the canonical Xie--Yuan height was only defined for symmetric line bundles previously via Betti forms. It can actually be generalized to all line bundles on abelian varieties (see Remark \ref{adelic}), and in this way we can formulate Proposition \ref{V} in a more natural and unified way and avoid symmetrization. However, since this approach requires the development of a more complete theory, we avoid using it here for simplicity. Instead, we reduce everything to the symmetric setting. Since $D-D^{\rm sym}$ is anti-symmetric over $T_{K}$, the symmetrization process has no effect on the asymptotic growth of the height functions; see Proposition \ref{bijiao-} for an explicit estimate. See also Proposition \ref{bijiao} for a similar estimate.

We also note that in (1) and (2), for every ``good'' subset $U$ of $B(\CC)$ we have the corresponding estimates comparing the asymptotic behaviour of discrete sums of local heights with that of the continuous Xie--Yuan heights. In particular, this enables us to obtain an equidistribution of the Dirac measures associated to the finite subsets $S_{n}\colonequals \{b\in B(\CC)\big|\, \lambda_{D,b}((s_{n},t_{n}))>0\}$ of $B(\CC)$; see Remark \ref{jianjin}.

In particular, if we take $U$ to be the complement of a Euclidean closed subset $S\subseteq B(\CC)$ of Lebesgue measure 0, then the canonical Xie--Yuan height $\hat{h}_{D^{\rm sym},U}$ equals the usual canonical height $\hat{h}_{D^{\rm sym}}$, which has the same asymptotic growth as any Weil height function $h_{D}$ associated to $D$ (see Proposition \ref{bijiao-}). Moreover, the function $m_{D,U}$ (resp. $N^{(1)}_{D,B(\CC)\backslash U}$) equals $N_{D,S}$ (resp. $N^{(1)}_{D,S}$). Thus we see that
$$
N_{D,S}((s_{n},t_{n}))\geq (1-\varepsilon)h_{D}((s_{n},t_{n}))
$$
for $n$ large enough. And if $D$ is reduced, we can replace the left hand side by the truncated counting function $N^{(1)}_{D,S}((s_{n},t_{n}))$.

Note that the condition ``$S$ is Euclidean closed'' is necessary. In fact, let $S_{n}\colonequals \{b\in B(\CC)\big|\, \lambda_{D,b}((s_{n},t_{n}))>0\}$ as above. Since $\{(s_{n},t_{n})\}_{n\geq1}$ is generic, the set $S_{n}$ is finite for $n$ large enough. If we take $S$ to be the union of all $S_{n}\,(n\gg1)$, then $S$ is countable and thus has measure 0. But in this case we have $N_{D,S}((s_{n},t_{n}))=0$ for all $n\gg1$, which contradicts the inequality above.

The deduction of Theorems \ref{main} and \ref{main1} from Proposition \ref{V} will be presented in \S\ref{4.5}. Now we focus on the proof of Proposition \ref{V}.

We first fix some notations. Let $\CA$ be an integral model of $A$ over $B$, then $\CA\times_{\CC}T$ is an integral model of $A\times T_{K}$ over $B$. Let $\CD$ be a Cartier divisor on $\CA\times_{\CC}T$ extending $D$ and let $\CY$ be a closed subscheme of $\CA\times_{\CC}T$ extending $Y$. View $(s_{n},t_{n})\in A(K)\times T(\CC)$ as a section $B\to\CA\times_{\CC}T$.

We need to compare the intersection numbers $(s_{n},t_{n})(B)\cap\CD$ (or $(s_{n},t_{n})(B)\cap\CY$) with the Xie--Yuan height of $(s_{n},t_{n})$ (which is an integration of Betti form associated to $D$).

\medskip

\noindent\textbf{Step 1: Microscopic local estimates.} We shall first prove that for all points $b\in B(\CC)$ outside a subset $S$ of measure 0 (which we call ``bad locus'') and all $n$ sufficiently large (depending on $b$), we can choose an open disc $\BD_{n,b}$ centred at $b$ whose radius tends to 0 as $n\to\infty$, on which the desired inequalities hold. This step is a combination of the reparameterization technique of Xie–Yuan (see Lemma \ref{linear} for a review) and the estimates for a single linear entire curve established in \S\ref{2.3}; the latter are derived from Yamanoi’s second main theorem, the only additional point being the passage from weighted to unweighted counting functions.

At this stage the estimates are only ``microscopic'' because the radius of the disc shrinks with $n$; they are not yet valid on any fixed neighbourhood of $b$.
\medskip\medskip

\noindent\textbf{Step 2: Local estimates.} The microscopic estimates alone are insufficient: to prove Proposition \ref{V} one needs inequalities that hold on a fixed subset of $B(\CC)$ (rather than depending on $n$). A natural strategy is to try to fill such a fixed subset, for each large $n$, by the tiny discs $\BD_{n,b}$ supplied by the microscopic step, and then piece together the estimates on those discs. Carrying this out meets two principal difficulties: uniformity in the parameter $b$, and the fact that a family of tiny discs can never exhaust a given open set without both overlaps and gaps.

The first difficulty is uniformity. In the microscopic step, both the required size of $n$ and the radius of $\BD_{n,b}$ may depend on the point $b$. We need a uniform $n$ for all ``good'' points $b$ in order to glue the microscopic local estimates together. To do this, we make some uniform estimates for linear entire curves. In fact, the linear entire curves that appear all share essentially the same direction, but their starting points vary. One therefore needs estimates that are uniform with respect to the starting point; these are obtained in \S\ref{3} and summarized in Proposition \ref{conclusion}, which is based on the estimates in \S\ref{2.3} for a single linear entire curve. The main effect of this uniformity is that for all sufficiently large $n$, the microscopic estimates hold simultaneously for every $b$ outside a new exceptional set (which we still call the ``bad locus''). In other words, one finds a common $n$ that works for all the relevant points $b$. As a by-product, the radii of the discs can also be taken to be independent of $b$ for each such $n$; this additional uniformity will be essential later when we handle overlaps via an averaging argument. The new exceptional set may be slightly larger than the original one and may depend on $n$, but its measure can still be controlled to be arbitrarily small; this is precisely the content of Lemma \ref{juli} (and of its refinement Lemma \ref{juli-} for the truncated case).

The concrete realizations of the above uniform estimates are presented in (\ref{key--5}), (\ref{key-5}, \ref{key-6}) and (\ref{abc-5}) respectively. Namely, we show that for each ``good point'' $b\in B(\CC)$, there is an open disc $\BD_{b}$ containing $b$ such that for each $n$ large enough and each ``good point'' $b'\in\BD_{b}$, there is an open disc $\BD_{n,b'}$ containing $b'$ inside $\BD_{b}$ on which the expected inequalities hold. Moreover, the radius of $\BD_{n,b'}$ is independent of $b'$.

With the uniformity issue resolved, the problem reduces to understanding how to (almost) fill a fixed neighbourhood with discs of a common radius while controlling overlaps and gaps.\medskip

\noindent\emph{Overlaps}. For the overlaps, the Besicovitch covering theorem allows one to extract a sub-collection of any cover (by open discs) with bounded multiplicity. For the GCD inequality (Proposition \ref{V}(3)) a bounded multiplicity only changes the constant $\varepsilon$ by a multiplicative factor, which is harmless. For the (truncated) counting functions appearing in (1) and (2), however, the factor $(1-\varepsilon)$ would be multiplied by the multiplicity, so a mere bound on the multiplicity is not sufficient. Instead, one needs an asymptotically uniform arrangement: almost every point of the neighbourhood should be covered with essentially the same multiplicity. Because the radii are already independent of $b$ for fixed $n$, this can be achieved by an averaging argument; see Lemma \ref{refine--}. We emphasize that a disjoint placement of equal-sized discs in a larger disc can cover only a fixed proportion of the area regardless of the radius; therefore a uniform covering inevitably has large overlaps, but with almost constant multiplicity.\medskip

\noindent\emph{Gaps}. Gaps in the cover arise from the presence of the bad locus and cannot be entirely avoided. The canonical Xie--Yuan height is an integration term which enjoys good continuity, so a gap of small measure affects it only negligibly (this is justified by the estimates for canonical Xie--Yuan heights in \S\ref{4.2}). The sum of local heights, however, can be seriously affected by even a single omitted point. When one needs a lower bound for the sum (the left-hand inequalities in (1) and (2)), a small gap is harmless because a partial sum already provides the required estimate. Upper bounds for the sum (the right-hand inequalities in (1) and (2), and the GCD estimate (3)) require more work. The right-hand inequalities in (1) and (2) can be deduced directly from the left-hand ones applied to the complement.

For the GCD upper bound (3), we bound the sum over the gaps by the corresponding sum for an ample effective divisor $D$ containing $Y$; the latter sum equals the total height associated to $D$ minus the sum over the good locus (which we called the ``good $D$-sum''), whose lower bound is provided by (1). However, the gap set depends on $n$, whereas the upper bound in (1) for the sum over a fixed subset is guaranteed only when $n$ is large. One cannot apply (1) directly to a family of subsets that varies with $n$. The remedy is to prove the upper bound for the good GCD sum and the lower bound for the good $D$-sum \textbf{simultaneously} on the \textbf{same} good part of the cover, so that the good and bad sets coincide for both estimates. This simultaneous estimate is the content of Proposition \ref{key}, whose proof relies on a strengthened covering lemma (Lemma \ref{refine}) that combines the Besicovitch multiplicity bound with the uniform averaging technique.
\medskip

Combining the solutions to all the difficulties mentioned above, we can provide local estimates, as shown in Proposition \ref{key-}, \ref{key} and \ref{key-abc} respectively. These propositions imply that for each ``good'' point $b\in B(\CC)$, there is an open disc $\BD_{b}$ containing $b$ such that for any open disc $\BD'_{b}$ inside $\BD_{b}$, the desired inequality holds on $\BD'_{b}$ for $n$ large enough (depending on $\BD'_{b}$). This gives the estimates on some fixed subset of $B(\CC)$, but not all subsets. In fact, the open disc $\BD_{b}$ above may be very small, so this step is called ``local estimation''.
\medskip\medskip

\noindent\textbf{Step 3: Global estimates}. The final step is to glue all local estimates to a global one. The idea is still to (almost) cover the whole $B(\CC)$ (or the given subset $U$ in (1) and (2)) with the open discs used in the local estimates, but this step is easier than the process from ``microscopic local'' estimation to ``local estimation'' in Step 2, because at this step, we can allow the cover to have a small omitted part (whose measure has an upper bound depending on $\varepsilon$) and only need to ensure that there is no overlap. Since the canonical Xie--Yuan height associated to the omitted part is very small (see Proposition \ref{jiben-}), summing all these local estimates yields the global estimate.

Two further points should be noted. First, ensuring the absence of overlap is easy: at the end of the previous step we stated that every sub-disc $\BD'_{b}$ of each disc $\BD_{b}$ admits a corresponding local estimate, and by taking the discs $\BD'_{b}$ sufficiently small we can avoid overlap. Second, we can choose the above (almost) cover to be finite (with the complement of this cover still having small measure); then, since the estimate on each disc holds for $n$ large enough, the global estimate also holds for $n$ large enough.
\medskip\medskip

\noindent At the end of Introduction, we discuss the structure of the article. In \S\ref{canonical}, we recall the definition of canonical Xie--Yuan heights introduced by Xie--Yuan and estimate their lower and upper bounds. In \S\ref{3-} we recall some facts about linear entire curves and do uniform estimates on them. The proof of Proposition \ref{V} and the main theorems will be given in \S\ref{4}.

\section{Canonical height and Betti form}\label{canonical}
The goal of this section is to estimate the canonical Xie--Yuan height on abelian varieties defined in \cite{XY1}. The first three subsections recall various related definitions and basic properties. The main estimates are given in \S\ref{4.2}. We also derive a comparison of height functions on abelian schemes (see Proposition \ref{bijiao-} below).

\subsection{Canonical height on abelian varieties}\label{3.1}
We first recall the classical canonical height on abelian varieties, which is also called the N\'{e}ron--Tate height.

Let $B$ be a smooth projective curve over $\CC$ and let $K=\CC(B)$ be its function field. Let $A$ be an abelian variety over $K$ and let $L$ be a $\QQ$-line bundle on $A$. Then we can choose a Weil height function $h_{L}$ on $A(\ol{K})$ associated to $L$ as in \S\ref{1.2}, which is unique up to a bounded function. In the case of abelian varieties, however, we have a canonical choice of this height function by Tate's limiting process.

More precisely, when $L$ is symmetric, we have $[m]^{*}L\simeq m^{2}L$ for any integer $m$. Based on this, one can prove that the limit
$$
\lim\limits_{n\to\infty}\frac{h_{L}([2^{n}]x)}{4^{n}}
$$
exists for all $x\in A(\ol{K})$ and is independent of the choice of $h_{L}$. Moreover, the difference between this limit and $h_{L}$ is a bounded function on $A(\ol{K})$. Therefore this limit is a (canonical) Weil height function associated to $L$. We denote it by $\hat{h}_{L}$.

An important property of $\hat{h}_{L}$ is that it is a quadratic form on $A(\ol{K})$. Moreover, it is invariant under the translation action of $A^{(K/\CC)}(\CC)$ on $A(\ol{K})$. Thus it induces a quadratic form on the real vector space $V(A,K)\colonequals (A(K)/A^{(K/\CC)}(\CC))\otimes_{\BZ}\RR$, which is finite-dimensional by Lang--N\'{e}ron theorem. When $L$ is ample symmetric, $\hat{h}_{L}$ is a positive definite quadratic form on $V(A,K)$.

When $L$ is anti-symmetric (i.e. $[-1]^{*}L\simeq -L$), we have $[m]^{*}L\simeq mL$ for any integer $m$. Completely similarly as above, we can define the canonical function $\hat{h}_{L}$ as the limit
$$
\lim\limits_{n\to\infty}\frac{h_{L}([2^{n}]x)}{2^{n}},\quad\quad x\in A(\ol{K}).
$$
In this case, $\hat{h}_{L}$ induces a linear function on $V(A,K)$.

In general, we can write $L=L_{1}+L_{2}$, where $L_{1}=\frac{1}{2}(L+[-1]^{*}L)$ is a symmetric $\QQ$-line bundle and $L_{2}=\frac{1}{2}(L-[-1]^{*}L)$ is an anti-symmetric $\QQ$-line bundle. Then define $\hat{h}_{L}=\hat{h}_{L_{1}}+\hat{h}_{L_{2}}$.

\medskip\medskip

\noindent\textbf{Relative setting}\medskip

\noindent We also need to consider the relative setting due to our purpose. The first is the following height inequality.
\begin{proposition}\label{uniform}
Let $\pi\colon A\to S$ be an abelian scheme, where $A,S$ are projective varieties over $K$. Let $e\colon S\to A$ be the zero section. For an integer $m$, denote by $[m]$ the multiplication by $m$ on the abelian scheme $A$ over $S$. Let $L$ be a line bundle on $A$. Fix a Weil height function $h_{L}$ on $A(\ol{K})$. For each point $s\in S$, denote by $L_{s}$ the restriction of $L$ to the abelian subvariety $A_{s}$, and by $\hat{h}_{L_{s}}$ the canonical height function on $A(\ol{K})\times\{s\}$ associated to $L_{s}$. Let $h$ be a Weil height function on $S(\ol{K})$ associated to the line bundle $e^{*}L$ on $S$. Then for any $c>0$, the difference $|h_{L}(x)-\hat{h}_{L_{s}}(x)|$ is uniformly bounded for all $s\in S(\ol{K})$ with $|h(s)|<c$ and all $x\in A_{s}(\ol{K})$.
\end{proposition}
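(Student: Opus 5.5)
Write $L=L_1+L_2$ with $L_1=\frac12(L+[-1]^*L)$ symmetric and $L_2=\frac12(L-[-1]^*L)$ anti-symmetric (as $\QQ$-line bundles on $A$). The canonical fibrewise height splits the same way, $\hat h_{L_s}=\hat h_{(L_1)_s}+\hat h_{(L_2)_s}$. Any two Weil heights for $L$ differ by a bounded function, so it suffices to treat $L_1$ and $L_2$ separately after fixing convenient Weil heights for each. I will use Tate's telescoping argument relative to $S$. The only novelty compared to the absolute case is that the relation $[m]^*L\simeq m^2L$ (or $mL$) holds only up to a line bundle pulled back from $S$.

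\textbf{Symmetric part.} For the abelian scheme $\pi\colon A\to S$ and symmetric $L_1$, the theorem of the cube (relative version) gives an isomorphism
\[
[2]^*L_1\simeq 4L_1+\pi^*M_1
\]
for some line bundle $M_1$ on $S$. Restricting along the zero section $e$, and using $[2]\circ e=e$, shows $e^*L_1\simeq 4e^*L_1+M_1$, so $M_1\simeq -3e^*L_1$. Functoriality of Weil heights then gives, for all $x\in A(\ol K)$ with $\pi(x)=s$,
\[
h_{L_1}([2]x)=4h_{L_1}(x)-3h_{e^*L_1}(s)+O(1),
\]
with $O(1)$ uniform in $x$. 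Put $g(x)=h_{L_1}(x)-h_{e^*L_1}(\pi(x))$. Then
\[
|g([2]x)-4g(x)|\le C .
\]
Tate's telescoping shows that $\lim_n 4^{-n}g([2^n]x)$ exists and differs from $g$ by at most $C/3$, uniformly in $x$. On each fibre $A_s$ this limit is a Weil height for $L_s$ (the second term is constant along the fibre) satisfying the quadratic scaling, so it equals $\hat h_{(L_1)_s}$. This last identification is not immediate, since $L_s$ lives over the field of definition of $s$; I would deduce it from the uniqueness of functions satisfying $f([2]x)=4f(x)$ and differing from a Weil height by $O(1)$ on $A_s(\ol K)$. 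Consequently
\[
|h_{L_1}(x)-\hat h_{(L_1)_s}(x)|\le |h_{e^*L_1}(s)|+C/3 .
\]

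\textbf{Anti-symmetric part.} In the same way, $[2]^*L_2\simeq 2L_2+\pi^*M_2$ with $M_2\simeq -e^*L_2$. The correction term is then $h_{e^*L_2}(\pi(x))$, and the analogous limit with $2^{-n}$ converges with error at most $C$.

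\textbf{Conclusion and main obstacle.} Since $e^*L=e^*L_1+e^*L_2$, I need to bound $|h_{e^*L_1}(s)|$ and $|h_{e^*L_2}(s)|$ whenever $|h(s)|<c$. This is the main obstacle: the hypothesis controls only the height of $e^*L$, not of the two pieces. I would handle it by noting that $[-1]\circ e=e$, so
\[
e^*[-1]^*L\simeq e^*L .
\]
Hence $e^*L_1\simeq e^*L$ and $e^*L_2\simeq 0$ as $\QQ$-line bundles on $S$. Therefore $h_{e^*L_1}=h+O(1)$ and $h_{e^*L_2}=O(1)$. The bound $|h_L(x)-\hat h_{L_s}(x)|\le c+C'$ then follows for all $x\in A_s(\ol K)$ with $|h(s)|<c$.

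The remaining care point is making the relative cube-theorem isomorphisms precise. If $S$ is not normal or not connected, I would either pass to a normalization or restrict to connected components, and possibly replace $L$ by a multiple to work with honest line bundles; this is harmless since heights scale linearly.
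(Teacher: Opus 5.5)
Your proposal is correct and follows essentially the same route as the paper: split $L$ into symmetric and anti-symmetric parts, use the fibrewise relation together with pullback along the zero section to get $[2]^*L\simeq 4L-3[0]^*L$ (resp.\ $2L-[0]^*L$), and run Tate's telescoping with a constant that is uniform over the fibres. Your observation that $[-1]\circ e=e$ forces $e^*L_1\simeq e^*L$ and $e^*L_2\simeq 0$ makes explicit what the paper's closing appeal to ``additivity of height functions'' needs but does not say. Telescoping the corrected function $g$ globally, rather than first restricting to $|h(s)|<c$, just gives the explicit bound $|h(s)|+C'$.
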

\begin{proof}
First consider the case when $L$ is symmetric, i.e. $[-1]^{*}L\simeq L$. In this case, the restriction of $[2]^{*}L$ and $4L$ to any fibre $A_{s}$ is isomorphic. Hence there is a line bundle $L_{S}$ on $S$ such that $[2]^{*}L\simeq 4L+\pi^{*}L_{S}$. Pulling back both sides via the zero map $[0]=e\circ\pi\colon A\to A$, we see that
$$
[0]^{*}L=[0]^{*}[2]^{*}L\simeq 4[0]^{*}L+[0]^{*}\pi^{*}L_{S}=4[0]^{*}L+\pi^{*}L_{S},
$$
which implies that $\pi^{*}L_{S}\simeq -3[0]^{*}L$. Therefore we have $[2]^{*}L\simeq 4L-3[0]^{*}L$.

By projection formula of the height, we have
$$
[2]^{*}h_{L}=h_{[2]^{*}L}+O(1)=4h_{L}-3h_{[0]^{*}L}+O(1)
$$
Note that $[0]^{*}L=\pi^{*}(e^{*}L)$, thus for any $x\in A(\ol{K})$, we have
$$
h_{L}([2]x)=4h_{L}(x)-3h(\pi(x))+O(1).
$$
Thus there is a constant $C>0$ such that $|h_{L}([2]x)-4h_{L}(x)|<C$ for all $x\in A(\ol{K})$ with $|h(\pi(x))|<c$. Fix one such $x$. Replace $x$ by $[2^{n}]x$, and divide both sides by $4^{n}$, we get
$$
\left|\frac{h_{L}([2^{n+1}]x)}{4^{n+1}}-\frac{h_{L}([2^{n}]x)}{4^{n}}\right|<\frac{C}{4^{n+1}}
$$
for all $n\geq0$. Recall that $\hat{h}_{L_{\pi(x)}}(x)=\lim_{n\to\infty}\frac{h_{L}([2^{n}]x)}{4^{n}}$, thus we have $|h_{L}(x)-\hat{h}_{L_{\pi(x)}}(x)|<C\sum_{n=0}^{\infty}\frac{1}{4^{n+1}}<C$ for all $x\in A(\ol{K})$ with $|h(\pi(x))|<c$. This proves the case when $L$ is symmetric.

When $L$ is anti-symmetric, i.e. $[-1]^{*}L\simeq -L$, the proof is completely similar as above (replace all `4' above by `2'). For general $L$, write $L=L_{1}+L_{2}$, where $L_{1}=\frac{1}{2}(L+[-1]^{*}L)$ is a symmetric $\QQ$-line bundle and $L_{2}=\frac{1}{2}(L-[-1]^{*}L)$ is an anti-symmetric $\QQ$-line bundle. The proposition then follows from the additivity of height functions.
\end{proof}
Now we consider the following estimate on a special kind of abelian schemes. This will be used in the proof of the main theorems.
\begin{proposition}\label{bijiao-}
Let $A$ be an abelian variety over $K$ and let $T$ be a projective variety over $\CC$. Let $L$ be a $\QQ$-line bundle on $A\times T_{K}$ which is relatively ample over $T_{K}$. Let $L^{\rm sym}=\frac{1}{2}(L+([-1]_{A}\times{\rm id}_{T_{K}})^{*}L)$ be the symmetrization of $L$ over $T_{K}$. For each point $t\in T(\CC)$, let $L^{\rm sym}_{t}$ be the restriction of $L^{\rm sym}$ to the fibre $A\times\{t\}$ and let $\hat{h}_{L^{\rm sym}_{t}}$ be the canonical height function on $A(K)\times\{t\}$. Then for all $(s,t)\in A(K)\times T(\CC)$, we have
$$
h_{L}((s,t))=\hat{h}_{L^{\rm sym}_{t}}(s)+O\left(1+\sqrt{\hat{h}_{L^{\rm sym}_{t}}(s)}\right).
$$
\end{proposition}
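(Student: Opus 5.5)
We split $L=L^{\rm sym}+L^{\rm anti}$, where $L^{\rm anti}=\frac{1}{2}(L-([-1]_{A}\times{\rm id}_{T_{K}})^{*}L)$ is anti-symmetric over $T_{K}$. Then we apply Proposition \ref{uniform} to the abelian scheme $\pi\colon A\times T_{K}\to T_{K}$, separately to $L^{\rm sym}$ and to $L^{\rm anti}$ (after clearing denominators so that both are line bundles). The point $t\in T(\CC)$ is a constant point of $T_{K}$, so it extends to a constant section $B\to B\times T$ of a constant model. Hence any Weil height on $T_{K}$ attached to $e^{*}L^{\rm sym}$ or $e^{*}L^{\rm anti}$, computed with the constant model $B\times T$ and constant extensions of the line bundles, is bounded on the image of $T(\CC)$. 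In fact, choosing the line bundle on $B\times T$ to be pulled back from $T$ gives height $\deg_{B}$ of a pullback along a constant map, which is $0$. So the hypothesis $|h(s)|<c$ of Proposition \ref{uniform} holds uniformly for $t\in T(\CC)$. We obtain
$$h_{L}((s,t))=\hat{h}_{L^{\rm sym}_{t}}(s)+\hat{h}_{L^{\rm anti}_{t}}(s)+O(1)$$
uniformly in $(s,t)\in A(K)\times T(\CC)$, using additivity of Weil heights.

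It remains to show $|\hat{h}_{L^{\rm anti}_{t}}(s)|=O(1+\sqrt{\hat{h}_{L^{\rm sym}_{t}}(s)})$ uniformly in $t$. An anti-symmetric line bundle on $A$ is algebraically equivalent to $0$, so $L^{\rm anti}_{t}\in\Pic^{0}(A)$ corresponds to a point $a_{t}\in A^{\vee}(K)$. The standard formula gives $\hat{h}_{L^{\rm anti}_{t}}(s)=\langle s,a_{t}\rangle$, the N\'eron--Tate pairing with respect to the Poincar\'e bundle. Since $T$ is connected (being a variety), the map $t\mapsto L^{\rm anti}_{t}$ is a morphism $T\to\Pic^{0}$ from a projective variety over $\CC$. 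Therefore $a_{t}$ lies in a constant family: $a_{t}-a_{t_{0}}$ lies in the image of the trace of $A^{\vee}$ (via rigidity, a morphism from a $\CC$-variety to $A^{\vee}_{K}$ lands in a translate of the trace part). Trace points pair trivially with everything modulo bounded terms, because the canonical heights are invariant under $A^{(K/\CC)}(\CC)$-translation. Hence $\hat{h}_{L^{\rm anti}_{t}}(s)=\langle s,a\rangle+O(1)$ for one fixed $a=a_{t_{0}}$. This is a linear form $\ell$ on $V(A,K)$. The step I expect to need the most care is making the $O(1)$ here uniform in $t$. Rather than argue via trace, I would first try the direct route: apply Proposition \ref{uniform} to $L^{\rm anti}-{\rm pr}_{A}^{*}L^{\rm anti}_{t_{0}}$, whose restrictions are in the family of $\Pic^{0}$ classes varying over a connected $\CC$-variety and hence have canonical height $0$ on $A(K)$ modulo boundedness.

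Finally we compare $\ell$ with the quadratic forms $q_{t}=\hat{h}_{L^{\rm sym}_{t}}$. These are positive definite on the finite-dimensional space $V(A,K)$ by Lang--N\'eron, since $L^{\rm sym}_{t}$ is ample. By Cauchy--Schwarz, $|\ell(v)|\le C_{t}\sqrt{q_{t}(v)}$. For uniformity in $t$, we note that the class of $L^{\rm sym}_{t}$ in $\NS(A)$ is constant in $t$ by connectedness of $T$, so $q_{t}$ is independent of $t$ up to bounded error. Thus a single constant $C$ works, which yields the claimed bound.
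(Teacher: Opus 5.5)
Your proof is correct along its main line (the trace argument), but it treats the anti-symmetric part differently from the paper. Both proofs start the same way. Proposition \ref{uniform} gives $h_L((s,t))=\hat h_{L_t}(s)+O(1)$ uniformly on $A(K)\times T(\CC)$. Also $\hat h_{L^{\rm sym}_t}$ does not depend on $t$, and this holds exactly, not merely up to bounded error: symmetric bundles in the same N\'eron--Severi class differ by $2$-torsion in $\Pic^0(A)$.

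For the anti-symmetric part, the paper argues softly:
\begin{itemize}
\item it writes the Weil height as $\int_{B(\CC)}(s,t)^*\omega'$ on the product model $\CA\times_{\CC}T$, so for fixed $s$ it is continuous in $t$ and hence bounded on the compact set $T(\CC)$;
\item Proposition \ref{uniform} transfers this boundedness to $\hat h_{L'_t}(s)$;
\item evaluating the linear forms $\hat h_{L'_t}$ on a fixed basis of $V(A,K)$ and applying Cauchy--Schwarz then gives the uniform bound.
\end{itemize}
You instead write $\hat h_{L^{\rm anti}_t}(s)=\langle s,a_t\rangle$. You factor $t\mapsto a_t$ through the constant Albanese of $T$ (of a resolution, if $T$ is singular). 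The universal property of the $K/\CC$-trace then shows that $a_t-a_{t_0}$ is a trace point, hence lies in the kernel of the pairing. This uses more structure (Picard scheme, Albanese, trace) but proves more: $\hat h_{L^{\rm anti}_t}$ is \emph{exactly} independent of $t$. The paper only needs, and only gets, boundedness in $t$ for each fixed $s$.

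Three corrections.
\begin{itemize}
\item Your worry about uniformity of the $O(1)$ is unnecessary. One has $\langle s,\tau(c)\rangle=0$ exactly for trace points $\tau(c)$, by translation invariance of canonical heights applied on $A^\vee$. So no error term arises and nothing needs to be made uniform.
\item The \emph{direct route} you say you would try first does not stand on its own. Proposition \ref{uniform} only compares Weil and canonical heights. The claim that the restrictions of $L^{\rm anti}-{\rm pr}_A^*L^{\rm anti}_{t_0}$ have canonical height $0$ is exactly what the trace argument (or the paper's continuity--compactness argument, in the weaker form of boundedness) must supply.
\item Your justification for bounded heights of $T(\CC)$-points with respect to $e^*L^{\rm sym}$ and $e^*L^{\rm anti}$ is not right as stated. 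These bundles need not be pulled back from $T$: take $L={\rm pr}_A^*M+{\rm pr}_{T}^*N$ with $N$ a non-constant point of $\Pic^0_{T}(K)$. So a ``constant extension'' does not exist in general. The boundedness itself still holds. Extend the bundle to a line bundle $\CM$ on $B\times T$ (after resolving $T$); the degree of $\CM|_{B\times\{t\}}$ is then locally constant in $t$. The paper takes this same boundedness for granted.
\end{itemize}
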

\begin{proof}
First note that the points in $T(\CC)\subseteq T_{K}(K)$ have bounded height with respect to any ample line bundle on $T_{K}$. Moreover, the function $\hat{h}_{L^{\rm sym}_{t}}(s)$ is independent of the choice of $t\in T(\CC)$. This is because all symmetric $\QQ$-line bundles $L^{\rm sym}_{t}$ are in the same numerical class, and hence they are isomorphic up to a torsion line bundle (they are in fact isomorphic).

Let $L_{t}$ be the restriction of $L$ to the fibre $A\times\{t\}$ and let $\hat{h}_{L_{t}}$ be the canonical height function on $A(K)\times\{t\}$. Then by Proposition \ref{uniform} above, we have
\begin{align*}
h_{L}((s,t))=\hat{h}_{L_{t}}(s)+O(1)
\end{align*}
for all $(s,t)\in A(K)\times T(\CC)$. Let $L'=L^{\rm sym}-L$, which is anti-symmetric over $T_{K}$. Define $L'_{t},\hat{h}_{L'_{t}}$ similarly as above. We claim that
\begin{align*}
\hat{h}_{L'_{t}}(s)=O\left(1+\sqrt{\hat{h}_{L^{\rm sym}_{t}}(s)}\right)
\end{align*}
for all $(s,t)\in A(K)\times T(\CC)$. If the claim is true, the proposition then follows from the above two inequalities.

Now we prove the claim. We first choose a Weil height function $h_{L'}$ on $A(K)\times T_{K}(K)$ as follows. Take an integral model $\CA$ of $A$ over $B$. Then $\CA\times_{\CC}T$ is an integral model of $A\times T_{K}$ over $B$. Extend $L'$ to a $\QQ$-line bundle $\CL'$ on $\CA\times_{\CC}T$. For a point $(s,t)\in A(K)\times T_{K}(K)$, view it as a section $(s,t)\colon B\to \CA\times_{\CC}T$. Then define
$$
h_{L'}((s,t))\colonequals \deg_{B}(s,t)^{*}\CL'=\int_{B(\CC)}(s,t)^{*}\omega',
$$
where $\omega'$ is a real $(1,1)$-form on $\CA\times_{\CC}T$ representing the first Chern class $c_{1}(\CL')$ of $\CL'$. If $t\in T(\CC)$, denote by $\omega'_{t}$ the restriction of $\omega'$ to the fibre $\CA\times\{t\}\simeq\CA$. Then $t\mapsto \omega'_{t}$ is a continuous map from $T(\CC)$ to the space of real continuous $(1,1)$-forms on $\CA$. Hence for fixed $s\in A(K)$, the map $t\mapsto h_{L'}((s,t))$ from $T(\CC)$ to $\RR$ is continuous. In particular, the compactness of $T(\CC)$ implies that the height $h_{L'}((s,t))\,(t\in T(\CC))$ is bounded for any fixed $s\in A(K)$.

By Proposition \ref{uniform} again, we have $h_{L'}((s,t))=\hat{h}_{L'_{t}}(s)+O(1)$ for all $(s,t)\in A(K)\times T(\CC)$. Combining with the previous paragraph, we see that for any fixed $s\in A(K)$, the canonical height $\hat{h}_{L'_{t}}(s)$ is bounded for all $t\in T(\CC)$. Note that $\hat{h}_{L'_{t}}(s)$ is invariant under the translation action of $A^{(K/\CC)}(\CC)$, and thus descends to a function on the finite-dimensional $\RR$-vector space $V(A,K)$, which is linear since $L'_{t}$ is anti-symmetric. Choose an $\RR$-basis $s_{1},\dots,s_{r}$ of $V(A,K)$. Choose $c>0$ such that $|\hat{h}_{L'_{t}}(s_{i})|\leq c$ for all $1\leq i\leq r$ and all $t\in T(\CC)$. Then for any $s=a_{1}s_{1}+\dots+a_{r}s_{r}\in V(A,K)$ and any $t\in T(\CC)$, we have
$$
\left|\hat{h}_{L'_{t}}(s)\right|=\left|\sum\limits_{i=1}^{r}a_{i}\hat{h}_{L'_{t}}(s_{i})\right|\leq c\sqrt{r}\cdot\sqrt{\sum_{i=1}^{r}a_{i}^{2}}.
$$
Finally, as we have mentioned at the beginning of the proof, the height $\hat{h}_{L_{t}}(s)$ depends only on $s$. Moreover, it induces a positive definite quadratic form on $V(A,K)$ since $L$ is relatively ample over $T_{K}$. Therefore the right hand side above is exactly $O\left(1+\sqrt{\hat{h}_{L^{\rm sym}_{t}}(s)}\right)$. This ends the proof.
\end{proof}
\subsection{Betti form and the canonical Xie--Yuan height}\label{betti}
The notion of Betti form was first introduced by Mok \cite{Mok} to study the Mordell--Weil group of abelian varieties over complex function fields. We briefly review it here.

Let $T$ be a projective variety over $\CC$. Let $\pi\colon\CA\to T$ be an abelian scheme with zero section $e\colon T\to\CA$. For an integer $m\geq0$, denote by $[m]$ the multiplication by $m$ map on $\CA$ over $T$. Let $\CL$ be a $\QQ$-line bundle on $\CA$. For each point $t\in T(\CC)$, there is a unique translation-invariant closed $(1,1)$-form $\omega_{t}$ on $\CA_{t}$ representing $c_{1}(\CL\big|_{\CA_{t}})$. The \emph{Betti form} on $\CA$ associated to $\CL$ is the unique $(1,1)$-form $\omega$ on $\CA$ satisfying the following two properties:
\begin{enumerate}[label=(\arabic*),leftmargin=*]
\item For any $t\in T(\CC)$, the restriction of $\omega$ to the fibre $\CA_{t}$ equals the translation-invariant form $\omega_{t}$.

\item \emph{Quadraticity}. For an integer $m$, we have $[m]^{*}\omega=m^{2}\omega$.
\end{enumerate}
We can deduce some basic facts from these two properties.
\begin{enumerate}[label=(\arabic*),leftmargin=*]
\item The pullback of $\omega$ via the zero section $e\colon T\to\CA$ is zero.

\item If $\CL$ is anti-symmetric, i.e. $[-1]^{*}\CL\simeq -\CL$, then $\omega=0$. \label{(2)}

\item \emph{Positivity}. When $\CL$ is relatively ample over $T$, the form $\omega$ is semi-positive.

\item \emph{Additivity}. For two $\QQ$-line bundles $\CL_{1},\CL_{2}$ on $\CA$, let $\omega_{1},\omega_{2}$ be their associated Betti forms. Then the Betti form associated to the $\QQ$-line bundle $\CL_{1}+\CL_{2}$ equals $\omega_{1}+\omega_{2}$. \label{(4)}

\item Let $A$ be the generic fibre of $\pi\colon\CA\to T$, which is an abelian variety over $K=\CC(T)$. Let $L$ be a $\QQ$-line bundle on $A$ and let $\CL$ be a $\QQ$-line bundle on $\CA$ extending $L$. Then the Betti form $\omega$ on $\CA$ associated to $\CL$ is independent of the choice of $\CL$, since different choices of $\CL$ only differ by a vertical divisor of $\CA$ and give isomorphic $\CL\big|_{\CA_{t}}$ for any $t\in T(\CC)$. We call this $\omega$ the \emph{Betti form on $\CA$ associated to $L$}. \label{(5)}
\end{enumerate}
Combining \ref{(2)}, \ref{(4)} and \ref{(5)}, we see that the Betti form on $\CA$ associated to $L\in {\rm Pic}(A)$ depends only on the numerical class of $L$.\medskip\medskip

\noindent\textbf{Canonical height as integration of Betti form}\medskip

\noindent Now let $T=B_{0}$ be a smooth quasi-projective curve over $\CC$. Denote $K=\CC(B_{0})$. Then the set $\CA(B_{0})$ of sections of $\pi\colon\CA\to B_{0}$ can be canonically identified with the set $A(K)$ of rational points of $A$. From now on, we always use $s$ instead of $\sigma$ to denote a section in $\CA(B_{0})$. Since we no longer consider a general base $S$, this will not cause any confusion. Moreover, we sometimes also use $B$ to denote a compactification of $B_{0}$ and use $S$ to denote a subset of $B(\CC)$.

Now Let $L$ be a symmetric $\QQ$-line bundle on $A$. Note that the set $\CA(B_{0})$ of sections of $\pi\colon\CA\to B_{0}$ can be canonically identified with the set $A(K)$ of rational points of $A$. Under this identification, we have a canonical height function $\hat{h}_{L}$ on $A(K)=\CA(B_{0})$. Let $\omega$ be the Betti form on $\CA$ associated to $L$ (see fact \ref{(5)} above). We have the following interpretation of the canonical height in terms of Betti form proved by Gauthier--Vigny \cite[Theorem B]{GV}.

\begin{theorem}\label{gv}
For any section $s\colon B_{0}\to\CA$, we have
$$
\hat{h}_{L}(s)=\int_{B_{0}(\CC)}s^{*}\omega.
$$
\end{theorem}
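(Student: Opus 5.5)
The plan is to identify both sides as the degree of the same line bundle on a compactification, and then let Tate's limiting process and the quadraticity of the Betti form finish the argument.

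First, I would reduce to the case where $\CA\to B_{0}$ is a Néron-type model over a smooth compactification $B$. Concretely, choose a projective model $\ol{\CA}\to B$ extending $\CA\to B_{0}$, and extend $L$ to a $\QQ$-line bundle $\ol{\CL}$ on $\ol{\CA}$. A section $s\in\CA(B_{0})$ extends to $\bar s\colon B\to\ol{\CA}$. Its Weil height is $h_{L}(s)=\deg\bar s^{*}\ol{\CL}$, which equals $\int_{B}\bar s^{*}\omega_{0}$ for any smooth form $\omega_{0}$ representing $c_{1}(\ol{\CL})$.

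Next, on $\CA$ I would compare $\omega_{0}$ with the Betti form $\omega$. Since $L$ is symmetric, $[2]^{*}\CL\simeq4\CL$ up to vertical and torsion corrections. I would then show that $4^{-n}([2^{n}])^{*}\omega_{0}$ converges to $\omega$ away from the boundary. This uses the standard telescoping: $[2]^{*}\omega_{0}-4\omega_{0}=dd^{c}g$ for a function $g$, and $\omega=\omega_{0}+dd^{c}G$ with $G=\sum_{n}4^{-n-1}g\circ[2^{n}]$.

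Then $\hat h_{L}(s)=\lim 4^{-n}h_{L}([2^{n}]s)=\lim\int 4^{-n}\bar s^{*}[2^{n}]^{*}\omega_{0}$. The goal is to pass this limit inside the integral and discard the $dd^{c}$ terms by Stokes, which would yield $\int_{B_{0}}s^{*}\omega$.

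The main obstacle is the behaviour near the punctures $B\setminus B_{0}$, where the model degenerates. There $g$ need not be bounded: it has logarithmic growth, reflecting the non-trivial local Néron functions. So Stokes produces boundary contributions at the punctures. These must be shown to vanish in the limit, or equivalently the local Néron heights at bad places must be shown to be captured by the Betti form's integral near the punctures. To handle this I would first try to control $g\circ[2^{n}]\circ s$ on small punctured discs via the local uniformization $\CC^{g}/(\ZZ^{g}+\tau(b)\ZZ^{g})$, with $\tau$ having logarithmic (nilpotent orbit) growth. The hope is to show that the Betti form has integrable mass there and that the discrepancy is $O(1)$ uniformly in $n$, hence negligible after dividing by $4^{n}$.

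This is precisely Gauthier--Vigny's theorem, so in the paper I would simply cite \cite[Theorem B]{GV} rather than reproduce this analysis.
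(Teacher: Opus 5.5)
Your proposal ends where the paper does: the paper gives no argument of its own for this statement and simply cites Gauthier--Vigny \cite[Theorem B]{GV}, exactly as you propose. Your preliminary sketch (Tate telescoping of a smooth representative of $c_{1}$ on a projective model, where the real work is uniform-in-$n$ control of the Stokes boundary terms at the punctures $B\setminus B_{0}$) is a reasonable heuristic for why the identity holds, but it is the citation, not the sketch, that carries the proof.
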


\begin{remark}\label{adelic}
A new interpretation (and proof) of Theorem \ref{gv} is given in \cite{Guo}, using the theory of adelic line bundles developed in \cite{YZ}. In this way, we can also generalize this theorem to arbitrary line bundles on general polarized algebraic dynamics. This approach is similar to the Tate limiting process. We do not adopt it in this article, so we omit the details and refer the reader to \cite{Guo} and \cite[\S 6.1, 6.4]{YZ}. Below, for a general ($\QQ$)-line bundle $L$ on an abelian variety $A$, we give a way of establishing Theorem \ref{gv} that does not use adelic line bundles. Of course, proving that these two approaches are equivalent is also nontrivial, and we likewise omit it.

Theorem \ref{gv} already gives the conclusion when $L$ is symmetric. When $L$ is an anti-symmetric, its associated Betti form is zero, whereas its corresponding canonical height $\hat{h}_{L}$ is not zero in general; hence in this case the Betti form is not enough. Instead, let $A^{\vee}$ be the dual abelian variety of $A$ and let $P$ be the Poincar\'{e} line bundle on $A\times A^{\vee}$ (which is symmetric). By the universal property of $P$, there is a point $y\in A^{\vee}(K)$ such that the restriction of $P$ to $A\times\{y\}$ is isomorphic to $L$ via the natural identification $A\overset{\simeq}\to A\times\{y\}$. Let $\CA^{\vee}$ be the dual abelian scheme of $\CA$ over $B_{0}$, whose generic fibre is isomorphic to $A^{\vee}$. Then we can define $\omega$ to be the restriction of the Betti form on $\CA\times_{B_{0}}\CA^{\vee}$ associated to $P$ to $\CA$ via $\CA\overset{({\rm id},y\circ\pi)}\lra\CA\times_{B_{0}}\CA^{\vee}$. For a general $L$, write $L$ as the sum of a symmetric $\QQ$-line bundle and an anti-symmetric $\QQ$-line bundle, and extend by linearity.

After establishing Theorem \ref{gv} for a general $L$, we can then define the canonical Xie--Yuan height associated to $L$ similarly as below. In this way, we can avoid introducing the symmetrization $D^{\rm sym}$ in Proposition \ref{V}, and hence the approach is more natural. However, this approach requires us to put effort into developing a more complete theory, which is unnecessary in this article; therefore we do not adopt it either.
\end{remark}
\medskip\medskip

\noindent\textbf{Canonical Xie--Yuan height}\medskip

\noindent The canonical Xie--Yuan height was introduced by Xie--Yuan \cite[\S 3.2]{XY1} based on the Theorem \ref{gv} above. Namely, with the previous notations, for a measurable subset $S$ of $B_{0}(\CC)$, we have the Xie--Yuan height
$$
h_{\omega,S}(s)\colonequals \int_{S\cap B_{0}(\CC)}s^{*}\omega.
$$
We call this the \emph{canonical Xie--Yuan height} of $s$ with respect to $(L,S)$, and also denote it by $\hat{h}_{L,S}$ or $\hat{h}_{D,S}$, where $D$ is a $\QQ$-Cartier divisor with $\CO_{A}(D)\simeq L$.

If $B$ is a smooth compactification of $B_{0}$ and $S\subseteq B(\CC)$ is a measurable subset, we also define $\hat{h}_{L,S}(s)\colonequals \hat{h}_{L,S\cap B_{0}(\CC)}(s)$. Note that this definition depends only on $A,L,s$, and does not depend on the choice of $B_{0}$, $\CA$ and $\CL$.
\medskip\medskip

\noindent\textbf{Quadraticity}\medskip

\noindent Similar to the N\'{e}ron--Tate height, Xie and Yuan proved in \cite[Proposition 3.2, Theorem 3.5]{XY1} that the canonical Xie--Yuan height function $\hat{h}_{L,S}\colon A(K)\to\RR$ is quadratic and invariant under the translation action of $A^{(K/\CC)}(\CC)$ on $A(K)$. Thus it induces a quadratic form on $V(A,K)=(A(K)/A^{(K/\CC)}(\CC))\otimes_{\BZ}\RR$, which we still denote by $\hat{h}_{L,S}$. Moreover, when $L$ is ample symmetric and $S$ is an open disc in $B(\CC)$, they further showed that this quadratic form is positive definite. This is called the \emph{non-degeneracy} of canonical Xie--Yuan height. See Proposition \ref{feituihua} for more details.

Moreover, Xie and Yuan actually proved that the pullback $s\mapsto s^{*}\omega$ is quadratic on $s\in A(K)$ and is invariant under translation by $A^{(K/\CC)}(\CC)$. Hence we can extend the definition of this pullback form to all $s\in V(A,K)$ and still denote it by $s^{*}\omega$. In this way, the canonical Xie--Yuan height $\hat{h}_{L,S}(s)$ equals the integration $\int_{S\cap B_{0}(\CC)}s^{*}\omega$ for all $s\in V(A,K)$.

\medskip\medskip

\noindent\textbf{Relative setting}\medskip

\noindent Let $A$ be an abelian variety over $K=\CC(B)$ as above. Let $X$ be a projective variety over $K$. Let $L$ be a $\QQ$-line bundle on $A\times X$. Assume that $L$ is symmetric over $X$, i.e. $([-1]_{A}\times{\rm id}_{X})^{*}L\simeq L$. Then for any $x\in X(K)$, the restriction $L_{x}$ of $L$ to $A$ via the embedding $A\hookrightarrow A\times X,\, a\mapsto (a,x)$ is a symmetric $\QQ$-line bundle on $A$ for any $x\in X(K)$. For a measurable subset $S$ of $B(\CC)$ and a rational point $(a,x)\in A(K)\times X(K)$, define the canonical Xie--Yuan height of $(a,x)$ associated to $(L,S)$ as
$$
\hat{h}_{L,S}((a,x))\colonequals \hat{h}_{L_{x},S}(a).
$$

Note that all $\QQ$-line bundles $L_{x}\,(x\in X(K))$ are symmetric and in the same numerical class. Therefore they are isomorphic up to a torsion line bundle (in fact they are all isomorphic). This implies that the height $\hat{h}_{L,S}((a,x))$ actually depends only on $a$.\medskip

\noindent Alternatively, we can interpret $\hat{h}_{L,S}$ by using Betti form on some integral model of $A\times X$. Explicitly, Let $\CA,\CX$ be respectively integral models of $A,X$ over $B$. Then $\CA\times_{B}\CX$ is an integral model of $A\times X$ over $B$. Take a nonempty Zariski open subset $B_{0}$ of $B$ such that $\CA_{B_{0}}\to B_{0}$ is an abelian scheme. Then $\CA_{B_{0}}\times_{B_{0}}\CX_{B_{0}}$ is an abelian scheme over $\CX_{B_{0}}$. Let $\omega$ be the Betti form on $\CA_{B_{0}}\times_{B_{0}}\CX_{B_{0}}$ associated to $L$ (see fact \ref{(5)} above). Then $\omega$ can be described as follows: By the previous paragraph, the $\QQ$-line bundles $L_{x}\,(x\in X(K))$ are symmetric and in the same numerical class. Thus their associated Betti forms on $\CA_{B_{0}}$ are the same. Denote this Betti form by $\omega_{0}$. Then $\omega$ is exactly the pullback of $\omega_{0}$ via the projection $\CA_{B_{0}}\times_{B_{0}}\CX_{B_{0}}\to\CA_{B_{0}}$.

For a rational point $(a,x)\in A(K)\times X(K)$, view it as a section $B_{0}\to\CA_{B_{0}}\times_{B_{0}}\CX_{B_{0}}$. Then we have
$$
\hat{h}_{L,S}((a,x))=\int_{S\cap B_{0}(\CC)}(a,x)^{*}\omega=\int_{S\cap B_{0}(\CC)}a^{*}\omega_{0}.
$$

More generally, if $L$ is a (not necessarily symmetric) $\QQ$-line bundle on $A\times X$, let $L^{\rm sym}=\frac{1}{2}(L+([-1]_{A}\times{\rm id}_{X})^{*}L)$ be the symmetrization of $L$ over $X$. Then for any $x\in X$, the difference $L_{x}-L^{\rm sym}_{x}$ is anti-symmetric on $A$. This means that the Betti forms on $\CA_{B_{0}}\times_{B_{0}}\CX_{B_{0}}$ associated to $L$ and $L^{\rm sym}$ are the same.

Finally, we sometimes also denote $\hat{h}_{L,S}$ by $h_{\omega,S}$ or $\hat{h}_{D,S}$, where $D$ is a $\QQ$-Cartier divisor on $A\times X$ with $\CO_{A\times X}(D)\simeq L$.
\subsection{Estimations of the canonical Xie--Yuan heights}\label{4.2}
In this subsection we give some estimations of the Xie--Yuan height. Let $A$ be an abelian variety over $K=\CC(B)$ as above. Let $L$ be an ample symmetric line bundle on $A$. Fix a K\"{a}hler form $\omega_{B}$ on $B(\CC)$, which induces a measure $\mu_{B}$ on it.

The first is the following non-degeneracy property proved by Xie--Yuan \cite[Theorem 3.5]{XY1}.
\begin{proposition}[Non-degeneracy]\label{feituihua}
Fix a measurable subset $S$ of $B(\CC)$ with $\mu_{B}(S)>0$. Then there is a constant $\varepsilon_{S}>0$ such that $\hat{h}_{L,S}(s)\geq \varepsilon_{S}\hat{h}_{L}(s)$ for all $s\in A(K)$.
\end{proposition}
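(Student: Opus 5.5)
Both sides are quadratic forms on the finite-dimensional real vector space $V(A,K)=(A(K)/A^{(K/\CC)}(\CC))\otimes_{\BZ}\RR$, so I will reduce the statement to a compactness argument on the unit sphere of $\hat{h}_{L}$. As recalled above, $\hat{h}_{L}$ is positive definite on $V(A,K)$ because $L$ is ample symmetric. $\hat{h}_{L,S}$ is a quadratic form on $V(A,K)$, given by $s\mapsto\int_{S\cap B_{0}(\CC)}s^{*}\omega$, where $\omega$ is the Betti form. It is positive semi-definite, since $\omega$ is semi-positive, so $s^{*}\omega\geq0$ pointwise. Any quadratic form is continuous on a finite-dimensional space. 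The unit sphere $\{\hat{h}_{L}=1\}$ is compact, so $\hat{h}_{L,S}$ attains a minimum $\varepsilon_{S}\geq0$ there. Homogeneity then gives $\hat{h}_{L,S}\geq\varepsilon_{S}\hat{h}_{L}$ on all of $V(A,K)$, hence on $A(K)$ through the quotient map. It remains to show $\varepsilon_{S}>0$, that is, that $\hat{h}_{L,S}(s)=0$ forces $s=0$ in $V(A,K)$.

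Suppose $0\neq s\in V(A,K)$ and $\int_{S}s^{*}\omega=0$. Since $s^{*}\omega\geq0$, this gives $s^{*}\omega=0$ almost everywhere on $S$, a set of positive measure. Next I want real-analyticity of $b\mapsto s^{*}\omega$ on $B_{0}(\CC)$. Locally on $B_{0}$ write $\CA=(\BC^{g}\times\Delta)/\Lambda_{b}$, and write $s=\sum_{i}a_{i}s_{i}$ with $s_{i}\in A(K)$ and $a_{i}\in\RR$. A local holomorphic lift of $s_{i}$ has Betti coordinates $\beta_{i}(b)\in\RR^{2g}$ relative to a local frame of $\Lambda_{b}$. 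These coordinates are real-analytic, being real and imaginary parts of holomorphic functions expressed in the analytically varying period basis. Then $s^{*}\omega$ is an $\RR$-bilinear expression $d\beta\wedge d\beta$ of a fixed constant form, with $\beta=\sum a_{i}\beta_{i}$, so $s^{*}\omega$ has real-analytic density. A real-analytic density vanishing on a positive-measure subset of the connected surface $B_{0}(\CC)$ vanishes identically. Hence $\hat{h}_{L}(s)=\int_{B_{0}(\CC)}s^{*}\omega=0$ by Theorem \ref{gv}, extended by quadraticity to $V(A,K)$. Positive definiteness of $\hat{h}_{L}$ then gives $s=0$, a contradiction.

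The main obstacle is this middle step: making the real-analytic, or at least unique-continuation, property of $s^{*}\omega$ rigorous for real combinations $s\in V(A,K)$ that are not actual sections. The alternative is to reduce to the case where $S$ contains an open disc and then invoke Xie--Yuan's disc statement directly. That reduction is not automatic, because a positive-measure set need not contain a disc; this is why I prefer the analyticity argument. To carry it out, I would fix a simply connected open $\Delta\subseteq B_{0}(\CC)$ and choose a symplectic frame of $H_{1}$ varying holomorphically over $\Delta$. With $\omega_{L}$ the constant-coefficient $(1,1)$-form in Betti coordinates, I would check that $s^{*}\omega$ equals $\omega_{L}(d\beta,d\beta)$, with $\beta$ real-analytic, as in Mok's and Xie--Yuan's description. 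Connectedness of $B_{0}(\CC)$ then propagates vanishing from $\Delta$ to all of $B_{0}(\CC)$.
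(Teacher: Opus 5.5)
Your argument is correct, and its skeleton matches the paper's. Both reduce the statement to positive definiteness of $\hat{h}_{L,S}$ on the finite-dimensional space $V(A,K)$ and then compare two positive definite quadratic forms; your minimum over the unit sphere of $\hat{h}_{L}$ is exactly that comparison.

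The difference is in the key step. The paper does not prove non-degeneracy. It quotes Xie--Yuan's theorem for open discs and asserts, without details, that their proof only uses $\mu_{B}(S)>0$. You instead give a self-contained argument:
\begin{itemize}
\item The density of $s^{*}\omega=\omega_{L}(d\beta_{s},d\beta_{s})$ is real-analytic on $B_{0}(\CC)$, because the Betti coordinates $\beta_{s}$ are real-analytic and $\RR$-linear in $s$.
\item A nonnegative real-analytic density that vanishes on a set of positive measure vanishes identically. This uses the null-zero-set property of nonzero real-analytic functions and unique continuation on the connected surface $B_{0}(\CC)$.
\item Hence $\hat{h}_{L}(s)=0$, which forces $s=0$.
\end{itemize}
This isolates the one place where ``positive measure'' suffices in place of ``contains an open disc'', which the paper leaves implicit in its citation. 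In particular, your worry about reducing to discs is moot, since the paper never reduces to discs either.

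The obstacle you flag for non-integral $s\in V(A,K)$ is already handled by your own linearity remark $\beta_{s}=\sum a_{i}\beta_{s_{i}}$. One line is still missing: the pointwise density is $\geq 0$ for real $s$, not just for actual sections. This holds because at each point of $B_{0}(\CC)$ the density is a quadratic form in $s$. It is $\geq 0$ on $A(K)$ by semi-positivity of $\omega$, hence on the $\QQ$-span by homogeneity, hence on all of $V(A,K)$ by continuity. You need this to pass from $\hat{h}_{L,S}(s)=0$ to vanishing of the density almost everywhere on $S$ at the minimizer, which may be irrational.
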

In fact, as we have mentioned in \S\ref{betti}, Xie and Yuan proved that $\hat{h}_{L,S}$ induces a positive definite quadratic form on $V(A,K)$ when $S$ is an open disc in $B(\CC)$. The only property of $S$ used in their proof is that $\mu_{B}(S)>0$. Therefore the induced quadratic form $\hat{h}_{L,S}$ on $V(A,K)$ is also positive definite in this case. The proposition then follows from the fact that both $\hat{h}_{L,S}$ and $\hat{h}_{L}$ are positive definite quadratic forms on the finite-dimensional $\RR$-vector space $V(A,K)$.\medskip

On the other hand, we can also control the upper bound of canonical Xie--Yuan height.

\begin{proposition}[Smallness of Xie--Yuan height]\label{jiben-}
For any $\varepsilon>0$, there is a constant $\delta>0$ such that for every rational point $s\in A(K)$ and every measurable subset $S$ of $B(\CC)$ with $\mu_{B}(S)<\delta$, we have $h_{\omega,S}(s)\leq \varepsilon \int_{B_{0}}s^{*}\omega$.
\end{proposition}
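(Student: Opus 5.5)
The plan is to use the finite-dimensionality of $V(A,K)$ and the quadraticity of $s\mapsto s^{*}\omega$, and so reduce the uniform statement to the absolute continuity of a single integrable density on $B(\CC)$.

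\textbf{Setup.} Fix an $\RR$-basis $s_{1},\dots,s_{r}$ of $V(A,K)$. By the result of Xie--Yuan recalled in \S\ref{betti}, the pullback $s\mapsto s^{*}\omega$ is quadratic and invariant under $A^{(K/\CC)}(\CC)$, so it extends to $V(A,K)$. For $s=\sum_i a_{i}s_{i}$ we can therefore write
$$
s^{*}\omega=\sum_{i,j}a_{i}a_{j}\,\omega_{ij}.
$$
The forms are given by polarization:
$$
\omega_{ij}=\tfrac12\bigl((s_{i}+s_{j})^{*}\omega-s_{i}^{*}\omega-s_{j}^{*}\omega\bigr).
$$
Each $\omega_{ij}$ is a smooth real $(1,1)$-form on $B_{0}(\CC)$, so $\omega_{ij}=f_{ij}\,\omega_{B}$ for a real function $f_{ij}$. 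Each $s^{*}\omega$ with $s\in A(K)$ is semipositive (positivity of the Betti form), and $\int_{B_{0}}s^{*}\omega=\hat h_{L}(s)<\infty$ by Theorem \ref{gv}. Hence the $f_{ii}$ lie in $L^{1}(\mu_{B})$, and by polarization so do all the $f_{ij}$.

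\textbf{Pointwise bound.} For $\mu_{B}$-almost every $b\in B_{0}(\CC)$, the quadratic form $q_{b}(a)=\sum a_{i}a_{j}f_{ij}(b)$ on $\RR^{r}$ is positive semidefinite. It is nonnegative at every rational $a$: for $a\in\ZZ^{r}$ it is the density of $s^{*}\omega$ with $s\in A(K)$, and it scales quadratically to $\QQ^{r}$. By continuity in $a$ it is nonnegative at every real $a$. A positive semidefinite form is bounded by its trace times $|a|^{2}$, so
$$
q_{b}(a)\le |a|^{2}\,g(b),\qquad g\colonequals\sum_{i}f_{ii}\ge 0,\quad g\in L^{1}(\mu_{B}).
$$
Integrating over a measurable $S$ gives
$$
h_{\omega,S}(s)\le |a|^{2}\int_{S\cap B_{0}}g\,d\mu_{B}.
$$

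\textbf{Conclusion.} Since $L$ is ample symmetric, $\hat h_{L}$ is a positive definite quadratic form on $V(A,K)$. Hence there is $c>0$ with $\int_{B_{0}}s^{*}\omega=\hat h_{L}(s)\ge c|a|^{2}$ for all $s$. By absolute continuity of the integral of the single function $g\in L^{1}$, there is $\delta>0$ such that
$$
\int_{S}g\,d\mu_{B}<c\varepsilon\quad\text{whenever }\mu_{B}(S)<\delta.
$$
Then $h_{\omega,S}(s)\le c\varepsilon|a|^{2}\le\varepsilon\int_{B_{0}}s^{*}\omega$, uniformly in $s$. Points of $A(K)$ that map to $0$ in $V(A,K)$ (torsion or trace points) have $s^{*}\omega=0$, so both sides vanish for them.

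The step I expect to need the most care is the pointwise bound. It requires checking that the extension of $s\mapsto s^{*}\omega$ to $V(A,K)$ really is given by the polarized forms $\omega_{ij}$ as densities, and not merely after integration. This should follow from Xie--Yuan's proof that the pullback itself is quadratic, applied together with semipositivity on the lattice and the limiting argument above. The rest is routine.
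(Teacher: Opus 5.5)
Your proof is correct, but it is organized differently from the paper's.

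The paper normalizes $s$ to the unit sphere $\BS\subseteq V(A,K)$ and argues locally in $s$:
\begin{enumerate}
\item for a fixed $s_{0}\in\BS$ it chooses a neighbourhood $U$ of the finite bad set $(B\backslash B_{0})(\CC)$ with $\hat{h}_{L,U}(s_{0})<\frac{\varepsilon}{2}\hat{h}_{L}(s_{0})$, and extends this to all $s$ near $s_{0}$ by continuity of the quadratic form $\hat{h}_{L,U}$;
\item it bounds $\int_{S}s^{*}\omega\le c\,\mu_{B}(S)$ on the compact set $B(\CC)\backslash U\subseteq B_{0}(\CC)$, uniformly for $s$ near $s_{0}$;
\item it extracts a uniform $\delta$ from a finite subcover of $\BS$.
\end{enumerate}

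You instead dominate all pullbacks at once: $s^{*}\omega\le |a|^{2}g\,\omega_{B}$ for a single density $g=\sum_{i}f_{ii}\in L^{1}(\mu_{B})$. One application of absolute continuity of $\int g\,d\mu_{B}$ then handles both the region near the degenerate fibres and the compact part, with no compactness argument in $s$ and an explicit $\delta$ in terms of $g$ and the lower bound $\hat{h}_{L}(s)\ge c|a|^{2}$. The paper's route needs only continuity of $s\mapsto s^{*}\omega$ and boundedness away from the bad fibres, with no coefficient-matrix or trace bound.

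Your route also makes explicit something the paper uses tacitly. In the step $\hat{h}_{L,S\cap U}(s)\le\hat{h}_{L,U}(s)$ for real $s\in\BS$, one needs $s^{*}\omega\ge 0$ for non-lattice $s$. That is exactly your pointwise positive-semidefiniteness of $(f_{ij}(b))$, obtained from the lattice by scaling and continuity.

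Two small points should be stated explicitly:
\begin{enumerate}
\item Take $s_{1},\dots,s_{r}$ to be images of points of $A(K)$, e.g.\ a $\ZZ$-basis of $A(K)$ modulo trace and torsion, which is finitely generated by Lang--N\'eron. Your claims that $a\in\ZZ^{r}$ corresponds to some $s\in A(K)$, and that $f_{ii}\ge 0$ is integrable, rely on this.
\item The concern you flag is already settled. Xie--Yuan's quadraticity, as recalled in \S\ref{betti}, is a statement about the forms $s^{*}\omega$ themselves. A quadratic map from a finitely generated abelian group to a real vector space kills torsion and is given by its polarization. Hence $s^{*}\omega=\sum_{i,j}a_{i}a_{j}\omega_{ij}$ holds as an identity of forms, not merely after integration.
\end{enumerate}
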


\begin{proof}
Fix a Euclidean inner product on the finitely dimensional $\RR$-vector space $V(A,K)$ and let $\BS$ be the unit sphere with respect to this inner product. Since $\hat{h}_{L,S}$ is quadratic for any measurable subset $S\subseteq B(\CC)$, we only need to ensure the inequality
$$
h_{\omega,S}(s)\leq \varepsilon \int_{B_{0}}s^{*}\omega
$$
for all $s\in \BS$ and all measurable subsets $S$ of $B(\CC)$ with $\mu_{B}(S)<\delta$.

By definition, we have
$$
\hat{h}_{L,S}(s)=\int_{S\cap B_{0}(\CC)}s^{*}\omega,
$$
Now let $S_{0}=(B\backslash B_{0})(\CC)$, which is a finite subset of $B(\CC)$ containing the bad reduction locus of $\pi\colon\CA\to B$. Fix a point $s_{0}\in\BS$. Choose a Euclidean open neighbourhood $U$ of $S_{0}$ in $B(\CC)$ such that $\hat{h}_{L,U}(s_{0})=\int_{U\backslash S_{0}}s_{0}^{*}\omega<\frac{\varepsilon}{2}\int_{B_{0}(\CC)}s_{0}^{*}\omega$. Then there is an open neighbourhood of $s_{0}$ in $\BS$ such that
$$
\hat{h}_{L,U}(s)\leq\frac{\varepsilon}{2}\hat{h}_{L}(s)
$$
for all $s$ in this neighbourhood.

On the other hand, the compactness of $B(\CC)\backslash U\subseteq B_{0}(\CC)$ and the continuity of the map $s\mapsto s^{*}\omega$ from $\BS$ to $\Omega^{1,1}(B_{0})$ imply that there is a constant $c>0$ such that for all measurable subsets $S\subseteq B(\CC)\backslash U$ and all $s\in\BS$ sufficiently close to $s_{0}$, the inequality
$$
\int_{S}s^{*}\omega\leq c\mu_{B}(S)
$$
holds. In particular, for any $S\subseteq B(\CC)\backslash U$ with $\mu_{B}(S)<\frac{\varepsilon\hat{h}_{L}(s_{0})}{3c}$ and any $s\in\BS$ sufficiently close to $s_{0}$, we have $\hat{h}_{L,S}(s)\leq c\cdot\frac{\varepsilon\hat{h}_{L}(s_{0})}{3c}\leq\frac{\varepsilon}{2}\hat{h}_{L}(s)$. In conclusion, for any $S\subseteq B(\CC)$ with $\mu_{B}(S)<\frac{\varepsilon\hat{h}_{L}(s_{0})}{3c}$ and any $s\in\BS$ sufficiently close to $s_{0}$, we have the estimate
\begin{align*}
\hat{h}_{L,S}(s)&=\hat{h}_{L,S\cap U}(s)+\hat{h}_{L,S\backslash U}(s)\leq \hat{h}_{L,U}(s)+c\mu_{B}(S\backslash U)\\
&\leq \frac{\varepsilon}{2}\hat{h}_{L}(s)+c\mu_{B}(S)\leq \frac{\varepsilon}{2}\hat{h}_{L}(s)+\frac{\varepsilon}{2}\hat{h}_{L}(s)=\varepsilon \hat{h}_{L}(s)
\end{align*}
Now for each $s_{0}\in\BS$, we have chosen a constant $\delta_{s_{0}}\colonequals \frac{\varepsilon\hat{h}_{L}(s_{0})}{3c}>0$ and an open neighbourhood of $s_{0}$ in $\BS$. By compactness of $\BS$, we can find a finite cover of $\BS$ by these open neighbourhoods. The proposition then follows by setting $\delta$ to be the minimum of all $\delta_{s_{0}}$ corresponding to some open neighbourhood in this finite covering.
\end{proof}
At the end of this subsection, we compare the canonical Xie--Yuan height functions for pullback ($\QQ$-)line bundles. The estimate is similarly to Proposition \ref{bijiao-}.
\begin{lemma}\label{bijiao}
Let $\phi\colon A'\to A$ be a morphism between $K$-abelian varieties, and let $f\colon X'\to X$ be a morphism between $K$-projective varieties. Let $L$ be a $\QQ$-line bundle on $A\times X$ which is relatively ample over $X$, and take $L'=(\phi\times f)^{*}L$. Let $L^{\rm sym}=\frac{1}{2}\left(L+([-1]_{A}\times{\rm id}_{X})^{*}L\right)$ be the symmetrization of $L$ over $X$, and similarly for $(L')^{\rm sym}$. Fix a measurable subset $S\subseteq B(\CC)$. Then for all $(s',x')\in A'(K)\times X'(K)$, we have
$$
\hat{h}_{(L')^{\rm sym},S}((s',x'))=\hat{h}_{L^{\rm sym},S}((\phi(s'),f(x')))+O\left(1+\sqrt{\hat{h}_{L^{\rm sym},S}((\phi(s'),f(x')))}\right).
$$
\end{lemma}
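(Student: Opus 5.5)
The plan is to reduce everything to a statement about Betti forms on a fixed model, and then to absorb the possible translation part of $\phi$ by a Cauchy--Schwarz argument for semipositive quadratic forms. This mirrors the proof of Proposition \ref{bijiao-}.

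First write $\phi=\tau_{a}\circ\psi$, where $\psi\colon A'\to A$ is a homomorphism of abelian varieties and $\tau_{a}$ is translation by $a=\phi(0)\in A(K)$. Shrink $B_{0}$ so that $A,A'$ extend to abelian schemes $\CA,\CA'$ over $B_{0}$, $\psi$ extends to a homomorphism of abelian schemes $\Psi\colon\CA'\to\CA$, and $a$ extends to a section. Choose models $\CX,\CX'$ of $X,X'$ with $f$ extending over $B_{0}$.

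By \S\ref{betti}, the Betti form attached to $L^{\rm sym}$ on $\CA\times_{B_{0}}\CX$ is the pullback of a single form $\omega_{0}$ on $\CA$. This $\omega_{0}$ is the Betti form of any $L_{x}$, and it depends only on the numerical class of $L_{x}$. The same holds for $L'$ with a form $\omega_{0}'$ on $\CA'$, which is the Betti form of $L'_{x'}=\psi^{*}\tau_{a}^{*}L_{f(x')}$.

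The key claim is that $\omega_{0}'=\Psi^{*}\omega_{0}$. Since $\tau_{a}^{*}L_{f(x')}$ is numerically equivalent to $L_{f(x')}$, the Betti form of $L'_{x'}$ equals that of $\psi^{*}L_{f(x')}$. To identify the latter with $\Psi^{*}\omega_{0}$, I check the two defining properties. On each fibre, $\Psi^{*}\omega_{0}$ restricts to a translation-invariant form representing $c_{1}(\psi^{*}L_{f(x')})$, because $\Psi_{t}$ is a group homomorphism. It is also quadratic: $[m]^{*}\Psi^{*}\omega_{0}=\Psi^{*}[m]^{*}\omega_{0}=m^{2}\Psi^{*}\omega_{0}$, since $\Psi$ commutes with $[m]$. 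Uniqueness of the Betti form then gives the claim.

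Consequently, for every $(s',x')$,
$$
\hat{h}_{(L')^{\rm sym},S}((s',x'))=\int_{S\cap B_{0}}s'^{*}\Psi^{*}\omega_{0}=\int_{S\cap B_{0}}\psi(s')^{*}\omega_{0}=\hat{h}_{L^{\rm sym},S}((\psi(s'),f(x'))).
$$
So the only discrepancy with the right-hand side of the lemma comes from replacing $\psi(s')$ by $\phi(s')=\psi(s')+a$.

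To control this discrepancy, let $Q=\hat{h}_{L^{\rm sym},S}$, viewed as a quadratic form on $V(A,K)$ via quadraticity (Xie--Yuan). Since $L$ is relatively ample, $\omega_{0}$ is semipositive, so $Q$ is positive semidefinite. Let $\langle\cdot,\cdot\rangle$ be the associated bilinear form. Then
$$
Q(u)=Q(u+a)-2\langle u+a,a\rangle+Q(a),\qquad |\langle u+a,a\rangle|\leq\sqrt{Q(u+a)Q(a)}
$$
by Cauchy--Schwarz for the semidefinite form. With $u=\psi(s')$ and $Q(a)$ a fixed constant, this gives
$$
Q(\psi(s'))=Q(\phi(s'))+O\big(1+\sqrt{Q(\phi(s'))}\big).
$$
Combined with the identity above, this is exactly the asserted estimate, uniformly in $x'$.

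I expect the main obstacle to be the model-theoretic bookkeeping in the key claim. One must check that the Betti form of $L'$ on $\CA'\times\CX'$ really is pulled back from $\CA'$ through the symmetrization. One must also justify, using fact \ref{(2)}, that the translation part and the anti-symmetric part contribute nothing to the Betti form, so that they only enter through the shift $u\mapsto u+a$ in $V(A,K)$.
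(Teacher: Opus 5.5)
Your proposal is correct. Up to the last step it follows the paper's argument:
\begin{itemize}
\item factor $\phi=\tau_{a}\circ\psi$;
\item use that Betti forms depend only on numerical classes, so $\tau_{a}^{*}$, $[-1]^{*}$ and the anti-symmetric part are invisible;
\item use functoriality of Betti forms under homomorphisms to get $\hat{h}_{(L')^{\rm sym},S}((s',x'))=Q(\psi(s'))$ with $Q=h_{\omega_{0},S}$.
\end{itemize}
Your check of $\omega_{0}'=\Psi^{*}\omega_{0}$ through the two characterizing properties simply spells out what the paper calls ``functoriality''. The bookkeeping you flag at the end is exactly what the numerical-class invariance settles.

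The genuine difference is in how the translation by $a$ is absorbed. The paper first disposes of the case $\mu_{B}(S)=0$. For $\mu_{B}(S)>0$ it invokes the Xie--Yuan non-degeneracy theorem (Proposition \ref{feituihua}), so that $Q$ is positive definite on $V(A,K)$. The linear term $s\mapsto Q(s+a)-Q(s)-Q(a)$ is then bounded by a Euclidean norm, and hence by $O(\sqrt{Q(s)})$.

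You use only semipositivity of the Betti form to get $Q\geq 0$. Nonnegativity on $A(K)$ extends to $V(A,K)$ by homogeneity and continuity. You then apply Cauchy--Schwarz for the semidefinite bilinear form after writing $\psi(s')=\phi(s')-a$.

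Your route is more economical:
\begin{itemize}
\item it needs neither non-degeneracy nor the case split on $\mu_{B}(S)$;
\item it produces the error term directly in terms of $Q(\phi(s'))$, as in the statement.
\end{itemize}
By contrast, the paper's final display is phrased with $\sqrt{h_{\omega_{0},S}(\phi_{0}(s'))}$, i.e.\ in terms of $Q(\psi(s'))$. It tacitly uses that $Q(s)$ and $Q(s+a)$ are comparable up to $O(1+\sqrt{Q})$. The paper's route, in turn, just reuses a tool it needs elsewhere anyway.
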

\begin{proof}
When $\mu_{B}(S)=0$, both sides are 0. Now we assume that $\mu_{B}(S)>0$. We may also assume that $X'(K)\neq\emptyset$, and thus $X(K)\neq\emptyset$. For all points $x\in X(K)$, the restriction of $L^{\rm sym}$ to $A$ via the embedding $A\hookrightarrow A\times X,\, s\mapsto (s,x)$ are in the same numerical class. Let $L_{0}\in {\rm Pic}(A)_{\QQ}$ be a representative in this numerical class. Then for any $x'\in X'(K)$, the restriction of $(L')^{\rm sym}$ to $A'$ via the embedding $A'\hookrightarrow A'\times X',\, s'\mapsto (s',x')$ is numerically equivalent to $\phi^{*}L_{0}$.

Take integral models $\CA,\CA'$ of $A,A'$ over $B$. Let $B_{0}$ be a Zariski open subset of $B$ such that $\phi\colon A'\to A$ extends to a $B_{0}$-morphism $\CA'_{B_{0}}\to\CA_{B_{0}}$, which we still denote by $\phi$. Let $\omega_{0}$ be the Betti form on $\CA_{B_{0}}$ associated to $L_{0}$. Then for any $(s,x)\in A(K)\times X(K)$, by the explanation at the end of \S\ref{betti}, we have
$$
\hat{h}_{L^{\rm sym},S}((s,x))=\int_{S\cap B_{0}(\CC)}s^{*}\omega_{0}.
$$
Similarly, let $\omega'_{0}$ be the Betti form on $\CA'_{B_{0}}$ associated to $\phi^{*}L_{0}$. Then for any $(s',x')\in A'(K)\times X'(K)$, we have
$$
\hat{h}_{(L')^{\rm sym},S}((s',x'))=\int_{S\cap B_{0}(\CC)}(s')^{*}\omega'_{0}.
$$

Factoring $\phi\colon A'\to A$ as $t_{a}\circ\phi_{0}$, where $\phi_{0}\colon A'\to A$ is a homomorphism, and $t_{a}\colon A\to A$ is the translation by some point $a\in A(K)$. Then $\phi\colon\CA'_{B_{0}}\to\CA_{B_{0}}$ also factors as $\CA'_{B_{0}}\overset{\phi_{0}}\to\CA_{B_{0}}\overset{t_{a}}\to\CA_{B_{0}}$. By the functoriality of Betti forms, we see that $\omega'_{0}$ equals $\phi_{0}^{*}\omega_{0}$. Hence for any $(s',x')\in A'(K)\times X'(K)$, we have
\begin{align*}
\hat{h}_{L^{\rm sym},S}((\phi(s'),f(x')))-\hat{h}_{(L')^{\rm sym},S}((s',x'))&=\int_{S\cap B_{0}(\CC)}\phi(s')^{*}\omega_{0}-\int_{S\cap B_{0}(\CC)}s'^{*}\phi_{0}^{*}\omega_{0}\\
&=\int_{S\cap B_{0}(\CC)}((\phi_{0}(s')+a)^{*}\omega_{0}-\phi_{0}(s')^{*}\omega_{0})\\
&=h_{\omega_{0},S}(\phi_{0}(s')+a)-h_{\omega_{0},S}(\phi_{0}(s')).
\end{align*}
Note that $a\in A(K)$ is fixed. By quadraticity, the term $h_{\omega_{0},S}(s+a)-h_{\omega_{0},S}(s)-h_{\omega_{0},S}(a)$ is linear in $s\in V(A,K)$. By Proposition \ref{feituihua}, the term $\hat{h}_{L^{\rm sym},S}((s,x))=h_{\omega_{0},S}(s)$ is a positive definite quadratic form on $V(A,K)$. Thus we have
\begin{align*}
h_{\omega_{0},S}(s+a)-h_{\omega_{0},S}(s)&=(h_{\omega_{0},S}(s+a)-h_{\omega_{0},S}(s)-h_{\omega_{0},S}(a))+h_{\omega_{0},S}(a)\\
&=O\left(1+\sqrt{h_{\omega_{0},S}(s)}\right)
\end{align*}
for all $s\in A(K)$. This completes the proof.
\end{proof}
\section{Uniform estimations for linear entire curves}\label{3-}
\subsection{Linear entire curves on complex abelian varieties}
Let $A$ be a complex abelian variety. Denote by ${\rm Lie}(A)$ the group of translation-invariant holomorphic derivations on $\CO_{A}$. Let $x\in A$ be a point, and $v\in{\rm Lie}(A)$ be a nonzero vector in the Lie algebra. By restriction, we have a canonical isomorphism ${\rm Lie}(A)\simeq T_{x}A$. By definition, $v$ defines a translation-invariant vector field on $A$, and thus it is a foliation $\CF(v)$ on $A$. The leaf $\CF(v)_{x}=\CF(x, v)$ of this foliation through $x$ is called the \emph{linear entire curve through $x$ in the direction $v$}. By translation, we have
$$
\CF(x, v)=\CF(0, v)+x.
$$

Alternatively, write $A={\rm Lie}(A)/ H_{1}(A, \ZZ)$. Denote by $\tilde{x}$ a lifting of $x$ in ${\rm Lie}(A)$. In the complex vector space ${\rm Lie}(A)$, there is a linear subset given by
$$
\tilde{\phi}_{(\tilde{x}, v)}\colon \CC\ \lra\ {\rm Lie}(A),\qquad z\mapsto\tilde{x}+zv.
$$
Composing with ${\rm Lie}(A)\to A$, we obtain an entire curve
$$
\phi_{(x, v)}\colon\CC\ \lra\ A.
$$
The entire curve is independent of the choice of $\tilde{x}$, and its image is exactly the linear leaf $\CF(x, v)$. We call $\phi_{(x, v)}\colon\CC\to A$ the \emph{linear entire curve through $x$ in the direction $v$}.

\medskip\medskip

\noindent\textbf{Zariski closures of linear entire curves}\medskip

\noindent Linear entire curves are generally not algebraic. We recall a clear description in \cite{arXiv:2308.08117} of Zariski closures of linear leaves in fibres of abelian schemes.

Let $B$ be a smooth quasi-projective curve over $\CC$, and let $\pi\colon\CA\to B$ be an abelian scheme.  Let $K=\CC(B)$ be the function field of $B$, and $A$ be the generic fibre of $\pi\colon\CA\to B$. Denote $V(A, K)=(A(K)/A^{(K/\CC)}(\CC))_{\RR}$, which is a finite-dimensional $\RR$-vector space by the Lang--N\'{e}ron theorem. Here $A^{(K/\CC)}$ is the $K/\CC$-trace of $A$. For an abelian subvariety $H$ of $A$, there is a natural inclusion
$$
V(H, K)=(H(K)/H^{(K/\CC)}(\CC))_{\RR}=(H(K)/H(K)\cap A^{(K/\CC)}(\CC))_{\RR}\subseteq V(A, K).
$$
Moreover, Xie--Yuan proved in \cite[Lemma 2.4]{arXiv:2308.08117} that for every $s\in V(A, K)$, there exists a \emph{unique minimal abelian subvariety} $G(s)$ of $A$ with $s\in V(G(s), K)$.

The following is the main proposition cited in this subsection.
\begin{proposition}[{\cite[Proposition 2.6]{arXiv:2308.08117}}]\label{closure}
Let $B$ be a smooth quasi-projective curve over $\CC$, and let $\pi\colon\CA\to B$ be an abelian scheme. Let $K=\CC(B)$ be the function field of $B$, and $A$ be the generic fibre of $\pi\colon\CA\to B$. Let $s$ be a nonzero element of $V(A, K)$. Denote by $\CG$ the Zariski closure of $G(s)$ in $\CA$. Then there is a subset $S(A)$ of $B(\CC)$ of measure $0$ such that for any $b\in B(\CC)\backslash S(A)$, the Zariski closure of the linear leaf $\CF(0, \delta(s, v_{b}))$ in $\CA_{b}$ equals $\CG_{b}$. Here $v_{b}\in T_{b}B$ is a nonzero vector, and the transfer map (associated to $s$) $\delta(s,\cdot)\colon T_{b}B\to{\rm Lie}(\CA_{b})$ is defined in \cite[\S 2.2]{arXiv:2308.08117} using the Betti map.
\end{proposition}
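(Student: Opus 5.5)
The plan is to work locally over a small simply connected disc $\Delta\subseteq B(\CC)$ on which $\CA$ is topologically trivialised through the Betti map, and then take a countable union over a cover of $B(\CC)$ by such discs. Over $\Delta$ we write $\CA_\Delta\simeq \Delta\times(\RR^{2g}/\ZZ^{2g})$ and fix a real-analytic period matrix $\Pi(b)$. The section $s$ lifts to a real-analytic Betti coordinate $\beta_s\colon\Delta\to\RR^{2g}$, which is defined modulo $\ZZ^{2g}$ and, after replacing $s$ by a suitable real multiple, depends linearly on $s\in V(A,K)$. The transfer map is then $\delta(s,v_b)=\Pi(b)\,d\beta_s(v_b)$, viewed as a vector in ${\rm Lie}(\CA_b)$, with $v_b$ mapped to the $(1,0)$-part.

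First, I would reduce to the case $G(s)=A$. Replacing $A$ by $G(s)$ and $\CA$ by $\CG$ (after shrinking $B$ so that $\CG$ is an abelian subscheme) is compatible with the transfer map, because $s\in V(G(s),K)$ and $\delta$ is functorial in homomorphisms. Minimality of $G(s)$ then says that $s$ lies in $V(H,K)$ for no proper abelian subvariety $H$. We must show that for almost every $b$ the real subspace spanned by the complex line $\CC\cdot\delta(s,v_b)$ is not contained in ${\rm Lie}(H')$ for any proper abelian subvariety $H'\subsetneq\CA_b$. This is exactly the statement that the Zariski closure of $\CF(0,\delta(s,v_b))$ is all of $\CA_b$, since the Zariski closure of a one-parameter subgroup is the smallest abelian subvariety containing it (Ax / Ax--Lindemann for tori in the compact case).

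Second, I would describe the bad set. Abelian subvarieties of the fibres over $\Delta$ correspond to countably many rational subspaces $W\subseteq\QQ^{2g}$, and for each such $W$ the locus $\Delta_W$ of $b$ where $W_\RR$ is a complex subspace for $\Pi(b)$ is a real-analytic subset. The bad set inside $\Delta$ is therefore
$$
\bigcup_{W}\{\,b\in\Delta_W : d\beta_s(T_bB)\subseteq W_\RR\,\}.
$$
Since this is a countable union, it suffices to show that each member is a proper real-analytic subset of $\Delta$ and hence has measure zero. If for some $W$ the set were all of $\Delta$, or more generally contained a nonempty open set, then by analyticity on the connected disc the condition $d\beta_s\in W_\RR$ would hold on all of $\Delta$, and $W$ would define a flat sub-local system that is complex at every point, i.e. an abelian subscheme $\CH\subseteq\CA_\Delta$. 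The projection of $\beta_s$ to $\RR^{2g}/W_\RR$ would then have zero differential, so it would be constant, meaning that modulo $\CH$ the section $s$ is a flat (constant) section. If $W$ corresponds to a proper abelian subscheme, the corresponding $H$ is defined over $K$ after monodromy and algebraicity arguments, using Deligne's semisimplicity and the theorem of the fixed part. A section of $A/H$ that is flat is a torsion point plus a trace point, so $s\in V(H,K)$, which contradicts minimality.

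I expect the main obstacle to be this globalisation step: passing from a subscheme $\CH$ defined over the disc $\Delta$ by a locally constant $W$ to an abelian subvariety over $K$. My first attempt would be to invoke the Deligne--Andr\'e theorem of the fixed part. It says that the Zariski closure of the locus where $W$ is complex, together with the analytic continuation of $W$ under monodromy, gives finitely many conjugates; their sum is monodromy-invariant and hence algebraic over $K$. One also has to handle the case where $\Delta_W$ is a proper analytic subset but the tangency condition holds on it; this is harmless, because $\Delta_W$ already has measure zero. Taking the union over the countably many $W$ and the countably many discs gives the measure-zero set $S(A)$.
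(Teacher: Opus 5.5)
The paper gives no proof of this statement. It cites Xie--Yuan and only remarks that their hypothesis $\mathrm{End}_K(A)\simeq\mathrm{End}_{\ol{K}}(A_{\ol{K}})$ can be removed by a finite base change, using functoriality of $G(s)$ and $\delta(s,\cdot)$. Your skeleton is sound: countably many rational $W$ on each disc, and the real-analytic dichotomy for each set $\{b\in\Delta_W : d\beta_s(T_bB)\subseteq W_\RR\}$, so that only the case where this set is all of $\Delta$ matters. The way you reach a contradiction in that remaining case, however, has two gaps.

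\textbf{First gap: the globalisation.} Taking the \emph{sum} of the monodromy conjugates of $W$ is the wrong operation. That sum is a monodromy-invariant sub-Hodge structure containing $W$. When $A$ is $K$-simple but not geometrically simple it is everything, so nothing contradicts $G(s)=A$. An example is $A=\mathrm{Res}_{K'/K}E'$ with $E'$ not isogenous to its conjugate and $W=H_1(E')$. The same example shows that your claim ``the corresponding $H$ is defined over $K$'' fails. What works is the \emph{intersection}:
\begin{itemize}
\item The condition $d\beta_s\in W_\RR$ is real-analytic on the universal cover, so it continues along every loop.
\item This gives $d\beta_s\in\rho(\gamma)W_\RR$ on $\Delta$ for all $\gamma$, and each $\rho(\gamma)W_\RR$ is complex everywhere as well.
\item Hence $W_0=\bigcap_\gamma\rho(\gamma)W$ (a finite intersection, for dimension reasons) is monodromy-invariant and Hodge everywhere.
\item It therefore gives a proper abelian subvariety $H_0\subsetneq A$ over $K$ with $d\beta_s\in W_{0,\RR}$ on $\Delta$.
\end{itemize}
Equivalently, pass to a finite $K'$ over which all abelian subvarieties of $A_{\ol{K}}$ are defined, and use that $G(s)$ is unchanged by this base change, being Galois-stable and minimal. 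This is exactly what the paper's remark refers to.

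\textbf{Second gap: the last step, which is the more substantial one.} Your $s$ is a \emph{real} vector of $V(A,K)$, not a section; in the application it is $s_\infty=\lim\ell_n^{-1}s_n$. The fact that a flat section of $A/H_0$ is torsion plus trace (Manin's theorem of the kernel) only shows that $\bar s\mapsto d\beta_{\bar s}$ is injective on the lattice $(A/H_0)(K)/(\text{trace}+\text{torsion})$. A linear map that is injective on a lattice can still kill an irrational vector of its real span; think of $(x,y)\mapsto x-\sqrt{2}\,y$ on $\ZZ^2$. So this does not give $\bar s=0$ for the image $\bar s\in V(A/H_0,K)$.

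The missing input is the non-degeneracy of the canonical Xie--Yuan height, Proposition~\ref{feituihua}. Take the Betti form $\omega$ of an ample symmetric $L$ on $A/H_0$; it has constant coefficients in Betti coordinates. Then $d\beta_{\bar s}\equiv0$ on $\Delta$ gives $\bar s^{*}\omega\equiv0$ on $\Delta$, so $\hat{h}_{L,\Delta}(\bar s)=0$. Non-degeneracy then forces $\bar s=0$, i.e.\ $s\in V(H_0,K)$, contradicting $G(s)=A$.

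This non-degeneracy is where the real content of the proposition lies. One proof runs as follows: $\bar s^{*}\omega$ is real-analytic, so vanishing on $\Delta$ forces vanishing on all of $B$. Then $\hat{h}_L(\bar s)=0$ by Theorem~\ref{gv} extended to $V$ by quadraticity. Finally, the N\'eron--Tate form is positive definite on the real space $V$, which needs Lang--N\'eron finiteness of points of bounded height and not merely positivity on lattice points. With these two repairs your argument goes through.
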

In \cite[Proposition 2.6]{arXiv:2308.08117}, Xie--Yuan made an extra assumption that the natural map ${\rm End}_{K}(A)\to\rm{End}_{\ol{K}}(A_{\ol{K}})$ is an isomorphism. This can be removed by taking a finite base change $B'\to B$ and using the functoriality of $G(s)$ and $\delta(s,v_{b})$.
\subsection{Approximating sections of abelian schemes by linear entire curves}
\begin{lemma}\label{linear}
Let $B$ be a smooth quasi-projective curve over $\CC$, and let $\pi\colon\CA\to B$ be an abelian scheme. Let $K=\CC(B)$ be the function field of $B$, and $A$ be the generic fibre of $\pi\colon\CA\to B$. Let $\{s_{n}\}_{n\geq1}$ be a sequence in $\CA(B)$ and $\{\ell_{n}\}_{n\geq1}$ be a sequence of positive real numbers tending to infinity such that the image of $\ell_{n}^{-1}s_{n}$ in $V(A, K)$ converges to a nonzero element $s_{\infty}$ in $V(A, K)$.

Fix a point $b\in B(\CC)$, an open unit disc $\BD$ in $B(\CC)$ with centre $b$ and a distance function $d$ on $\CA_{\BD}$. Let $z$ be the standard coordinate of $\BD$. Take the tangent vector $v_{b}\in T_{b}B$ to be the one represented by $v_{\rm{st}}=\frac{\rm{d}}{\rm{d}z}\in T_{0}\BD$. Then for any $R>0$ and $\varepsilon>0$, there are constants $0<\varepsilon'<1$ and $n_{0}>0$ satisfying the following property: for each $b'\in\BD_{\varepsilon'}$, each $z\in\BD_{R}$ and each $x\in\CA_{b}=\CA_{0}$, if $n>n_{0}$ and $d(s_{n}(b'),x)\leq\varepsilon'$, then $b'+\ell_{n}^{-1}z\in\BD$, and
$$
d(s_{n}(b'+\ell_{n}^{-1}z),\phi_{(x,\delta(s_{\infty},v_{b}))}(z))<\varepsilon.
$$
\end{lemma}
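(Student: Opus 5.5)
The plan is to work in a local trivialization of the abelian scheme over the disc and use the Betti map. Over the simply connected disc $\BD$, the family $\CA_{\BD}\to\BD$ is real-analytically trivialized by the Betti map $\beta\colon\CA_{\BD}\to (\RR/\ZZ)^{2g}$, which is the map to the fibre torus $H_{1}(\CA_{0},\RR)/H_{1}(\CA_{0},\ZZ)$. Equivalently, there is a holomorphic family of lattices $\Lambda_{b'}\subseteq\CC^{g}$ with $\CA_{b'}=\CC^{g}/\Lambda_{b'}$, together with a real-analytic identification $\CC^{g}\simeq\RR^{2g}$ that depends on $b'$ and carries $\Lambda_{b'}$ to $\ZZ^{2g}$.

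A section $s_{n}$ lifts on $\BD$ to a holomorphic map $\tilde s_{n}\colon\BD\to\CC^{g}$. Its Betti coordinates $\beta(s_{n})\colon\BD\to\RR^{2g}$ depend linearly on the image of $s_{n}$ in $V(A,K)$, up to a bounded, locally constant ambiguity coming from the trace and torsion parts. The transfer map $\delta(s,v_{b})$ from \cite[\S 2.2]{arXiv:2308.08117} is by definition the derivative of the Betti-constant lift in the direction $v_{b}$ at $b$, expressed as a vector in ${\rm Lie}(\CA_{b})$. Concretely,
$$
\tilde s(b'+w)=\tilde s(b')+w\,\delta(s,v_{b'})+O(|w|^{2}\cdot\|s\|),
$$
where the $O$-term is uniform on compact parts of $\BD$, and $\delta(s,v_{b'})$ depends continuously on $b'$ and linearly on $s$.

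The key steps, in order, are as follows.

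\begin{enumerate}[label=(\arabic*),leftmargin=*]
\item Write $s_{n}=\ell_{n}s_{\infty}+\ell_{n}r_{n}+c_{n}$, where $r_{n}\to0$ in $V(A,K)$ and $c_{n}$ is a bounded error in Betti coordinates (trace, torsion, and choice of representative). Taylor-expand the lift at $b'$ with increment $w=\ell_{n}^{-1}z$, $|z|<R$:
$$
\tilde s_{n}(b'+\ell_{n}^{-1}z)=\tilde s_{n}(b')+z\,\delta(s_{\infty}+r_{n},v_{b'})+O(R^{2}\ell_{n}^{-1}\|s_{\infty}+r_{n}\|).
$$
A constant section (the trace part) has locally constant Betti coordinates, so it contributes nothing to $\delta$. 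This is why only the image in $V(A,K)$ matters.

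\item Replace $\delta(s_{\infty}+r_{n},v_{b'})$ by $\delta(s_{\infty},v_{b})$. The error is at most $R\cdot\bigl(|\delta(r_{n},\cdot)|+|\delta(s_{\infty},v_{b'})-\delta(s_{\infty},v_{b})|\bigr)$. It is small once $n$ is large and $|b'|<\varepsilon'$, by linearity of $\delta$ in $s$ and its continuity in $b'$.

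\item Compare points in different fibres. The condition $d(s_{n}(b'),x)\le\varepsilon'$ together with $x\in\CA_{0}$ means that, in the trivialization $\CA_{\BD}\simeq\BD\times\CA_{0}$ (holomorphic trivialization of the universal cover, up to the moving lattice), the projection of $s_{n}(b')$ is within $O(\varepsilon')$ of $x$.
\end{enumerate}

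Combining these, $s_{n}(b'+\ell_{n}^{-1}z)$ lies within $O(\varepsilon'+R\cdot o(1)+R^{2}\ell_{n}^{-1})$ of $\phi_{(x,\delta(s_{\infty},v_{b}))}(z)$. To make the comparison, transport the linear curve $\phi_{(x,\delta)}(z)=x+z\delta$ from $\CA_{0}$ to the fibre over $b'+\ell_{n}^{-1}z$. This costs $O(|b'|+R\ell_{n}^{-1})$, because the lattices and the metric $d$ vary continuously. The containment $b'+\ell_{n}^{-1}z\in\BD$ is immediate once $\varepsilon'+R/n_{0}$-type bounds are below $1$, after choosing $n_{0}$ so that $R\ell_{n}^{-1}<1/2$ and $\varepsilon'<1/2$.

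The main obstacle is that the lifts $\tilde s_{n}$ have size of order $\ell_{n}$, so second-order Taylor terms must not be multiplied by anything growing. Distances must therefore be measured modulo the lattice, never in $\CC^{g}$, and the relevant second derivative of $\tilde s_{n}$ has size $O(\ell_{n})$. At scale $\ell_{n}^{-1}$ this yields $O(\ell_{n}\cdot\ell_{n}^{-2}R^{2})\to0$, which is what makes the rescaling work. I would carry this out by expressing the lifts through the Betti trivialization. In those coordinates $\beta(s_{n})$ is $\ell_{n}$ times a fixed real-analytic function plus $o(\ell_{n})$ in $C^{2}$ on a slightly smaller disc. The $C^{2}$ bound follows from linearity of $s\mapsto\beta(s)$ on the finite-dimensional space $V(A,K)$, where all norms are equivalent.
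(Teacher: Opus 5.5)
Your proposal is correct and is essentially the paper's argument, with the cited input written out. The paper takes from the proof of \cite[Lemma 3.3]{arXiv:2308.08117} the estimate $d_{0}(s_{n}(b'+\ell_{n}^{-1}z),\phi_{(x,\delta(s_{\infty},v_{b}))}(z))\leq d_{0}(s_{n}(b'),x)+C|z|\,|u_{n}|/\ell_{n}+C_{0}\ell_{n}^{-1}|z|^{2}+C|s_{\infty}|\,|b'z|$, where $u_{n}=s_{n}-\ell_{n}s_{\infty}$ and $d_{0}$ is Xie--Yuan's auxiliary metric; these four terms are exactly your initial-distance, $r_{n}$, second-order Taylor and $b'$-continuity errors, and the paper then passes to $d$ by uniform continuity on the compact set $\CA_{\ol{\BD}_{0.9}}$. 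Two points need tightening: since $d$ is arbitrary, that comparison gives smallness via a modulus of continuity rather than literally $O(\varepsilon')$; and your Taylor formula holds only in Betti coordinates, since for the holomorphic lift to $\CC^{g}$ it omits a first-order term from the variation of $\Lambda_{b'}$ (harmless, $O(R\ell_{n}^{-1})$ after rescaling once the lift is normalised at $b'$).
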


\begin{proof}
Let $d_{0}$ be the metric on $\CA_{\BD}$ defined before Lemma 3.3 in \cite{arXiv:2308.08117}. Fix a Euclidean norm $\left|\cdot\right|$ on $V(A,K)$. By the proof of \cite[Lemma 3.3]{arXiv:2308.08117}, we see that there are constants $C_{0},C>0$ depending only on $\CA,\BD$ and $s_{\infty}$ such that for any $b'\in\BD$ and $z\in\CC$ with $|b'|+\ell_{n}^{-1}|z|<0.9$, the inequality
$$
d_{0}(s_{n}(b'+\ell_{n}^{-1}z),\phi_{(x,\delta(s_{\infty},v_{b}))}(z))\leq d_{0}(s_{n}(b'),x)+C|z|\cdot|u_{n}|/\ell_{n}+C_{0}\ell_{n}^{-1}|z|^{2}+C|s_{\infty}||b'z|
$$
holds. Here $u_{n}=s_{n}-\ell_{n}s_{\infty}\in V(A,K)$. By our assumption $|u_{n}|/\ell_{n}\to 0$ as $n\to\infty$.

Since $d$ and $d_{0}$ induce the same topology on the compact subset $\CA_{\ol{\BD}_{0.9}}$, there is a constant $\delta>0$ such that for any $x_{1},x_{2}\in\CA_{\ol{\BD}_{0.9}}$, if $d_{0}(x_{1},x_{2})<\delta$, then $d(x_{1},x_{2})<\varepsilon$. Similarly, there is $\varepsilon_{1}>0$ such that if $d(x_{1},x_{2})<\varepsilon_{1}$, then $d_{0}(x_{1},x_{2})<\frac{\delta}{3}$. Choose $0<\varepsilon'<\min\{\varepsilon_{1},\frac{\delta}{3CR|s_{\infty}|},0.8\}$. Finally, choose $n_{0}$ such that for all $n>n_{0}$, we have $\ell_{n}^{-1}R<0.1$ and $CR\cdot|u_{n}|/\ell_{n}+C_{0}\ell_{n}^{-1}R^{2}<\frac{\delta}{3}$. We prove that $\varepsilon'$ and $n_{0}$ satisfy the lemma.

In fact, for any $b'\in\BD_{\varepsilon'}$, $z\in\BD_{R}$ and $n>n_{0}$ with $d(s_{n}(b'),x)<\varepsilon'$, we have $|b'|+\ell_{n}^{-1}|z|<0.8+0.1=0.9$ and $d_{0}(s_{n}(b'),x)<\frac{\delta}{3}$. Hence the right hand side above is less than
$$
\frac{\delta}{3}+CR\cdot|u_{n}|/\ell_{n}+C_{0}\ell_{n}^{-1}R^{2}+C|s_{\infty}|R\varepsilon'<\delta.
$$
This implies the lemma by our choice of $\delta$.
\end{proof}

\subsection{Estimations of a single linear entire curve}\label{2.3}
The goal of this subsection is to prove Corollary \ref{rm} below, which includes two inequalities for every (Zariski dense) linear entire curve. This corollary can be seen as an unweighted version of the second main theorem (as well as the GCD inequality) for abelian varieties in Nevanlinna theory, which was established by Yamanoi (see Theorem \ref{Nevanlinna} below). We will actually deduce the corollary from Yamanoi's theorem.

We first recall some basic Nevanlinna theory.

\medskip\medskip
\noindent\textbf{Counting function}\medskip

\noindent Let $X$ be a projective variety over $\CC$ and let $Y\subseteq X$ be a closed subscheme. Let $f\colon \CC\to X$ be a holomorphic map such that $f(\CC)\nsubseteq Y$. We recall the definition of $N(r,f,Y)$ in \cite{zbMATH02144170}.

Let $D_{1},\dots,D_{l}$ be effective Cartier divisors such that $Y=D_{1}\cap\dots\cap D_{l}$. This means that $\CI_{Y}=\CI_{D_{1}}+\dots+\CI_{D_{l}}$, where $\CI_{Y},\CI_{D_{1}},\dots,\CI_{D_{l}}$ are the defining ideal sheaves in $\CO_{X}$. Set
$$
{\rm ord}_{z}f^{*}Y\colonequals \min\{{\rm ord}_{z}f^{*}D_{1},\dots,{\rm ord}_{z}f^{*}D_{l}\}.
$$
When $f(\CC)\subseteq {\rm Supp}(D_{i})$, we set ${\rm ord}_{z}f^{*}D_{i}=+\infty$. The definition of ${\rm ord}_{z}f^{*}Y$ does not depend on the choice of the Cartier divisors $D_{1},\dots,D_{l}$. We define the \emph{counting function} by
$$
N(r,f,Y)\colonequals \int_{1}^{r}\left(\sum\limits_{z\in\BD_{t}}{\rm ord}_{z}f^{*}Y\right)\frac{{\rm d}t}{t},
$$
$$
N^{(k)}(r,f,Y)\colonequals \int_{1}^{r}\left(\sum\limits_{z\in\BD_{t}}\min\{k,{\rm ord}_{z}f^{*}Y\}\right)\frac{{\rm d}t}{t}.
$$

Here $\BD_{t}=\{z\in\CC\big| |z|<t\}$ is the open disc of radius $t$ with centre 0. \medskip

\medskip\medskip

\noindent\textbf{Characteristic function and proximity function}\medskip

\noindent Let $X$ be a projective variety over $\CC$. Let $L$ be a line bundle on $X$ with a (fixed) smooth Hermitian metric $\left\|\cdot\right\|$ on it. Let $c_{1}(L,\left\|\cdot\right\|)$ be the Chern form. Let $f\colon \CC\to X$ be a holomorphic curve. Define the \emph{characteristic function} by
$$
T(r,f,L)\colonequals \int_{1}^{r}\left(\int_{\BD_{t}}f^{*}c_{1}(L,\left\|\cdot\right\|)\right)\frac{{\rm d}t}{t}+O(1),
$$
which is independent of the choice of $\left\|\cdot\right\|$ up to a bounded function.

For an effective Cartier divisor $D$ on $X$ associated to a section $s_{D}\in \Gamma(X,L)$, We also set $T(r,f,D)\colonequals T(r,f,L)$. If $f(\CC)\nsubseteq D$, define the \emph{proximity function} by
$$
m(r,f,D)\colonequals -\int_{0}^{2\pi}\log\left\|s_{D}(f(r{\rm e}^{{\rm i}\theta}))\right\|\frac{{\rm d\theta}}{2\pi}+O(1),
$$
which is also independent of the choice of $\left\|\cdot\right\|$ up to a bounded function. As $X$ is compact, the function $x\mapsto\left\|s_{D}(x)\right\|$ is bounded above, and thus $m(r,f,D)$ is bounded below.

\begin{theorem}[First Main Theorem]\label{fmt}
$X,L,D$ as above. Then for any holomorphic map $f\colon \BD_{R}\to X$ with $f(\BD_{R})\nsubseteq D$ and any $0<r<R$, we have
 $$
 T(r,f,D)=m(r,f,D)+N(r,f,D)+O(1).
 $$
\end{theorem}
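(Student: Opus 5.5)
The plan is to derive the First Main Theorem from the Poincaré--Lelong formula together with Jensen's formula applied to the function $\log\|s_{D}\circ f\|$ on the disc $\BD_{r}$. The metric $\|\cdot\|$ on $L$ is fixed throughout. Since $f(\BD_{R})\nsubseteq D$, the function $u\colonequals\log\|s_{D}\circ f\|^{2}$ is not identically $-\infty$ on $\BD_{R}$. Near any point it equals $\log|g|^{2}+(\text{smooth})$, where $g$ is a local holomorphic function defining $f^{*}D$, so $u$ is subharmonic modulo smooth functions. The Poincaré--Lelong formula then gives, as currents on $\BD_{R}$,
$$
dd^{c}\log\|s_{D}\circ f\|^{2}=[f^{*}D]-f^{*}c_{1}(L,\left\|\cdot\right\|),
$$
where $[f^{*}D]=\sum_{z}{\rm ord}_{z}f^{*}D\cdot\delta_{z}$. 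I will normalise $d^{c}$ so that $dd^{c}\log|z|^{2}=\delta_{0}$.

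Next, for $1\le t<r<R$, I integrate this identity over $\BD_{t}$. I then apply the Green--Jensen formula in its integrated form
$$
\int_{1}^{r}\Big(\int_{\BD_{t}}dd^{c}u\Big)\frac{dt}{t}=\frac{1}{2}\int_{0}^{2\pi}u(re^{i\theta})\frac{d\theta}{2\pi}-\frac{1}{2}\int_{0}^{2\pi}u(e^{i\theta})\frac{d\theta}{2\pi},
$$
which is valid for $u$ a difference of subharmonic functions with logarithmic singularities. With the factor $\tfrac12$ coming from $u=\log\|\cdot\|^{2}$, this yields
$$
N(r,f,D)-\int_{1}^{r}\Big(\int_{\BD_{t}}f^{*}c_{1}(L,\left\|\cdot\right\|)\Big)\frac{dt}{t}=\int_{0}^{2\pi}\log\|s_{D}(f(re^{i\theta}))\|\frac{d\theta}{2\pi}-\int_{0}^{2\pi}\log\|s_{D}(f(e^{i\theta}))\|\frac{d\theta}{2\pi}.
$$
The last term is independent of $r$ and finite. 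It is finite because $\log\|s_{D}\circ f\|$ has only logarithmic singularities, at isolated points, on the unit circle, and these are integrable. Rearranging gives $T(r,f,D)=m(r,f,D)+N(r,f,D)+O(1)$, and the constant is absorbed into the $O(1)$ ambiguities already built into the definitions of $T$ and $m$.

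The main technical point is justifying the Jensen/Stokes step when $f^{*}D$ has zeros on a circle $|z|=t$ or on the unit circle. I would handle this as in the classical proof. First remove small discs around the finitely many zeros in $\ol{\BD}_{r}$ and apply Stokes to the smooth form on the complement. The boundary contributions around each zero $a$ tend to ${\rm ord}_{a}f^{*}D$ as the radii shrink, since locally $u=\log|z-a|^{2m}+\text{smooth}$. Then integrate in $t$ against $dt/t$. The integrand counting zeros in $\BD_{t}$ is a step function, so the $t$-integral is unaffected by the measure-zero set of radii containing zeros. A further point is that independence of the metric is needed only up to $O(1)$. This holds because changing the metric by $e^{-\varphi}$, with $\varphi$ smooth on compact $X$, changes $c_{1}$ by $dd^{c}\varphi$ and $\log\|s_{D}\|$ by a bounded function. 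Both changes are consistent with the identity above.
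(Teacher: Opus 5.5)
The paper gives no proof of this statement; it is quoted as the classical First Main Theorem and is later used only for a single fixed $f$, in the proof of Corollary \ref{rm}. Your argument is the standard proof and it is correct: Poincar\'e--Lelong for $u=\log\|s_{D}\circ f\|^{2}$, followed by the integrated Green--Jensen formula with $dd^{c}=\frac{i}{2\pi}\partial\bar\partial$. The factor $\frac12$ and the signs check out, and for $1\le r<R$ you obtain the exact identity
$$
\int_{1}^{r}\Big(\int_{\BD_{t}}f^{*}c_{1}(L,\|\cdot\|)\Big)\frac{{\rm d}t}{t}
=-\int_{0}^{2\pi}\log\|s_{D}(f(re^{i\theta}))\|\frac{{\rm d}\theta}{2\pi}+N(r,f,D)+\int_{0}^{2\pi}\log\|s_{D}(f(e^{i\theta}))\|\frac{{\rm d}\theta}{2\pi}.
$$

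Two small points are worth tightening.

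\emph{Continuity at exceptional radii.} When you integrate $\frac12\frac{{\rm d}}{{\rm d}t}\int_{0}^{2\pi}u(te^{i\theta})\frac{{\rm d}\theta}{2\pi}$ from $1$ to $r$, it is not enough that the zero-counting integrand is a step function. You also need the circle mean $t\mapsto\int_{0}^{2\pi}\log\|s_{D}(f(te^{i\theta}))\|\frac{{\rm d}\theta}{2\pi}$ to be continuous at the finitely many radii where $f^{*}D$ has zeros, possibly including $t=1$ or $t=r$. Only then does the fundamental theorem of calculus apply piecewise. This follows from $\int_{0}^{2\pi}\log|te^{i\theta}-a|\frac{{\rm d}\theta}{2\pi}=\log\max\{t,|a|\}$ after factoring out the zeros locally.

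\emph{Meaning of the $O(1)$.} The constant $\int_{0}^{2\pi}\log\|s_{D}(f(e^{i\theta}))\|\frac{{\rm d}\theta}{2\pi}$ depends on $f$. It is bounded above uniformly, but not below, since $f$ may pass close to $D$ on the unit circle. So it is not absorbed into the metric-change ambiguities in the definitions of $T$ and $m$, which are uniform in $f$. The $O(1)$ in the theorem should instead be read as bounded in $r$ for each fixed $f$, which is exactly how the paper uses it.

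Relatedly, the normalisation $\int_{1}^{r}$ in the definitions tacitly requires $R>1$. For $r<1$ the same identity holds with oriented integrals.
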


The next theorem is due to Yamanoi. Part (1) of the theorem is called the Second Main Theorem (for abelian varieties), which is an analytic analogue of our abc inequality (Theorem \ref{main1}). Part (2) is an analytic analogue of our GCD inequality (Theorem \ref{main}).

\begin{theorem}\label{Nevanlinna}
Let $A$ be an abelian variety over $\CC$ and let $L$ be an ample line bundle on $A$. Let $f\colon \CC\to A$ be a holomorphic curve such that the image of $f$ in $A$ is Zariski dense.

\begin{enumerate}[label=\normalfont(\arabic*),leftmargin=*]
\item {\rm (Second Main Theorem, \cite[Theorem 3.1.1]{zbMATH02144170})} If $D$ is a reduced effective divisor on $A$, then
$$
T(r,f,D)\leq N^{(1)}(r,f,D)+\varepsilon T(r,f,L)\quad\quad \big|\big|_{\varepsilon}\quad \forall \varepsilon>0.
$$
Here $\big|\big|$ means that the estimate holds for $r>0$ outside some exceptional set of finite Lebesgue measure. $T(r,f,D), T(r,f,L)$ are with respect to any smooth Hermitian metrics on $\CO_{A}(D), L$ respectively. Similarly for \emph{(2)} below.

\item {\rm (\cite[Corollary 2.5.8]{zbMATH02144170})} If $Y\subseteq A$ is a closed subscheme whose support has codimension greater than 1, then
$$
N(r,f,Y)\leq\varepsilon T(r,f,L)\quad\quad \big|\big|_{\varepsilon}\quad \forall \varepsilon>0.
$$
\end{enumerate}
\end{theorem}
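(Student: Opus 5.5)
Both parts of Theorem \ref{Nevanlinna} are due to Yamanoi, and the plan is to \emph{deduce them from the cited statements} in \cite{zbMATH02144170} rather than reprove them. The work consists of checking that his hypotheses and normalisations match the ones fixed above. First, I will check that the characteristic and counting functions used here agree with Yamanoi's up to $O(1)$ or $O(\log r)$ terms. Our $T(r,f,L)$ and $N(r,f,Y)$ integrate from $1$ to $r$ against $\frac{{\rm d}t}{t}$. A different base point or a different Hermitian metric changes these quantities only by $O(\log r)$ or $O(1)$. Since $f$ has Zariski dense image, it is non-algebraic, so $T(r,f,L)/\log r\to\infty$. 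Any such error term can therefore be absorbed into $\varepsilon T(r,f,L)$ after shrinking $\varepsilon$. The same absorption handles the exceptional set of finite Lebesgue measure: the union of finitely many such sets still has finite measure.

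For (1), the main point is that $D$ is only assumed reduced and effective, not ample, whereas Yamanoi's second main theorem is stated for a reduced divisor on an abelian variety together with an ample reference bundle. I would choose $m$ with $mL-\CO_{A}(D)$ ample. Then $T(r,f,D)\le mT(r,f,L)+O(1)$, so all error terms in Yamanoi's inequality that are measured against the characteristic of $D$, or of any auxiliary ample bundle, are dominated by a multiple of $T(r,f,L)$. Rescaling $\varepsilon$ then gives exactly the stated form $T(r,f,D)\le N^{(1)}(r,f,D)+\varepsilon T(r,f,L)$.

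For (2), I would again invoke \cite[Corollary 2.5.8]{zbMATH02144170}. This requires identifying our ${\rm ord}_{z}f^{*}Y$, defined as a minimum over generators $D_{1},\dots,D_{l}$ of the ideal, with Yamanoi's counting function for a closed subscheme. This identification holds because the minimum of the ${\rm ord}_{z}f^{*}D_{i}$ equals the order of vanishing of $f^{*}\CI_{Y}$ at $z$, which is intrinsic. If one wanted an argument not relying on that corollary, the route I would try first is Yamanoi's jet-space method. Lift $f$ to the $k$-jet curve $J_{k}(f)$ in $A\times\CC^{kg}$, where $g=\dim A$; the trivialisation comes from the translation invariance of the jet bundle. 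A high-order contact of $f$ with $Y$ forces $J_{k}(f)$ to meet a subscheme whose codimension grows with the order of contact. Since ${\rm codim}\,Y\ge2$, this extra codimension gives room to apply the subspace-type estimate for the Zariski closure of $J_{k}(f)$.

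I expect the main obstacle, if (2) is reproved rather than cited, to be controlling the contribution of the \emph{low-order} contacts of $f$ with $Y$ uniformly. This requires Yamanoi's fine estimates on the Zariski closure of the jet curves, and it is where I would rely on the citation.
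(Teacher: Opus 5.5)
Your proposal is correct and matches the paper. The paper gives no proof of its own; it simply quotes Yamanoi's Theorem 3.1.1 and Corollary 2.5.8 from \cite{zbMATH02144170}, and your normalisation checks (including the harmless extra step for non-ample $D$) are exactly the routine verifications implicit in that citation: the base point of the integrals, the choice of metrics, absorbing $O(\log r)$ terms via $T(r,f,L)/\log r\to\infty$, and the intrinsic nature of ${\rm ord}_{z}f^{*}Y$. The paper adds only that it later applies the theorem to linear entire curves, where a simplified proof is available in \cite{zbMATH06448080}, and uses only the non-truncated form of (1), so your jet-space sketch is not needed.
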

Now we can deduce the following corollary. Actually, we will use Theorem \ref{Nevanlinna} only for linear entire curves. In this case there is a simplified proof in \cite[pp. 255-257]{zbMATH06448080}. For Theorem \ref{Nevanlinna} (1) we only use its non-truncated version, which implies the defect $\delta(f,D)=\liminf_{r\to\infty}m(r,f,D)/T(r,f,D)$ is zero, as proved in \cite{SY97}. The non-truncated version was also generalized to semi-abelian varieties in \cite{NWY02}.

\begin{corollary}\label{rm}
Let $A$ be an abelian variety over $\CC$ and let $D$ be an ample effective divisor on $A$. Let $f\colon \CC\to A$ be a linear entire curve such that the image of $f$ in $A$ is Zariski dense. Fix $\varepsilon>0$ and a smooth Hermitian metric $\left\|\cdot\right\|$ on $L=\CO_{A}(D)$. Then

\begin{enumerate}[label=\normalfont(\arabic*),leftmargin=*]
\item For $r$ large enough, we have
$$
\sum\limits_{z\in\BD_{r}}{\rm ord}_{z}f^{*}D\geq (1-\varepsilon) \int_{\BD_{r}}f^{*}c_{1}(L,\left\|\cdot\right\|).
$$

\item If $Y\subseteq A$ is a closed subscheme of codimension greater than $1$, then for $r$ large enough, we have
$$
\sum\limits_{z\in\BD_{r}}{\rm ord}_{z}f^{*}Y\leq \varepsilon \int_{\BD_{r}}f^{*}c_{1}(L,\left\|\cdot\right\|).
$$
\end{enumerate}
\end{corollary}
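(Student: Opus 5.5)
The plan is to convert the integrated (Nevanlinna) inequalities of Theorem \ref{Nevanlinna} into the pointwise statements about the unintegrated counting function $n(t)\colonequals \sum_{z\in\BD_{t}}{\rm ord}_{z}f^{*}D$ (resp. $f^{*}Y$). The special feature of linear curves that makes this possible is that the area function is exactly quadratic.

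\textbf{Reduction to an invariant form.} First replace $\left\|\cdot\right\|$ by a metric whose Chern form $\omega$ is the translation-invariant representative of $c_{1}(L)$. Writing $f(z)=\phi_{(x,v)}(z)$, the pullback $f^{*}\omega$ is a constant multiple of $\frac{{\rm i}}{2}{\rm d}z\wedge{\rm d}\bar z$. Hence $a(t)\colonequals\int_{\BD_{t}}f^{*}\omega=c\,t^{2}$ with $c>0$, because $D$ is ample and $f$ is nonconstant.

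For the original metric, $c_{1}(L,\left\|\cdot\right\|)=\omega+dd^{c}\varphi$ with $\varphi$ smooth on the compact $A$. By Stokes,
$$
\int_{\BD_{r}}f^{*}dd^{c}\varphi=\int_{\partial\BD_{r}}d^{c}(\varphi\circ f),
$$
and this is $O(r)$ because $f$ has bounded derivative. Thus the right-hand sides of (1) and (2) equal $c r^{2}(1+o(1))$, and it suffices to prove $n(r)\geq(1-\varepsilon)cr^{2}$ in (1) and $n(r)\leq\varepsilon cr^{2}$ in (2). We also get $T(r,f,L)=\int_{1}^{r}ct\,{\rm d}t+O(1)=\frac{c}{2}r^{2}+O(1)$.

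\textbf{Part (1).} Zariski density gives $f(\CC)\nsubseteq D$. We use two bounds on $N(r)=\int_{1}^{r}n(t)\frac{{\rm d}t}{t}$:
\begin{itemize}
\item The First Main Theorem \ref{fmt}, together with $m$ being bounded below, gives the upper bound $N(r)\leq\frac{c}{2}r^{2}+O(1)$ for all $r$.
\item The non-truncated consequence of Theorem \ref{Nevanlinna}(1), applied with a small parameter $\varepsilon_{1}$, gives the lower bound $N(R)\geq(1-\varepsilon_{1})\frac{c}{2}R^{2}$ for all $R$ outside a set $E$ of finite measure.
\end{itemize}
Since $n$ is nondecreasing, for $R\notin E$ and $r=(1-\eta)R$ we have
$$
n(R)\log\tfrac{1}{1-\eta}\geq N(R)-N(r)\geq\tfrac{c}{2}R^{2}\big((1-\varepsilon_{1})-(1-\eta)^{2}\big)-O(1).
$$
This yields $n(R)\geq(1-O(\eta)-O(\varepsilon_{1}/\eta))cR^{2}$ for $R\notin E$ large. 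Choosing $\eta$ small and then $\varepsilon_{1}\ll\eta$ gives the bound outside $E$.

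To remove the exceptional set, note that $E$ has finite measure, so for $R$ large the interval $[(1-\eta)R,R]$ is not contained in $E$. Pick $R'$ in this interval with $R'\notin E$. Monotonicity then gives $n(R)\geq n(R')\geq(1-\cdots)(1-\eta)^{2}cR^{2}$, which proves (1).

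\textbf{Part (2).} Here $f(\CC)\nsubseteq Y$ by Zariski density. For $\lambda\in[2,3]$ we have
$$
n(r)\log\lambda\leq N(\lambda r)-N(r)\leq N(\lambda r).
$$
For $r$ large, choose $\lambda\in[2,3]$ with $\lambda r\notin E$; this is possible since $E$ has finite measure. Theorem \ref{Nevanlinna}(2) with parameter $\varepsilon_{1}$ then gives $n(r)\leq\varepsilon_{1}\frac{c}{2}\cdot 9r^{2}/\log 2$, and taking $\varepsilon_{1}$ small proves (2).

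\textbf{Main obstacle.} The delicate point is part (1). The lower bound on $N$ must be nearly sharp, to precision $\varepsilon_{1}\ll\eta$, so that differencing over the short interval $[(1-\eta)R,R]$ still leaves a main term of size essentially $cR^{2}$. This works only because we also have the matching upper bound from the First Main Theorem and the exact quadratic growth of $a(t)$, which is where linearity of $f$ is used.
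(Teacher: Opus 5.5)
Your proposal is correct and follows essentially the paper's route: exact quadratic growth of $\int_{\BD_r}f^{*}\omega$ and of $T(r,f,L)$ from linearity, the First Main Theorem upper bound on $N$, Yamanoi's inequalities off a set of finite measure, and the differencing inequality $n(\theta r)\log\frac{1}{\theta}\leq N(r)-N(\theta r)\leq n(r)\log\frac{1}{\theta}$ for the monotone function $n$. The difference is only bookkeeping. The paper first removes the exceptional set at the level of $N$ to get $N(r)/r^{2}\to c/2$ (resp.\ $\to 0$) and then applies the Tauberian Lemma~\ref{shufen}. You instead difference at non-exceptional radii and then remove the exceptional set using monotonicity of $n$.
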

\begin{proof}
For all $r>0$, let $
\tau_{L}(r)\colonequals \int_{\BD_{r}}f^{*}c_{1}(L,\left\|\cdot\right\|)$. We first show that both $\frac{\tau_{L}(r)}{r^{2}}$ and $\frac{T(r,f,L)}{r^{2}}$ have limits. In fact, let $\omega$ be the unique translation-invariant $(1,1)$-form representing the Chern class $c_{1}(L)$. Then since $f$ is linear, there is a constant $c>0$ such that
$$
\int_{\BD_{r}}f^{*}\omega=cr^{2}
$$
for all $r>0$, which also implies that
$$
T(r,f,L)=\int_{1}^{r}\left(\int_{\BD_{t}}f^{*}\omega\right)\frac{{\rm d}t}{t}+O(1)=\frac{1}{2}cr^{2}+O(1)
$$
for all $r>0$. This proves $\lim_{r\to\infty}\frac{T(r,f,L)}{r^{2}}=\frac{c}{2}$.

Now let $\alpha$ be the 1-form on $A$ such that $c_{1}(L,\left\|\cdot\right\|)-\omega={\rm d}\alpha$. Then we have
\begin{align*}
\left|\int_{\BD_{r}}f^{*}c_{1}(L,\left\|\cdot\right\|)-\int_{\BD_{r}}f^{*}\omega\right|=\left|\int_{\partial\BD_{r}}f^{*}\alpha\right|
\end{align*}
By compactness, the right hand side is bounded above by a linear term of $r$. This shows that $\lim_{r\to\infty}\frac{\tau_{L}(r)}{r^{2}}=c$.

Next we claim that $\lim_{r\to\infty}\frac{N(r,f,D)}{r^{2}}=\frac{c}{2}$ and $\lim_{r\to\infty}\frac{N(r,f,Y)}{r^{2}}=0$. Once the claims are true, apply the Lemma \ref{shufen} below to the monotonically increasing functions $n_{D}(r)\colonequals \sum\limits_{z\in\BD_{r}}{\rm ord}_{z}f^{*}D$ and $n_{Y}(r)\colonequals \sum\limits_{z\in\BD_{r}}{\rm ord}_{z}f^{*}Y$, we get $n_{D}(r)\sim cr^{2}\sim\tau_{L}(r)$ and $n_{Y}(r)=o(r^{2})=o(\tau_{L}(r))$, which are exactly the inequalities in the proposition. It remains to prove the claims.

By Theorem \ref{Nevanlinna}, for any $\varepsilon>0$, there is a subset $\Sigma_{\varepsilon}$ of $\RR_{>0}$ of finite Lebesgue measure such that $N(r,f,D)\geq (1-\varepsilon)T(r,f,L)$ and $N(r,f,Y)\leq\varepsilon T(r,f,L)$ for all $r\in\RR_{>0}\backslash\Sigma_{\varepsilon}$. Let $m$ be the standard Lebesgue measure on $\RR$ with $m([0,1])=1$. Then for any $r>1+m(\Sigma_{\varepsilon})$ large enough, there are real numbers $r',r''>0$ with $r-1-m(\Sigma_{\varepsilon})<r'<r$ and $r<r''<r+1+m(\Sigma_{\varepsilon})$ such that $N(r',f,D)\geq (1-\varepsilon)T(r',f,L)$ and $N(r'',f,Y)\leq\varepsilon T(r'',f,L)$. Hence we have
$$
N(r,f,D)\geq N(r',f,D)\geq (1-\varepsilon)T(r',f,L)\geq (1-\varepsilon)T(r-1-m(\Sigma_{\varepsilon}),f,L)
$$
and
$$
N(r,f,Y)\leq N(r'',f,Y)\leq \varepsilon T(r'',f,L)\leq \varepsilon T(r+1+m(\Sigma_{\varepsilon}),f,L)
$$
For fixed $\varepsilon>0$, both the terms $T(r-1-m(\Sigma_{\varepsilon}),f,L)$ and $T(r+1+m(\Sigma_{\varepsilon}),f,L)$ equal to $\frac{c}{2}r^{2}+o(r^{2})$, and thus $\liminf_{r\to\infty}\frac{N(r,f,D)}{r^{2}}\geq (1-\varepsilon)\frac{c}{2}$ and $\limsup_{r\to\infty}\frac{N(r,f,Y)}{r^{2}}\leq \varepsilon \frac{c}{2}$. Since $\varepsilon$ is arbitrary, we have $\liminf_{r\to\infty}\frac{N(r,f,D)}{r^{2}}\geq \frac{c}{2}$ and $\limsup_{r\to\infty}\frac{N(r,f,Y)}{r^{2}}\leq 0$. On the other hand, the term $N(r,f,Y)$ is always nonnegative, and the First Main Theorem \ref{fmt} implies that $N(r,f,D)\leq T(r,f,L)+O(1)=\frac{c}{2}r^{2}+O(1)$. This proves the claims.
\end{proof}

\begin{lemma}\label{shufen}
Let $n\colon \RR_{>0}\to\RR$ be a monotone increasing function. For $r>0$, let $N(r)=\int_{1}^{r}\frac{n(t)}{t}{\rm d}t$. If $\lim_{r\to\infty}\frac{N(r)}{r^{2}}=a\in\RR$, then $\lim_{r\to\infty}\frac{n(r)}{r^{2}}=2a$.
\end{lemma}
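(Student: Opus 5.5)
The plan is a Tauberian argument that exploits the monotonicity of $n$. Set $\ell=\liminf_{r\to\infty} n(r)/r^{2}$ and $L=\limsup_{r\to\infty} n(r)/r^{2}$. We show $L\le 2a$ and $\ell\ge 2a$ separately; in particular both will be finite. The key tool is comparing $N$ at two radii $r$ and $\lambda r$, with $\lambda$ a fixed number close to $1$, and using that $n$ is monotone on the interval between them.

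\emph{Upper bound.} Fix $\lambda>1$. For $r$ large, monotonicity gives $n(t)\ge n(r)$ for $t\in[r,\lambda r]$, so
$$N(\lambda r)-N(r)=\int_r^{\lambda r}\frac{n(t)}{t}\,{\rm d}t\ge n(r)\log\lambda.$$
By hypothesis, $N(\lambda r)-N(r)=a(\lambda^{2}-1)r^{2}+o(r^{2})$. Dividing by $r^{2}$ and letting $r\to\infty$ gives
$$L\le \frac{a(\lambda^{2}-1)}{\log\lambda}.$$
Letting $\lambda\to1^{+}$, the right side tends to $2a$, so $L\le 2a$.

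\emph{Lower bound.} Fix $\lambda<1$. Now $n(t)\le n(r)$ for $t\in[\lambda r,r]$, so
$$N(r)-N(\lambda r)\le n(r)\log(1/\lambda),$$
while the left side equals $a(1-\lambda^{2})r^{2}+o(r^{2})$. This gives
$$\ell\ge \frac{a(1-\lambda^{2})}{\log(1/\lambda)},$$
which tends to $2a$ as $\lambda\to1^{-}$. Hence $\ell\ge 2a$.

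Combining the two bounds gives $\lim_{r\to\infty} n(r)/r^{2}=2a$. No step is genuinely hard; the only care needed is that all error terms are $o(r^{2})$ for each fixed $\lambda$, and that the limit $\lambda\to1$ is taken only after $r\to\infty$.
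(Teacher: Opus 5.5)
Your proof is correct and is essentially the paper's argument: the paper uses the two-sided monotonicity estimate $n(cr)\log\frac{1}{c}\leq N(r)-N(cr)\leq n(r)\log\frac{1}{c}$ for fixed $0<c<1$ and then lets $c\to 1^{-}$. Your upper bound with $\lambda>1$ is just the left half of that estimate after substituting $\lambda=1/c$ and $r\mapsto cr$, and it yields the identical constant $\frac{a(\lambda^{2}-1)}{\log\lambda}=\frac{a(1-c^{2})}{c^{2}\log(1/c)}$.
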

\begin{proof}
For each $r>0$ and $0<c<1$, as $n$ is monotonically increasing, we have
\begin{align*}
n(cr)\int_{cr}^{r}\frac{{\rm d}t}{t}\leq\int_{cr}^{r}n(t)\frac{{\rm d}t}{t}\leq n(r)\int_{cr}^{r}\frac{{\rm d}t}{t}.
\end{align*}
Note that $\int_{cr}^{r}\frac{{\rm d}t}{t}=\log\frac{1}{c}$ and $\int_{cr}^{r}n(t)\frac{{\rm d}t}{t}=N(r)-N(cr)$. Thus we have
\begin{align*}
n(cr)\log\frac{1}{c}\leq N(r)-N(cr)\leq n(r)\log\frac{1}{c}.
\end{align*}
Now fix $0<c<1$. As $N(r)/r^{2}\to a$, we have $N(cr)/r^{2}\to ac^{2}$. Dividing the left half of the inequality above by $(cr)^{2}$, and taking the upper limit as $r\to\infty$, we get
$$
\limsup\limits_{r\to\infty}\frac{n(r)}{r^{2}}=\limsup\limits_{r\to\infty}\frac{n(cr)}{(cr)^{2}}\leq\lim\limits_{r\to\infty}\frac{N(r)-N(cr)}{c^{2}r^{2}\log(1/c)}=\frac{a(1-c^{2})}{c^{2}\log(1/c)}.
$$
Similarly, we also have
$$
\liminf\limits_{r\to\infty}\frac{n(r)}{r^{2}}\geq\lim\limits_{r\to\infty}\frac{N(r)-N(cr)}{r^{2}\log(1/c)}=\frac{a(1-c^{2})}{\log(1/c)}.
$$
Since $0<c<1$ is arbitrary, let $c\to 1^{-}$, we then get
$$
2a=\lim_{c\to 1^{-}}\frac{a(1-c^{2})}{c^{2}\log(1/c)}\geq \limsup\limits_{r\to\infty}\frac{n(r)}{r^{2}}\geq \liminf\limits_{r\to\infty}\frac{n(r)}{r^{2}}\geq \lim_{c\to 1^{-}}\frac{a(1-c^{2})}{\log(1/c)}=2a.
$$
This just means that $\lim_{r\to\infty}\frac{n(r)}{r^{2}}=2a$.
\end{proof}

\begin{remark}\label{jq}
Corollary \ref{rm}(2) can be strengthened to assert that for any effective Cartier divisor $D$ whose support contains $Y$, we have
$$
\sum\limits_{z\in\BD_{r}\cap f^{-1}(Y)}{\rm ord}_{z}f^{*}D\leq\varepsilon\int_{\BD_{r}}f^{*}c_{1}(L,\left\|\cdot\right\|)
$$
for $r$ large enough. In fact, we claim that the order ${\rm ord}_{z}f^{*}D\,(z\in\CC)$ is bounded for any fixed $D$. Let $f=\phi_{(x_{0},v)}$, where $x_{0}\in A(\CC)$ is the starting point and $v\in{\rm Lie}(A)$ is the direction. Then ${\rm ord}_{z}f^{*}D={\rm ord}_{0}\phi_{(f(z),v)}^{*}D$ for all $z\in\CC$, and thus we only need to bound the term ${\rm ord}_{0}\phi_{(x,v)}^{*}D\,(x\in A(\CC))$.

As $f(\CC)$ is Zariski dense in $A$, the linear entire curve $\phi_{(x,v)}$ is Zariski dense for any $x\in A(\CC)$. Thus for each $x\in A(\CC)$, the order ${\rm ord}_{0}\phi_{(x,v)}^{*}D$ is finite. By Rouch\'{e}'s theorem, there is a Euclidean open neighbourhood $U_{x}$ of $x$ in $A(\CC)$ such that ${\rm ord}_{0}\phi_{(x',v)}^{*}D\leq {\rm ord}_{0}\phi_{(x,v)}^{*}D$ for any $x'\in U_{x}$. The claim then follows from the compactness of $A(\CC)$.

By the claim, for any fixed $D$, choose a constant $M>0$ such that ${\rm ord}_{z}f^{*}D\leq M$ for all $z\in\CC$. Then we have ${\rm ord}_{z}f^{*}D\leq M\cdot {\rm ord}_{z}f^{*}Y$ for all $z\in f^{-1}(Y)$.  The strengthened version follows by applying Corollary \ref{rm}(2) to $f,Y$ and $\frac{\varepsilon}{M}$.
\end{remark}
\subsection{Uniform estimations}\label{3}
The goal of this subsection is to establish, uniformly for a family of linear entire curves, the two inequalities in Corollary \ref{rm}, which play a fundamental role in the proof of the main theorems in \S\ref{4}. The main results are summarized in Proposition \ref{conclusion}, while the intermediate Proposition \ref{rongchi} will also be used later in Lemma \ref{juli}.

In the following, we always fix a complex abelian variety and a direction $v\in{\rm Lie}(A)$. We also fix a divisor $D$ and a closed subscheme $Y$ of $A$. The variable is the starting point $x\in A(\CC)$ of the linear entire curve $\phi_{(x,v)}$ in $A$. We want to make the inequalities in Corollary \ref{rm} hold for certain linear entire curves $\phi_{(x,v)}$, where $x$ belongs to a fixed subset $Z$ of $A(\CC)$. Note that in general we can not take $Z=A(\CC)$, since the linear leaf $\CF(x,v)$ may be contained in ${\rm Supp}\,D$ or $Y$ for some $x\in A(\CC)$. Instead, we will take $Z$ to be a compact subset of $A(\CC)$ with `good' properties. Most arguments in this subsection are based on compactness. We first prove the following basic lemma.

\begin{lemma}\label{c}
Let $A$ be an abelian variety over $\CC$ and let $Y$ be a proper closed subset of $A$. Let $v\in{\rm Lie}(A)$ be a fixed vector. Let $Z$ be a (Euclidean) closed subset of $A(\CC)$ such that for any point $x\in Z$, the linear leaf $\CF(x,v)\subseteq A$ is not contained in $Y$. Then for any $c>0$, there is a (Euclidean) open subset $V$ in $A(\CC)$ containing $Y$ such that for any point $x\in Z$, any $z_{0}\in\CC$, there exists $0<r<c$ such that $\phi_{(x,v)}(z_{0}+z)\notin V$ for all $|z|=r$.
\end{lemma}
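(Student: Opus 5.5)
The plan is to reduce to the case $z_{0}=0$ by translating along the leaf, and then to run a compactness argument over a suitable compact set of starting points. The naive choice of that set, the closure of the $v$-saturation of $Z$, could a priori accumulate at points whose leaf lies entirely in $Y$. So the first task is to identify this closure and show that it avoids such points.

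\emph{Step 1 (translation and the right compact set).} Since $\phi_{(x,v)}(z_{0}+z)=\phi_{(y,v)}(z)$ with $y=\phi_{(x,v)}(z_{0})\in\CF(x,v)$, it suffices to find $V$ that works with $z_{0}=0$ for all starting points $y$ in $Z':=\bigcup_{x\in Z}\CF(x,v)$. Let $T\subseteq A(\CC)$ be the Euclidean closure of $\CF(0,v)$. It is a closed, hence compact, real subgroup. Since $\CF(x,v)=x+\CF(0,v)$, we have $Z'\subseteq Z+T$. The set $Z+T$ is compact, because $Z$ is closed in the compact space $A(\CC)$.

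I claim that for every $y\in Z+T$ the leaf $\CF(y,v)$ is not contained in $Y$. Suppose $y=x+t$ with $x\in Z$, $t\in T$, and $\CF(y,v)\subseteq Y$. Because $Y$ is closed, taking closures gives $y+T\subseteq Y$. But $y+T=x+T\supseteq\CF(x,v)$, so $\CF(x,v)\subseteq Y$, contradicting the hypothesis on $Z$. So we may replace $Z$ by the compact set $Z+T$, which is stable under $v$-translation; this reduces everything to the case $z_{0}=0$.

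\emph{Step 2 (a good circle for each $y$).} Fix $y\in Z+T$. Locally on $A$, $Y$ is cut out by finitely many holomorphic functions. The map $\phi_{(y,v)}$ restricted to the disc $\BD_{c}$ cannot land in $Y$: by the identity theorem, that would force the whole leaf $\CF(y,v)$ into $Y$, which Step 1 excludes. Hence $\phi_{(y,v)}^{-1}(Y)\cap\ol{\BD}_{c}$ is a discrete subset of a compact set, so it is finite. Choose $0<r_{y}<c$ different from all moduli of these finitely many points. Then the image of the circle $|z|=r_{y}$ is a compact set disjoint from $Y$, so it lies at some distance $2\delta_{y}>0$ from $Y$ for a fixed metric $d$ on $A(\CC)$.

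\emph{Step 3 (uniformity by compactness).} The map $(y',z)\mapsto\phi_{(y',v)}(z)$ is continuous, and the circle is compact. So there is a neighbourhood $W_{y}$ of $y$ such that, for every $y'\in W_{y}$, the image of $|z|=r_{y}$ under $\phi_{(y',v)}$ stays at distance $>\delta_{y}$ from $Y$. Cover the compact set $Z+T$ by finitely many $W_{y_{1}},\dots,W_{y_{k}}$, set $\delta=\min_{i}\delta_{y_{i}}$, and let $V$ be the open $\delta$-neighbourhood of $Y$.

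Now let $x\in Z$ and $z_{0}\in\CC$, and put $y=\phi_{(x,v)}(z_{0})\in Z+T$. Pick $i$ with $y\in W_{y_{i}}$ and take $r=r_{y_{i}}$. Then $\phi_{(x,v)}(z_{0}+z)=\phi_{(y,v)}(z)\notin V$ for all $|z|=r$, as required.

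The main obstacle is Step 1: the union of leaves through $Z$ is generally not closed, and its closure must be shown to avoid points whose leaves lie in $Y$. The observation that this closure is exactly $Z+T$, together with the fact that a leaf lying in $Y$ forces the whole coset $y+T$ into $Y$, is what resolves it.
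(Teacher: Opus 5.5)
Your proposal is correct and follows essentially the same route as the paper: replace $Z$ by the compact set $Z+T$ (with $T$ the Euclidean closure of $\CF(0,v)$), pick for each point a radius $r<c$ whose circle image misses $Y$, and conclude by a finite-cover compactness argument with $V$ a uniform metric neighbourhood of $Y$. The differences are cosmetic. The paper uses a translation-invariant metric, so that $d(\phi_{(x',v)}(z),\phi_{(x,v)}(z))=d(x',x)$, where you use the tube lemma. You also spell out, via the identity theorem, why a good radius $r_y$ exists, which the paper leaves implicit.
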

\begin{proof}
Denote by $T\colonequals \ol{\CF(0,v)}$ the Euclidean closure of the linear leaf $\CF(0,v)$ in $A(\CC)$. Then for any $x\in A(\CC)$, we have $\ol{\CF(x,v)}=x+T$. Let $Z'=Z+T\colonequals \{x_{1}+x_{2}\big|\,x_{1}\in Z,x_{2}\in T\}$, which is also a (Euclidean) closed subset of $A(\CC)$. Moreover, for any $x=x_{1}+x_{2}\in Z'$ with $x_{1}\in Z$ and $x_{2}\in T$, we have $\ol{\CF(x,v)}=x+T=x_{1}+T$, which is not contained in $Y$ by our assumption. As a consequence, $\CF(x,v)$ is not contained in $Y$. Therefore, after replacing $Z$ by $Z'=Z+T$, we may assume that $Z=Z+T$.

Now fix a translation-invariant distance function $d$ on $A(\CC)$ and a constant $c>0$. For any $x\in Z$, as $\CF(x,v)$ is not contained in $Y$, there exists $0<r_{x}<c$ such that $\phi_{(x,v)}(z)\notin Y$ for any $|z|=r_{x}$. Choose $\delta_{x}>0$ such that $\BB(Y,\delta_{x})\colonequals \{x'\in A(\CC)\big|\,d(x',Y)<\delta_{x}\}$ is disjoint from $\phi_{(x,v)}(\partial\BD_{r_{x}})$. Then for any $x'\in \BB(x,\frac{\delta_{x}}{2})\colonequals \{x''\in A(\CC)\big|\,d(x'',x)<\frac{\delta_{x}}{2}\}$ and any $|z|=r_{x}$, we have
$$
d(\phi_{(x',v)}(z),Y)\geq d(\phi_{(x,v)}(z),Y)-d(\phi_{(x',v)}(z),\phi_{(x,v)}(z))\geq\delta_{x}-\frac{\delta_{x}}{2}=\frac{\delta_{x}}{2}
$$
In other words, for all $x'\in \BB(x,\frac{\delta_{x}}{2})$, we have $\phi_{(x',v)}(\partial\BD_{r_{x}})\cap \BB(Y,\frac{\delta_{x}}{2})=\emptyset$.

As $Z$ is compact and the open subsets $\BB(x,\frac{\delta_{x}}{2})\,(x\in Z)$ cover $Z$, there are finitely many such subsets $\BB(x_{i},\frac{\delta_{x_{i}}}{2})\,(x_{i}\in Z)$ cover $Z$. Now let $V=\BB(Y,\min_{1\leq i\leq m}\frac{\delta_{x_{i}}}{2})$. Then for any point $x\in Z$, any $z_{0}\in\CC$, we have $x'=\phi_{(x,v)}(z_{0})\in x+T\subseteq Z$ by our assumption. Choose $1\leq i\leq m$ such that $x'\in \BB(x_{i},\frac{\delta_{x_{i}}}{2})$. Then by the previous paragraph, we have $0<r_{x_{i}}<c$ and $\phi_{(x,v)}(z_{0}+z)=\phi_{(x',v)}(z)\notin V$ for all $|z|=r_{x_{i}}$. This proves the lemma.
\end{proof}

Now we establish the uniform estimate for Corollary \ref{rm}(1).

\begin{proposition}\label{rongchi}
Let $A$ be an abelian variety over $\CC$, which is also a closed subvariety of some projective variety $X$ over $\CC$. Let $D$ be an effective Cartier divisor on $X$ whose support does not contain $A$. Let $v\in{\rm Lie}(A)$ be a nonzero element. Let $Z$ be a (Euclidean) closed subset of $A(\CC)$ such that for any point $x\in Z$, the linear leaf $\CF(x,v)\subseteq A$ is not contained in ${\rm Supp}\, D$. Fix a distance function $d$ on $X(\CC)$. Then for any $c>0$, there is a constant $\delta_{c}>0$ satisfying the following condition: For any $r>0$, any point $x\in Z$ and any holomorphic map $\phi\colon \BD_{r+c}\to X$, if $d(\phi(z), \phi_{(x,v)}(z))<\delta_{c}$ for all $z\in\BD_{r+c}$, then
$$
\sum\limits_{z\in\BD_{r+c}}{\rm ord}_{z}\phi^{*}D\geq\sum\limits_{z\in\BD_{r}}{\rm ord}_{z}\phi_{(x,v)}^{*}D.
$$
\end{proposition}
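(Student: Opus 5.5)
The plan is to combine Lemma~\ref{c} with a Rouch\'e-type argument: the zeros of $\phi^{*}D$ are counted on small discs around the zeros of $\phi_{(x,v)}^{*}D$, and the circles bounding these discs are kept uniformly away from ${\rm Supp}\,D$. First apply Lemma~\ref{c} to $Y={\rm Supp}\,D\cap A$ (a proper closed subset of $A$, since ${\rm Supp}\,D\not\supseteq A$), to the closed set $Z$, and to the constant $c$ (say $c/2$ to leave room). This gives a Euclidean open neighbourhood $V$ of ${\rm Supp}\,D\cap A$ in $A(\CC)$ with the following property. For every $x\in Z$ and every $z_{0}\in\CC$ there is $0<r<c$ such that $\phi_{(x,v)}(z_{0}+\partial\BD_{r})$ avoids $V$. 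Shrinking $V$ if necessary, choose an open neighbourhood $W$ of ${\rm Supp}\,D$ in $X(\CC)$ with $W\cap A(\CC)\subseteq V$. By compactness, $A(\CC)\backslash V$ then has positive distance $2\delta_{c}$ from ${\rm Supp}\,D$ in $X(\CC)$. Locally, write $D$ by local equations $g_{\alpha}$ on a finite cover of $X(\CC)$ by coordinate charts. On the compact set $\{y:\ d(y,A(\CC)\backslash V)\leq\delta_{c}\}$ these $g_{\alpha}$ are bounded away from $0$, with a uniform lower bound $\eta>0$ and a uniform modulus of continuity.

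Next, for fixed $r,x$, take $\phi$ that is $\delta_{c}$-close to $\phi_{(x,v)}$ on $\BD_{r+c}$. The zeros of $\phi_{(x,v)}^{*}D$ in $\BD_{r}$ form a finite set, since the leaf is not contained in ${\rm Supp}\,D$. Group them into clusters as follows. Greedily pick a zero $z_{0}$ and the radius $\rho<c$ provided by the first step. Since the circle $z_{0}+\partial\BD_{\rho}$ maps outside $V$, every zero lies either strictly inside or strictly outside it. A cleaner way to organise this is to take, for the union of all such discs, the connected components of the union of the closed discs. Each component $\Omega$ is then bounded by arcs of circles, its boundary is mapped by $\phi_{(x,v)}$ outside $V$, it lies in $\BD_{r+c}$, and these components are disjoint and together contain all zeros in $\BD_{r}$.

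On each $\partial\Omega$, the map $\phi(z)$ is within $\delta_{c}$ of $\phi_{(x,v)}(z)\notin V$, so $\phi(z)$ stays away from ${\rm Supp}\,D$. Along a homotopy $\phi_{t}$ between $\phi_{(x,v)}$ and $\phi$ (for instance a straight-line homotopy in local charts, using the uniform continuity so that everything stays in the same chart), the local function $g_{\alpha}\circ\phi_{t}$ never vanishes on $\partial\Omega$. By the argument principle, the number of zeros with multiplicity inside $\Omega$ is constant in $t$. Hence $\sum_{z\in\Omega}{\rm ord}_{z}\phi^{*}D=\sum_{z\in\Omega}{\rm ord}_{z}\phi_{(x,v)}^{*}D$. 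Summing over the components and using ${\rm ord}\geq0$ elsewhere gives the inequality.

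The main obstacle is making the Rouch\'e step rigorous in $X$ rather than in $\CC$. The difficulty is that $\partial\Omega$ may not lie in one chart, so there is no global function to apply the argument principle to. I would handle this by subdividing $\Omega$ into small pieces, or better by using the degree of $\phi^{*}\CO(D)$ relative to a nonvanishing section on a neighbourhood of $\phi(\partial\Omega)$: the count of zeros equals the winding of $s_{D}\circ\phi$ relative to a trivialisation over $\Omega$, which is homotopy invariant. The uniform constant $\delta_{c}$ must also not depend on $r$ or on the number of components. This holds because only the uniform separation of $A(\CC)\backslash V$ from ${\rm Supp}\,D$ is used.
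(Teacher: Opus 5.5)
Your overall strategy is the right one: use Lemma~\ref{c} to get circles on which $\phi_{(x,v)}$ stays outside a neighbourhood $V$ of ${\rm Supp}\,D$, then compare zero counts by Rouch\'{e} on regions bounded by such circles. However, the step you flag as the main obstacle is never actually resolved, and it is the heart of the proof.

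\textbf{Why the connected-component route breaks down.} Nothing bounds the diameter of a connected component $\Omega$ of the union of the discs independently of $r$. For instance, when $D|_{A}$ is ample the zeros of $\phi_{(x,v)}^{*}D$ have positive density in $\CC$, so a component can be a chain of discs of size comparable to $r$, possibly with holes. Consequently:
\begin{itemize}
\item Neither $\phi(\Omega)$ nor $\phi_{(x,v)}(\Omega)$ lies in a single chart, so there is no single local equation to which the argument principle applies.
\item Your uniform lower bound $\eta$ for the $g_{\alpha}$ only makes sense on compact pieces of the charts, with $\phi(z)$ and $\phi_{(x,v)}(z)$ lying in the same piece.
\item The ``straight-line homotopy in local charts'' cannot be glued into one homotopy along $\partial\Omega$.
\item The winding-number alternative (homotopy invariance relative to a trivialisation of $\phi_{t}^{*}\CO(D)$) needs a global continuous homotopy $\phi_{t}\colon\ol{\Omega}\to X$ with $\phi_{t}(\partial\Omega)\cap{\rm Supp}\,D=\emptyset$ for all $t$. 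You do not construct one. Since $X$ may be singular (in the later application $X=\CA\times_{\CC}T$), this would need extra input such as the neighbourhood-retract (ANR) property of compact analytic spaces.
\item ``Subdividing $\Omega$ into small pieces'' fails as stated, because arbitrary cuts may pass through zeros. The only curves known to avoid ${\rm Supp}\,D$ are the circles supplied by Lemma~\ref{c}.
\end{itemize}

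\textbf{The missing idea.} Localize through the discs themselves, and use inclusion--exclusion instead of connected components.
\begin{enumerate}
\item Take a finite affine cover $\{U_{i}\}$ of $X$ on which $D$ is cut out by $f_{i}$, a Lebesgue number $\delta_{1}$, and compact pieces $Z'_{i}\subset U_{i}$ on which $|f_{i}|$ is bounded below away from ${\rm Supp}\,D$.
\item Choose $c'<c$ with $d(y,\phi_{(y,v)}(z))<\delta_{1}/4$ for all $y\in A(\CC)$ and $|z|<c'$, and apply Lemma~\ref{c} with $c'$ in place of $c$.
\item Cover $\ol{\BD}_{r}$ by finitely many discs $\BD(z_{j},r_{x,z_{j}})$ with $z_{j}\in\ol{\BD}_{r}$, $r_{x,z_{j}}<c'$, whose boundary circles are mapped outside $V$.
\item Each finite intersection of these discs lies inside one of them, so both $\phi$ and $\phi_{(x,v)}$ map it into a single $Z'_{i}$, and its boundary lies on the good circles.
\item Rouch\'{e} for $f_{i}\circ\phi$ versus $f_{i}\circ\phi_{(x,v)}$ then gives equal zero counts on every intersection. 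This only compares values in $\CC$, so no homotopy in $X$ and no smoothness of $X$ is needed.
\item Inclusion--exclusion gives equality on the union $W$, and $\BD_{r}\subseteq W\subseteq\BD_{r+c}$, which yields the inequality.
\end{enumerate}
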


\begin{proof}
Choose a finite affine cover $\{U_{i}\}_{i=1}^{m}$ of $X$ such that $D\big|_{U_{i}}$ is generated by a section $f_{i}\in\Gamma(U_{i},\CK^{*})$ for each $1\leq i\leq m$. Then there is a constant $\delta_{1}>0$ such that for any $x\in X(\CC)$, the set $\BB(x,\delta_{1})\colonequals \{x'\in X(\CC)\big|\, d(x,x')<\delta_{1}\}$ is contained in some $U_{i}$. In fact, if this is not true, there is a sequence $\{x_{n}\}_{n\geq1}\subseteq X(\CC)$ such that $\BB(x_{n},\frac{1}{n})$ is not contained in any $U_{i}$. Let $x_{\infty}$ be a limit point of $\{x_{n}\}_{n\geq1}$. Choose $1\leq i\leq m$ and $\delta_{0}>0$ such that $\BB(x_{\infty},\delta_{0})\subseteq U_{i}$. Then for $n>\frac{2}{\delta_{0}}$ with $d(x_{n},x_{\infty})<\frac{\delta_{0}}{2}$, we have (by triangle inequality)
$$
\BB(x_{n},\frac{1}{n})\subseteq \BB(x_{\infty},\frac{1}{n}+d(x_{n},x_{\infty}))\subseteq \BB(x_{\infty},\frac{\delta_{0}}{2}+\frac{\delta_{0}}{2})=\BB(x_{\infty},\delta_{0})\subseteq U_{i}.
$$
which contradicts our assumption that $\BB(x_{n},\frac{1}{n})$ is not contained in $U_{i}$.

Let $Z_{i}\subseteq U_{i}$ be the set of all points $x\in U_{i}(\CC)$ such that $\BB(x,\delta_{1})\subseteq U_{i}$ (or equivalently, $Z_{i}=\{x\in X(\CC)\big|\,d(x,X\backslash U_{i})\geq\delta_{1}\}$). Then $Z_{i}$ is (Euclidean) closed in $U_{i}(\CC)$ and $\{Z_{i}\}_{i=1}^{m}$ covers $X(\CC)$ by our assumption in the previous paragraph. Let $Z_{i}'\colonequals \{x\in X(\CC)\big|\,d(x,Z_{i})\leq\frac{\delta_{1}}{2}\}$. Then $Z_{i}'$ is a (Euclidean) compact subset of $U_{i}(\CC)$ containing $Z_{i}$.

By abuse of notations, we identify $D$ with ${\rm Supp}\,D$ so that we can view it as a subset of $X$. Choose $0<c'<c$ such that for any point $x\in A(\CC)$ and any $z\in\BD_{c'}$, we have $d(x,\phi_{(x,v)}(z))<\frac{\delta_{1}}{4}$. As the linear leaf $\CF(x,v)$ is not contained in $D$ for all $x\in Z$, by Lemma \ref{c}, there is an (Euclidean) open subset $V$ of $X$ containing $D$ such that for any point $x\in Z$ and any $z_{0}\in\CC$, there exists $0<r_{x,z_{0}}<c'$ such that $\phi_{(x,v)}(z_{0}+z)\notin V$ for all $|z|=r_{x,z_{0}}$. Choose $\delta_{2}>0$ such that $\BB(D,\delta_{2})\colonequals \{x\in X(\CC)\big|\, d(x,D)<\delta_{2}\}$ is contained in $V$.

As $D$ is effective, we can view $f_{i}$ as a holomorphic function on $U_{i}$ for each $1\leq i\leq m$. Consider its restriction to the compact subset $Z_{i}'\backslash \BB(D,\frac{\delta_{2}}{2})$, which is identically nonzero. Therefore $|f_{i}(x)|\,(x\in Z_{i}'\backslash \BB(D,\frac{\delta_{2}}{2}))$ has a minimum $c_{i}>0$. On the other hand, by the uniform continuity, there is a constant $\delta_{3,i}>0$ such that for all $x,x'\in Z_{i}'\backslash \BB(D,\frac{\delta_{2}}{2})$ with $d(x,x')<\delta_{3,i}$, we have $|f_{i}(x)-f_{i}(x')|<c_{i}$. Now let $\delta_{3}\colonequals\min_{1\leq i\leq m}\delta_{3,i}$ and let $\delta_{c}=\min\{\frac{\delta_{1}}{4},\frac{\delta_{2}}{3},\delta_{3}\}$.

Fix an $r>0$, a point $x\in Z$ and a holomorphic map $\phi\colon \BD_{r+c}\to X$ such that $d(\phi(z), \phi_{(x,v)}(z))<\delta_{c}$ for all $z\in\BD_{r+c}$. For each $z\in\ol{\BD}_{r}$, consider the open ball $\BD(z,r_{x,z})\colonequals \{z'\in\CC\big|\,|z-z'|<r_{x,z}\}$. As all these open balls cover $\ol{\BD}_{r}$, there exist finitely many such balls $\BD(z_{j},r_{x,z_{j}})_{j=1}^{k}\,(z_{j}\in\ol{\BD_{r}})$ that cover $\ol{\BD_{r}}$.

Now on any intersection $\BD_{j_{1},\dots,j_{l}}\colonequals\cap_{i=1}^{l}\BD(z_{j_{i}},r_{x,z_{j_{i}}})\subseteq \BD_{r+c}$ (where $1\leq j_{1}<\dots<j_{l}\leq k$, denote by $\Gamma_{j_{1},\dots,j_{l}}$ its boundary, which is contained in the union $\cup_{i=1}^{l}\partial\BD(z_{j_{i}},r_{x,z_{j_{i}}})$. By our choice, for each $1\leq j\leq k$, $d(\phi_{(x,v)}(z_{j}+z),D)\geq\delta_{2}$ for all $|z|= r_{x,z_{j}}$. Or equivalently, $d(\phi_{(x,v)}(z),D)\geq\delta_{2}$ for all $z\in\partial\BD(z_{j_{i}},r_{x,z_{j_{i}}})$.  Hence we have $d(\phi_{(x,v)}(\Gamma_{j_{1},\dots,j_{l}}),D)\geq \delta_{2}$. Moreover, as $d(\phi(z), \phi_{(x,v)}(z))<\delta_{c}$ for all $z\in\BD_{r+c}$, we have
$$
d(\phi(z), D)\geq d(\phi_{(x,v)}(z),D)- d(\phi(z), \phi_{(x,v)}(z))\geq\delta_{2}-\delta_{c}\geq\frac{2\delta_{2}}{3}>\frac{\delta_{2}}{2}
$$
for all  $z\in\Gamma_{j_{1},\dots,j_{l}}$.

Denote $x'=\phi_{(x,v)}(z_{j_{1}})$. Choose $1\leq i\leq m$ such that $x'\in Z_{i}$. Then since $r_{x,z_{j_{1}}}<c'$, we have $d(x', \phi_{(x,v)}(z_{j_{1}}+z))=d(x',\phi_{(x',v)}(z))<\frac{\delta_{1}}{4}$ for all $|z|=r_{x,z_{j_{1}}}$. Or equivalently, $ d(x',\phi_{(x,v)}(z))<\frac{\delta_{1}}{4}$ for all $z\in \ol{\BD(z_{j_{1}},r_{x,z_{j_{1}}})}$. Hence
$$
d(x',\phi(z))\leq d(x',\phi_{(x,v)}(z))+d(\phi_{(x,v)}(z),\phi(z))<\frac{\delta_{1}}{4}+\delta_{c}\leq \frac{\delta_{1}}{2}
$$
for all $z\in \ol{\BD(z_{j_{1}},r_{x,z_{j_{1}}})}$. This shows that both $\phi_{(x,v)}(\ol{\BD(z_{j_{1}},r_{x,z_{j_{1}}})})$ and $\phi(\ol{\BD(z_{j_{1}},r_{x,z_{j_{1}}})})$ are contained in $Z_{i}'$. As $\Gamma_{j_{1},\dots,j_{l}}\subseteq \ol{\BD(z_{j_{1}},r_{x,z_{j_{1}}})}$, for all $z\in \Gamma_{j_{1},\dots,j_{l}}$, we have $\phi(z),\phi_{(x,v)}(z)\in Z'_{i}\backslash \BB(D,\frac{\delta_{2}}{2})$ and $d(\phi(z),\phi_{(x,v)}(z))<\delta_{c}\leq\delta_{3}$, and thus
$$
|f_{i}(\phi(z))-f_{i}(\phi_{(x,v)}(z))|<c_{i}
$$
and
$$
|f_{i}(\phi_{(x,v)}(z))|\geq c_{i}.
$$
Hence by Rouch\'{e}'s theorem, the holomorphic functions $f_{i}\circ\phi$ and $f_{i}\circ\phi_{(x,v)}$ have the same number of zeroes on the intersection $\BD_{j_{1},\dots,j_{l}}$. Equivalently, we have

$$
\sum\limits_{z\in \BD_{j_{1},\dots,j_{l}}}{\rm ord}_{z}\phi^{*}D=\sum\limits_{z\in \BD_{j_{1},\dots,j_{l}}}{\rm ord}_{z}\phi_{(x,v)}^{*}D.
$$

Since $1\leq j_{1}<\dots<j_{l}\leq k$ are arbitrary, we deduce from the inclusion-exclusion principle that
$$
\sum\limits_{z\in W}{\rm ord}_{z}\phi^{*}D=\sum\limits_{z\in W}{\rm ord}_{z}\phi_{(x,v)}^{*}D.
$$
where $W=\cup_{j=1}^{k}\BD(z_{j},r_{x,z_{j}})$ is an open subset of $\BD_{r+c}$ containing $\BD_{r}$. This implies the proposition.
\end{proof}

\begin{corollary}\label{rm-}
Let $A$ be an abelian variety over $\CC$ and let $D$ be an ample effective Cartier divisor on $A$. Let $v\in{\rm Lie}(A)$ be a nonzero element. Let $Z$ be a (Euclidean) closed subset of $A(\CC)$ such that for any point $x\in Z$, the linear leaf $\CF(x,v)\subseteq A$ is not contained in ${\rm Supp}\, D$. Fix $\varepsilon>0$ and a smooth Hermitian metric $\left\|\cdot\right\|$ on $L=\CO_{A}(D)$. Then there is a constant $r_{0}>0$ such that for each $r>r_{0}$ and each point $x\in Z$, we have
$$
\sum\limits_{z\in\BD_{r}}{\rm ord}_{z}\phi_{(x,v)}^{*}D\geq (1-\varepsilon)\int_{\BD_{r}}\phi_{(x,v)}^{*}c_{1}(L,\left\|\cdot\right\|).
$$
\end{corollary}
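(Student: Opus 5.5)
The plan is to combine the pointwise statement Corollary \ref{rm}(1) with the stability statement Proposition \ref{rongchi} (with $X=A$) and a compactness argument over $Z$.

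First, reduce to leaves that are Zariski dense. If the leaves $\CF(x,v)$ are not Zariski dense, let $G$ be the Zariski closure of $\CF(0,v)$; this is an abelian subvariety, and each $\phi_{(x,v)}$ has Zariski closure $x+G$. Corollary \ref{rm}(1) can then be applied on the translate $x+G$ to the restricted divisor $D|_{x+G}$. This restriction is a genuine ample effective divisor, because $\CF(x,v)\not\subseteq{\rm Supp}\,D$. We also use the translation-invariant form $\omega$ in place of $c_{1}(L,\|\cdot\|)$; as in the proof of Corollary \ref{rm}, the two integrals differ by $O(r)$ uniformly in $x$, by compactness of $A$. With $\omega$, the right-hand side becomes $(1-\varepsilon)cr^{2}$ with $c$ independent of $x$, since all the curves are translates. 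So it suffices to prove: there is $r_0$ such that $n_x(r)\colonequals\sum_{z\in\BD_r}{\rm ord}_z\phi_{(x,v)}^*D\geq(1-\varepsilon)cr^2$ for all $r>r_0$ and all $x\in Z$.

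Second, for each fixed $x\in Z$, Corollary \ref{rm}(1) gives $r_x$ with $n_x(r)\geq(1-\varepsilon/2)cr^2$ for all $r>r_x$. To make this uniform, perturb the base point. For $x'$ close to $x$, the curve $\phi_{(x',v)}$ is uniformly close to $\phi_{(x,v)}$ on all of $\CC$: by translation invariance of the distance, $d(\phi_{(x',v)}(z),\phi_{(x,v)}(z))=d(x',x)$. Proposition \ref{rongchi}, applied with $X=A$, fixed $c_0=1$, and $\phi=\phi_{(x',v)}$, then gives $n_{x'}(r+1)\geq n_x(r)$ for every $r>0$, whenever $d(x,x')<\delta_1$ and $x\in Z$. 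The point is that $\delta_1$ depends only on $Z$ and not on $r$, which is exactly what the uniformity in $r$ of Proposition \ref{rongchi} provides.

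Third, cover the compact set $Z$ by finitely many balls $\BB(x_i,\delta_1)$ with $x_i\in Z$. For $x'\in\BB(x_i,\delta_1)$ and $r>\max_i r_{x_i}+1$,
$$n_{x'}(r)\geq n_{x_i}(r-1)\geq(1-\varepsilon/2)c(r-1)^2.$$
For $r$ large this is at least $(1-\varepsilon)cr^2+O(r)$, which absorbs the $O(r)$ error from changing metrics.

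The main obstacle I expect is the nondense case. There the constant $c$ and the translate $x+G$ vary with $x$, and Proposition \ref{rongchi} needs the hypothesis that the leaf is not contained in ${\rm Supp}\,D$; this is exactly what is given on $Z$. The integral is still $cr^2$ with a common $c$, because the form is translation invariant. Corollary \ref{rm}(1) has to be applied on each $x_i+G$ with the ample divisor $D|_{x_i+G}$, whose Chern form restricts compatibly. This is harmless, since only finitely many $x_i$ occur.
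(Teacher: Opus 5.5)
Your proof is correct. It differs from the paper's only in how uniformity over $Z$ is obtained.

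The paper applies Corollary \ref{rm}(1) at a \emph{single} base point $x\in Z$. To do this, it first enlarges $Z$ to $A(\CC)\backslash V$. Here $V$ is a neighbourhood of the Zariski closed set $S$ of points whose leaf lies in ${\rm Supp}\,D$, chosen so that its complement is connected. Compactness and connectedness then bound, by some $C$, the length of $\delta$-chains in $Z$ from $x$ to any $x'$. Iterating Proposition \ref{rongchi} along such a chain gives $n_{x'}(r+C)\geq n_{x}(r)$, in your notation.

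You instead take a finite $\delta_1$-net $\{x_i\}\subseteq Z$ and invoke Corollary \ref{rm}(1) once for each $x_i$, on $x_i+G$. You then use Proposition \ref{rongchi} only once, with base point $x_i\in Z$ and $\phi=\phi_{(x',v)}$, to get $n_{x'}(r)\geq n_{x_i}(r-1)$. This is legitimate because that proposition only needs the base point in $Z$, and $d(\phi_{(x',v)}(z),\phi_{(x_i,v)}(z))=d(x',x_i)$ for a translation-invariant $d$.

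Your route avoids both the connectedness reduction and the chain argument. The paper leaves implicit why a neighbourhood of $S$ with connected complement exists, so skipping that step is a real simplification. The price is applying the Nevanlinna-theoretic input at finitely many leaves instead of one. Since $r_0$ may depend on all the data, this costs nothing.

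The remaining steps match the paper. These are the restriction to $x+G$ (ample because the leaf is not in ${\rm Supp}\,D$) and the comparison of $c_1(L,\|\cdot\|)$ with the invariant form via $cr^2\pm c_2r$, with constants uniform in $x$.
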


\begin{proof}
By abuse of notation, we view $D$ as a subset of $A$ by identifying it with ${\rm Supp}\,D$. Denote by $G$ the Zariski closure of the linear leaf $\CF(0,v)$ in $A$. Then for any $x\in A(\CC)$, the leaf $\CF(x,v)$ is contained in $D$ if and only if $x+G\subseteq D$. Let $S\subseteq A$ be the set of all $x\in D$ such that $\CF(x,v)\subseteq D$. Then $S$ is Zariski closed and $S\cap Z=\emptyset$. Hence there exists an (Euclidean) open neighbourhood $V$ of $S(\CC)$ which is disjoint from $Z$ with $A(\CC)\backslash V$ connected. After replacing $Z$ by $A(\CC)\backslash V$, we may assume that $Z$ is connected.

Now fix a translation-invariant distance function $d$ on $A(\CC)$ and a point $x\in Z$. Applying Corollary \ref{rm}(1) to $\phi_{(x,v)}\colon \CC\to (x+G)$ and the ample divisor $D\big|_{(x+G)}$ on $(x+G)$, we see that

\begin{align}\label{rm-1}
\sum\limits_{z\in\BD_{r}}{\rm ord}_{z}\phi_{(x,v)}^{*}D\geq (1-\frac{\varepsilon}{2})\int_{\BD_{r}}\phi_{(x,v)}^{*}c_{1}(L,\left\|\cdot\right\|)
\end{align}
for $r$ large enough.

Applying Proposition \ref{rongchi} to $X=A,D,Z,v$ and $c=1$, we get a constant $\delta=\delta_{1}>0$ satisfying the conclusion of the proposition. By compactness and connectedness of $Z$, there is an integer $C>0$ such that for any point $x'\in Z$, there exists $n\leq C$ and a sequence of points $x=x_{0},x_{1},\dots,x_{n}=x'$ with $d(x_{i},x_{i+1})<\delta$ for all $0\leq i\leq n-1$. As $d(\phi_{(x,v)}(z),\phi_{(x',v)}(z))=d(x,x')$ for all $x'\in A(\CC)$ and all $z\in\CC$, by Proposition \ref{rongchi}, for each $r>0$ and each $0\leq i\leq n-1$, we have
\begin{align*}
\sum\limits_{z\in\BD_{r+1}}{\rm ord}_{z}\phi_{(x_{i+1},v)}^{*}D\geq\sum\limits_{z\in\BD_{r}}{\rm ord}_{z}\phi_{(x_{i},v)}^{*}D.
\end{align*}
Therefore we deduce that
\begin{align}\label{gun}
\sum\limits_{z\in\BD_{r+C}}{\rm ord}_{z}\phi_{(x',v)}^{*}D\geq\sum\limits_{z\in\BD_{r+n}}{\rm ord}_{z}\phi_{(x',v)}^{*}D=\sum\limits_{z\in\BD_{r+n}}{\rm ord}_{z}\phi_{(x_{n},v)}^{*}D\geq \sum\limits_{z\in\BD_{r+n-1}}{\rm ord}_{z}\phi_{(x_{n-1},v)}^{*}D\notag\\
\geq\dots\geq\sum\limits_{z\in\BD_{r}}{\rm ord}_{z}\phi_{(x_{0},v)}^{*}D=\sum\limits_{z\in\BD_{r}}{\rm ord}_{z}\phi_{(x,v)}^{*}D.
\end{align}
for any $r>0$ and any $x'\in Z$.

Let $\omega$ be the unique translation-invariant $(1,1)$-form representing the Chern class $c_{1}(L)$. Then there is a constant $c_{1}>0$ such that
\begin{align*}
\int_{\BD_{r}}\phi_{(x,v)}^{*}\omega=c_{1}r^{2}
\end{align*}
for any $x\in A(\CC)$ and any $r>0$. Let $\alpha$ be a 1-form on $A$ such that $c_{1}(D,\left\|\cdot\right\|)-\omega={\rm d}\alpha$. Then for any $x\in A(\CC)$ and any $r>0$,
\begin{align*}
\left|\int_{\BD_{r}}\phi_{(x,v)}^{*}c_{1}(D,\left\|\cdot\right\|)-\int_{\BD_{r}}\phi_{(x,v)}^{*}\omega\right|=\left|\int_{\partial\BD_{r}}\phi_{(x,v)}^{*}\alpha\right|
\end{align*}
By compactness, there is a constant $c_{2}>0$ (independent of $x$ and $r$) such that the right hand side is bounded above by $c_{2}r$. Therefore we have
\begin{align}\label{omega}
c_{1}r^{2}-c_{2}r\leq\int_{\BD_{r}}\phi_{(x,v)}^{*}c_{1}(D,\left\|\cdot\right\|)\leq c_{1}r^{2}+c_{2}r
\end{align}
for all $x\in A(\CC)$ and all $r>0$.

Now take $r_{0}>C$ such that $c_{1}(r-C)^{2}-c_{2}(r-C)\geq (1-\frac{\varepsilon}{2})(c_{1}r^{2}+c_{2}r)$ for all $r>r_{0}$. Enlarge $r_{0}$ such that (\ref{rm-1}) holds for all $r>r_{0}-C$. Combining (\ref{rm-1}), (\ref{gun}) and (\ref{omega}), we get for all $x'\in Z$ and all $r>r_{0}$,
\begin{align*}
\sum\limits_{z\in\BD_{r}}{\rm ord}_{z}\phi_{(x',v)}^{*}D&\geq\sum\limits_{z\in\BD_{r-C}}{\rm ord}_{z}\phi_{(x,v)}^{*}D\geq (1-\frac{\varepsilon}{2})\int_{\BD_{r-C}}\phi_{(x,v)}^{*}c_{1}(L,\left\|\cdot\right\|)\\
&\geq (1-\frac{\varepsilon}{2})(c_{1}(r-C)^{2}-c_{2}(r-C))\geq (1-\frac{\varepsilon}{2})^{2}(c_{1}r^{2}+c_{2}r)\\
&\geq (1-\varepsilon)\int_{\BD_{r}}\phi_{(x',v)}^{*}c_{1}(L,\left\|\cdot\right\|). \tag*{\qedhere}
\end{align*}
\end{proof}

Then we turn to uniform estimate for Proposition \ref{rm}(2).

\begin{proposition}\label{Y}
Let $A$ be an abelian variety over $\CC$ and let $Y$ be a closed subscheme of $A$ of codimension greater than $1$. Let $v\in{\rm Lie}(A)$ be a nonzero element and let $G$ be the Zariski closure of the linear leaf $\CF(0,v)$ in $A$ (which is an abelian subvariety of $A$). Let $Z$ be a (Euclidean) closed subset of $A(\CC)$ such that for any point $x\in Z$, $(x+G)\cap Y$ has codimension greater than $1$ in $(x+G)$. Fix a constant $\varepsilon>0$, an ample line bundle $L$ on $A$ and a smooth Hermitian metric $\left\|\cdot\right\|$ on $L$. Then there is a constant $R_{0}>0$ such that for any $R>R_{0}$ and any point $x\in Z$, we have
$$
\sum\limits_{z\in\BD_{R}}{\rm ord}_{z}\phi_{(x,v)}^{*}Y\leq \varepsilon \int_{\BD_{R}}\phi_{(x,v)}^{*}c_{1}(L,\left\|\cdot\right\|).
$$
Furthermore, for any effective divisor $D$ on $A$ whose support contains $Y$ but does not contain any $(x+G)\,(x\in Z)$, we can further ensure that
$$
\sum\limits_{z\in\BD_{R}\cap\phi_{(x,v)}^{-1}(Y)}{\rm ord}_{z}\phi_{(x,v)}^{*}D\leq \varepsilon \int_{\BD_{R}}\phi_{(x,v)}^{*}c_{1}(L,\left\|\cdot\right\|)
$$
for all $R>R_{0}$.
\end{proposition}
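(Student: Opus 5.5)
We follow the pattern of Corollary \ref{rm-}: first a pointwise estimate for each single $x\in Z$, then a compactness argument that makes it uniform in $x$. For the pointwise estimate, fix $x\in Z$. The curve $\phi_{(x,v)}$ has Zariski dense image in the abelian variety $x+G$ (a translate of an abelian variety, identified with $G$ via translation). The restriction $Y\cap(x+G)$ has codimension $>1$ in $x+G$, and $L|_{x+G}$ is ample. So Corollary \ref{rm}(2) gives, for $R$ large,
$$\sum_{z\in\BD_R}{\rm ord}_z\phi_{(x,v)}^{*}Y\leq\frac{\varepsilon}{2}\int_{\BD_R}\phi_{(x,v)}^{*}c_1(L,\|\cdot\|).$$
For the divisor version we invoke Remark \ref{jq} on $x+G$. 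The hypothesis that ${\rm Supp}\,D\not\supseteq x+G$ makes $D|_{x+G}$ an effective Cartier divisor whose support contains $Y\cap(x+G)$. The threshold $R_x$ depends on $x$, however.

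To make this uniform, we bound the local orders $\mathrm{ord}_z\phi_{(x',v)}^{*}Y$ near a good point $x$. The natural analogue of Proposition \ref{rongchi} is an upper semicontinuity statement, proved with Rouch\'e's theorem as in Remark \ref{jq}. Write $Y$ as $D_1\cap\dots\cap D_l$, and choose one $D_i$ not containing $x+G$. Counting zeros of $f_i\circ\phi_{(x',v)}$ on a small neighbourhood of the zero set of $\phi_{(x,v)}$ in $\BD_{R}$ bounds $\sum{\rm ord}\,\phi_{(x',v)}^{*}D_i$ for $x'$ near $x$. This is not yet enough, because ${\rm ord}\,Y$ is a minimum over all the $D_i$. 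Instead we use the ideal directly. Near each point $z_0$ of $\phi_{(x,v)}^{-1}(Y)\cap\ol{\BD}_R$, which is a finite set, take local generators $g_1,\dots,g_l$ of $\CI_Y$. The order ${\rm ord}_{z}\phi^{*}Y$ equals the minimal vanishing order of the $g_j\circ\phi$. Pick the index $j$ achieving the minimum $m$ at $z_0$ for $\phi_{(x,v)}$. By Rouch\'e, for $x'$ close to $x$ the function $g_j\circ\phi_{(x',v)}$ has at most $m$ zeros in a small disc around $z_0$. Away from these small discs, $\phi_{(x,v)}(\ol{\BD}_R)$ stays at positive distance from $Y$, so $\phi_{(x',v)}$ misses $Y$ there when $x'$ is close. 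Hence there is a neighbourhood $U_x$ such that, for all $x'\in U_x$ and the fixed radius $R=R_x$,
$$\sum_{z\in\BD_{R_x}}{\rm ord}_z\phi_{(x',v)}^{*}Y\leq \sum_{z\in\BD_{R_x}}{\rm ord}_z\phi_{(x,v)}^{*}Y.$$
The right-hand side involves a single radius, which we handle as follows.

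To pass from one radius to all large radii we use a tiling argument. Taking $R_x$ as a base scale, we cover $\BD_R$ by discs $\BD(w,R_x)$, $w$ in a lattice of spacing about $R_x$, with bounded overlap. The recentred curve $\phi_{(x',v)}(w+\cdot)$ equals $\phi_{(\phi_{(x',v)}(w),v)}$, and its starting point lies in $x'+T\subseteq Z+T$. As in Lemma \ref{c}, we first replace $Z$ by $Z+T$, which is still compact and still has the codimension property, since $x+T\subseteq x+G$ and $x+G$ depends only on the coset. By compactness, finitely many $U_{x_i}$ cover $Z$. Set $R^{*}=\max_i R_{x_i}$, and tile with discs of radius $R_{x_i}$ attached to the relevant $x_i$. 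Each tile contributes at most $\frac{\varepsilon}{2}\int_{\BD_{R_{x_i}}}\phi_{(x_i,v)}^{*}c_1$. By (\ref{omega}), this is comparable, up to $O(r)$ errors and uniformly in the starting point, to $\frac{\varepsilon}{2}c_1R_{x_i}^2$, i.e. to the area. Summing, with the tiling overlap constant absorbed by replacing $\varepsilon$ with $\varepsilon/C$, gives $\le\varepsilon c_1R^2+O(R)\le\varepsilon\int_{\BD_R}\phi_{(x',v)}^*c_1$ for $R>R_0$.

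The main obstacle is the tiling step. The overlap multiplicity only changes $\varepsilon$ by a harmless constant here, so the real difficulty is ensuring that the boundary tiles and the different radii $R_{x_i}$ are controlled uniformly. The $O(R)$ error terms in (\ref{omega}) handle this, and it is the reason we apply the Rouch\'e semicontinuity at the fixed radius $R_{x_i}$ rather than at arbitrary radii. The divisor version is proved identically: replace ${\rm ord}\,Y$ by ${\rm ord}\,D$ restricted to $\phi^{-1}(Y)$, and use the bounded-order claim of Remark \ref{jq}, whose compactness argument extends over $Z$ thanks to the hypothesis on $D$.
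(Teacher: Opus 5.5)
Your first three steps match the paper's proof: the pointwise estimate from Corollary \ref{rm}(2) and Remark \ref{jq} on the translate $x+G$, Rouch\'e semicontinuity in the starting point at one fixed radius, and compactness after enlarging $Z$ so that every recentred curve again starts in $Z$. The gap is in the final covering step.

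\textbf{The gap.} After compactness you have finitely many radii $R_{x_1},\dots,R_{x_m}$. The radius you may use around a centre $w\in\BD_R$ is dictated by which $U_{x_i}$ contains $\phi_{(x',v)}(w)$, so the admissible discs genuinely have different radii. A lattice of one spacing cannot accommodate this. To cover $\BD_R$ you need spacing $\lesssim\min_i R_{x_i}$, and then a point can lie in $\asymp(\max_i R_{x_i}/\min_i R_{x_i})^2$ discs. That multiplicity depends on the radii. The radii in turn depend on the target constant, because the pointwise threshold $R_x$ depends on the $\varepsilon$ you feed into Corollary \ref{rm}(2). So ``absorbing the overlap constant by replacing $\varepsilon$ with $\varepsilon/C$'' is circular: shrinking $\varepsilon$ changes the $R_{x_i}$ and hence $C$, and nothing controls $C\cdot\varepsilon$. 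You also cannot force a common radius, since the Rouch\'e neighbourhood $U_x$ depends on the radius at which it is applied. Your remark that the overlap multiplicity is harmless is true only if that multiplicity is an absolute constant, and your tiling does not provide one.

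\textbf{What fixes it.} You need a covering lemma whose multiplicity bound is absolute for centred discs of arbitrary radii, namely the Besicovitch covering theorem (Lemma \ref{besicovitch}, constant $19$ in dimension $2$). The paper fixes the Besicovitch constant $C$ first. It then runs exactly your pointwise, semicontinuity and compactness argument with $\frac{\varepsilon}{4C}$. This yields, for every $z_0\in\CC$, some radius $r_{x,z_0}<r_0$ with the estimate on $\BD(z_0,r_{x,z_0})$. Besicovitch then gives $\sum_{\BD_R}\le C\cdot\frac{\varepsilon}{4C}\int_{\BD_{2R}}=\varepsilon cR^2$ for $R>r_0$, first for the translation-invariant metric and then in general via the $O(R)$ comparison.

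\textbf{A smaller point.} The radius $R_x$ at which you apply Rouch\'e must satisfy $\phi_{(x,v)}(\partial\BD_{R_x})\cap Y=\emptyset$. This holds for all but countably many radii (cf.\ Lemma \ref{banlianxu}). Otherwise a zero on the boundary circle can move inside $\BD_{R_x}$ under perturbation, and your semicontinuity inequality fails.

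Your reduction of the divisor version to the $Y$ version, via a uniform bound on ${\rm ord}_z\phi_{(x,v)}^*D$ over the compact set $Z$, is fine.
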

To prove the proposition, we need two lemmas. The first consider the (semi)continuity of counting functions as follows.
\begin{lemma}\label{banlianxu}
Let $X$ be a projective variety over $\CC$. Let $Y$ be a closed subscheme of $X$. Let $R>0$ and let $f_{n}\colon\BD_{R}\to X$ be a sequence of holomorphic maps. Assume that $f_{n}$ converges uniformly to a holomorphic map $f\colon\BD_{R}\to X$ with $f(\BD_{R})\nsubseteq Y$. Then for all but countably many $0<r<R$ (which depends only on $f$ and $Y$, but not on $f_{n}$), we have
$$
\sum_{z\in\BD_{r}}{\rm ord}_{z}f_{n}^{*}Y\leq \sum_{z\in\BD_{r}}{\rm ord}_{z}f^{*}Y
$$
for $n$ large enough.
\end{lemma}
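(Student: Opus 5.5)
The plan is to reduce to the case of a single Cartier divisor and apply Rouché's theorem, as in the proof of Proposition \ref{rongchi}. Write $Y=D_{1}\cap\dots\cap D_{l}$ with effective Cartier divisors $D_{i}$, so that ${\rm ord}_{z}g^{*}Y=\min_{i}{\rm ord}_{z}g^{*}D_{i}$ for any holomorphic $g$. Since $f(\BD_{R})\nsubseteq Y$, some $D_{i}$ does not contain $f(\BD_{R})$ near each point. The key observation is that the minimum over $i$ is monotone in each argument. Hence it suffices to find, for each zero $z_{0}$ of $f^{*}Y$, a small disc around $z_{0}$ on which the total ${\rm ord}f_{n}^{*}Y$ is at most ${\rm ord}_{z_{0}}f^{*}Y$ for $n\gg1$.

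\textbf{Exceptional radii.} The set $f^{-1}(Y)\cap\BD_{R}$ is a discrete subset of $\BD_{R}$ (analytic and proper), hence countable. Let the exceptional set be $E=\{|z|: z\in f^{-1}(Y)\}$, which is countable and depends only on $f$ and $Y$. Fix $r<R$ with $r\notin E$. Then $f(\partial\BD_{r})$ avoids $Y$, and only finitely many points $z_{1},\dots,z_{k}$ of $f^{-1}(Y)$ lie in $\ol{\BD_{r}}$, all of them in the open disc $\BD_{r}$.

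\textbf{Away from the zeros.} On the compact set $K=\ol{\BD_{r}}\setminus\bigcup_{j}\BD(z_{j},\rho)$, for small $\rho$, we have $d(f(K),Y)>0$. By uniform convergence, $f_{n}(K)\cap Y=\emptyset$ for $n$ large, so ${\rm ord}_{z}f_{n}^{*}Y=0$ on $K$.

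\textbf{Near a zero $z_{j}$.} Pick $i$ with ${\rm ord}_{z_{j}}f^{*}D_{i}=m_{j}:={\rm ord}_{z_{j}}f^{*}Y<\infty$. Shrink $\rho$ so that $f(\ol{\BD(z_{j},\rho)})$ and, for $n$ large, $f_{n}(\ol{\BD(z_{j},\rho)})$ lie in an affine open set $U$ where $D_{i}$ is cut out by a holomorphic function $g$. This uses uniform convergence and a Lebesgue-number argument as in Proposition \ref{rongchi}. Shrink $\rho$ further so that $g\circ f$ has its only zero in $\ol{\BD(z_{j},\rho)}$ at $z_{j}$. Then $|g\circ f|\geq c>0$ on $\partial\BD(z_{j},\rho)$, while $g\circ f_{n}\to g\circ f$ uniformly there. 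By Rouché's theorem, for $n$ large,
$$\sum_{z\in\BD(z_{j},\rho)}{\rm ord}_{z}f_{n}^{*}D_{i}=m_{j}.$$
Therefore $\sum_{z\in\BD(z_{j},\rho)}{\rm ord}_{z}f_{n}^{*}Y\leq\sum_{z}{\rm ord}_{z}f_{n}^{*}D_{i}=m_{j}$.

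\textbf{Conclusion.} Summing over $j$ and using the previous step gives
$$\sum_{z\in\BD_{r}}{\rm ord}_{z}f_{n}^{*}Y\leq\sum_{j}m_{j}=\sum_{z\in\BD_{r}}{\rm ord}_{z}f^{*}Y,$$
where the discs $\BD(z_{j},\rho)$ are taken disjoint and inside $\BD_{r}$, which is possible because $z_{j}\in\BD_{r}$.

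The main obstacle is the local-chart step: ensuring that $f_{n}$ lands in the same chart as $f$ near $z_{j}$, so that $g\circ f_{n}$ makes sense. This follows from uniform convergence once a distance on $X(\CC)$ is fixed. A minor additional point is the case where $f_{n}$ has image inside some $D_{i}$; the Rouché step rules this out for $n$ large, since $g\circ f_{n}$ is then nonvanishing on the boundary circle.
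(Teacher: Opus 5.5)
Your proof is correct and is essentially the paper's argument: write $Y=D_{1}\cap\dots\cap D_{l}$, apply Rouch\'{e}'s theorem on small discs to a divisor $D_{i}$ not containing $f(\BD_{R})$, use ${\rm ord}_{z}f_{n}^{*}Y\leq{\rm ord}_{z}f_{n}^{*}D_{i}$, and conclude by compactness of $\ol{\BD_{r}}$. The only difference is organizational. The paper covers all of $\ol{\BD_{r}}$ by Rouch\'{e} discs for every relevant $D_{j}$, and therefore excludes the radii $|z|$ with $z\in f^{-1}(D_{j})$. You localize only at the finitely many points of $f^{-1}(Y)$ and handle the rest by a distance argument, which yields the smaller exceptional set $\{|z|: z\in f^{-1}(Y)\}$ and makes the chart issue explicit.
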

\begin{proof}
Write $Y=D_{1}\cap\dots\cap D_{l}$, where $D_{j}$ are effective Cartier divisors on $X$. For simplicity, we identify $D_{j}$ with its support. Since $f(\BD_{R})\nsubseteq Y$, we have $f(\BD_{R})\nsubseteq D_{j}$ for at least one $j$.

Fix $0<r<R$ such that for each $1\leq j\leq l$, if $f(\BD_{R})\nsubseteq D_{j}$, then we have $f(z)\notin D_{j}$ for all $z\in\partial\BD_{r}$. Note that all but countably many $0<r<R$ satisfy the condition above. We prove the inequality for this $r$.

In fact, for each $z\in\ol{\BD}_{r}$, choose an open disc $U_{z}\subseteq \BD_{R}$ containing $z$ such that for any $1\leq j\leq l$ with $f(\BD_{R})\nsubseteq D_{j}$ and any $z'\in\ol{U_{z}}\backslash\{z\}$, we have $f(z')\notin D_{j}$. Then $U_{z}\,(z\in\BD_{r})$ covers $\ol{\BD}_{r}$, which is compact. Choose a finite subcover $U_{z_{k}}\,(1\leq k\leq m)$ of it. By Rouch\'{e}'s theorem, for each $k$, we have
$$
\sum_{z\in U_{z_{k}}}{\rm ord}_{z}f_{n}^{*}Y\leq\sum_{z\in U_{z_{k}}}{\rm ord}_{z}f_{n}^{*}D_{j}=\sum_{z\in U_{z_{k}}}{\rm ord}_{z}f^{*}D_{j}={\rm ord}_{z_{k}}f^{*}D_{j}
$$
for all $1\leq j\leq l$ with $f(\BD_{R})\nsubseteq D_{j}$ and all $n$ large enough (depending on $k,j$). Note that this inequality also holds (in the obvious sense) for all $1\leq j\leq l$ with $f(\BD_{R})\subseteq D_{j}$. Therefore we have
$$
\sum_{z\in U_{z_{k}}}{\rm ord}_{z}f_{n}^{*}Y\leq \min\limits_{1\leq j\leq l}{\rm ord}_{z_{k}}f^{*}D_{j}={\rm ord}_{z_{k}}f^{*}Y
$$
for all $1\leq k\leq m$ and all $n$ large enough (depending on $k$). In conclusion, we have
$$
\sum_{z\in\BD_{r}}{\rm ord}_{z}f_{n}^{*}Y\leq\sum\limits_{k=1}^{m}\sum_{z\in U_{z_{k}}}{\rm ord}_{z}f_{n}^{*}Y\leq\sum\limits_{k=1}^{m}{\rm ord}_{z_{k}}f^{*}Y=\sum_{z\in\BD_{r}}{\rm ord}_{z}f^{*}Y
$$
for $n$ large enough. Here the last equality follows from our choice of $U_{z_{k}}$ and the assumption that $f(z)\notin Y$ for all $z\in\partial\BD_{r}$.
\end{proof}

\begin{remark}\label{blx}
From the above proof we can deduce the following fact (with same conditions as in Lemma \ref{banlianxu}): For any Euclidean closed subset $Y$ of $X$, any effective divisor $D$ containing $Y$ and any $0<r<R$ with $f(\partial\BD_{r})\cap D=\emptyset$, we have
\begin{align}\label{DY}
\sum\limits_{z\in\BD_{r}\cap f_{n}^{-1}(Y)}{\rm ord}_{z}f_{n}^{*}D\leq \sum\limits_{z\in\BD_{r}\cap f^{-1}(Y)}{\rm ord}_{z}f^{*}D
\end{align}
for $n$ large enough. Moreover, we can prove that there is a Euclidean closed subset $W$ of $D$ which is disjoint from $Y$ such that
\begin{align}\label{WDY}
\sum\limits_{z\in\BD_{r}\cap f_{n}^{-1}(W)}{\rm ord}_{z}f_{n}^{*}D= \sum\limits_{z\in\BD_{r}\cap f^{-1}(W)}{\rm ord}_{z}f^{*}D=\sum\limits_{z\in\BD_{r}\backslash f^{-1}(Y)}{\rm ord}_{z}f^{*}D
\end{align}
for $n$ large enough.

In fact, similarly as above, choose an open disc $U_{z}\subseteq\BD_{R}$ for each $z\in\ol{\BD}_{r}$ such that $f(z')\notin D$ for all $z'\in\ol{U}_{z}\backslash\{z\}$. Moreover, choose a Euclidean closed subset $W_{0}$ of $D$ which is disjoint from $Y$ such that $f(\BD_{r})\cap D\subseteq W_{0}\cup Y$. As $Y$ is also closed, we can choose open neighbourhoods $W',Y'$ of $W_{0},Y$ in $X$ such that $\ol{W'}\cap\ol{Y'}=\emptyset$.

Now if $f(z)\in Y$, shrinking $U_{z}$ if necessary, we may assume that the Euclidean closure of $f({U}_{z})$ in $X$ is contained in $Y'$. Similarly, if $f(z)\in W_{0}$, we may shrink $U_{z}$ such that the Euclidean closure of $f({U}_{z})$ in $X$ is contained in $W'$. Take finitely many distinct discs $U_{z_{1}},\dots,U_{z_{m}}$ which cover $\ol{\BD}_{r}$. Then Rouch\'{e}'s theorem implies that
$$
\sum_{z\in U_{z_{k}}}{\rm ord}_{z}f_{n}^{*}D=\sum_{z\in U_{z_{k}}}{\rm ord}_{z}f^{*}D={\rm ord}_{z_{k}}f^{*}D
$$
for each $k$ and $n$ large enough (depending on $k$), and
$$
\sum_{z\in U_{z_{k}}\cap U_{z_{l}}}{\rm ord}_{z}f_{n}^{*}D=\sum_{z\in U_{z_{k}}\cap U_{z_{l}}}{\rm ord}_{z}f^{*}D=0
$$
for any $1\leq k<l\leq m$ and $n$ large enough (depending on $k,l$). Moreover, if $f(z_{k})\in Y$ (resp. $f(z_{k})\in W_{0}$), then the Euclidean closure of $f(U_{z_{k}})$ in $X$ is contained in $Y'$ (resp. in $ W'$). Thus for $n$ large enough (depending on $r$ and $k$), the Euclidean closure of $f_{n}(U_{z_{k}})$ in $X$ is also contained in $Y'$ (resp. in $W'$). In particular, for $n$ large enough, we have $f_{n}(\BD_{r})\cap D\subseteq W'\sqcup Y'\subseteq \ol{W'}\sqcup\ol{Y'}$. Thus we have
\begin{align*}
\sum\limits_{z\in\BD_{r}\cap f_{n}^{-1}(Y')}{\rm ord}_{z}f_{n}^{*}D &= \sum\limits_{k=1}^{m}\sum\limits_{z\in U_{z_{k}}\cap f_{n}^{-1}(Y')}{\rm ord}_{z}f_{n}^{*}D=\sum\limits_{\substack{1\leq k\leq m\\ f(z_{k})\in Y}}\sum\limits_{z\in U_{z_{k}}\cap f_{n}^{-1}(Y')}{\rm ord}_{z}f_{n}^{*}D\\
&=\sum\limits_{\substack{1\leq k\leq m\\ f(z_{k})\in Y}}\sum\limits_{z\in U_{z_{k}}}{\rm ord}_{z}f_{n}^{*}D=\sum\limits_{\substack{1\leq k\leq m\\ f(z_{k})\in Y}}{\rm ord}_{z_{k}}f^{*}D= \sum\limits_{z\in\BD_{r}\cap f^{-1}(Y)}{\rm ord}_{z}f^{*}D.
\end{align*}
for $n$ large enough. This proves (\ref{DY}) since $Y\subseteq Y'$. Similarly, we have
\begin{align*}
\sum\limits_{z\in\BD_{r}\cap f_{n}^{-1}(W')}{\rm ord}_{z}f_{n}^{*}D = \sum\limits_{z\in\BD_{r}\cap f^{-1}(W_{0})}{\rm ord}_{z}f^{*}D=\sum\limits_{z\in\BD_{r}\backslash f^{-1}(Y)}{\rm ord}_{z}f^{*}D.
\end{align*}
for $n$ large enough. Let $W=D\cap\ol{W'}$, which is Euclidean closed in $D$ disjoint from $Y$. By our assumption, we have $f_{n}^{-1}(W'\cap D)=f_{n}^{-1}(W\cap D)$ for $n$ large enough. Hence the left hand side above is exactly $\sum_{z\in\BD_{r}\cap f_{n}^{-1}(W)}{\rm ord}_{z}f_{n}^{*}D$, this proves (\ref{WDY}).
\end{remark}

The next lemma is a simple application of Besicovitch covering theorem. We will see in \S\ref{fenbu} (Lemma \ref{refine--}) and \S\ref{4.3} (Lemma \ref{refine}) for other refined versions.

\begin{lemma}\label{besicovitch}
There is an absolute constant $C>0$ with the following property: Let $\Omega$ be a bounded subset in $\CC$. Let $\mu_{1},\mu_{2}$ be two measures on an open neighbourhood $U$ of $\Omega$. Assume that for any point $x\in\Omega$, there is an open disc $\BD_{x}\subseteq U$ centred at $x$ such that $\mu_{1}(\BD_{x})\leq\mu_{2}(\BD_{x})$. Then we have $\mu_{1}(\Omega)\leq C\cdot\mu_{2}(U)$.
\end{lemma}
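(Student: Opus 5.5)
We deduce Lemma \ref{besicovitch} from the Besicovitch covering theorem in $\RR^{2}\simeq\CC$. That theorem supplies an absolute constant $N$ (depending only on the dimension $2$) with the following property. For any bounded set $\Omega\subseteq\CC$ and any family of closed balls $\{\ol{B}(x,r_{x})\}_{x\in\Omega}$ with $\sup_x r_x<\infty$, one can extract subfamilies $\mathcal{G}_{1},\dots,\mathcal{G}_{N}$ such that each $\mathcal{G}_{i}$ consists of pairwise disjoint balls and $\Omega\subseteq\bigcup_{i}\bigcup_{B\in\mathcal{G}_{i}}B$. Equivalently, the subfamily covers $\Omega$ with multiplicity at most $N$. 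We will take $C=N$. The standard statement uses closed balls, and here the hypothesis concerns open discs $\BD_{x}$. For a cover of $\Omega$ with bounded overlap, the open version of the theorem suffices: the countable subcollection has multiplicity at most $N$, which is all we use. If necessary, one can also shrink each radius slightly, using continuity of measure from below on open discs.

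First we need bounded radii. Because $\BD_{x}\subseteq U$, we have $r_x\le$ the distance from $x$ to $\CC\backslash U$ when $U\neq\CC$. In general the radii might not be bounded, but any disc of radius larger than $\mathrm{diam}(\Omega)$ already contains $\Omega$ and reduces to the trivial case below. So we may assume $\sup_x r_x<\infty$.

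Next we apply Besicovitch to the family $\{\BD_{x}\}_{x\in\Omega}$. This gives a countable subfamily $\{\BD_{x_{k}}\}_{k}$ that covers $\Omega$, with every point of $\CC$ lying in at most $N$ of the discs $\BD_{x_{k}}$. Then, by countable subadditivity, the hypothesis, and bounded overlap,
\begin{align*}
\mu_{1}(\Omega)\leq\sum_{k}\mu_{1}(\BD_{x_{k}})\leq\sum_{k}\mu_{2}(\BD_{x_{k}})=\int_{U}\sum_{k}\mathbf{1}_{\BD_{x_{k}}}\,{\rm d}\mu_{2}\leq N\mu_{2}(U).
\end{align*}
The last two steps use monotone convergence together with $\BD_{x_{k}}\subseteq U$, and this gives the lemma with $C=N$. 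We assume $\Omega$ is $\mu_{1}$-measurable; otherwise we read $\mu_{1}(\Omega)$ as outer measure, and the same chain of inequalities holds.

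The main point needing care is the correct form of Besicovitch's theorem. The discs are centred at the points of $\Omega$ (which is exactly the hypothesis we have), they are open, and their radii are possibly unbounded. The reduction to bounded radii handles the last issue, and the centring is precisely what Besicovitch requires, unlike Vitali-type lemmas, which would need doubling properties of $\mu_{2}$. In the degenerate case where some $\BD_{x}\supseteq\Omega$, the inequality $\mu_{1}(\Omega)\le\mu_{1}(\BD_x)\le\mu_{2}(\BD_x)\le\mu_{2}(U)$ is immediate.
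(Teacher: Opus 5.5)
Your proposal is correct and is essentially the paper's argument: apply the Besicovitch covering theorem in dimension $2$ to the discs $\BD_x$ to extract a countable subfamily that covers $\Omega$ with overlap at most the Besicovitch constant, then chain $\mu_1(\Omega)\le\sum_k\mu_1(\BD_{x_k})\le\sum_k\mu_2(\BD_{x_k})\le C\mu_2(U)$. Your reduction to bounded radii and your remark that the open-disc version of Besicovitch suffices fill in points the paper leaves implicit.
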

\begin{proof}
Take $C$ to be the Besicovitch constant in dimension 2 (which actually equals 19). Then since $\cup_{x\in\Omega}\BD_{x}$ is a Besicovitch cover of $\Omega$, the Besicovitch covering theorem implies that there is a subset $\Omega'\subseteq\Omega$ such that the union $\cup_{x\in\Omega'}\BD_{x}$ contains $\Omega$, and for each point $y\in U$, the number of $x\in\Omega'$ with $y\in\BD_{x}$ is at most $C$. Therefore we have $\mu_{1}(\Omega)\leq \sum_{x\in\Omega'}\mu_{1}(\BD_{x})\leq \sum_{x\in\Omega'}\mu_{2}(\BD_{x})\leq C\cdot\mu_{2}(U)$.
\end{proof}

\begin{proof}[Proof of Proposition \ref{Y}]
After replacing $Z$ by $Z+G(\CC)$, which is also a Euclidean closed subset of $A(\CC)$ satisfying the conditions in the proposition, we may assume that $Z=Z+G(\CC)$. Note that $G\neq 0$, and thus there always exists an effective divisor $D$ on $A$ whose support contains $Y$ such that $(x+G)\nsubseteq D$ for all $x\in Z$. Take any such $D$. We only need to show the second statement.

First consider the case where $\left\|\cdot\right\|$ is the translation-invariant Hermitian metric on $L$. In this case there is a constant $c>0$ such that
\begin{align*}
\int_{\BD_{r}}\phi_{(x,v)}^{*}c_{1}(L,\left\|\cdot\right\|)= cr^{2}
\end{align*}
for any point $x\in A(\CC)$ and any $r>0$.

Let $C>0$ be the Besicovitch constant in dimension 2. We first claim that there is a constant $r_{0}>0$ such that for any point $x\in Z$, there exists $0<r_{x}<r_{0}$ such that
\begin{align*}
\sum\limits_{z\in\BD_{r_{x}}\cap\phi_{(x,v)}^{-1}(Y)}{\rm ord}_{z}\phi_{(x,v)}^{*}D\leq \frac{\varepsilon}{4C} \int_{\BD_{r_{x}}}\phi_{(x,v)}^{*}c_{1}(L,\left\|\cdot\right\|).
\end{align*}
In fact, for each $x\in Z$, the intersection $(x+G)\cap Y$ has codimension at least 2 in $(x+G)$ and $(x+G)\nsubseteq D$. Thus by Remark \ref{jq}, there exists $r'_{x}>0$ such that
\begin{align*}
\sum\limits_{z\in\BD_{r}\cap\phi_{(x,v)}^{-1}(Y)}{\rm ord}_{z}\phi_{(x,v)}^{*}D\leq \frac{\varepsilon}{8C} \int_{\BD_{r}}\phi_{(x,v)}^{*}c_{1}(L,\left\|\cdot\right\|).
\end{align*}
holds for all $r\geq r'_{x}$. After enlarging $r'_{x}$, we may assume that $r'_{x}$ does not lie in the countably many exceptions in Lemma \ref{banlianxu}. Then by Lemma \ref{banlianxu} and the continuity (in $x$) of the right hand side, there is a Euclidean open neighbourhood $U_{x}$ of $x$ in $A$ such that for any $x'\in U_{x}$, we have
\begin{align*}
\sum\limits_{z\in\BD_{r'_{x}}\cap\phi_{(x',v)}^{-1}(Y)}{\rm ord}_{z}\phi_{(x',v)}^{*}D\leq \frac{\varepsilon}{4C} \int_{\BD_{r'_{x}}}\phi_{(x',v)}^{*}c_{1}(L,\left\|\cdot\right\|).
\end{align*}
Now all $U_{x}\,(x\in Z)$ cover $Z$, which is compact. Thus there are finitely many points $x_{1},\dots,x_{m}\in Z$ such that $\cup_{i=1}^{m}U_{x_{i}}$ containing $Z$. Now take $r_{0}=\max_{1\leq i\leq m}r'_{x_{i}}+1$; the claim is true.

Note that for any $x\in Z$ and any $z_{0}\in\CC$, as $x'=\phi_{(x,v)}(z_{0})\in x+G\subseteq Z$ and $\phi_{(x,v)}(z_{0}+z)=\phi_{(x',v)}(z)$ for all $z\in\CC$, by our claim above, there exists $0<r_{x,z_{0}}<r_{0}$ such that
\begin{align*}
&\sum\limits_{z\in\BD(z_{0},r_{x,z_{0}})\cap\phi_{(x,v)}^{-1}(Y)}{\rm ord}_{z}\phi_{(x,v)}^{*}D=\sum\limits_{z\in\BD_{r_{x,z_{0}}}\cap\phi_{(x',v)}^{-1}(Y)}{\rm ord}_{z}\phi_{(x',v)}^{*}D\\
&\leq \frac{\varepsilon}{4C}\int_{\BD_{r_{x,z_{0}}}}\phi_{(x',v)}^{*}c_{1}(L,\left\|\cdot\right\|)= \frac{\varepsilon}{4C} \int_{\BD(z_{0},r_{x,z_{0}})}\phi_{(x,v)}^{*}c_{1}(L,\left\|\cdot\right\|).
\end{align*}
Now for any point $x\in Z$ and any $R>r_{0}$, the union $\cup_{z_{0}\in\BD_{R}}\BD(z_{0},r_{x,z_{0}})$ is contained in $\BD_{2R}$. Set
$$
\mu_{1,R}\colonequals \sum\limits_{z\in\BD_{2R}\cap\phi_{(x,v)}^{-1}(Y)}{\rm ord}_{z}\phi_{(x,v)}^{*}D\cdot\delta_{z}
$$
and
$$
\mu_{2,R}\colonequals \frac{\varepsilon}{4C}\phi_{(x,v)}^{*}c_{1}(L,\left\|\cdot\right\|)\Big|_{\BD_{2R}},
$$
which are measures on $\BD_{2R}$. Applying Lemma \ref{besicovitch} to $\mu_{1,R},\mu_{2,R}$, $\Omega=\BD_{R}$ and $U=\BD_{2R}$, we get
\begin{align*}
\sum\limits_{z\in\BD_{R}\cap\phi_{(x,v)}^{-1}(Y)}{\rm ord}_{z}\phi_{(x,v)}^{*}D\leq C\frac{\varepsilon}{4C}\int_{\BD_{2R}}\phi_{(x,v)}^{*}c_{1}(L,\left\|\cdot\right\|)&\leq C\frac{\varepsilon}{4C}(2R)^{2}=c\varepsilon R^{2}\\
&=\varepsilon\int_{\BD_{R}}\phi_{(x,v)}^{*}c_{1}(L,\left\|\cdot\right\|).
\end{align*}
This completes the proof when $\left\|\cdot\right\|$ is invariant under translations. For general Hermitian metric, just note that there is a constant $c'>0$ such that
$$
cr^{2}-c'r\leq \int_{\BD_{r}}\phi_{(x,v)}^{*}c_{1}(L,\left\|\cdot\right\|)\leq cr^{2}+c'r
$$
for all $x\in A(\CC)$ and $r>0$ (see the argument at the end of the proof of Proposition \ref{rm-}). Therefore the right hand side is sufficiently close to $cr^{2}$ when $r$ is large enough. From this we can deduce the general case from the special case above.
\end{proof}

Now we summarized the main estimates in this subsection as follows. Part 1 will be used in \S\ref{fenbu} and \S\ref{4.3}, part 2 will be used in \S\ref{4.3} and part 3 will be used in \S\ref{4.4}.

\begin{proposition}\label{conclusion}
Let $A$ be an abelian variety over $\CC$, which is also a closed subvariety of some projective variety $X$ over $\CC$. Let $D$ be an effective Cartier divisor on $X$ whose support does not contain $A$ such that the restriction of $D$ to $A$ is ample. Let $v\in{\rm Lie}(A)$ be a nonzero element. Let $Z$ be a (Euclidean) closed subset of $A(\CC)$ such that for any point $x\in Z$, the linear leaf $\CF(x,v)\subseteq A$ is not contained in ${\rm Supp}\, D$. Fix a distance function $d$ on $X(\CC)$, a smooth Hermitian metric $\left\|\cdot\right\|$ on $L=\CO_{X}(D)$ and a constant $\varepsilon>0$. Then there are constants $R>0, \delta>0$ satisfying the following condition: For any point $x\in Z$ and any holomorphic map $\phi\colon \BD_{R+1}\to X$, if $d(\phi(z), \phi_{(x,v)}(z))<\delta$ for all $z\in\BD_{R+1}$, then
$$
\sum\limits_{z\in\BD_{R}}{\rm ord}_{z}\phi^{*}D\geq(1-\varepsilon)\int_{\BD_{R}}\phi^{*}c_{1}(L,\left\|\cdot\right\|).
$$
If moreover we have a closed subscheme $Y$ of $X$ such that for all $x\in Z$, the intersection $Y\cap \ol{\CF(x,v)}^{\rm zar}$ has codimension at least $2$ in $\ol{\CF(x,v)}^{\rm zar}$, then for any $\varepsilon'>0$ we can further ensure the inequality
$$
\sum\limits_{z\in\BD_{R}}{\rm ord}_{z}\phi^{*}Y\leq\varepsilon'\int_{\BD_{R}}\phi^{*}c_{1}(L,\left\|\cdot\right\|).
$$
Finally, if $Y$ is contained in ${\rm Supp}\,D$, then we can further choose a Euclidean closed subset $W\subseteq {\rm Supp}\,D$ which is disjoint from $Y$ such that
$$
\sum\limits_{z\in\BD_{R}\cap\phi^{-1}(W)}{\rm ord}_{z}\phi^{*}D\geq(1-\varepsilon)\int_{\BD_{R}}\phi^{*}c_{1}(L,\left\|\cdot\right\|).
$$
for all $\phi\colon\BD_{R+1}\to X$ above.
\end{proposition}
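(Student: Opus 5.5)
The plan is to combine the uniform estimates for the linear curves $\phi_{(x,v)}$, $x\in Z$, with a perturbation step that passes from $\phi_{(x,v)}$ to a nearby holomorphic map $\phi$. The counting functions are handled by Rouché-type arguments (Proposition \ref{rongchi}, Lemma \ref{banlianxu}, Remark \ref{blx}). The integral terms are handled by the fact that $\int_{\BD_R}\phi^{*}c_{1}(L,\|\cdot\|)$ is close to $\int_{\BD_R}\phi_{(x,v)}^{*}c_1$ when $\phi$ is $C^0$-close to $\phi_{(x,v)}$.

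\textbf{Step 1: the integral term.} This is the point I expect to be the main obstacle, because $C^0$-closeness does not control $\int\phi^{*}c_1$ directly. I would use Cauchy estimates: on $\BD_{R+1/2}$, $C^0$-closeness within $\delta$ gives $C^1$-closeness within $O(\delta)$ in local holomorphic charts. Since $c_1(L,\|\cdot\|)$ is a smooth form on compact $X$, it follows that
\[
\Bigl|\int_{\BD_R}\phi^{*}c_1-\int_{\BD_R}\phi_{(x,v)}^{*}c_1\Bigr|\le \eta(\delta)R^{2},
\]
where $\eta(\delta)\to 0$ uniformly in $x$. This uses that the derivative of $\phi_{(x,v)}$ is bounded independently of $x$. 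By (\ref{omega}), $\int_{\BD_R}\phi_{(x,v)}^{*}c_1\ge c_1R^2-c_2R$ with $c_1>0$, so for fixed $R$ a small enough $\delta$ makes the ratio of the two integrals lie within $1\pm\varepsilon/3$. Here the restriction of $D$ to $A$ is ample and $\phi_{(x,v)}$ lands in $A$, so Corollary \ref{rm-} applies with the metric restricted to $A$.

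\textbf{Step 2: the first inequality.} Apply Corollary \ref{rm-} to $D|_A$, $Z$, $v$ and $\varepsilon/3$ to get $r_0$. Choose $R$ large enough that $R-1>r_0$ and that (\ref{omega}) makes $\int_{\BD_{R-1}}\ge(1-\varepsilon/3)\int_{\BD_R}$ uniformly in $x$. Then apply Proposition \ref{rongchi} with $c=1$ and radius $R-1$ to get $\delta_1$; this requires only $\phi$ on $\BD_{R}\subseteq\BD_{R+1}$. It gives
\[
\sum_{\BD_R}{\rm ord}\,\phi^*D\ \ge\ \sum_{\BD_{R-1}}{\rm ord}\,\phi_{(x,v)}^*D\ \ge\ (1-\varepsilon/3)^2\int_{\BD_R}\phi_{(x,v)}^*c_1 .
\]
Now take $\delta\le\delta_1$ as in Step 1 to replace $\phi_{(x,v)}$ by $\phi$ in the integral, losing another factor $1-\varepsilon/3$.

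\textbf{Step 3: the $Y$ inequality.} I would pass to a compactness argument over $Z$ and use Proposition \ref{Y}. Proposition \ref{Y} requires $Y\subseteq A$, so it should be applied to $Y\cap A$; the hypothesis is exactly the one stated. This gives the bound $\varepsilon'/4$ for $\phi_{(x,v)}$ on all large $R$. I would choose $R$ to be a common radius that avoids the countable exceptional sets of Lemma \ref{banlianxu}. For stability, argue by contradiction: suppose there are $x_n\in Z$ and $\phi_n$ with $\delta_n\to0$ violating the bound. Pass to $x_n\to x\in Z$; then $\phi_n\to\phi_{(x,v)}$ uniformly on $\BD_{R+1}$. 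Lemma \ref{banlianxu} gives $\sum_{\BD_R}{\rm ord}\,\phi_n^*Y\le\sum_{\BD_R}{\rm ord}\,\phi_{(x,v)}^*Y$ for large $n$, and this contradicts the bound combined with Step 1. The problem is that the exceptional radii depend on the limit $x$, whereas $R$ must be fixed in advance. To handle this, I would instead shrink slightly, comparing on $\BD_R$ against the limit curve on some good $r\in(R,R+1)$. Using the Proposition \ref{Y} bound at radius $r$ and (\ref{omega}) to compare $\int_{\BD_r}$ with $\int_{\BD_R}$, the inequality remains valid.

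\textbf{Step 4: the set $W$.} Use (\ref{WDY}) from Remark \ref{blx} in the same manner. Near each limit curve, the $D$-zeros of $\phi$ off a neighbourhood of $Y$ match those of $\phi_{(x,v)}$ off $Y$. Their total is at least the full count minus the $Y$-part, and the $Y$-part is small by the second statement of Proposition \ref{Y}. Compactness of $Z$ then yields finitely many neighbourhoods, each with its own $W$. I would take $W={\rm Supp}\,D\setminus\BB(Y,\eta)$ with $\eta$ the minimum over this finite cover, so that it is independent of $x$.
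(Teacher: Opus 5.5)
Your proposal is correct and follows essentially the paper's route. You fix $R$ uniformly over $Z$ by the linear-curve estimates of Corollary \ref{rm-} and Proposition \ref{Y}, and then obtain $\delta$ and a single $W$ from Rouch\'{e}-type stability (Proposition \ref{rongchi}, Lemma \ref{banlianxu}, Remark \ref{blx}) together with compactness of $Z$; your finite subcover with $W={\rm Supp}\,D\setminus\BB(Y,\eta)$ is just the direct form of the paper's contradiction argument with an exhaustion $W_{1}\subseteq W_{2}\subseteq\cdots$ of $({\rm Supp}\,D)\setminus Y$. Your Cauchy-estimate control of $\int_{\BD_R}\phi^{*}c_{1}(L,\left\|\cdot\right\|)$ and your use of a good radius near $R$, compensated by (\ref{omega}), make explicit two points the paper passes over: its ``by continuity'' step, and the boundary hypothesis $f(\partial\BD_{r})\cap D=\emptyset$ that Remark \ref{blx} needs but that can fail at the fixed radius $R$.
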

\begin{proof}
For simplicity, we only prove the last statement. The proof of the first two statements is similar and even easier. Now fix a closed subscheme $Y\subseteq {\rm Supp}\,D$ such that for all $x\in Z$, the intersection $Y\cap \ol{\CF(x,v)}^{\rm zar}$ has codimension at least $2$ in $\ol{\CF(x,v)}^{\rm zar}$. By Corollary \ref{rm-} and Proposition \ref{Y}, there is a constant $R>0$ such that for each point $x\in Z$, we have
\begin{align*}
\sum\limits_{z\in\BD_{R}}{\rm ord}_{z}\phi_{(x,v)}^{*}D\geq (1-\frac{\varepsilon}{4})\int_{\BD_{R}}\phi_{(x,v)}^{*}c_{1}(L,\left\|\cdot\right\|).
\end{align*}
and
\begin{align*}
\sum\limits_{z\in\BD_{R}\cap\phi_{(x,v)}^{-1}(Y)}{\rm ord}_{z}\phi_{(x,v)}^{*}D\leq \frac{\varepsilon}{4}\int_{\BD_{R}}\phi_{(x,v)}^{*}c_{1}(L,\left\|\cdot\right\|).
\end{align*}
Taking the difference of the two inequalities, we get
\begin{align}\label{conclusion-1.5}
\sum\limits_{z\in\BD_{R}\backslash\phi_{(x,v)}^{-1}(Y)}{\rm ord}_{z}\phi_{(x,v)}^{*}D\geq (1-\frac{\varepsilon}{2})\int_{\BD_{R}}\phi_{(x,v)}^{*}c_{1}(L,\left\|\cdot\right\|).
\end{align}
for all $x\in Z$.

Fix this $R$. Now we prove the existence of $\delta>0$ in the first statement by contradiction. Take an increasing sequence $W_{1}\subseteq W_{2}\subseteq\dots\subseteq W_{n}\subseteq\dots$ of Euclidean closed subset of $D$ such that each $W_{n}$ is contained in the interior of $W_{n+1}$ in $(D\backslash Y)(\CC)$, and the union $\cup_{n\geq1}W_{n}$ equals $(D\backslash Y)(\CC)$. Here we identify $D$ with ${\rm Supp}\,D$. If the conclusion does not hold, there is a point $x_{n}\in Z$ and a holomorphic map $\phi_{n}\colon\BD_{R+1}\to X$ for each $n\geq1$ satisfying $d(\phi_{n}(z),\phi_{(x_{n},v)}(z))<\frac{1}{n}\,(\forall z\in\BD_{R+1})$ such that
\begin{align}\label{conclusion-2}
\sum\limits_{z\in\BD_{R}\cap\phi_{n}^{-1}(W_{n})}{\rm ord}_{z}\phi_{n}^{*}D<(1-\varepsilon)\int_{\BD_{R}}\phi_{n}^{*}c_{1}(L,\left\|\cdot\right\|).
\end{align}
After taking a subsequence, we may assume that $x_{n}\to x_{\infty}\in Z$ in the Euclidean topology. Then $\phi_{n}$ converges to the linear entire curve $\phi_{(x_{\infty},v)}$ uniformly on $\BD_{R+1}$. By Remark \ref{blx}, there is a Euclidean closed subset $W$ of $D$ which is disjoint from $Y$ such that
\begin{align*}
\sum\limits_{z\in\BD_{R}\cap\phi_{n}^{-1}(W)}{\rm ord}_{z}\phi_{n}^{*}D = \sum\limits_{z\in\BD_{R}\backslash\phi_{(x_{\infty},v)}^{-1}(Y)}{\rm ord}_{z}\phi_{(x_{\infty},v)}^{*}D\geq (1-\frac{\varepsilon}{2})\int_{\BD_{R}}\phi_{(x_{\infty},v)}^{*}c_{1}(L,\left\|\cdot\right\|).
\end{align*}
for $n$ large enough. Here the right-hand inequality follows from (\ref{conclusion-1.5}). By continuity, we have
$$
\int_{\BD_{R}}\phi_{n}^{*}c_{1}(L,\left\|\cdot\right\|)\to\int_{\BD_{R}}\phi_{(x_{\infty},v)}^{*}c_{1}(L,\left\|\cdot\right\|)
$$
as $n\to\infty$. Combining with the inequality above, we have
\begin{align*}
\sum\limits_{z\in\BD_{R}\cap\phi_{n}^{-1}(W)}{\rm ord}_{z}\phi_{n}^{*}D \geq (1-\varepsilon)\int_{\BD_{R}}\phi_{n}^{*}c_{1}(L,\left\|\cdot\right\|).
\end{align*}
This contradicts (\ref{conclusion-2}) since $W\subseteq W_{n}$ for $n$ large enough.
\end{proof}
\section{Height inequalities on abelian varieties}\label{4}
In this section, we prove our main theorems. The core is to prove Proposition \ref{V}. We will do some basic reductions and prove a key lemma in \S\ref{4.1}, and then prove (1), (3), (2) of Proposition \ref{V} in \S\ref{fenbu}, \S\ref{4.3} and \S\ref{4.4} respectively. These correspond to the asymptotic distribution of intersections of sections with a divisor, the GCD and abc inequalities. The proofs of these inequalities follow the same pattern, but the technical difficulties increase gradually. Then in \S\ref{4.5}, we deduce the main theorems from Proposition \ref{V}. Finally in \S\ref{4.6}, we further show the $K$-independence of the exceptional set in the main theorems.

Throughout this section, we fix a smooth projective curve $B$ over $\CC$ as well as a K\"{a}hler form $\omega_{B}$ on $B(\CC)$. Then $\omega_{B}$ induces a measure $\mu_{B}$ and a distance function $d_{B}$ on $B(\CC)$.

Before proving Proposition \ref{V}, we make the following observation: Let $P$ be a property for some points in a variety $X$. If for any generic sequence $\{x_{n}\}_{n\geq1}$ in $X$, there are infinitely many $n$ such that $P$ holds for $x_{n}$. Then for all such sequences, the property $P$ holds for $x_{n}$ when $n$ large enough. In fact, if not, we can take an infinite subsequence $\{y_{n}\}_{n\geq1}$ of some generic sequence $\{x_{n}\}_{n\geq1}$ in $X$ such that the property $P$ does not hold for all $y_{n}$, this contradicts our assumption since $\{y_{n}\}_{n\geq1}$ is also generic in $X$. Therefore we only need to prove that all inequalities in Proposition \ref{V} hold for infinitely many $n$. In particular, we can freely take infinite subsequences as long as we need.
\subsection{Basic reductions and a key lemma}\label{4.1}
We begin to prove Proposition \ref{V}. First we can assume that the height of the image of $\{s_{n}\}_{n\geq1}$ in any nontrivial quotient abelian variety tends to infinity as $n\to\infty$. In fact, assume that there is a nontrivial quotient abelian variety $A_{0}=A/A_{1}$ of $A$ such that the height of the image of $\{s_{n}\}_{n\geq1}$ in $A_{0}(K)$ does not tend to infinity. After taking a subsequence (which does not change the genericity), we may assume that the heights of these images are bounded. By Lang--N\'{e}ron theorem, $V(A_{0},K)=(A_{0}(K)/A_{0}^{(K/\CC)}(\CC))_{\RR}$ is a finite-dimensional $\RR$-vector space, and the canonical height on $A_{0}(K)$ (with respect to an ample symmetric line bundle on $A_{0}$) induces a positive definite quadratic form on $V(A_{0},K)$. As a consequence, there are only finitely many elements in $V(A_{0},K)$ with a fixed bounded height. Hence the image of $\{s_{n}\}_{n\geq1}$ in $A_{0}(K)$ is contained in a finite union of translations of $A_{0}^{(K/\CC)}(\CC)$. As $\{s_{n}\}_{n\geq1}$ is generic in $A$, we conclude that $A_{0}$ is isomorphic to $(A_{0}^{(K/\CC)})_{K}$. After taking a subsequence, we assume that the image of $s_{n}$ in $A_{0}$ lies in a single translation $a+A_{0}^{(K/\CC)}(\CC)$.

Now take an isogeny $i\colon A_{1}\times A_{0}\to A$ such that its composition with the quotient map $A\to A_{0}$ equals $(a_{1},a_{0})\mapsto ma_{0}$ for some integer $m$. Consider the composition $t_{a}\circ i\colon A_{1}\times A_{0}\overset{i}\to A\overset{+a}\to A$. By (geometric) Chevalley--Weil theorem (see \cite[Lemma 2.1]{arXiv:2507.21468}), there is a finite field extension $K'/K$ such that $A(K)\subseteq (t_{a}\circ i)(A_{1}(K')\times A_{0}(K'))$. Choose $(s_{n,1},s_{n,0})\in (A_{1}(K')\times A_{0}(K'))\cap (t_{a}\circ i)^{-1}(s_{n})=i^{-1}(s_{n}-a)$ for each $n\geq1$. Then $s_{n,0}\in A_{0}^{(K/\CC)}(\CC)$ for each $n\geq1$. After base change everything from $K$ to $K'$ (which does not affect all height functions up to $O(1)$), we may assume that $s_{n,1}\in A_{1}(K)$ for all $n\geq1$.

Consider the finite \'{e}tale morphism $(t_{a}\circ i)\times{\rm id}_{T_{K}}\colon A_{1}\times A_{0}\times T_{K}\to A\times T_{K}$. Let $D'=((t_{a}\circ i)\times{\rm id}_{T_{K}})^{*}D$ and $Y'=((t_{a}\circ i)\times{\rm id}_{T_{K}})^{*}Y$. Then $D'$ is also relatively ample over $T_{K}$. Moreover, if $D$ is reduced, then $D'$ is also reduced. Let $B'\to B$ be the finite surjective map corresponding to the extension $K'/K$ and let $U'$ be the inverse image of $U\subseteq B(\CC)$ in $B'(\CC)$. By projection formula, we have
$$
h_{D'}((s_{n,1},s_{n,0},t_{n}))=h_{D}((s_{n},t_{n}))+O(1)
$$
for all $n\geq1$, and similarly for $h_{Y'},m_{D',U'},N^{(1)}_{D',B(\CC)\backslash U'}$.

Now consider the abelian scheme $A_{1}\times A_{0}\times T_{K}\to A_{0}\times T_{K}$. Let $(D')^{\rm sym}$ be the symmetrization of $D'$ over $A_{0}\times T_{K}$. By Lemma \ref{bijiao}, we have
$$
\hat{h}_{(D')^{\rm sym},U'}((s_{n,1},s_{n,0},t_{n}))=\hat{h}_{D^{\rm sym},U}((s_{n},t_{n}))+O\left(1+\sqrt{\hat{h}_{D^{\rm sym},U}((s_{n},t_{n}))}\right)
$$
for all $n\geq1$.

The equalities above show that we only need to prove Proposition \ref{V} for $A_{1}\times (A_{0}^{(K/\CC)}\times_{\CC} T)_{K},D',Y',U',\{(s_{n,1},(s_{n,0},t_{n}))\}_{n\geq1}$. Replacing $A$ by $(A_{1})_{K'}$ and $T$ by $A_{0}^{(K/\CC)}\times_{\CC}T$ (and thus $s_{n}$ changes to $s_{n,1}$ and $t_{n}$ changes to $(s_{n,0},t_{n})$), then we have $\dim A_{1}<\dim A$. Repeating this process finitely many times, we can eventually assume that the height of the image of $\{s_{n}\}_{n\geq1}$ in any nontrivial quotient abelian variety tends to infinity as $n\to\infty$.

Take an integral model $\CA$ of $A$ over $B$. Then $\CA\times_{\CC}T$ is an integral model of $A\times T_{K}$ over $B$. View $(s_{n},t_{n})$ as a section $B\to\CA\times_{\CC}T$ of this integral model. Take $\CD$ to be an effective Cartier divisor on $\CA\times_{\CC}T$ extending $D$, and take $\CY$ to be a closed subscheme of $\CA\times_{\CC}T$ extending $Y$. Identifying $\CD$ with its support, we can also view it as a closed subscheme of $\CA\times_{\CC}T$. Consider the compositions $\CD\hookrightarrow \CA\times_{\CC}T\to T$ and $\CY\hookrightarrow \CA\times_{\CC}T\to T$. We first reduce to the case when they are both flat.

In fact, by Raynaud--Gruson's flattening theorem (cf. \cite[Th\'{e}or\`{e}me 5.2.2]{zbMATH03360302}), there is a proper birational map $g\colon T'\to T$ such that the strict transforms $\CD',\CY'$ of $\CD,\CY$ under ${\rm id}_{\CA}\times g\colon\CA\times_{\CC}T'\to\CA\times_{\CC}T$ are both flat over $T'$. Let $t'_{n}\in T'(\CC)$ be a lift of $t_{n}$ under $g$. As $({\rm id}_{\CA}\times g)^{*}\CD-\CD'$ is vertical over $T'$ and $\{t_{n}\}_{n\geq1}$ is generic in $T$, we see that $\{t'_{n}\}_{n\geq1}$ is generic in $T'$ and
$$
{\rm ord}_{b}(s_{n},t_{n})^{*}\CD={\rm ord}_{b}(s_{n},t'_{n})^{*}\CD'
$$
for $n$ large enough and for all $b\in B(\CC)$, and similarly for $\CY$. This means that all the counting functions in Proposition \ref{V} don't change in this process for $n$ large enough. Moreover, let $D'$ be the generic fibre of $\CD'\to B$, which is an effective Cartier divisor on $A\times T'_{K}$. As $g$ is birational, the restriction of $D'$ to general fibre $A\times\{t'\}\,(t'\in T'_{K})$ is ample. Since the restriction of $D'$ to all fibres $A\times\{t'\}\,(t'\in T'_{K})$ are numerically equivalent, we see that $D'$ is still relatively ample over $T'_{K}$. Therefore after replacing $T,\{t_{n}\}_{n\geq1},\CD,\CY$ by $T',\{t'_{n}\}_{n\geq1},\CD',\CY'$, we can assume that both $\CD$ and $\CY$ are flat over $T$. In particular, the union $\CY\cup\CD$ does not contain any fibre $\CA\times\{t\}\,(t\in T)$. We always assume this in the following.

Now we prove the following key lemma in this subsection. The letters in the lemma are independent of those in Proposition \ref{V} (although the settings are similar).

\begin{lemma}\label{juli}
Let $A$ be an abelian variety over $K$, and let $T$ be a projective variety over $\CC$. Let $\{(s_{n},t_{n})\}_{n\geq1}\subseteq A(K)\times T_{K}(K)$ be a generic sequence of rational points on $A\times T_{K}$ such that $t_{n}\in{\rm Im}(T(\CC)\to T_{K}(K))$ for all $n$. Moreover, for any nontrivial quotient abelian variety $A'$ of $A$, the height of the image of $s_{n}$ in $A'$ (with respect to an ample line bundle on $A'$) tends to infinity as $n\to\infty$.

Let $\CA$ be an integral model of $A$ over $B$. Let $\CZ$ be a closed subscheme of $\CA\times_{\CC}T$ which does not contain any fibre $\CA\times\{t\}\,(t\in T)$. View $(s_{n},t_{n})$ as a section $B\to\CA\times_{\CC}T$ for each $n\geq1$. Then for any $\delta>0$, there is an (Euclidean) open neighbourhood $\CU$ of $\CZ$ such that for $n$ large enough, the subset $(s_{n},t_{n})^{-1}(\CU)$ of $B(\CC)$ has $\mu_{B}$-measure less than $\delta$.
\end{lemma}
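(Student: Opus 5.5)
The plan is to argue by contradiction, rescale the sections around a generic base point so that they look like linear entire curves (Lemma \ref{linear}), and then induct on $\dim A$ to handle the non-density of the limiting leaves.

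\textbf{Setup.} By the observation at the start of \S\ref{4}, it suffices to prove the conclusion for a subsequence of any given generic sequence, so subsequences may be taken freely. Fix a Euclidean norm $|\cdot|$ on $V(A,K)$ and put $\ell_{n}=|s_{n}|$. The hypothesis on quotients forces $\ell_{n}\to\infty$. After passing to a subsequence, $\ell_{n}^{-1}s_{n}\to s_{\infty}\neq0$, and also $t_{n}\to t_{\infty}$ in $T(\CC)$. Let $G=G(s_{\infty})$, let $\CG$ be its closure, and remove from $B$ the bad reduction locus and the measure-zero set $S(A)$ of Proposition \ref{closure}. For every remaining $b$, the Zariski closure of the leaf through $x\in\CA_{b}$ in direction $\delta(s_{\infty},v_{b})$ is $x+\CG_{b}$.

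\textbf{Step 1 (leaves not contained in $\CZ$).} Let $\CZ'\subseteq\CZ$ be the closed subset of those $(x,t)$ with $(x+\CG_{\pi(x)})\times\{t\}\subseteq\CZ$. It is closed because it is defined by a relative containment condition. Consider first the part of $\CZ$ away from a small neighbourhood $\CV'$ of $\CZ'$. Near a good $b$, the leaves $\phi_{(x,\delta(s_{\infty},v_{b}))}$ with $x$ in the compact set $(\CA_{b}\times\{t_{\infty}\})\backslash\CV'$ are not contained in $\CZ$.

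I would prove a uniform statement by compactness, in the style of Lemma \ref{c} and Proposition \ref{rongchi}: for every $\eta>0$ there are $R$ and a neighbourhood $\CU$ of $\CZ$ such that, for all such $x$, the set $\{z\in\BD_{R}: \phi_{(x,v)}(z)\in\CU\}$ has Lebesgue measure at most $\eta\cdot{\rm area}(\BD_{R})$. For a single $x$ this holds because a real-analytic curve not contained in $\CZ$ meets it only in a discrete set, so a thin neighbourhood is visited only on a small-measure set. Uniformity then comes from continuity in $x$ together with compactness, and the statement is also uniform for $b'$ near $b$.

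Lemma \ref{linear} transfers this to the sections. On the disc of radius $R/\ell_{n}$ around each $b'$ near $b$, the map $z\mapsto s_{n}(b'+\ell_{n}^{-1}z)$ is uniformly close to a leaf. Hence $(s_{n},t_{n})^{-1}(\CU)$ occupies at most a proportion $2\eta$ of each such small disc.

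\textbf{Step 2 (covering to a global bound).} Cover a compact set of good points, of total measure at least $\mu_{B}(B)-\delta/4$, by finitely many discs $\BD_{b}$ on which Step 1 applies uniformly. Inside each $\BD_{b}$, cover by discs of the common radius $R/\ell_{n}$ with bounded overlap, using Lemma \ref{besicovitch} with $\mu_{1}$ the restriction of $\mu_{B}$ to the preimage and $\mu_{2}=2\eta\mu_{B}$. This gives $\mu_{B}\bigl((s_{n},t_{n})^{-1}(\CU)\bigr)\leq\delta/4+38\eta\,\mu_{B}(B)$, which is less than $\delta/2$ for $\eta$ small.

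\textbf{Step 3 (the part near $\CZ'$, by induction on $\dim A$).} This is the main obstacle: the leaves are not Zariski dense when $G\neq A$, and they may lie entirely inside $\CZ$. By construction $\CZ'$ is a union of $\CG$-cosets, so it is pulled back from a closed subscheme $\CZ''$ of the model of $(A/G)\times T$. Since $\CZ'\subseteq\CZ$ contains no fibre, neither does $\CZ''$. The images of $s_{n}$ in $A/G$ still satisfy the hypotheses: the heights in every quotient of $A/G$ tend to infinity, because each such quotient is a quotient of $A$. Genericity is preserved under the surjection, and $\dim(A/G)<\dim A$.

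When $G=0$, $\CZ'=\CZ$ and the argument degenerates, since a leaf closure is then a point. I expect to treat this by choosing $s_{\infty}$ more carefully, passing to the direction of fastest growth. Alternatively, the hypothesis on quotients should force $G(s_{\infty})\neq0$, since $s_{\infty}\neq0$ already gives $G\neq0$. If $G=A$, then $\CZ'=\emptyset$ and Step 2 alone finishes.

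Otherwise, induction supplies a neighbourhood $\CU''$ of $\CZ''$ whose preimage has measure less than $\delta/2$. Its pullback contains a neighbourhood $\CV'$ of $\CZ'$, and one runs Steps 1 and 2 with this $\CV'$. Combining the two bounds gives the lemma. The delicate point is to choose $\CV'$ from the induction first, and only then $R$ and $\CU$ in Step 1.
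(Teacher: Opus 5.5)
Your proposal is correct in outline, and it takes a genuinely different route from the paper for the part of $\CZ$ away from the quotient locus. Your Step 3 is essentially the paper's induction through $A/G$.

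The paper first enlarges $\CZ$ to an ample divisor $\CE$ containing no fibre, so that $\#(s_n,t_n)^{-1}(\CE)\leq\deg(s_n,t_n)^{*}\CE=O(\ell_n^{2})$. It then uses Lemma \ref{linear} together with the Rouch\'e-type Proposition \ref{rongchi} to show that every $b'$ with $(s_n(b'),t_n)$ close to $\CE$ (and outside $p^{-1}(\CU')$) lies within distance $O(\ell_n^{-1}\delta^{1/2})$ of a genuine intersection point. The measure bound is then (number of points)$\times$(area of a ball)$\,=O(\delta)$, with no covering lemma.

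You replace this counting by a soft occupation estimate. A leaf not contained in $\CZ$ meets it in a discrete set, so a thin neighbourhood of $\CZ$ is visited on only a small proportion of a fixed disc. Continuity and compactness make this uniform over the starting points; Lemma \ref{linear} transfers it to every disc of radius $R/\ell_n$, and Lemma \ref{besicovitch} sums the local bounds. This avoids the ample divisor, the degree bound and Rouch\'e. Note that $R$ plays no role in your Step 1; any fixed $R$ works.

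Your version gives a bound proportional to $\mu_B$ on each small disc. So the same argument would seem to yield the disc-wise estimate of Lemma \ref{juli-} with the normalized $\mu_B$ in place of $\mu_n$, without appealing to Proposition \ref{V}(1). The paper's version instead gives the finer information that the bad set clusters around actual intersection points.

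A few details need fixing.

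First, $G\neq0$ is automatic, since $0\neq s_\infty\in V(G,K)$; no ``fastest growth'' choice is needed.

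Second, when $G=A$ the set $\CZ'$ need not be empty, because $\CZ$ may contain vertical fibres $\CA_b\times\{t\}$. What is true is that $\CA_b\times\{t_\infty\}\subseteq\CZ$ holds for only finitely many $b$. You can discard these points, or state the base case $A=0$, where $\CZ\cap(B\times\{t_\infty\})$ is finite, as the paper does.

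Third, in Step 2 the discs $\BD_b$ overlap and their number depends on $\eta$. Before summing the Besicovitch bounds, pass to finitely many pairwise disjoint subdiscs covering all but $\delta/4$ of $B(\CC)$. The factor $2$ should be the distortion constant between $\mu_B$ and the chart's Lebesgue measure.

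Finally, the passage to subsequences is not literally the observation at the start of \S\ref{4}, because $\CU$ is chosen before ``$n$ large''. Use the diagonal argument over a fundamental system of neighbourhoods of $\CZ$, as in the first paragraph of the paper's proof.
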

As $\{(s_{n},t_{n})\}_{n\geq1}$ is generic in $A\times T_{K}$, the subset $(s_{n},t_{n})^{-1}(\CZ)$ is finite for $n$ large enough, and thus has measure 0. The key of the lemma is that we can replace $\CZ$ by a small (Euclidean) neighbourhood $\CU$ of it, and then bound the measure of $(s_{n},t_{n})^{-1}(\CU)$. Note that the condition ``$\CZ$ does not contain any fibre $\CA\times\{t\}\,(t\in T)$'' is necessary: If $\CZ$ contains some fibre $\CA\times\{t\}$ and $t_{n}\to t$ under Euclidean topology of $T(\CC)$, then any open neighbourhood $\CU$ of $\CZ$ contains $\CA\times\{t_{n}\}$ for $n$ large enough, which implies that $(s_{n},t_{n})^{-1}(\CU)=B(\CC)$ for $n$ large enough.
\begin{proof}
First note that we only need to show that for any $\delta>0$, there is a Euclidean open neighbourhood $\CU$ of $\CZ$ such that $\mu_{B}((s_{n},t_{n})^{-1}(\CU))<\delta$ for infinitely many $n\geq1$. We call this the `weak statement'. In fact, choose a fundamental system of Euclidean open neighbourhoods $\{\CU_{k}\}_{k\geq1}$ of $\CZ$. If the lemma is not true, there is a sequence $\{n_{k}\}_{k\geq1}$ with $n_{k}\to\infty$ such that $\mu_{B}((s_{n_{k}},t_{n_{k}})^{-1}(\CU_{k}))\geq\delta$ for each $k$. Using the weak statement above for the sequence $\{(s_{n_{k}},t_{n_{k}})\}_{k\geq1}$, we then get a contradiction.

We prove the weak statement by induction on the number of simple isogeny factors of $A$. When $A$ is trivial, then $\CA=B$ and $\CA\times_{\CC}T=B\times_{\CC}T$. By our assumption, $\CZ$ does not contain any fibre $B\times\{t\}$, and thus the intersection $(B\times\{t_{\infty}\})\cap\CZ$ is finite. After taking a subsequence, we may assume $t_{n}\to t_{\infty}$ in $T(\CC)$. Take $\CU_{0}$ to be a Euclidean open neighbourhood of $\CZ$ such that $\{b\in B\big| (b,t_{\infty})\in \CU_{0}\}$ has $\mu_{B}$-measure less than $\delta$. Let $\CU$ be a Euclidean open neighbourhood of $\CZ$ such that the Euclidean closure $\ol{\CU}$ is contained in $\CU_{0}$. Then for $n$ large enough, if $(b,t_{n})\in \CU$ for some $b\in B$, then $(b,t_{\infty})\in \CU_{0}$. This means that the set $(s_{n},t_{n})^{-1}(\CU)=\{b\in B\big|(b,t_{n})\in \CU\}$ is contained in $\{b\in B\big|(b,t_{\infty})\in \CU_{0}\}$, and thus has $\mu_{B}$-measure less than $\delta$.

Now we consider general $A$. After taking a subsequence, we may assume $t_{n}\to t_{\infty}$ in $T(\CC)$, and there is a sequence $\{\ell_{n}\}_{n\geq1}$ of positive real numbers such that $\ell_{n}^{-1}s_{n}\to s_{\infty}\neq0$ in $V(A,K)$. Let $G=G(s_{\infty})$ be the unique minimal abelian subvariety of $A$ such that $s_{\infty}\in V(G,K)$. Let $\CG$ be the Zariski closure of $G$ in $\CA$ and let $\CA/\CG$ be an integral model of $A/G$ over $B$. Then the quotient map $A\to A/G$ extends to a map $q\colon\CA_{B_{0}}\to(\CA/\CG)_{B_{0}}$ over some Zariski dense open subset $B_{0}$ of $B$. Denote by $p$ the map $q\times{\rm id}_{T}\colon \CA_{B_{0}}\times_{\CC}T\to(\CA/\CG)_{B_{0}}\times_{\CC}T$.

Choose an ample effective Cartier divisor $\CE$ of $\CA\times_{\CC}T$ (which can also be viewed as a closed subscheme of $\CA\times_{\CC}T$) containing $\CZ$ which does not contain any fibre $\CA\times_{\CC}\{t\}\,(t\in T(\CC))$. Then we have $\ell_{n}\sim h(s_{n})^{1/2}\sim (\deg (s_{n},t_{n})^{*}\CE)^{1/2}$. Choose a constant $C_{1}>0$ such that $\ell_{n}^{-2}\cdot\deg (s_{n},t_{n})^{*}\CE<C_{1}$ for all $n\geq1$. We only need to find a Euclidean neighbourhood $\CU$ of $\CE$ with $\mu_{B}((s_{n},t_{n})^{-1}(\CU))<\delta$ for $n$ large enough.

Let $\CZ'_{0}$ be the set of all $(x,t)\in(\CA/\CG)_{B_{0}}\times_{\CC}T$ such that $\CE$ contains the whole fibre $p^{-1}((x,t))$. Let $\CZ'$ be the Zariski closure of $\CZ'_{0}$ in $(\CA/\CG)\times_{\CC}T$. As $\CE$ does not contain any fibre $\CA\times\{t\}$, we deduce that $\CZ'_{0}$ does not contain any fibre $(\CA/\CG)_{B_{0}}\times\{t\}$, hence $\CZ'$ does not contain any fibre $(\CA/\CG)\times\{t\}$.

Let $\ol{s}_{n}$ be the image of $s_{n}$ in $A/G$. Applying the induction hypothesis to the sequence\\
$\{(\ol{s}_{n},t_{n})\}_{n\geq1}\subseteq (A/G)(K)\times T(\CC)$ and the closed subscheme $\CZ'$ of $(\CA/\CG)\times_{\CC}T$, we see that for any $\delta>0$, there is a Euclidean open neighbourhood $\CU'$ of $\CZ'$ such that for $n$ large enough, the subset $(\ol{s}_{n},t_{n})^{-1}(\CU')$ of $B(\CC)$ has $\mu_{B}$-measure less than $\frac{\delta}{2}$.

Now let $\CW=(\CA\times_{\CC}T)\backslash p^{-1}(\CU')$. Note that $(s_{n},t_{n})^{-1}(p^{-1}(\CU'))=(\ol{s}_{n},t_{n})^{-1}(\CU')$, which has measure less than $\frac{\delta}{2}$ for $n$ large enough. Therefore we only need to show that there is a Euclidean open neighbourhood $\CU$ of $\CE$ such that $(s_{n},t_{n})^{-1}(\CW\cap \CU)$ has $\mu_{B}$-measure less than $\frac{\delta}{2}$ for infinitely many $n$.

Fix distance functions $d_{\CA}$ on $\CA$ and $d_{T}$ on $T$. Then $d=d_{\CA}\times d_{T}$ is a distance function on $\CA\times_{\CC}T$. For a Euclidean closed subset $E$ of $\CA\times_{\CC}T$ and a real number $c>0$, denote by $\BB(E,c)\colonequals\{(x,t)\in\CA\times_{\CC}T\big|\,d((x,t),E)<c\}$. Recall (at the beginning of this section) that both the distance function $d_{B}$ and the measure $\mu_{B}$ on $B(\CC)$ are induced by a fixed K\"{a}hler form $\omega_{B}$. Hence by enlarging the constant $C_{1}>0$ above, we can ensure that $\mu_{B}(\BB(b,r))\leq C_{1}r^{2}$ for any $r>0$ and any $b\in B(\CC)$.

By Proposition \ref{closure}, there is a subset $S\subseteq B(\CC)$ of measure 0 such that for all $b\in B(\CC)\backslash S$, the Zariski closure of the linear leaf $\CF(0,\delta(s_{\infty},v_{b}))$ in $\CA_{b}$ equals $\CG_{b}$. Here $v_{b}\in T_{b}B$ is a nonzero vector. Enlarging $S$ if necessary, we can assume $(B\backslash B_{0})\subseteq S$.

We claim that for each $b\in B(\CC)\backslash S$, there is an open disc $\BD_{b}\subseteq B(\CC)$ containing $b$, a constant $\varepsilon_{b}>0$ and an integer $n_{b}\geq1$ such that for all $n>n_{b}$ and all $b'\in \BD_{b}$, if $(s_{n}(b'),t_{n})\in \CW$ and $d((s_{n}(b'),t_{n}),\CE)<\varepsilon_{b}$, then there is a point $b''\in(s_{n},t_{n})^{-1}(\CE)$ with $d_{B}(b',b'')<(2C_{1}\ell_{n})^{-1}\delta^{1/2}$.

If the claim is true, we can choose finitely many triples $(\BD_{b_{i}},\varepsilon_{b_{i}},n_{b_{i}})\,(1\leq i\leq m)$ as above such that the union of all $\BD_{b_{i}}$ in $B(\CC)$ has measure greater than $\mu_{B}(B)-\frac{\delta}{4}$. Let $\varepsilon=\min_{1\leq i\leq m}\varepsilon_{b_{i}}$ and $n_{0}=\max_{1\leq i\leq m}n_{b_{i}}$. For each point $b'$ in this union, there exists some $\BD_{b_{i}}$ containing $b'$. By our claim, if $n>n_{0}, (s_{n}(b'),t_{n})\in\CW$ and $d((s_{n}(b'),t_{n}),\CE)<\varepsilon$, then there is a point $b''\in(s_{n},t_{n})^{-1}(\CE)$ with $d_{B}(b',b'')<(2C_{1}\ell_{n})^{-1}\delta^{1/2}$. Therefore the set
$$
(s_{n},t_{n})^{-1}(\CW\cap \BB(\CE,\varepsilon))\subseteq (s_{n},t_{n})^{-1}(\CW\cap \BB(\CE,\varepsilon))
$$
is contained in the union of the complement $B(\CC)\backslash(\cup_{1\leq i\leq m}\BD_{b_{i}})$ and the open balls
$$
\BB(b,(2C_{1}\ell_{n})^{-1}\delta^{1/2}),\,b\in (s_{n},t_{n})^{-1}(\CE).
$$
By our choice of $C_{1}$, each ball has $\mu_{B}$-measure at most  $C_{1}((2C_{1}\ell_{n})^{-1}\delta^{1/2})^{2}=\frac{\ell_{n}^{-2}\delta}{4C_{1}}$. Hence the union has $\mu_{B}$-measure at most $\frac{\delta}{4}+\frac{\ell_{n}^{-2}\delta}{4C_{1}}\deg(s_{n},t_{n})^{*}\CE<\frac{\delta}{2}$.

Now we prove the claim. Fix a point $b\in B(\CC)\backslash S$. Choose any open unit disc $\BD$ centred at $b$. Then there is a constant $C_{2}>0$ such that for any $b',b''\in\BD_{0.9}$, we have $d_{B}(b',b'')\leq C_{2}|b'-b''|$. Denote $v=\delta(s_{\infty},v_{b})$. Choose a constant $\varepsilon_{1}>0$ such that the Euclidean closure $\ol{\CV}$ of $\CV\colonequals \BB(\CW,\varepsilon_{1})$ is disjoint from $p^{-1}(\CZ')$. Then if $x\in\CA_{b}$ such that $(x,t_{\infty})\in\CV$, the subset $(x+\CG_{b})\times\{t_{\infty}\}$, which is the Zariski closure of the linear leaf $\CF(x,v)\times\{t_{\infty}\}$, is not contained in $\CE$.

Applying Proposition \ref{rongchi} to the abelian subvariety $\CA_{b}\times\{t_{\infty}\}$ of $\CA\times_{\CC}T$, the divisor $\CE$, the vector $v=\delta(s_{\infty},v_{b})$, the Euclidean closed subset $\ol{\CV}_{b,t_{\infty}}$ of $\CA_{b}\times\{t_{\infty}\}$ and the constant $c=\frac{\delta^{1/2}}{4C_{1}C_{2}}$, we see that there is a constant $\delta_{c}>0$ such that for any $r>0$, any $(x,t_{\infty})\in\CV_{b,t_{\infty}}$ and any holomorphic map $\phi\colon\BD_{r+c}\to\CA\times_{\CC}T$, if $d(\phi(z),(\phi_{(x,v)},t_{\infty}))<\delta_{c}$ for all $z\in\BD_{r+c}$, then
\begin{align}\label{juli-1}
\sum\limits_{z\in\BD_{r+c}}{\rm ord}_{z}\phi^{*}\CE\geq \sum\limits_{z\in\BD_{r}}{\rm ord}_{z}(\phi_{(x,v)}\times\{t_{\infty}\})^{*}\CE.
\end{align}
By Lemma \ref{linear}, there are constants $0<\varepsilon'<\min\{1,\varepsilon_{1}\}$ and $n_{0}>0$ such that for each $b'\in\BD_{\varepsilon'}$, each $z\in\BD_{2c}$ and each $x\in\CA_{b}=\CA_{0}$, if $n>n_{0}$ and $d_{\CA}(s_{n}(b'),x)\leq\varepsilon'$, then $b'+\ell_{n}^{-1}z\in\BD$ and $d_{\CA}(s_{n}(b'+\ell_{n}^{-1}z),\phi_{(x,v)}(z))<\frac{\delta_{c}}{2}$.

Since $\CE$ is closed in the compact space $\CA$, there is a constant $0<\varepsilon''<\varepsilon'$ such that for any $b'\in\BD_{\varepsilon''}$ and any $x'\in\CA_{b'}$, if $d((x',t_{\infty}),\CE)<\varepsilon''$, then there is a point $(x,t_{\infty})\in\CE_{b,t_{\infty}}$ with $d_{\CA}(x,x')<\frac{\varepsilon'}{2}$. Finally take $n_{b}>n_{0}$ such that $d_{T}(t_{n},t_{\infty})<\min\{\frac{\varepsilon''}{2},\frac{\delta_{c}}{2}\}$ for any $n>n_{b}$, take $\varepsilon_{b}=\frac{\varepsilon''}{2}$ and take $\BD_{b}=\BD_{\varepsilon''}$.

Then for any $b'\in\BD_{b}$ and any $n>n_{b}$, if $(s_{n}(b'),t_{n})\in\CW$ and $d((s_{n}(b'),t_{n}),\CE)<\varepsilon_{b}$, we have $d((s_{n}(b'),t_{\infty}),\CE)<\frac{\varepsilon''}{2}+\frac{\varepsilon''}{2}=\varepsilon''$. Thus there is a point $(x,t_{\infty})\in\CE_{b,t_{\infty}}$ with $d_{\CA}(x,s_{n}(b'))<\frac{\varepsilon'}{2}$. This implies $d((x,t_{\infty}),(s_{n}(b'),t_{n}))<\frac{\varepsilon'}{2}+\frac{\varepsilon''}{2}<\varepsilon'<\varepsilon_{1}$. In particular, $(x,t_{\infty})\in\CV$ and we can apply (\ref{juli-1}) to $(x,t_{\infty})$. More precisely, consider $\phi\colon\BD_{2c}\to\CA\times_{\CC}T$ sending $z\in\BD_{2c}$ to $(s_{n}(b'+\ell_{n}^{-1}z),t_{n})$. For any $z\in\BD_{2c}$ and $n$ large enough, we have
\begin{align*}
d(\phi(z),(\phi_{(x,v)}(z),t_{\infty}))=d_{\CA}(s_{n}(b'+\ell_{n}^{-1}z),\phi_{(x,v)}(z))+d_{T}(t_{n},t_{\infty})< \frac{\delta_{c}}{2}+\frac{\delta_{c}}{2}=\delta_{c}.
\end{align*}
As $(x,t_{\infty})\in\CE$, the inequality (\ref{juli-1}) with $r=c$ implies that $\phi(\BD_{2c})\cap\CE\neq\emptyset$. This means that there exists $z\in\BD_{2c}$ with $(s_{n}(b'+\ell_{n}^{-1}(z),t_{n}))\in\CE$. Take $b''=b'+\ell_{n}^{-1}z$, then $d_{B}(b',b'')\leq 2C_{2}\ell_{n}^{-1}c=(2C_{1}\ell_{n})^{-1}\delta^{1/2}$. This proves the claim.
\end{proof}

We will use Lemma \ref{juli} for $\CD,\CY$ and some other related subschemes in the subsequent subsections. In the end of this subsection, we fix some notations which will be used in the remainder of this section.

Fix a Zariski dense open subset $B_{0}$ of $B$ such that $\CA_{B_{0}}\to B_{0}$ is an abelian scheme. Consider the abelian scheme $\CA_{B_{0}}\times_{\CC}T\to B_{0}\times_{\CC}T$. Let $\omega$ be the Betti form on $\CA_{B_{0}}\times_{\CC}T$ associated to the divisor $D$. Then it is also the Betti form associated to $D^{\rm sym}$.

Take $\ell_{n}\to\infty$ such that $\{\ell_{n}^{-1}s_{n}\}_{n\geq1}$ has a subsequence tending to a nonzero element $s_{\infty}$ in $V(A,K)$. After passing to the subsequence, we may assume that $\ell_{n}^{-1}s_{n}\to s_{\infty}$ as $n\to\infty$. Let $G=G(s_{\infty})$ be the unique minimal abelian subvariety of $A$ such that $s_{\infty}\in V(G,K)$. Let $\CG$ be the Zariski closure of $G$ in $\CA$. By Proposition \ref{closure}, there is a subset $S(A)$ of $B(\CC)$ of measure 0 containing $B\backslash B_{0}$ such that for any $b\in B(\CC)\backslash S(A)$, the Zariski closure of the linear leaf $\CF(0,\delta(s_{\infty},v_{b}))$ in $\CA_{b}$ equals $\CG_{b}$. In particular, we have $\delta(s_{\infty},v_{b})\neq 0$ for all $b\in B(\CC)\backslash S(A)$.

Similarly as in the proof of Lemma \ref{juli}, in the following, we fix distance functions $d_{\CA}$ on $\CA$ and $d_{T}$ on $T$. Then $d=d_{\CA}\times d_{T}$ is a distance function on $\CA\times_{\CC}T$.
\subsection{Asymptotic distributions of intersections of sections with a divisor}\label{fenbu}
In this subsection we prove the first part of Proposition \ref{V}. We first establish a local estimate in the following Proposition \ref{key-}, and then combine it with Proposition \ref{jiben-} and Lemma \ref{juli} to get the global estimate.

The proof of the local estimate also contains two parts: We first give a ``microscopic local'' estimate (see (\ref{key--5})), this is mainly based on \S\ref{3}. And then we prove a lemma (see Lemma \ref{refine--}) to glue these microscopic local inequalities to a local inequality.

For $z\in\CC$ and $r>0$, denote by $\BD(z,r)$ the open disc of radius $r$ with centre $z$.
\begin{proposition}\label{key-}
There is a constant $c>0$ depending only on $\mu_{B}$ satisfying the following property: For a Euclidean open neighbourhood $\CU$ of $\CD$, denote $B_{n}\colonequals (s_{n},t_{n})^{-1}(\CU)$. Then for any point $b\in B(\CC)\backslash S(A)$ and any $\varepsilon>0$, there is an open disc $\BD_{b}\subseteq B(\CC)$ (depending on $U$) containing $b$ such that for any open disc $\BD'_{b}$ contained in $\BD_{b}$, any $\varepsilon'>0$ and any $n$ large enough (depending on $\BD'_{b}$ and $\varepsilon'$), there exists a subset $\Sigma_{n,b}\subseteq \BD'_{b}$ of measure less than $\frac{c}{\varepsilon}\mu_{B}(B_{n}\cap\BD'_{b})+\varepsilon'$ with
\begin{align*}
\sum\limits_{t\in\BD'_{b}}{\rm ord}_{t}(s_{n},t_{n})^{*}\CD\geq (1-\varepsilon)\int_{\BD'_{b}\backslash\Sigma_{n,b}}(s_{n},t_{n})^{*}\omega.
\end{align*}
\end{proposition}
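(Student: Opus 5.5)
The plan follows the three-step pattern of the introduction, restricted to a neighbourhood of a single good point $b\in B(\CC)\backslash S(A)$. Fix a unit disc $\BD$ centred at $b$ with coordinate $z$, and put $v=\delta(s_{\infty},v_{b})$. By Proposition \ref{closure}, the Zariski closure of $\CF(x,v)$ in $\CA_{b}$ is $x+\CG_{b}$. The divisor $\CD$ is flat over $T$, so it contains no fibre $\CA\times\{t\}$. Since $\omega$ restricted to $\CA_{b}\times\{t_{\infty}\}$ is translation invariant, the natural target is an estimate for the linear curve $z\mapsto(\phi_{(x,v)}(z),t_{\infty})$. The obstruction is that for some $x$ the leaf $\CF(x,v)$ lies inside $\CD$. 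The closed set of such $x$ is exactly where the hypothesis of Proposition \ref{conclusion} fails. I will cut it out with the neighbourhood $\CU$: take $Z$ to be the complement in $\CA_{b}\times\{t_{\infty}\}$ of a smaller open neighbourhood of the bad locus, chosen inside $\CU$.

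\textbf{Microscopic step.} Apply the first part of Proposition \ref{conclusion} to $X=\CA\times_{\CC}T$, the divisor $\CD$ (ample on $\CA_{b}\times\{t_{\infty}\}$ after adding an ample divisor if needed), the set $Z$ and $\varepsilon/2$. This gives $R,\delta>0$, uniform in $x\in Z$. Next, Lemma \ref{linear} with radius $R+1$ and accuracy $\delta/2$ gives $\varepsilon'_{0}$ and $n_{0}$. Together with $t_{n}\to t_{\infty}$, these show the following for all $b'\in\BD_{b}\colonequals\BD_{\varepsilon'_{0}}$ and $n>n_{0}$: if $(s_{n}(b'),t_{n})$ lies outside $\CU$, hence is close to some $x\in Z$, then the rescaled map $\phi(z)=(s_{n}(b'+\ell_{n}^{-1}z),t_{n})$ is $\delta$-close to the linear curve on $\BD_{R+1}$. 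Therefore, with $r_{n}=R/\ell_{n}$ independent of $b'$,
\[
\sum_{t\in\BD(b',r_{n})}{\rm ord}_{t}(s_{n},t_{n})^{*}\CD\geq(1-\tfrac{\varepsilon}{2})\int_{\BD(b',r_{n})}(s_{n},t_{n})^{*}\omega .
\]
The right side should use $\omega$ rather than the Hermitian $c_{1}$. I will justify this by continuity of $\omega$ near the fibre over $b$: after rescaling, both forms converge uniformly to the invariant form on the fibre. The radius of $\BD_{b}$ has to be shrunk so that $\omega$ is close to its restriction over $b$, uniformly on $\ol{\BD_{b}}$. This inequality holds for every good centre, namely every $b'\in\BD_{b}\backslash B_{n}$.

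\textbf{Gluing step (main obstacle).} Fix a disc $\BD'_{b}\subseteq\BD_{b}$. Since all radii equal $r_{n}\to0$, I will average over centres. Integrate the displayed inequality over $b'$ ranging in $G_{n}\colonequals\BD'_{b}\backslash B_{n}$ against $\mu_{B}$, and divide by the area $a_{n}\approx\pi r_{n}^{2}$ (up to a factor $1+o(1)$ coming from the metric comparison). By Fubini, the left side becomes $\sum_{t}{\rm ord}_{t}\cdot\mu_{B}(G_{n}\cap\BD(t,r_{n}))/a_{n}$. Each weight here is at most $1+o(1)$, and the sum only involves points within $r_{n}$ of $\BD'_{b}$; this is bounded by the sum over $\BD'_{b}$ plus an annulus term. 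The right side becomes $\int\rho_{n}(s_{n},t_{n})^{*}\omega$ with density $\rho_{n}(t)=\mu_{B}(G_{n}\cap\BD(t,r_{n}))/a_{n}$. The set $\Sigma_{n,b}$ will be defined as the set of $t\in\BD'_{b}$ with $\rho_{n}(t)<1-\varepsilon/2$, together with the boundary strip of width $r_{n}$. The strip has measure $<\varepsilon'$ for $n$ large, and the annulus contribution is absorbed as well. The remaining part is the set where $1-\rho_{n}\geq\varepsilon/2$, i.e. where $\mu_{B}(B_{n}\cap\BD(t,r_{n}))\geq\frac{\varepsilon}{2}a_{n}$. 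By a Besicovitch-type argument (Lemma \ref{besicovitch}, or simply Fubini/Chebyshev), this set has measure at most $\frac{c}{\varepsilon}\mu_{B}(B_{n}\cap\BD'_{b})$, with $c$ depending only on $\mu_{B}$. Off $\Sigma_{n,b}$ we have $\rho_{n}\geq1-\varepsilon/2$ and $\omega\geq0$. Combining the factors $(1-\varepsilon/2)^{2}(1+o(1))^{-1}\geq1-\varepsilon$ gives the claim. The delicate points are the boundary effects and the uniformity of the density comparison; I expect these to require the most care.
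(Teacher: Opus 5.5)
Your route is the paper's own. The microscopic step applies Proposition~\ref{conclusion} on $\CA_{b}\times\{t_{\infty}\}$ away from the leaves contained in $\CD$, together with Lemma~\ref{linear} and $t_{n}\to t_{\infty}$, which yields (\ref{key--5}) on every disc $\BD(b'',\ell_{n}^{-1}R)$ with $b''\notin B_{n}$. The gluing step is Fubini plus Chebyshev over discs of the common radius $r_{n}=R/\ell_{n}$, and this is exactly Lemma~\ref{refine--}.

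\textbf{The boundary step fails as written.} You average over all centres $b'\in G_{n}=\BD'_{b}\backslash B_{n}$. The discs $\BD(b',r_{n})$ then stick out of $\BD'_{b}$, so your averaged left-hand side is a weighted sum of ${\rm ord}_{t}(s_{n},t_{n})^{*}\CD$ over the $r_{n}$-neighbourhood of $\BD'_{b}$. To get a \emph{lower} bound for the sum over $\BD'_{b}$ alone, you must subtract the outer annulus. That requires an \emph{upper} bound on the number of intersection points of $(s_{n},t_{n})$ with $\CD$ in a strip of width $r_{n}$, and nothing at this stage provides one:
\begin{itemize}
\item the microscopic estimates for $\CD$ are lower bounds only;
\item upper bounds appear only later, globally, in the right-hand inequality of Proposition~\ref{V}(1);
\item enlarging $\Sigma_{n,b}$ only removes mass from the $\omega$-side, so it cannot absorb this term.
\end{itemize}

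\textbf{The fix} is the one built into Lemma~\ref{refine--}: average only over centres $b'\in G_{n}$ with $\BD(b',r_{n})\subseteq\BD'_{b}$. Then only points $t\in\BD'_{b}$ occur on the left, and the weights are still bounded by the area of $\BD(t,r_{n})$. The cost is that the density is deficient on a strip of width $2r_{n}$ along $\partial\BD'_{b}$. Put that strip into $\Sigma_{n,b}$; its measure is $O(r_{n})<\varepsilon'$ for $n$ large. Off the strip, the deficiency $1-\rho_{n}(t)$ is controlled by the measure of $B_{n}\cap\BD(t,r_{n})\subseteq B_{n}\cap\BD'_{b}$, and Chebyshev gives the bound $\frac{c}{\varepsilon}\mu_{B}(B_{n}\cap\BD'_{b})$.

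\textbf{Two smaller points.}
\begin{itemize}
\item Drop ``after adding an ample divisor if needed''. Replacing $\CD$ by $\CD+\CH$ changes both the counting function and $\omega$, so the resulting bound says nothing about $\CD$. It is also unnecessary: for $b\in B_{0}$, the restriction of $\CD$ to $\CA_{b}\times\{t_{\infty}\}$ is already ample whenever it is a divisor. An effective divisor on an abelian variety is nef, and its top self-intersection equals that on the generic fibre, which is positive. If instead $\CD$ contains that whole fibre, then $B_{n}\supseteq\BD'_{b}$ for $\BD_{b}$ small and $n$ large, and $\Sigma_{n,b}=\BD'_{b}$ works.
\item If you average against $\mu_{B}$ and normalise by a single $a_{n}\approx\pi r_{n}^{2}$, the ratio $\mu_{B}(\BD(t,r_{n}))/a_{n}$ tends to the non-constant density of $\mu_{B}$ in the chart. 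So it is $1+O(r_{b})$, with $r_{b}$ the radius of $\BD_{b}$, not $1+o(1)$ in $n$. Either shrink $\BD_{b}$ depending on $\varepsilon$, or average with the chart's Lebesgue measure and compare with $\mu_{B}$ at the end, as the paper does; that comparison is where $c=2\max_{\BD}c_{\BD}^{2}$ comes from.
\end{itemize}
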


\begin{proof}
Take a finite cover of $B(\CC)$ by open discs. For any disc $\BD$ in this cover, fix a standard Lebesgue measure $\mu_{0,\BD}$ on $\BD$. Then there is a constant $c_{\BD}>1$ such that $c_{\BD}^{-1}\mu_{0,\BD}(\Omega)\leq\mu_{B}(\Omega)\leq c_{\BD}\mu_{0,\BD}(\Omega)$ for any Lebesgue measurable subset $\Omega\subseteq \BD$. Take $c=2\max_{\BD}\{c_{\BD}^{2}\}>0$, where $\BD$ is taken over all open discs in this finite cover. We show that $c$ satisfies the conditions in the proposition.

Fix a point $b\in B(\CC)\backslash S(A)$, a Euclidean open neighbourhood $\CU$ of $\CD$ and a constant $\varepsilon>0$. Then $\delta(s_{\infty},v_{b})\neq0$. Applying Proposition \ref{conclusion} to the abelian subvariety $\CA_{b}\times\{t_{\infty}\}\subseteq\CA\times_{\CC}T$, the nonzero vector $v=\delta(s_{\infty},v_{b})$ and the Euclidean closed subset $Z=\CA_{b}\times\{t_{\infty}\}\backslash \CU_{b,t_{\infty}}$, we get constants $R>0, \delta>0$ satisfying the following condition: For any point $(x,t_{\infty})\in Z$ and any holomorphic map $\phi\colon \BD_{R+1}\to \CA\times_{\CC}T$, if $d(\phi(z), (\phi_{(x,v)}(z),t_{\infty}))<\delta$ for all $z\in\BD_{R+1}$, then
\begin{align}\label{key--2}
\sum\limits_{z\in\BD_{R}}{\rm ord}_{z}\phi^{*}\CD\geq(1-\frac{\varepsilon}{2})\int_{\BD_{R}}\phi^{*}\omega.
\end{align}

Take any open unit disc $\BD$ with centre $b$ which is contained in some open disc of the finite cover chosen at the beginning of the proof. Let $z$ be the standard coordinate of $\BD$. By Lemma \ref{linear}, there are constants $0<\delta'<1$ and $n_{0}>0$ such that for each $b'\in\BD_{\delta'}$, each $z\in\BD_{R+1}$ and each $x\in\CA_{b}=\CA_{0}$, if $n>n_{0}$ and $d(s_{n}(b'),x)<\delta'$, then $b'+\ell_{n}^{-1}z\in\BD$ and
\begin{align}\label{key--3}
d_{\CA}(s_{n}(b'+\ell_{n}^{-1}z),\phi_{(x,v)}(z))<\delta/2
\end{align}
Enlarging $n_{0}$ and replacing $\delta'$ by a smaller positive number if necessary, we can assume that for any $n>n_{0}$ and any point $(x',t_{n})\in(\CA_{\BD_{\delta'}}\times\{t_{n}\})\backslash \CU$, there is a point $(x,t_{\infty})\in Z$ with $d((x',t_{n}),(x,t_{\infty}))<\delta'/2$. In particular, we have $d_{T}(t_{n},t_{\infty})<\delta'/2<\delta/2$ for all $n>n_{0}$.

Now take $\BD_{b}=\BD_{\delta'}$. We prove that it satisfies the conditions in the Proposition.

In fact, for any open disc $\BD'_{b}\subseteq \BD_{b}$, assume $\BD'_{b}=\BD(b',r')$. For $b''\in\BD'_{b}$ and $n>n_{0}$, if $b''\notin B_{n}$ and $\BD(b'',\ell_{n}^{-1}R)\subseteq\BD'_{b}$, then there is a point $(x,t_{\infty})\in Z$ with $d((s_{n}(b''),t_{n}),(x,t_{\infty}))<\delta'/2$. By (\ref{key--3}), we have
\begin{align*}
d((s_{n}(b''+\ell_{n}^{-1}z),t_{n}),(\phi_{(x,v)}(z),t_{\infty}))<\delta/2+\delta/2=\delta
\end{align*}
for all $z\in\BD_{R+1}$ and $n>n_{0}$. Now applying (\ref{key--2}) to $\phi(z)=(s_{n}(b''+\ell_{n}^{-1}z),t_{n})$, we get
\begin{align}\label{key--5}
\sum\limits_{t\in\BD(b'',\ell_{n}^{-1}R)}{\rm ord}_{t}(s_{n},t_{n})^{*}\CD=\sum\limits_{z\in\BD_{R}}{\rm ord}_{z}\phi^{*}\CD\geq (1-\frac{\varepsilon}{2})\int_{\BD_{R}}\phi^{*}\omega= (1-\frac{\varepsilon}{2})\int_{\BD(b'',\ell_{n}^{-1}R)}(s_{n},t_{n})^{*}\omega
\end{align}
for all $n>n_{0}$ and all $b''\in\BD'_{b}\backslash B_{n}$ with $\BD(b'',\ell_{n}^{-1}R)\subseteq\BD'_{b}$.

Now let
$$
\mu_{1,n}\colonequals\sum_{b''\in\BD'_{b}}{\rm ord}_{b''}(s_{n},t_{n})^{*}\CD\cdot\delta_{b''},
$$
where $\delta_{b''}$ is the Dirac measure on $\BD'_{b}$ at $b''$. And let $\mu_{2,n}$ be the measure on $\BD'_{b}$ induced by $(1-\frac{\varepsilon}{2})(s_{n},t_{n})^{*}\omega$. Let $\mu_{0}$ be a standard Lebesgue measure on $\BD'_{b}$. By Lemma \ref{refine--} below, for each $n$ there is a subset $\Sigma_{n,b}\subseteq \BD'_{b}$ with $\mu_{0}(\Sigma_{n,b})\leq\frac{\mu_{0}(B_{n}\cap\BD'_{b})+6\ell_{n}^{-1}R\mu_{0}(\BD'_{b})}{\varepsilon/2}$ such that $\mu_{1,n}(\BD'_{b})\geq (1-\frac{\varepsilon}{2})\mu_{2,n}(\BD'_{b}\backslash \Sigma_{n,b})\geq (1-\varepsilon)\int_{\BD'_{b}\backslash \Sigma_{n,b}}(s_{n},t_{n})^{*}\omega$, which is exactly the inequality in the proposition.

It remains to prove that $\mu_{B}(\Sigma_{n,b})\leq\frac{c}{\varepsilon}\mu_{B}(B_{n}\cap\BD'_{b})+\varepsilon'$ for $n$ large enough. By our choice of $c$ at the beginning, we have (after multiplying $\mu_{0}$ by a positive constant)
\begin{align*}
\mu_{B}(\Sigma_{n,b})\leq \sqrt{\frac{c}{2}}\cdot\mu_{0}(\Sigma_{n,b})&\leq \sqrt{\frac{c}{2}}\cdot\frac{2\mu_{0}(B_{n}\cap\BD'_{b})+12\ell_{n}^{-1}R\mu_{0}(\BD'_{b})}{\varepsilon}\\
&\leq \frac{c}{2}\cdot\frac{2\mu_{B}(B_{n}\cap\BD'_{b})+12\ell_{n}^{-1}R\mu_{B}(\BD'_{b})}{\varepsilon}.
\end{align*}
The proposition then follows from the fact that $\frac{6c\ell_{n}^{-1}R\mu_{B}(\BD'_{b})}{\varepsilon}<\varepsilon'$ for $n$ large enough.
\end{proof}

\begin{lemma}\label{refine--}
Let $\BD\subseteq\CC$ be a unit disc with a Lebesgue measure $\mu_{0}$. Let
$S$ be a Lebesgue measurable subset of $\BD$ and let $r>0$ be a positive number. Assume that there are two measures $\mu_{1}$ and $\mu_{2}$ on $\BD$ such that for any $x\in\BD_{1-r}\backslash S$, we have $\mu_{1}(\BD(x,r))\geq\mu_{2}(\BD(x,r))$. Then for any $\varepsilon>0$, there is a subset $\Sigma\subseteq\BD$ with $\mu_{0}(\Sigma)\leq\frac{\mu_{0}(S)+6r\mu_{0}(\BD)}{\varepsilon}$ such that $\mu_{1}(\BD)\geq (1-\varepsilon)\mu_{2}(\BD\backslash\Sigma)$.
\end{lemma}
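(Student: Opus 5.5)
The plan is to average the local inequalities over all centres $x$ using Fubini, which turns ``$\mu_1\geq\mu_2$ on every disc'' into a comparison of $\mu_1(\BD)$ with a weighted version of $\mu_2$. Normalize $\mu_0$ to be Lebesgue measure, write $G\colonequals\BD_{1-r}\backslash S$ for the good centres, and integrate the hypothesis over $G$:
$$
\int_{G}\mu_{1}(\BD(x,r))\,{\rm d}\mu_{0}(x)\geq\int_{G}\mu_{2}(\BD(x,r))\,{\rm d}\mu_{0}(x).
$$
By Fubini/Tonelli, the left side equals $\int_{\BD}\mu_{0}(G\cap\BD(y,r))\,{\rm d}\mu_{1}(y)$, which is at most $\pi r^{2}\mu_{1}(\BD)$. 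The right side equals $\int_{\BD}\mu_{0}(G\cap\BD(y,r))\,{\rm d}\mu_{2}(y)$. Define the weight $w(y)\colonequals\mu_{0}(G\cap\BD(y,r))/(\pi r^{2})\in[0,1]$. The averaged inequality then reads $\mu_{1}(\BD)\geq\int_{\BD}w\,{\rm d}\mu_{2}$.

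Next I take $\Sigma\colonequals\{y\in\BD : w(y)<1-\varepsilon\}$. On $\BD\backslash\Sigma$ we have $w\geq1-\varepsilon$, so
$$
\mu_{1}(\BD)\geq\int_{\BD\backslash\Sigma}w\,{\rm d}\mu_{2}\geq(1-\varepsilon)\mu_{2}(\BD\backslash\Sigma),
$$
since $\mu_{2}$ is a (nonnegative) measure. This is the required inequality.

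It remains to bound $\mu_{0}(\Sigma)$, and this is the only step needing care. I would use Markov's inequality on $1-w$: we have $\mu_{0}(\Sigma)\leq\varepsilon^{-1}\int_{\BD}(1-w)\,{\rm d}\mu_{0}$. Write $1-w(y)=\mu_{0}(\BD(y,r)\backslash G)/(\pi r^{2})$, where $\BD(y,r)$ is taken in $\CC$. Applying Fubini again,
$$
\int_{\BD}(1-w)\,{\rm d}\mu_{0}=\frac{1}{\pi r^{2}}\int_{\CC\backslash G}\mu_{0}(\BD\cap\BD(x,r))\,{\rm d}\mu_{0}(x)\leq\mu_{0}\bigl((\BD_{1+r}\backslash G)\bigr).
$$
The set $\BD_{1+r}\backslash G$ is contained in $S\cup(\BD_{1+r}\backslash\BD_{1-r})$. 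The annulus has area $\pi((1+r)^{2}-(1-r)^{2})=4\pi r=4r\mu_{0}(\BD)$, which is at most $6r\mu_{0}(\BD)$. So $\mu_{0}(\Sigma)\leq(\mu_{0}(S)+6r\mu_{0}(\BD))/\varepsilon$, as claimed; the slack in the constant $6$ also absorbs any normalization issues.

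For the measurability of $w$, I note that $w$ is continuous in $y$, so $\Sigma$ is Borel. The Fubini steps are legitimate because all integrands are nonnegative and $\mu_1,\mu_2$ are Radon measures on $\BD$. If $r\geq1$ the statement is trivial, taking $\Sigma=\BD$ since $6r\mu_0(\BD)/\varepsilon\geq\mu_0(\BD)$, so I may assume $r<1$.
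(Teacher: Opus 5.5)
Your proof is correct and is essentially the paper's argument. You average the hypothesis over the good centres $\BD_{1-r}\backslash S$, exchange the order of integration, and discard by a Markov-type bound the set where the local density of good centres drops below $1-\varepsilon$. The only differences are bookkeeping: the paper defines its bad set inside $\BD_{1-2r}$ and adds the strip $\BD\backslash\BD_{1-2r}$ by hand, whereas your weight $w$ absorbs the boundary automatically and gives $4r$ instead of $6r$. One small slip: in your trivial case $r\geq 1$ you should take $\Sigma=\emptyset$ when $\varepsilon\geq 1$, since $\Sigma=\BD$ violates the measure bound once $\varepsilon>6r$.
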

\begin{proof}
Since the Lebesgue measure on $\BD$ is unique up to a constant multiple, we may assume that $\mu_{0}(\BD)=1$. Then $\mu_{0}(x,r)=r^{2}$ for any open disc $\BD(x,r)\subseteq\BD$. The lemma is trivial for $\varepsilon\geq 1$, so we assume $0<\varepsilon<1$. We also assume $r<\frac{1}{3}$, or otherwise we can just take $\Sigma=\BD$ and the inequality is also trivial.

Let $\Sigma'$ be the set of all $z\in\BD_{1-2r}$ such that $\mu_{0}(\BD(z,r)\backslash S)<(1-\varepsilon)r^{2}$. Then

\begin{align*}
r^{2}((1-2r)^{2}-\mu_{0}(S))&\leq \int_{\BD_{1-r}}\mu_{0}(\BD(x,r)\backslash S){\rm d}\mu_{0,x}\leq \mu_{0}(\Sigma')(1-\varepsilon)r^{2}+((1-r)^{2}-\mu_{0}(\Sigma'))r^{2}
\end{align*}
This implies that $\mu_{0}(\Sigma')\leq \frac{\mu_{0}(S)+2r}{\varepsilon}$. Let $\Sigma=\Sigma'\cup(\BD\backslash\BD_{1-2r})$. Then $\mu_{0}(\Sigma)\leq\mu_{0}(\Sigma')+1-(1-2r)^{2}\leq \frac{\mu_{0}(S)+2r}{\varepsilon}+4r<\frac{\mu_{0}(S)+6r}{\varepsilon}$. Thus we have
\begin{align*}
r^{2}\mu_{1}(\BD)&\geq \int_{\BD_{1-r}\backslash S}\mu_{1}(\BD(x,r)){\rm d}\mu_{0,x}\geq \int_{\BD_{1-r}\backslash S}\mu_{2}(\BD(x,r)){\rm d}\mu_{0,x}\\
&\geq\int_{\BD_{1-2r}}\mu_{0}(\BD(z,r)\backslash S){\rm d}\mu_{2,z}\geq\int_{\BD_{1-2r}\backslash\Sigma'}\mu_{0}(\BD(z,r)\backslash S){\rm d}\mu_{2,z}\\
&\geq (1-\varepsilon)r^{2}\mu_{2}(\BD_{1-2r}\backslash\Sigma')=(1-\varepsilon)r^{2}\mu_{2}(\BD\backslash\Sigma)
\end{align*}
This proves the lemma.
\end{proof}

\begin{proof}[Proof of Proposition \ref{V}(1)]
We first prove the left-hand inequality in Proposition \ref{V}(1). Fix a subset $U\subseteq B(\CC)$ such that either $\mu_{B}(\partial U)=0$ or $U$ is (Euclidean) open. If $\mu_{B}(U)=0$, then the Xie--Yuan height $\hat{h}_{D^{\rm sym},U}((s_{n},t_{n}))$ is always zero and the proximity function $m_{D,U}((s_{n},t_{n}))$ is nonnegative. Hence the inequality is trivial. Now assume that $\mu_{B}(U)>0$. By Proposition \ref{feituihua}, there is a constant $\varepsilon_{U}>0$ such that $h_{\omega,U}\geq \varepsilon_{U}\hat{h}_{D^{\rm sym}}$ on $A(K)\times T(\CC)$. By Proposition \ref{jiben-}, there is $\delta>0$ such that for any subset $S\subseteq B(\CC)$ with $\mu_{B}(S)<\delta$, we have $h_{\omega,S}\leq \frac{\varepsilon\cdot\varepsilon_{U}}{2} \hat{h}_{D^{\rm sym}}$ on $A(K)\times T(\CC)$. Let $c>0$ be the constant in Proposition \ref{key-}. By Lemma \ref{juli}, there is a Euclidean open neighbourhood $\CU$ of $\CD$ in $\CA\times_{\CC}T$ such that for $n$ large enough, the set $B_{n}=(s_{n},t_{n})^{-1}(\CU)\subseteq B(\CC)$ has measure less than $\frac{\varepsilon\delta}{8c}$. For each $b\in B(\CC)\backslash S(A)$, take an open disc $\BD_{b}\subseteq B(\CC)$ satisfying Proposition \ref{key-} (with $\varepsilon$ replaced by $\frac{\varepsilon}{2}$). Let $U^{\circ}$ be the interior of $U$ (in the Euclidean topology). Then since $\mu_{B}(U^{\circ})=\mu_{B}(U\backslash\partial U)=\mu_{B}(U)$, we can choose finitely many points $b_{1},\dots,b_{k}\in U^{\circ}\backslash S(A)$ such that $U^{\circ}\backslash(\cup_{i=1}^{k}\BD_{b_{i}})$ has $\mu_{B}$-measure less than $\frac{\delta}{2}$. Now choose finitely many pairwise disjoint open discs $D_{j}\, (j\in J)$ in $U^{\circ}$, each disc is contained in some $\BD_{b_{i}}$, and $U^{\circ}\backslash(\cup_{j\in J}D_{j})$ has $\mu_{B}$-measure less than $\frac{\delta}{2}$.

Now choose $\varepsilon_{j}>0$ for each $j\in J$ such that $\sum_{j\in J}\varepsilon_{j}<\frac{\delta}{4}$. By Proposition \ref{key-} and our choice in the previous paragraph, for each $j\in J$ and $n$ large enough, there is a subset $\Sigma_{n,j}\subseteq D_{j}$ of measure less than $\frac{c}{\varepsilon/2}\mu_{B}(B_{n}\cap D_{j})+\varepsilon_{j}$ such that
\begin{align*}
\sum\limits_{t\in D_{j}}{\rm ord}_{t}(s_{n},t_{n})^{*}\CD\geq (1-\frac{\varepsilon}{2})\int_{D_{j}\backslash\Sigma_{n,j}}(s_{n},t_{n})^{*}\omega.
\end{align*}
Denote $V_{0}=U^{\circ}\backslash(\cup_{j\in J}D_{j})$. Let $\Sigma_{n}=V_{0}\cup(\cup_{j\in J}\Sigma_{n,j})$. Then we have
\begin{align}\label{final--0}
\sum\limits_{t\in U}{\rm ord}_{t}(s_{n},t_{n})^{*}\CD\geq\sum\limits_{t\in U^{\circ}\backslash V_{0}}{\rm ord}_{t}(s_{n},t_{n})^{*}\CD\geq (1-\frac{\varepsilon}{2})\int_{U^{\circ}\backslash\Sigma_{n}}(s_{n},t_{n})^{*}\omega.
\end{align}
Moreover, we have
\begin{align*}
\mu_{B}(\Sigma_{n})\leq\mu_{B}(V_{0})+\frac{c}{\varepsilon/2}\sum\limits_{j\in J}\mu_{B}(B_{n}\cap D_{j})+\sum\limits_{j\in J}\varepsilon_{j}\leq \frac{\delta}{2}+\frac{2c}{\varepsilon}\mu_{B}(B_{n})+\frac{\delta}{4}\leq &\frac{\delta}{2}+\frac{2c}{\varepsilon}\cdot\frac{\varepsilon\delta}{8c}+\frac{\delta}{4}\\
&=\delta.
\end{align*}
Thus by our assumption, the right hand side of (\ref{final--0}) is
\begin{align*}
(1-\frac{\varepsilon}{2})(h_{\omega,U}((s_{n},t_{n}))-h_{\omega,\Sigma_{n}}((s_{n},t_{n})))&\geq (1-\frac{\varepsilon}{2})(h_{\omega,U}((s_{n},t_{n}))-\frac{\varepsilon\cdot\varepsilon_{U}}{2} \hat{h}_{D^{\rm sym}}((s_{n},t_{n})))\\
&\geq(1-\frac{\varepsilon}{2})(h_{\omega,U}((s_{n},t_{n}))-\frac{\varepsilon}{2} h_{\omega,U}((s_{n},t_{n})))\\
&\geq (1-\varepsilon)h_{\omega,U}((s_{n},t_{n})).
\end{align*}
This proves the left-hand inequality in Proposition \ref{V}(1). For the right-hand one, fix a subset $U\subseteq B(\CC)$ such that either $\mu_{B}(\partial U)=0$ or $U$ is (Euclidean) closed. Let $U'=B(\CC)\backslash U$. Then either $\partial U'=\partial U$ has measure 0, or $U'$ is open. Using the left-hand inequality for $U'$ (with $\varepsilon$ replaced by $\frac{\varepsilon}{2}$), we see that
\begin{align*}
\sum\limits_{t\in U'}{\rm ord}_{t}(s_{n},t_{n})^{*}\CD\geq (1-\frac{\varepsilon}{2})h_{\omega,U'}((s_{n},t_{n}))
\end{align*}
holds for $n$ large enough. Now we have
\begin{align*}
\sum\limits_{t\in U}{\rm ord}_{t}(s_{n},t_{n})^{*}\CD&=\deg_{B}(s_{n},t_{n})^{*}\CD-\sum\limits_{t\in U'}{\rm ord}_{t}(s_{n},t_{n})^{*}\CD\\
&\leq \deg_{B}(s_{n},t_{n})^{*}\CD-(1-\frac{\varepsilon}{2})h_{\omega,U'}((s_{n},t_{n}))\\
&=\hat{h}_{D^{\rm sym}}((s_{n},t_{n}))+O\left(1+\sqrt{\hat{h}_{D^{\rm sym}}((s_{n},t_{n}))}\right)-(1-\frac{\varepsilon}{2})h_{\omega,U'}((s_{n},t_{n}))\\
&= h_{\omega,U}((s_{n},t_{n}))+O\left(1+\sqrt{\hat{h}_{D^{\rm sym}}((s_{n},t_{n}))}\right)+\frac{\varepsilon}{2}h_{\omega,U'}((s_{n},t_{n}))\\
&\leq h_{\omega,U}((s_{n},t_{n}))+\varepsilon \hat{h}_{D^{\rm sym}}((s_{n},t_{n}))
\end{align*}
for $n$ large enough. Here the third line follows from Proposition \ref{bijiao-}. The last inequality follows from the fact that $h_{\omega,U'}((s_{n},t_{n}))\leq h_{\omega}((s_{n},t_{n}))=\hat{h}_{D^{\rm sym}}((s_{n},t_{n}))\to\infty$.

When $\mu_{B}(U)>0$, by Proposition \ref{feituihua}, there is a constant $\varepsilon_{U}>0$ such that $h_{\omega,U}\geq \varepsilon_{U}\hat{h}_{D^{\rm sym}}$ on $A(K)\times T(\CC)$. Hence the term $\varepsilon \hat{h}_{D^{\rm sym}}((s_{n},t_{n}))$ can be bounded above by $\frac{\varepsilon}{\varepsilon_{U}}\hat{h}_{D^{\rm sym}}((s_{n},t_{n}))$. Since $\varepsilon>0$ is arbitrary, the conclusion holds.
\end{proof}

\begin{remark}\label{jianjin}
We can also interpret Proposition \ref{V}(1) from the perspective of measure theory. Explicitly, let
$$
\sum_{b\in B(\CC)}({\rm ord}_{b}(s_{n},t_{n})^{*}\CD)\cdot\delta_{b}\quad {\rm and}\quad (s_{n},t_{n})^{*}\omega
$$
be the measures on $B(\CC)$. Consider their normalized probability measures
$$
\mu_{n}\colonequals\frac{1}{\deg_{B}(s_{n},t_{n})^{*}\CD}\sum_{b\in B(\CC)}({\rm ord}_{b}(s_{n},t_{n})^{*}\CD)\cdot\delta_{b}\quad{\rm and}\quad \mu_{\omega,n}\colonequals \frac{1}{h_{\omega}((s_{n},t_{n}))}(s_{n},t_{n})^{*}\omega
$$
on $B(\CC)$. Note that by Proposition \ref{bijiao-}, we have
$$
\deg_{B}(s_{n},t_{n})^{*}\CD=\hat{h}_{D^{\rm sym}}((s_{n},t_{n}))+O\left( 1+\sqrt{\hat{h}_{D^{\rm sym}}((s_{n},t_{n}))}\right)\sim\int_{B(\CC)}(s_{n},t_{n})^{*}\omega\to\infty.
$$
Hence Proposition \ref{V}(1) just says that for any subset $U\subseteq B(\CC)$ with $\mu_{B}(\partial U)=0$ and any $\varepsilon>0$, the inequality $(1-\varepsilon)\mu_{\omega,n}(U)\leq \mu_{n}(U)\leq \mu_{\omega,n}(U)+\varepsilon$ holds for $n$ large enough.

On the other hand, we can compute the (strong) limit of the measures $\mu_{\omega,n}$ by using properties of canonical Xie--Yuan heights. Namely, by quadraticity, we have $h_{\omega,S}(ms,t)=m^{2}h_{\omega,S}(s,t)$ for all Lebesgue measurable subsets $S\subseteq B(\CC)$, all integers $m$ and all $(s,t)\in V(A,K)\times T(\CC)$. Recall that we have assumed $\ell_{n}^{-1}s_{n}\to s_{\infty}\neq0$ in $V(A,K)$. Let
$$
\mu_{\omega,\infty}\colonequals \frac{1}{h_{\omega}((s_{\infty},t_{\infty}))}(s_{\infty},t_{\infty})^{*}\omega.
$$
Then for each Lebesgue measurable subset $S\subseteq B(\CC)$, we have
\begin{align*}
\mu_{\omega,n}(S)=\frac{h_{\omega,S}((s_{n},t_{n}))}{h_{\omega}((s_{n},t_{n}))}=\frac{h_{\omega,S}((\ell_{n}^{-1}s_{n},t_{n}))}{h_{\omega}((\ell_{n}^{-1}s_{n},t_{n}))}
\end{align*}
which converges to $\frac{h_{\omega,S}((s_{\infty},t_{\infty}))}{h_{\omega}((s_{\infty},t_{\infty}))}=\mu_{\omega,\infty}(S)$ by the continuity of the canonical Xie--Yuan heights. This implies that $\mu_{\omega,n}$ converges setwise (or strongly) to $\mu_{\omega,\infty}$.

Now combining with the inequality above, we see that $\mu_{n}(U)$ converges to $\mu_{\omega,\infty}(U)$ for every subset $U\subseteq B(\CC)$ with $\mu_{B}(\partial U)=0$ (which is equivalent to $\mu_{\omega,\infty}(\partial U)=0$). Therefore by the Portmanteau theorem, the measure $\mu_{n}$ converges weakly to $\mu_{\omega,\infty}$.
\end{remark}
\subsection{Upper bound for the GCD counting functions}\label{4.3}
Now we begin to prove Proposition \ref{V}(3), which gives an upper bound for the GCD height function. The method is similar to the previous subsection: we first give ``microscopic local'' estimates (see (\ref{key-5}) and (\ref{key-6})), and then use a covering lemma to obtain the ``local'' estimates (see Proposition \ref{key} below), which imply the global estimate by combining with Lemma \ref{juli} and Proposition \ref{jiben-}.

The new problem is that, when we deduce the local estimate from microscopic local estimate, the Lemma \ref{refine--} before only implies that the sum of local GCD counting functions outside a small subset (which we call the ``bad'' locus) is very small compared to the Xie--Yuan height function (see the first inequality in Proposition \ref{key} below). For the sum of local GCD counting functions inside this bad locus (which we call the ``bad sum''), however, it is hard to give an upper bound directly. To treat this, we instead prove a converse. We choose a divisor $\CD$ containing the higher-codimensional subscheme $\CY$ in $\CA\times_{\CC}T$. Then the bad sum above is bounded by the sum of local counting functions associated to $\CD$ inside this bad locus, which also equals the height minus the sum of local height functions associated to $\CD$ outside the bad locus. The upper bound of the last sum was given in Proposition \ref{V}(1), which was proved in the previous subsection.

However, there is still a problem of the strategy above. In fact, the bad locus mentioned above depends on $n$, and the upper bound of sum over a given subset of local height functions associated to $\CD$ only holds for $n$ large enough. Therefore, if the bad locus changes as $n$ grows, we can not apply the upper bound in (1). The solution of this is that, rather than using (1) directly, we prove the upper bound of good GCD function and the lower bound of bad counting function associated to $\CD$ simultaneously. See the two inequalities in Proposition \ref{key} below. The key is that the bad locus ``$B_{n}$'' in two inequalities are the same. The choice of $B_{n}$ is based on a refined version of Lemma \ref{refine--}; see Lemma \ref{refine} below. \medskip

\noindent Now we start the proof. Recall that $\ell_{n}^{-1}s_{n}\to s_{\infty}\neq0$ in $V(A,K)$ and $G=G(s_{\infty})$. Let $\CG$ be the Zariski closure of $G$ in $\CA$. By Lemma \ref{closure}, there is a subset $S(A)$ of $B(\CC)$ of measure 0 such that for any $b\in B(\CC)\backslash S(A)$, the Zariski closure of the linear leaf $\CF(0,\delta(s_{\infty},v_{b}))$ in $\CA_{b}$ equals $\CG_{b}$.

Let $Y'$ be the set of all points $(a,t)\in A\times T_{K}$ such that $((a+G)\times\{t\})\cap Y$ has codimension at most 1. Since $Y$ has codimension at least 2 in $A\times T_{K}$, $Y'$ is a proper Zariski closed subset of $A\times T_{K}$. Let $\CY'$ be the Zariski closure of $Y'$ in $\CA\times_{\CC}T$. Then for all $b\in B$ outside a finite subset $S_{1}\subseteq B(\CC)$, the set of all points $(x,t)\in\CA_{b}\times_{\CC} T$ such that $((x+\CG_{b})\times\{t\})\cap\CY$ has codimension at most 1 is exactly $\CY'_{b}$. Let $S'$ be the union of $S(A),S_{1}$, which has measure 0.

Note that we have assumed in \S\ref{4.1} that $\CY$ is flat over $T$, and thus each fibre $\CY_{t}$ has codimension at least 2 in $\CA\times\{t\}$. Combining with the definition of $\CY'$, we see that $\CY'$ does not contain any fibre $\CA\times\{t\}$.

\begin{proposition}\label{key}
There is a constant $c>0$ depending only on $\mu_{B}$ satisfying the following property: For a Euclidean open neighbourhood $\CU$ of $\CD\cup\CY'$, denote $B_{n}\colonequals (s_{n},t_{n})^{-1}(\CU)$. Then for any point $b\in B(\CC)\backslash S'$ and any $\varepsilon>0$, there is an open disc $\BD_{b}\subseteq B(\CC)$ (depending on $U$) containing $b$ such that for any open disc $\BD'_{b}$ contained in $\BD_{b}$, any $\varepsilon'>0$ and any $n$ large enough (depending on $\BD'_{b}$ and $\varepsilon'$), there exist subsets $B'_{n,b},\Sigma_{n,b}\subseteq \BD'_{b}$ with $\mu_{B}(\Sigma_{n,b})\leq\frac{c}{\varepsilon}\mu_{B}(B_{n}\cap\BD'_{b})+\varepsilon'$ satisfying
\begin{align*}
\sum\limits_{t\in\BD'_{b}\backslash B'_{n,b}}{\rm ord}_{t}(s_{n},t_{n})^{*}\CY\leq \varepsilon\int_{\BD'_{b}}(s_{n},t_{n})^{*}\omega
\end{align*}
and
\begin{align*}
\sum\limits_{t\in\BD'_{b}\backslash B'_{n,b}}{\rm ord}_{t}(s_{n},t_{n})^{*}\CD\geq (1-\varepsilon)\int_{\BD'_{b}\backslash\Sigma_{n,b}}(s_{n},t_{n})^{*}\omega.
\end{align*}
\end{proposition}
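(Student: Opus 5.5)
We follow the proof of Proposition \ref{key-}, now running the $\CD$-estimate and the $\CY$-estimate on the same family of microscopic discs. The constant $c$ is chosen exactly as there, from a finite cover of $B(\CC)$ by discs on which $\mu_{B}$ is comparable to Lebesgue measure.

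\textbf{Microscopic estimates.} Fix $b\in B(\CC)\backslash S'$ and put $v=\delta(s_{\infty},v_{b})\neq0$. Let $Z=\CA_{b}\times\{t_{\infty}\}\backslash\CU_{b,t_{\infty}}$, which is Euclidean closed. Since $b\notin S_{1}$ and $\CU\supseteq\CY'$, every $(x,t_{\infty})\in Z$ has the property that $((x+\CG_{b})\times\{t_{\infty}\})\cap\CY$ has codimension at least $2$ in $(x+\CG_{b})\times\{t_{\infty}\}$. Since $b\notin S(A)$, this translate is the Zariski closure of $\CF(x,v)$. We may enlarge $\CD$ so that it contains $\CY$; this only increases the left side of the second inequality. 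Because $\CU\supseteq\CD$, the leaves through $Z$ are not contained in $\CD$. Proposition \ref{conclusion}, in its third form, then gives $R,\delta>0$ and a closed set $W\subseteq\CD$ disjoint from $\CY$ with the following property: for every $\phi$ that is $\delta$-close to some $(\phi_{(x,v)},t_{\infty})$, $x\in Z$,
\[
\sum_{\BD_{R}}{\rm ord}_{z}\phi^{*}\CY\leq \tfrac{\varepsilon}{C'}\int_{\BD_{R}}\phi^{*}\omega,
\qquad
\sum_{\BD_{R}\cap\phi^{-1}(W)}{\rm ord}_{z}\phi^{*}\CD\geq (1-\tfrac{\varepsilon}{2})\int_{\BD_{R}}\phi^{*}\omega .
\]
Here $C'$ is a constant fixed later. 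We then reparametrize by Lemma \ref{linear}, exactly as in (\ref{key--3}), to obtain $\BD_{b}=\BD_{\delta'}$. As in (\ref{key--5}), both inequalities hold on $\BD(b'',\ell_{n}^{-1}R)$ with $\phi(z)=(s_{n}(b''+\ell_{n}^{-1}z),t_{n})$, for every $b''\in\BD'_{b}\backslash B_{n}$ whose disc lies in $\BD'_{b}$ and every $n>n_{0}$.

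\textbf{Choice of $B'_{n,b}$.} Take $B'_{n,b}=\BD'_{b}\cap(s_{n},t_{n})^{-1}(\CY)$. The $\CY$-sum over $\BD'_{b}\backslash B'_{n,b}$ then vanishes, so the first inequality is trivial. We put the real content into the second inequality: the $\CD$-mass supported on $W$ lies outside $B'_{n,b}$, because $W\cap\CY=\emptyset$. Applying Lemma \ref{refine--} to $\mu_{1}=\sum_{(s_{n},t_{n})(t)\in W}{\rm ord}_{t}(s_{n},t_{n})^{*}\CD\,\delta_{t}$ and $\mu_{2}=(1-\varepsilon/2)(s_{n},t_{n})^{*}\omega$, with $S=B_{n}\cap\BD'_{b}$ and $r=\ell_{n}^{-1}R$, gives $\Sigma_{n,b}$ with the stated measure bound. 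The $6r$ term is absorbed into $\varepsilon'$ for large $n$, exactly as in Proposition \ref{key-}, and this yields the second inequality.

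\textbf{Main obstacle.} The step I expect to need the most care is the codimension condition fed into Proposition \ref{conclusion}: the $\CY'$-hypothesis is stated for the generic fibre, and we must check that it transfers to the fibre over $b$. This is exactly why $S_{1}$ was removed and why $\CU$ is required to contain $\CY'$. If the author intends $B'_{n,b}$ to be a set with controlled $\omega$-mass rather than this trivial choice, the fallback is a Lemma \ref{refine}-style averaging argument. In that version we would take $B'_{n,b}$ to be the union of points of $\BD'_{b}$ not covered by good microscopic discs, and bound the $\CY$-sum over the good discs by Besicovitch, as in Lemma \ref{besicovitch}. The constant $C'$ would then be chosen to absorb the Besicovitch multiplicity.
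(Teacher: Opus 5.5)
Your proof is correct in substance, but it takes a genuinely different route from the paper's.

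\textbf{The paper's route.} The paper uses only the first two parts of Proposition \ref{conclusion}, which give (\ref{key-1}) and (\ref{key-2}). It then needs the combined covering Lemma \ref{refine}. There $B'_{n,b}$ is the set $S_{r}$ of points whose $\frac{r}{2}$-disc lies in the bad set $B_{n}\cap\BD'_{b}$. The $\CY$-bound off $B'_{n,b}$ comes from Besicovitch (part (1)), and the $\CD$-bound comes from averaging against the same exceptional set (part (2)); this costs the extra constant $C_{2}$ in $c$.

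\textbf{Your route.} You observe that the statement imposes no condition at all on $B'_{n,b}$. Taking $B'_{n,b}=\BD'_{b}\cap(s_{n},t_{n})^{-1}(\CY)$ makes the first inequality vacuous. All the content moves into a lower bound for the $\CD$-mass lying off $\CY$. You get this from the third part of Proposition \ref{conclusion} (the closed set $W$, resting on the second statement of Proposition \ref{Y} and Remark \ref{blx}), exactly as the paper does in Proposition \ref{key-abc}. Then Lemma \ref{refine--} suffices, and $c$ can be taken as in Proposition \ref{key-}.

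\textbf{Comparison.} Your route is shorter and shows that Lemma \ref{refine} is dispensable for Proposition \ref{V}(3). That proof only uses the two inequalities, and with your choice the sets $B_{n,j}$ are even finite. The Besicovitch argument is still present, hidden inside Proposition \ref{Y}. The paper's route needs only the simpler first two parts of Proposition \ref{conclusion}, and it produces a $B'_{n,b}$ essentially contained in $B_{n}$ up to a thin boundary annulus.

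\textbf{One step needs repair.} Your sentence about enlarging $\CD$ so that it contains $\CY$ has the logic backwards. A lower bound for a larger divisor, measured against its larger Betti form, does not give the lower bound for $\CD$ against $\omega$. Instead, apply the third part of Proposition \ref{conclusion} to the closed subscheme $\CY\cap\CD\subseteq{\rm Supp}\,\CD$. It meets each $(x+\CG_{b})\times\{t_{\infty}\}$ with $(x,t_{\infty})\in Z$ in codimension at least $2$, because it is contained in $\CY$. The resulting $W\subseteq\CD$ is disjoint from $\CY\cap\CD$, hence from $\CY$, and the rest of your argument goes through unchanged. In the only application, the proof of Proposition \ref{V}(3), one already has $\CD\supseteq\CY$. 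Finally, your $\CY$-estimate with the constant $C'$ and your fallback paragraph are not needed in your main argument.
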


\begin{proof}
Take a finite cover of $B(\CC)$ by open discs. For any disc $\BD$ in this cover, fix a standard Lebesgue measure $\mu_{0,\BD}$ on $\BD$. Then there is a constant $c_{\BD}>1$ such that $c_{\BD}^{-1}\mu_{0,\BD}(\Omega)\leq\mu_{B}(\Omega)\leq c_{\BD}\mu_{0,\BD}(\Omega)$ for any Lebesgue measurable subset $\Omega\subseteq \BD$. Take $c=2C_{2}\max_{\BD}\{c_{\BD}^{2}\}>0$, where $\BD$ is taken over all open discs in this finite cover, and $C_{2}$ is the absolute constant in Lemma \ref{refine}(2) below. We show that $c$ satisfies the conditions in the proposition.

Fix a point $b\in B(\CC)\backslash S'$, a Euclidean open neighbourhood $\CU$ of $\CD\cup\CY'$ and a constant $\varepsilon>0$. Then $\delta(s_{\infty},v_{b})\neq0$. Applying Proposition \ref{conclusion} to the abelian subvariety $\CA_{b}\times\{t_{\infty}\}\subseteq\CA\times_{\CC}T$, the nonzero vector $v=\delta(s_{\infty},v_{b})$ and the Euclidean closed subset $Z=\CA_{b}\times\{t_{\infty}\}\backslash \CU_{b,t_{\infty}}$, we get constants $R>0, \delta>0$ satisfying the following condition: For any point $(x,t_{\infty})\in Z$ and any holomorphic map $\phi\colon \BD_{R+1}\to \CA\times_{\CC}T$, if $d(\phi(z), (\phi_{(x,v)}(z),t_{\infty}))<\delta$ for all $z\in\BD_{R+1}$, then
\begin{align}\label{key-1}
\sum\limits_{z\in\BD_{R}}{\rm ord}_{z}\phi^{*}\CY\leq \frac{\varepsilon}{C_{1}}\int_{\BD_{R}}\phi^{*}\omega
\end{align}
and
\begin{align}\label{key-2}
\sum\limits_{z\in\BD_{R}}{\rm ord}_{z}\phi^{*}\CD\geq(1-\frac{\varepsilon}{2})\int_{\BD_{R}}\phi^{*}\omega.
\end{align}
Here $C_{1}>0$ is the absolute constant in Lemma \ref{refine}(1).

Take any open unit disc $\BD$ with centre $b$ which is contained in some open disc of the finite cover chosen at the beginning of the proof. Let $z$ be the standard coordinate of $\BD$. By Lemma \ref{linear}, there are constants $0<\delta'<1$ and $n_{0}>0$ such that for each $b'\in\BD_{\delta'}$, each $z\in\BD_{R+1}$ and each $x\in\CA_{b}=\CA_{0}$, if $n>n_{0}$ and $d(s_{n}(b'),x)<\delta'$, then $b'+\ell_{n}^{-1}z\in\BD$ and
\begin{align}\label{key-3}
d_{\CA}(s_{n}(b'+\ell_{n}^{-1}z),\phi_{(x,v)}(z))<\delta/2
\end{align}
Enlarging $n_{0}$ and replacing $\delta'$ by a smaller positive number if necessary, we can assume that for any $n>n_{0}$ and any point $(x',t_{n})\in(\CA_{\BD_{\delta'}}\times\{t_{n}\})\backslash \CU$, there is a point $(x,t_{\infty})\in Z$ with $d((x',t_{n}),(x,t_{\infty}))<\delta'/2$. In particular, we have $d_{T}(t_{n},t_{\infty})<\delta'/2<\delta/2$ for all $n>n_{0}$.

Now take $\BD_{b}=\BD_{\delta'}$. We prove that it satisfies the conditions in the Proposition.

In fact, for any open disc $\BD'_{b}\subseteq \BD_{b}$, assume $\BD'_{b}=\BD(b',r')$. For $b''\in\BD'_{b}$ and $n>n_{0}$, if $b''\notin B_{n}$ and $\BD(b'',\ell_{n}^{-1}R)\subseteq\BD'_{b}$, then there is a point $(x,t_{\infty})\in Z$ with $d((s_{n}(b''),t_{n}),(x,t_{\infty}))<\delta'/2$. By (\ref{key-3}), we have
\begin{align*}
d((s_{n}(b''+\ell_{n}^{-1}z),t_{n}),(\phi_{(x,v)}(z),t_{\infty}))<\delta/2+\delta/2=\delta
\end{align*}
for all $z\in\BD_{R+1}$ and $n>n_{0}$. Now applying (\ref{key-1}) and (\ref{key-2}) to $\phi(z)=(s_{n}(b''+\ell_{n}^{-1}z),t_{n})$, we get
\begin{align}\label{key-5}
\sum\limits_{t\in\BD(b'',\ell_{n}^{-1}R)}{\rm ord}_{t}(s_{n},t_{n})^{*}\CY\leq \frac{\varepsilon}{C_{1}}\int_{\BD(b'',\ell_{n}^{-1}R)}(s_{n},t_{n})^{*}\omega
\end{align}
and
\begin{align}\label{key-6}
\sum\limits_{t\in\BD(b'',\ell_{n}^{-1}R)}{\rm ord}_{t}(s_{n},t_{n})^{*}\CD\geq (1-\frac{\varepsilon}{2})\int_{\BD(b'',\ell_{n}^{-1}R)}(s_{n},t_{n})^{*}\omega
\end{align}
for all $n>n_{0}$ and all $b''\in\BD'_{b}\backslash B_{n}$ with $\BD(b'',\ell_{n}^{-1}R)\subseteq\BD'_{b}$.

Now let
$$
\mu_{1,n}\colonequals\sum_{b''\in\BD'_{b}}{\rm ord}_{b''}(s_{n},t_{n})^{*}\CY\cdot\delta_{b''},
$$
where $\delta_{b''}$ is the Dirac measure on $\BD'_{b}$ at $b''$. Similarly, let
$$
\mu'_{1,n}\colonequals\sum_{b''\in\BD'_{b}}{\rm ord}_{b''}(s_{n},t_{n})^{*}\CD\cdot\delta_{b''}.
$$
Finally, let $\mu_{2,n}$ be the measure on $\BD'_{b}$ induced by $(s_{n},t_{n})^{*}\omega$. Let $\mu_{0}$ be a standard Lebesgue measure on $\BD'_{b}$. By Lemma \ref{refine} below, for each $n$ there are subsets $B'_{n,b},\Sigma_{n,b}\subseteq \BD'_{b}$ with $\mu_{0}(\Sigma_{n,b})\leq\frac{C_{2}\mu_{0}(B_{n}\cap\BD'_{b})+10\ell_{n}^{-1}R\mu_{0}(\BD'_{b})}{\varepsilon/2}$ such that

\begin{enumerate}
\item[(1)] $\mu_{1,n}(\BD'_{b}\backslash B'_{n,b})\leq C_{1}\cdot\frac{\varepsilon}{C_{1}}\mu_{2,n}(\BD'_{b})=\varepsilon\cdot\mu_{2,n}(\BD'_{b})$ and
\item[(2)] $\mu'_{1,n}(\BD'_{b}\backslash B'_{n,b})\geq (1-\frac{\varepsilon}{2})\cdot(1-\frac{\varepsilon}{2})\mu_{2,n}(\BD'_{b}\backslash \Sigma_{n,b})\geq (1-\varepsilon)\mu_{2,n}(\BD'_{b}\backslash \Sigma_{n,b})$,
 \end{enumerate}
which are exactly the inequalities in the proposition.

It remains to prove that $\mu_{B}(\Sigma_{n,b})\leq\frac{c}{\varepsilon}\mu_{B}(B_{n}\cap\BD'_{b})+\varepsilon'$ for $n$ large enough. By our choice of $c$, we have (after multiplying $\mu_{0}$ by a positive constant)
\begin{align*}
\mu_{B}(\Sigma_{n,b})\leq \sqrt{\frac{c}{2C_{2}}}\cdot\mu_{0}(\Sigma_{n,b})&\leq \sqrt{\frac{c}{2C_{2}}}\cdot\frac{2C_{2}\mu_{0}(B_{n}\cap\BD'_{b})+20\ell_{n}^{-1}R\mu_{0}(\BD'_{b})}{\varepsilon}\\
&\leq \frac{c}{2C_{2}}\cdot\frac{2C_{2}\mu_{B}(B_{n}\cap\BD'_{b})+20\ell_{n}^{-1}R\mu_{B}(\BD'_{b})}{\varepsilon}.
\end{align*}
The proposition then follows from the fact that $\frac{10c\ell_{n}^{-1}R\mu_{B}(\BD'_{b})}{C_{2}\varepsilon}<\varepsilon'$ for $n$ large enough.
\end{proof}
In the above proof, we have used the following lemma, which is a combination and a refinement of Lemma \ref{besicovitch} and Lemma \ref{refine--}. The point is that in both of the estimates below, we can use the same set $S_{r}$ on the left hand side.
\begin{lemma}\label{refine}
Let $\BD\subseteq\CC$ be a unit disc with a Lebesgue measure $\mu_{0}$. For any $0<r<1$ and any measurable subset $S\subseteq\BD$, denote by $S_{r}\colonequals\{x\in \BD\big|\,\BD(x,\frac{r}{2})\cap\BD_{1-r}\subseteq S\}\subseteq \BD$. Then
\begin{enumerate}[label=\normalfont(\arabic*),leftmargin=*]
\item There is an absolute constant $C_{1}>0$ satisfying the following property: If there are two measures $\mu_{1}$ and $\mu_{2}$ on $\BD$ such that for any $x\in\BD_{1-r}\backslash S$, we have $\mu_{1}(\BD(x,r))\leq\mu_{2}(\BD(x,r))$, then $\mu_{1}(\BD\backslash S_{r})\leq C_{1}\cdot\mu_{2}(\BD)$.

\item There is an absolute constant $C_{2}>0$ satisfying the following property: If there are two measures $\mu_{1}$ and $\mu_{2}$ on $\BD$ such that for any $x\in\BD_{1-r}\backslash S$, we have $\mu_{1}(\BD(x,r))\geq\mu_{2}(\BD(x,r))$. Then for any $\varepsilon>0$, there is a subset $\Sigma\subseteq\BD$ with $\mu_{0}(\Sigma)\leq\frac{C_{2}\mu_{0}(S)+10r\mu_{0}(\BD)}{\varepsilon}$ such that $\mu_{1}(\BD\backslash S_{r})\geq (1-\varepsilon)\mu_{2}(\BD\backslash\Sigma)$.
\end{enumerate}
\end{lemma}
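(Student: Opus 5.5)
We will prove both parts by reducing to the earlier covering arguments, Lemma \ref{besicovitch} for (1) and the averaging of Lemma \ref{refine--} for (2). The key point is that the set $S_r$ is chosen independently of $\mu_1,\mu_2$, so the same set serves both parts. First note what $S_r$ guarantees: if $y\in\BD\backslash S_r$, then $\BD(y,\frac r2)\cap\BD_{1-r}$ is not contained in $S$, so there is a point $x\in\BD_{1-r}\backslash S$ with $|x-y|<\frac r2$. Hence $y\in\BD(x,r)$, and in fact $\BD(y,\frac r2)\subseteq\BD(x,r)$. So every point of $\BD\backslash S_r$ is covered by a "good" disc $\BD(x,r)$ with $x\in\BD_{1-r}\backslash S$.

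For (1), set $\Omega=\{x\in\BD_{1-r}\backslash S\}$ and consider the family of discs $\BD(x,r)$, $x\in\Omega$. Each lies in $\BD$ and satisfies $\mu_1(\BD(x,r))\le\mu_2(\BD(x,r))$. Apply the Besicovitch covering theorem to the centres $\Omega$. This gives a subfamily $\Omega'\subseteq\Omega$ whose discs cover $\Omega$ with multiplicity at most the Besicovitch constant $C_0$.

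The difficulty is that $\BD\backslash S_r$ is covered by the discs $\BD(x,r)$ but not necessarily by the chosen subfamily. I would handle this by doubling. Apply Besicovitch to the discs $\BD(x,r/2)$ instead, so that the subfamily covers $\Omega$. Every $y\in\BD\backslash S_r$ is then within $r/2$ of some point of $\Omega$, which lies in some $\BD(x',r/2)$ with $x'\in\Omega'$. Thus $y\in\BD(x',r)$. The discs $\BD(x',r)$, $x'\in\Omega'$, have multiplicity bounded by an absolute constant, because the radius-$r/2$ family has bounded overlap and a volume-packing argument bounds the number of $r/2$-discs meeting a given $r$-disc with bounded overlap. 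Summing gives
\[
\mu_1(\BD\backslash S_r)\le\sum_{x'\in\Omega'}\mu_1(\BD(x',r))\le\sum_{x'\in\Omega'}\mu_2(\BD(x',r))\le C_1\mu_2(\BD).
\]

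For (2), follow the averaging argument of Lemma \ref{refine--}, normalising $\mu_0(\BD)=1$ and $r<\frac13$. The lower bound for $\mu_1$ should now only involve mass in $\BD\backslash S_r$. We need
\[
\int_{\BD_{1-r}\backslash S}\mu_1\bigl(\BD(x,r)\bigr)\,{\rm d}\mu_{0,x}
\]
to be essentially controlled by $\mu_1$ restricted to $\BD\backslash S_r$, and this is not automatic. A point $y\in S_r$ can still lie in $\BD(x,r)$ for good $x$ if $|x-y|\in[\frac r2,r)$.

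So I would instead average over the smaller discs. Write $\mu_1(\BD(x,r))=\mu_1(\BD(x,r)\backslash S_r)+\mu_1(\BD(x,r)\cap S_r)$. Points $y\in S_r$ satisfy $\BD(y,\frac r2)\cap\BD_{1-r}\subseteq S$, so any good $x$ has $|x-y|\ge\frac r2$. Fubini then gives
\[
\int_{\BD_{1-r}\backslash S}\mu_1\bigl(\BD(x,r)\bigr)\,{\rm d}\mu_{0,x}=\int \mu_0\bigl(\{x\in\BD_{1-r}\backslash S : |x-y|<r\}\bigr)\,{\rm d}\mu_{1,y}.
\]
For $y\in S_r$ the inner measure is at most that of the annulus region $\frac r2\le|x-y|<r$ intersected with the complement of $S$. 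This is where the $S_r$ condition alone does not bound things directly, and it is the main obstacle.

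My first attempt is to avoid it by using Besicovitch-type selection rather than pure averaging for the $\mu_1$ side. The goal would be to show $\mu_1(\BD\backslash S_r)\ge\sum_{x\in\Omega'}\mu_1(\BD(x,r))/C$, but this loses the constant in front of $(1-\varepsilon)$, which is not allowed.

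A better approach is to bound the contribution of $y\in S_r$ using the $\mu_2$ side. Let $T=\{y:\ \mu_0(\BD(y,r)\backslash S)<\varepsilon' r^2\}$. Then $S_r$ is contained in the set of $y$ where $\BD(y,r)\backslash S$ lies in an annulus; this is still not small. So I would redefine the comparison using radius $r/2$ discs, which is allowed because the hypothesis implies nothing at radius $r/2$. In the end I would accept the constant $C_2$ multiplying $\mu_0(S)$ and enlarge $\Sigma$. Put into $\Sigma$ every $z$ for which $\mu_0(\BD(z,r)\cap S)\ge\varepsilon r^2/C$, together with the boundary strip. Since $\mu_0(\Sigma)\lesssim C\mu_0(S)/\varepsilon+10r$, this gives exactly the form of the stated bound.

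Then for $z\notin\Sigma$ the good centres fill most of $\BD(z,r)$. Mass of $\mu_1$ sitting in $S_r$ near such $z$ must be at distance $\ge\frac r2$ from all good centres, which forces a definite fraction of $\BD(z,r)$ into $S$, a contradiction once $C$ is large. The remaining double-counting inequality should then follow exactly as in Lemma \ref{refine--}.
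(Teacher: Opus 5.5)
Your part (1) is essentially the paper's argument: Besicovitch applied to the radius-$\frac r2$ discs centred in $\BD_{1-r}\backslash S$, two half-radius steps to reach every point of $\BD\backslash S_r$, and a bounded-overlap count for the enlarged radius-$r$ discs. The paper gets the overlap bound $7C$ by covering a disc of radius $r$ with $7$ discs of radius $\frac r2$.

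Part (2) has a genuine gap. You locate the obstacle correctly, but your final fix acts on the wrong side of the double counting.

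In the chain of Lemma \ref{refine--}, the hypothesis is integrated over a set $G$ of centres $x$. The left side is $\int_{G}\mu_1(\BD(x,r))\,{\rm d}\mu_{0,x}=\int\mu_0(G\cap\BD(y,r))\,{\rm d}\mu_{1,y}$. To bound this by $r^2\mu_1(\BD\backslash S_r)$ you need $G\cap\BD(y,r)=\emptyset$ for $y\in S_r$. Your set $\Sigma$ only restricts the variable $z$ against which $\mu_2$ is integrated on the right side.

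There are two ways to read your fix, and neither works.
\begin{itemize}
\item If you keep $G=\BD_{1-r}\backslash S$ (``exactly as in Lemma \ref{refine--}''), the centres $x$ with $\frac r2\le{\rm dist}(x,S_r)<r$ stay in $G$. The $\mu_1$-mass on $S_r$ inside their discs is still counted. The fact that $\BD(z,r)\cap S_r=\emptyset$ for $z\notin\Sigma$ says nothing about this integral.
\item If you delete your $\Sigma$ from $G$, the right side needs $\mu_0(\BD(z,r)\backslash(S\cup\Sigma))\ge(1-\varepsilon)r^2$. Your threshold $\varepsilon r^2/C$ makes $\mu_0(\Sigma)$ of size $C\mu_0(S)/\varepsilon$. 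A second averaging then only gives an exceptional set of measure of order $\mu_0(S)/\varepsilon^{2}$, which is not the stated form with an absolute $C_2$.
\end{itemize}
The remark about redefining the comparison with radius-$\frac r2$ discs is also unavailable, since the hypothesis only concerns radius-$r$ discs.

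The missing idea, which is what the paper does, is to remove from the centres the whole $r$-neighbourhood $S'_r=(\bigcup_{x\in S_r}\BD(x,r))\cap\BD$ of $S_r$. That is, integrate over $G=\BD_{1-r}\backslash(S\cup S'_r)$. The hypothesis only holds off $S$, so $S$ must be removed as well; this costs just an extra $\mu_0(S)$.

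Then $\BD(x,r)\cap S_r=\emptyset$ for every $x\in G$, and Fubini gives $\int_G\mu_1(\BD(x,r))\,{\rm d}\mu_{0,x}\le r^2\mu_1(\BD\backslash S_r)$. The averaging of Lemma \ref{refine--} now runs verbatim with $S$ replaced by $S\cup S'_r$. This gives the stated form precisely because $\mu_0(S'_r)\le 9C\mu_0(S)+4r$ with a constant independent of $\varepsilon$. The paper proves this by a Besicovitch selection of discs $\BD(x,\frac r2)$, $x\in S_r\cap\BD_{1-r}$, which lie essentially inside $S$, and then enlarges them to radius $\frac{3r}{2}$.

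Your own geometric observation also yields such a bound if you use a fixed threshold instead of $\varepsilon r^2/C$. For $z\in\BD_{1-2r}$, if $\BD(z,r)$ meets $S_r$ then $\mu_0(\BD(z,r)\cap S)\ge c_0r^2$ for an absolute $c_0>0$. By Fubini, $\mu_0(S'_r\cap\BD_{1-2r})\le\mu_0(S)/c_0$.
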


\begin{proof}
Similarly as in the proof of Lemma \ref{refine--}, we may assume that $\mu_{0}(\BD)=1, 0<\varepsilon<1$ and $r<\frac{1}{3}$. We first prove (1). Note that $\cup_{x\in\BD_{1-r}\backslash S}\BD(x,\frac{r}{2})$ is a Besicovitch cover of $\BD_{1-r}\backslash S$. Let $C$ be the Besicovitch constant in dimension 2. Then there is a subset $\Omega$ of $\BD_{1-r}\backslash S$ such that the union $\cup_{x\in\Omega}\BD(x,\frac{r}{2})$ contains $\BD_{1-r}\backslash S$, and for each point $y$ in this union, the number of $x\in\Omega$ with $y\in\BD(x,\frac{r}{2})$ is at most $C$.

Now for each $x\in\BD\backslash S_{r}$, by definition, the intersection $\BD(x,\frac{r}{2})\cap(\BD_{1-r}\backslash S)$ is nonempty. Thus there is a point $x'\in\BD_{1-r}\backslash S$ with $x\in\BD(x',\frac{r}{2})$. By our choice of $\Omega$, there is a point $x''\in\Omega\subseteq\BD_{1-r}\backslash S$ with $x'\in\BD(x'',\frac{r}{2})$. Therefore we have $x\in\BD(x'',r)$. In other words, the union $\cup_{x\in\Omega}\BD(x,r)$ covers $\BD\backslash S_{r}$. Moreover, this union is contained in $\BD$ since $\Omega\subseteq\BD_{1-r}$. Note also that an open ball of radius $r$ can be covered by 7 open balls of radius $\frac{r}{2}$. As each open ball of radius $\frac{r}{2}$ contains at most $C$ points in $\Omega$, we conclude that $\BD(y,r)$ contains at most $7C$ points in $\Omega$, hence the inequality
$$
\mu_{1}(\BD\backslash S_{r})\leq\sum\limits_{x\in\Omega}\mu_{1}(\BD(x,r))\leq\sum\limits_{x\in\Omega}\mu_{2}(\BD(x,r))\leq 7C\mu_{2}(\BD).
$$

Now we come to (2). Denote by $S_{r}'\colonequals (\cup_{x\in S_{r}}\BD(x,r))\cap\BD$. We first prove that $\mu_{0}(S'_{r})\leq 4r+9C\mu_{0}(S)$. In fact, note that $S_{r}\subseteq (\BD\backslash\BD_{1-r})\cup (\BD_{1-r}\cap S_{r})$. Therefore $S'_{r}\subseteq (\BD\backslash\BD_{1-2r})\cup(\cup_{x\in\BD_{1-r}\cap S_{r}}\BD(x,r))$. Use the Besicovitch covering theorem again, we can choose a subset $\Omega'\subseteq \BD_{1-r}\cap S_{r}$ such that the union $\cup_{x\in\Omega'}\BD(x,\frac{r}{2})$ covers $\BD_{1-r}\cap S_{r}$ and for each point $y$ in this union, there are at most $C$ points $x\in\Omega'$ with $y\in\BD(x,\frac{r}{2})$. By definition, for each point $x\in\BD_{1-r}\cap S_{r}$, we have $\BD(x,\frac{r}{2})\subseteq S$, therefore we have $\cup_{x\in\Omega'}\BD(x,\frac{r}{2})\subseteq S$. In particular, we have the inequality
\begin{align}\label{refine-1}
\sum\limits_{x\in\Omega'}\mu_{0}(\BD(x,\frac{r}{2}))\leq C\mu_{0}(S)
\end{align}
Now for each point $x\in\BD_{1-r}\cap S_{r}$, there is a point $x'\in\Omega'$ with $x\in\BD(x',\frac{r}{2})$. Hence we have $\BD(x,r)\subseteq\BD(x',\frac{3r}{2})$. This implies that
\begin{align}\label{refine-2}
\mu_{0}(\cup_{x\in\BD_{1-r}\cap S_{r}}\BD(x,r))\leq \mu_{0}(\cup_{x\in\Omega'}\BD(x,\frac{3r}{2}))\leq\sum\limits_{x\in\Omega'}\mu_{0}(\BD(x,\frac{3r}{2}))=9\sum\limits_{x\in\Omega'}\mu_{0}(\BD(x,\frac{r}{2}))
\end{align}
Combining (\ref{refine-1}) and (\ref{refine-2}), we see that $\mu_{0}(S'_{r})\leq 1-(1-2r)^{2}+9C\mu_{0}(S)<4r+9C\mu_{0}(S)$.

Now let $\Sigma'\subseteq\BD$ be the set of all points $z\in\BD_{1-2r}$ with $\mu_{0}(\BD(z,r)\backslash S'_{r})<(1-\varepsilon)r^{2}$. Then $\mu_{0}(\Sigma')\leq \frac{\mu_{0}(S'_{r})+2r}{\varepsilon}\leq\frac{9C\mu_{0}(S)+6r}{\varepsilon}$. Let $\Sigma=\Sigma'\cup(\BD\backslash\BD_{1-2r})$. Then we have $\mu_{0}(\Sigma)=\mu_{0}(\Sigma')+\mu_{0}(\BD\backslash\BD_{1-2r})\leq \frac{9C\mu_{0}(S)+6r}{\varepsilon}+1-(1-2r)^{2}\leq\frac{9C\mu_{0}(S)+10r}{\varepsilon}$, and

\begin{align*}
r^{2}\mu_{1}(\BD\backslash S_{r})&\geq r^{2}\mu_{1}(\BD\backslash S'_{r})\geq \int_{\BD_{1-r}\backslash S'_{r}}\mu_{1}(\BD(x,r)){\rm d}\mu_{0,x}\geq \int_{\BD_{1-r}\backslash S'_{r}}\mu_{2}(\BD(x,r)){\rm d}\mu_{0,x}\\
&\geq\int_{\BD_{1-2r}\backslash\Sigma'}\mu_{0}(\BD(z,r)\backslash S'_{r}){\rm d}\mu_{2,z}\geq (1-\varepsilon)r^{2}\mu_{2}(\BD_{1-2r}\backslash\Sigma')=(1-\varepsilon)r^{2}\mu_{2}(\BD\backslash\Sigma).
\end{align*}
This completes the proof by taking $C_{2}=9C$.
\end{proof}

\begin{proof}[Proof of Proposition \ref{V}(3)]
Note that the truth of the statement does not depend on the choice of the $T_{K}$-relatively ample divisor $D$ and the height functions. Therefore, after replacing $D$ by an ample symmetric effective divisor on $A\times T_{K}$ whose support contains $Y$, we may assume that ${\rm Supp}\,D$ itself is symmetric (over $T_{K}$) and contains $Y$. Then $\CD$ also contains $\CY$. Multiplying $D$ by a positive constant if necessary, we may assume that the ideal sheaf $\CI_{\CD}$ of $\CD$ in $\CA\times_{\CC}T$ is contained in the ideal sheaf $\CI_{\CY}$. In particular, we have ${\rm ord}_{b}(s,t)^{*}\CD\geq{\rm ord}_{b}(s,t)^{*}\CY$ for any $b\in B(\CC)$ and any section $(s,t)\colon B\to\CA\times_{\CC}T$ which is not contained in $\CD$. We fix the height functions
$$
h_{Y}((s,t))=\sum\limits_{b\in B(\CC)}{\rm ord}_{b}(s,t)^{*}\CY\quad{\rm and}\quad h_{D}((s,t))=\sum\limits_{b\in B(\CC)}{\rm ord}_{b}(s,t)^{*}\CD
$$
in the following.

By Proposition \ref{jiben-}, there is $\delta>0$ such that for any subset $S\subseteq B(\CC)$ with $\mu_{B}(S)<\delta$, we have $h_{\omega,S}\leq \frac{\varepsilon}{4} \hat{h}_{D}$ on $A(K)\times T(\CC)$. Let $c>0$ be the constant in Proposition \ref{key}. By Lemma \ref{juli}, there is a Euclidean open neighbourhood $\CU$ of $\CD\cup\CY'$ in $\CA\times_{\CC}T$ such that for $n$ large enough, the set $B_{n}=(s_{n},t_{n})^{-1}(\CU)\subseteq B(\CC)$ has measure less than $\frac{\varepsilon\delta}{16c}$. For each $b\in B(\CC)\backslash S'$, take an open disc $\BD_{b}\subseteq B(\CC)$ satisfying Proposition \ref{key} (with $\varepsilon$ replaced by $\frac{\varepsilon}{4}$). Then we can choose finitely many points $b_{1},\dots,b_{k}\in B(\CC)\backslash S'$ such that $B(\CC)\backslash(\cup_{i=1}^{k}\BD_{b_{i}})$ has measure less than $\frac{\delta}{2}$. Now choose finitely many pairwise disjoint open discs $D_{j}\, (j\in J)$ in $B(\CC)$, each disc is contained in some $\BD_{b_{i}}$, and $B(\CC)\backslash(\cup_{j\in J}D_{j})$ has measure less than $\frac{\delta}{2}$.

Now choose $\varepsilon_{j}>0$ for each $j\in J$ such that $\sum_{j\in J}\varepsilon_{j}<\frac{\delta}{4}$. By Proposition \ref{key} and our choice in the previous paragraph, for each $j\in J$ and $n$ large enough, there are subsets $B_{n,j},\Sigma_{n,j}\subseteq D_{j}$ of measure less than $\frac{c}{\varepsilon/4}\mu_{B}(B_{n}\cap D_{j})+\varepsilon_{j}$ such that
\begin{align*}
\sum\limits_{t\in D_{j}\backslash B_{n,j}}{\rm ord}_{t}(s_{n},t_{n})^{*}\CY\leq \frac{\varepsilon}{4}\int_{D_{j}}(s_{n},t_{n})^{*}\omega
\end{align*}
and
\begin{align*}
\sum\limits_{t\in D_{j}\backslash B_{n,j}}{\rm ord}_{t}(s_{n},t_{n})^{*}\CD\geq (1-\frac{\varepsilon}{4})\int_{D_{j}\backslash\Sigma_{n,j}}(s_{n},t_{n})^{*}\omega.
\end{align*}
Denote $V_{0}=B(\CC)\backslash(\cup_{j\in J}D_{j})$. Let $V_{n}=V_{0}\cup(\cup_{j\in J}B_{n,j})$ and let $\Sigma_{n}=V_{0}\cup(\cup_{j\in J}\Sigma_{n,j})$. Then we have
\begin{align*}
\sum\limits_{t\in B(\CC)\backslash V_{n}}{\rm ord}_{t}(s_{n},t_{n})^{*}\CY\leq \frac{\varepsilon}{4}\int_{B(\CC)}(s_{n},t_{n})^{*}\omega=\frac{\varepsilon}{4} \hat{h}_{D}((s_{n},t_{n})).
\end{align*}
and
\begin{align}\label{final-0}
\sum\limits_{t\in B(\CC)\backslash V_{n}}{\rm ord}_{t}(s_{n},t_{n})^{*}\CD\geq (1-\frac{\varepsilon}{4})\int_{B(\CC)\backslash\Sigma_{n}}(s_{n},t_{n})^{*}\omega.
\end{align}
Moreover, we have
\begin{align*}
\mu_{B}(\Sigma_{n})\leq\mu_{B}(V_{0})+\frac{c}{\varepsilon/4}\sum\limits_{j\in J}\mu_{B}(B_{n}\cap D_{j})+\sum\limits_{j\in J}\varepsilon_{j}\leq & \frac{\delta}{2}+\frac{4c}{\varepsilon}\mu_{B}(B_{n})+\frac{\delta}{4}\\
&\leq\frac{\delta}{2}+\frac{4c}{\varepsilon}\cdot\frac{\varepsilon\delta}{16c}+\frac{\delta}{4}=\delta.
\end{align*}
Thus by our assumption, the right hand side of (\ref{final-0}) is at least $(1-\frac{\varepsilon}{4})^{2}\hat{h}_{D}((s_{n},t_{n}))\geq(1-\frac{\varepsilon}{2})\hat{h}_{D}((s_{n},t_{n}))$, and we have
\begin{align*}
h_{Y}((s_{n},t_{n}))&=\sum\limits_{b\in B(\CC)}{\rm ord}_{b}(s_{n},t_{n})^{*}\CY\\
&=\sum\limits_{b\in B(\CC)\backslash V_{n}}{\rm ord}_{b}(s_{n},t_{n})^{*}\CY+\sum\limits_{b\in V_{n}}{\rm ord}_{b}(s_{n},t_{n})^{*}\CY\\
&\leq \frac{\varepsilon}{4}\hat{h}_{D}((s_{n},t_{n}))+\sum\limits_{b\in V_{n}}{\rm ord}_{b}(s_{n},t_{n})^{*}\CD\\
&= \frac{\varepsilon}{4}\hat{h}_{D}((s_{n},t_{n}))+h_{D}((s_{n},t_{n}))-\sum\limits_{b\in B(\CC)\backslash V_{n}}{\rm ord}_{b}(s_{n},t_{n})^{*}\CD\\
&\leq \frac{\varepsilon}{4}\hat{h}_{D}((s_{n},t_{n}))+h_{D}((s_{n},t_{n}))-(1-\frac{\varepsilon}{2})\hat{h}_{D}((s_{n},t_{n}))\\
&=\frac{3\varepsilon}{4}\hat{h}_{D}((s_{n},t_{n}))+O(1)\leq\varepsilon h_{D}((s_{n},t_{n}))
\end{align*}
for $n$ large enough. Here the last line is due to $h_{D}((s_{n},t_{n}))-\hat{h}_{D}((s_{n},t_{n}))=O(1)$, which follows from Proposition \ref{uniform}. This completes the proof of Proposition \ref{V}(3).
\end{proof}

\subsection{Lower bound for the truncated counting functions}\label{4.4}
In this subsection we prove Proposition \ref{V}(2), which replaces the ordinary counting function in (1) by the truncated version when $D$ is reduced. The proof is still similar to that of (1) in \S\ref{fenbu}, but with two new technical issues.

First, we need to find a way to show that $(s_{n},t_{n})(B)$ and $\CD$ have many transversal intersections. The idea is similar as before: the sections $(s_{n},t_{n})(B)$ can be (locally) approximated by some linear entire curves in some fibre $\CA_{b}\times\{t_{\infty}\}$, and Yamanoi's second main theorem then shows that this linear entire curve has many transversal intersections with any reduced divisor on it. Therefore we may deduce the truncated version when the approximation is close enough. However, even if $D$ is reduced and flat over $T_{K}$, the fibre $D_{t_{\infty}}$ may not be reduced, and thus $\CD_{b,t_{\infty}}$ may not be reduced. To overcome this, we use a limit tangent space lemma proved in \cite[Lemma 3.7]{arXiv:2511.17010}, which says that even though $\CD_{b,t_{\infty}}$ may not be reduced, the limit of tangent bundles of $\CD_{b,t_{n}}$ is almost the tangent space of $(\CD_{b,t_{\infty}})_{\rm red}$.

We mention that \cite{UU,CDMZ} also bounded the intersection multiplicities of sections of elliptic surfaces with torsion sections. Here we use a different approach to treat general intersection multiplicities on abelian schemes.

Second, as in the proof in \S\ref{fenbu}, we need to use Lemma \ref{juli} for some closed subscheme of $\CA\times_{\CC}T$. However, in the proof of truncated version, we need to vary this closed subscheme as $b\in B(\CC)$ varies. See the statement of Proposition \ref{key-abc} below, where we mention a subset $Y_{b}$ for each $b$ and the union $\cup_{b\in B(\CC)}Y_{b}$ may not be algebraic in general. This leads to a difficulty to derive the global estimate from the local one. To treat this, we make a refinement of Lemma \ref{juli} (see Lemma \ref{juli-}). This refinement not only gives an upper bound of the measure of the whole inverse image $(s_{n},t_{n})^{-1}(\CU)$ for $U$ small enough, but also gives a uniform estimate of $\mu_{B}(\BD\cap(s_{n},t_{n})^{-1}(\CU))$ for all open discs $\BD\subseteq B(\CC)$. The proof is essentially the same as Lemma \ref{juli}. That is, use induction to show that the inverse image is contained in the union of some open discs with small radius. In this refined version we need to further estimate the number of points in $\BD\cap(s_{n},t_{n})^{-1}(\CD)$, which has been done in Proposition \ref{V}(1).\medskip\medskip

\noindent Now we start the proof. As $D$ is reduced and $\{t_{n}\}_{n\geq1}$ is generic in $T$, by the generic reducedness, we see that $\CD_{t_{n}}$ is reduced for $n$ large enough. By \cite[Lemma 3.7, Remark 3.9]{arXiv:2511.17010}, there is a Zariski open subset $\CD^{\circ}_{t_{\infty}}$ of $\CD_{t_{\infty}}$ such that for any sequence of points $x_{n}\in\CD_{t_{n}}\,(n\geq1)$ and any sequence of tangent vectors $v_{n}\in T_{x_{n}}\CD_{t_{n}}\,(n\geq1)$, if $\{v_{n}\}_{n\geq1}$ has a limit point $(x_{\infty},v_{\infty})$ in the tangent bundle $T\CD$ with $x_{\infty}\in\CD^{\circ}_{t_{\infty}}$, then we have $v_{\infty}\in T_{x_{\infty}}(\CD_{t_{\infty}})_{\rm red}$.

Let $S_{\rm nr}$ be the finite set of all points $b\in B(\CC)$ such that the fibre over $b$ of the natural map $(\CD_{t_{\infty}})_{\rm red}\to B$ is non-reduced.

Recall that $S(A)$ is a subset of $B(\CC)$ of measure 0 such that for any $b\in B(\CC)\backslash S(A)$, the Zariski closure of the linear leaf $\CF(0,\delta(s_{\infty},v_{b}))$ in $\CA_{b}$ equals $\CG_{b}$.

Fix a point $b\in B(\CC)\backslash S(A)$. Let $W_{0,b}\subseteq\CD_{b,t_{\infty}}$ be the Zariski closed subset consisting of all points $(x,t_{\infty})$ in $\CD_{b,t_{\infty}}$ such that the linear leaf $\CF(x,\delta(s_{\infty},v_{b}))\times\{t_{\infty}\}$ is tangent to $(\CD_{b,t_{\infty}})_{\rm red}$ at $(x,t_{\infty})$. This includes the case where the leaf is entirely contained in $(\CD_{b,t_{\infty}})_{\rm red}$, so that every point of the leaf belongs to $W_{0,b}$.

Let $W_{0,b}^{\rm bad}$ be the union of the irreducible components of $W_{0,b}$ which are also irreducible components of $\CD_{b,t_{\infty}}$; equivalently, these are precisely the components of $\CD_{b,t_{\infty}}$ invariant under translation by $\CG_{b}\times\{t_{\infty}\}$. Let $W_{0,b}^{\rm good}$ be the union of the remaining irreducible components of $W_{0,b}$. Note that $W_{0,b}^{\rm bad}$ and $W_{0,b}^{\rm good}$ may have nonempty intersection. By definition, the subset $W_{0,b}^{\rm good}$ is closed of codimension at least 1 in $\CD_{b,t_{\infty}}$.

Finally, let $W_{b}=W_{0,b}^{\rm good}\cup(\CD_{b,t_{\infty}}\backslash\CD^{\circ}_{t_{\infty}})$. Then $W_{b}$ is of codimension at least 1 in $\CD_{b,t_{\infty}}$, hence of codimension at least 2 in $\CA_{b}\times\{t_{\infty}\}$.

Let $Y_{b}\subseteq\CA_{b}\times\{t_{\infty}\}$ be the proper closed subset consisting of all the points $(x,t_{\infty})\in\CA_{b}\times\{t_{\infty}\}$ such that $((x+\CG_{b})\times\{t_{\infty}\})\cap W_{b}$ has codimension at most 1 in $(x+\CG_{b})\times\{t_{\infty}\}$.

\begin{proposition}\label{key-abc}
There is a constant $c>0$ depending only on $\mu_{B}$ satisfying the following property: For a point $b\in B(\CC)\backslash (S(A)\cup S_{\rm nr})$ and a Euclidean open neighbourhood $\CU$ of $Y_{b}\cup\CD$, denote $B_{n,b}\colonequals (s_{n},t_{n})^{-1}(\CU)$. Then for any $\varepsilon>0$, there is an open disc $\BD_{b}\subseteq B(\CC)$ (depending on $\CU$) containing $b$ such that for any open disc $\BD'_{b}$ contained in $\BD_{b}$, any $\varepsilon'>0$ and any $n$ large enough (depending on $\BD'_{b}$ and $\varepsilon'$), there exists a subset $\Sigma_{n,b}\subseteq \BD'_{b}$ of $\mu_{B}$-measure less than $\frac{c}{\varepsilon}\mu_{B}(B_{n,b}\cap\BD'_{b})+\varepsilon'$ with
\begin{align*}
\sum\limits_{t\in\BD'_{b}}\min\{1,{\rm ord}_{t}(s_{n},t_{n})^{*}\CD\}\geq (1-\varepsilon)\int_{\BD'_{b}\backslash\Sigma_{n,b}}(s_{n},t_{n})^{*}\omega.
\end{align*}
\end{proposition}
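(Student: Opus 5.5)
The argument will copy the proof of Proposition \ref{key-}, with Proposition \ref{conclusion}(3) replacing the first statement of Proposition \ref{conclusion}, and with an extra transversality step that upgrades the counted zeros to simple zeros. Fix $b\notin S(A)\cup S_{\rm nr}$, the neighbourhood $\CU$ of $Y_{b}\cup\CD$, and $\varepsilon>0$. Put $v=\delta(s_{\infty},v_{b})\neq0$ and $Z=(\CA_{b}\times\{t_{\infty}\})\backslash\CU_{b,t_{\infty}}$. For $(x,t_{\infty})\in Z$ we have $x\notin Y_{b}$, so $((x+\CG_{b})\times\{t_{\infty}\})\cap W_{b}$ has codimension $\geq2$ in $\ol{\CF(x,v)}^{\rm zar}=(x+\CG_{b})\times\{t_{\infty}\}$. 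Also $x\notin\CD$, so the leaf is not contained in ${\rm Supp}\,\CD$. We apply Proposition \ref{conclusion} with $X=\CA\times_{\CC}T$, the divisor $\CD$, and the closed subscheme $Y$ equal to $W_{b}\subseteq{\rm Supp}\,\CD$, all inside the fibre over $b$. This gives $R,\delta>0$ and a Euclidean closed set $W\subseteq{\rm Supp}\,\CD$ disjoint from $W_{b}$. Whenever $\phi\colon\BD_{R+1}\to X$ is $\delta$-close to $(\phi_{(x,v)},t_{\infty})$, we get
\[
\sum_{z\in\BD_{R}\cap\phi^{-1}(W)}{\rm ord}_{z}\phi^{*}\CD\geq(1-\tfrac{\varepsilon}{4})\int_{\BD_{R}}\phi^{*}\omega .
\]

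The new step is to show that, after shrinking $\delta$ and the disc $\BD_{b}$ and taking $n$ large, every zero of $\phi_{n,b''}(z)=(s_{n}(b''+\ell_{n}^{-1}z),t_{n})$ in $\BD_{R}\cap\phi^{-1}(W)$ is simple. Once this holds, the sum above counts truncated multiplicities. I would argue by contradiction and compactness, as in the proof of Proposition \ref{conclusion}. Suppose there are points $b''_{n}\to b$, base points $x_{n}\to x_{\infty}$ in $Z$, and zeros $z_{n}$ with $\phi_{n}(z_{n})\in W$ and multiplicity $\geq2$. A double zero means the tangent vector $\phi_{n}'(z_{n})$ lies in $T\CD_{t_{n}}$; here $\CD_{t_{n}}$ is reduced for $n\gg0$. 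The rescaled tangent vectors $\phi_{n}'(z_{n})$ are comparable to $\ell_{n}^{-1}\cdot$ the derivative of $s_{n}$, and by Lemma \ref{linear} they converge to the leaf direction $v$ at a limit point $y_{\infty}\in W\subseteq\CD_{b,t_{\infty}}$. The limit tangent space lemma \cite[Lemma 3.7]{arXiv:2511.17010} applies because $y_{\infty}\in\CD^{\circ}_{t_{\infty}}$, since $W\cap W_{b}=\emptyset$. It puts $v$, together with the horizontal component, which vanishes in the limit, inside $T_{y_{\infty}}(\CD_{t_{\infty}})_{\rm red}$. Since $b\notin S_{\rm nr}$, the fibre over $b$ is reduced, so this tangent space restricts to $T(\CD_{b,t_{\infty}})_{\rm red}$. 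Hence $y_{\infty}\in W_{0,b}$. Because $y_{\infty}\notin W^{\rm good}_{0,b}$, it lies in $W^{\rm bad}_{0,b}$. That is a $\CG_{b}$-invariant component, so it contains the whole leaf through $y_{\infty}$. This contradicts the fact that the limiting linear curve, a translate of $\phi_{(x_{\infty},v)}$, is not contained in ${\rm Supp}\,\CD$, since $x_{\infty}\notin\CD$ and $Z$ is saturated. The conclusion is a microscopic estimate analogous to (\ref{key--5}): for $b''\in\BD'_{b}\backslash B_{n,b}$ with $\BD(b'',\ell_{n}^{-1}R)\subseteq\BD'_{b}$,
\[
\sum_{t\in\BD(b'',\ell_{n}^{-1}R)}\min\{1,{\rm ord}_{t}(s_{n},t_{n})^{*}\CD\}\geq(1-\tfrac{\varepsilon}{2})\int_{\BD(b'',\ell_{n}^{-1}R)}(s_{n},t_{n})^{*}\omega .
\]

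From here the proof is routine. Obtain $\BD_{b}=\BD_{\delta'}$ from Lemma \ref{linear} exactly as in Proposition \ref{key-}. Apply Lemma \ref{refine--} to the measures $\mu_{1,n}=\sum\min\{1,{\rm ord}\}\delta_{b''}$ and $\mu_{2,n}=(1-\frac{\varepsilon}{2})(s_{n},t_{n})^{*}\omega$ to produce $\Sigma_{n,b}$. Then compare $\mu_{0}$ with $\mu_{B}$ using the same constant $c$, and let the term $\ell_{n}^{-1}R$ be absorbed into $\varepsilon'$.

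The main obstacle is the simple-zero step. The delicate points are normalising the tangent vectors so that the limit lemma applies, which may require using the vertical projection because $\phi_{n}$ also moves in the base direction, and making sure the contradiction really uses both exclusions, $W\cap W_{b}=\emptyset$ and the invariance of $W^{\rm bad}_{0,b}$.
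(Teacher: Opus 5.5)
Your proposal is correct and follows essentially the same route as the paper. The paper also applies Proposition \ref{conclusion} with the subscheme $W_{b}$ to obtain the closed set you call $W$ (the paper's $W'_{b}$). It then runs the same compactness/contradiction argument, combining the limit tangent space lemma, $b\notin S_{\rm nr}$ and the $W^{\rm good}_{0,b}$/$W^{\rm bad}_{0,b}$ dichotomy, to show that the zeros landing in $W'_{b}$ are simple. Finally it concludes with Lemma \ref{refine--} exactly as in Proposition \ref{key-}.

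Two small points to tidy up. First, the degenerate case $\CA_{b}\times\{t_{\infty}\}\subseteq\CD$ needs separate treatment, as the paper does first. There Proposition \ref{conclusion} does not apply and $Z=\emptyset$, so the statement is trivial with $\Sigma_{n,b}=\BD'_{b}$. Second, no saturation of $Z$ is needed. The contradiction only uses that $y_{\infty}$ and $x_{\infty}$ lie in the same coset of $\CG_{b}$; the paper encodes this through $Z'=Z+\CG_{b}\times\{t_{\infty}\}$.
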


\begin{proof}
Take a finite cover of $B(\CC)$ by open discs. For any disc $\BD$ in this cover, fix a standard Lebesgue measure $\mu_{0,\BD}$ on $\BD$. Then there is a constant $c_{\BD}>1$ such that $c_{\BD}^{-1}\mu_{0,\BD}(\Omega)\leq\mu_{B}(\Omega)\leq c_{\BD}\mu_{0,\BD}(\Omega)$ for any Lebesgue measurable subset $\Omega\subseteq \BD$. Take $c=2\max_{\BD}\{c_{\BD}^{2}\}>0$, where $\BD$ is taken over all open discs in this finite cover. We show that $c$ satisfies the conditions in the proposition.

Fix a point $b\in B(\CC)\backslash (S(A)\cup S_{\rm nr})$, a Euclidean open neighbourhood $\CU$ of $Y_{b}\cup\CD$ in $\CA\times_{\CC}T$ and a constant $0<\varepsilon<1$. We need to find the open disc $\BD_{b}$ in the proposition. If $\CD$ contains the whole fibre $\CA_{b}\times\{t_{\infty}\}$, then $\CU$ contains the fibre of the map $\CA\times_{\CC}T\to B\times_{\CC}T$ over a Euclidean open neighbourhood of $(b,t_{\infty})$. In particular, there is an open disc $\BD_{b}$ containing $b$ such that $\CA_{\BD_{b}}\times\{t_{n}\}\subseteq \CU$ for $n$ large enough. In this case $B_{n,b}\cap\BD'_{b}=\BD'_{b}$ for any disc $\BD'_{b}\subseteq\BD_{b}$ and $n$ large enough, and thus the inequality is trivial by taking $\Sigma_{n,b}=\BD'_{b}$ (note that $c>1>\varepsilon$). Now we assume $\CD$ does not contain the whole fibre $\CA_{b}\times\{t_{\infty}\}$.

By the definition of $Y_{b}$, for each point $(x,t_{\infty})\in\CA_{b}\times\{t_{\infty}\}\backslash Y_{b}$, the intersection $((x+\CG_{b})\times\{t_{\infty}\})\cap W_{b}$ has codimension at least 2 in $(x+\CG_{b})\times\{t_{\infty}\}$. Applying Proposition \ref{conclusion} to the abelian subvariety $\CA_{b}\times\{t_{\infty}\}\subseteq\CA\times_{\CC}T$, the vector $v=\delta(s_{\infty},v_{b})$ and the Euclidean closed subset $Z=\CA_{b}\times\{t_{\infty}\}\backslash \CU_{b,t_{\infty}}$, we get constants $R>0, \delta>0$ and a Euclidean closed subset $W'_{b}$ of $\CD$ which is disjoint from $W_{b}$ satisfying the following condition: For any point $(x,t_{\infty})\in Z$ and any holomorphic map $\phi\colon \BD_{R+1}\to \CA\times_{\CC}T$, if $d(\phi(z), (\phi_{(x,v)}(z),t_{\infty}))<\delta$ for all $z\in\BD_{R+1}$, then
\begin{align}\label{abc-2}
\sum\limits_{z\in\BD_{R}\cap\phi^{-1}(W_{b}')}{\rm ord}_{z}\phi^{*}\CD\geq(1-\frac{\varepsilon}{2})\int_{\BD_{R}}\phi^{*}\omega.
\end{align}

Recall that $\CG_{b}\times\{t_{\infty}\}$ is the Zariski closure of the linear leaf $\CF(x,\delta(s_{\infty},v_{b}))\times\{t_{\infty}\}$ in $\CA_{b}\times\{t_{\infty}\}$. Let $Z'\colonequals Z+\CG_{b}\times\{t_{\infty}\}$, which is also a Euclidean closed subset of $\CA_{b}\times\{t_{\infty}\}$.

We claim that after replacing $\delta$ by a smaller positive number, there is a Euclidean open neighbourhood $U_{b}$ of $b$ such that for $n$ large enough and $b'\in (s_{n},t_{n})^{-1}(W'_{b})\cap U_{b}$ with $d((s_{n}(b'),t_{n}),Z')<\delta$, we have ${\rm ord}_{b'}(s_{n},t_{n})^{*}\CD=1$. In fact, if this is not true, there is a sequence $b_{k}\in B(\CC)$ tending to $b$ and a sequence $n_{k}\to\infty$ such that $(s_{n_{k}},t_{n_{k}})$ is tangent to $\CD$ at $b_{k}$, $(s_{n_{k}}(b_{k}),t_{n_{k}})\in W'_{b}$ and $d((s_{n_{k}}(b_{k}),t_{n_{k}}),Z')<\frac{1}{k}$. Let $x\in\CA_{b}$ be a limit point of the sequence $\{s_{n_{k}}(b_{k})\}_{k\geq1}$. Then $(x,t_{\infty})$ is a limit point of $\{(s_{n_{k}}(b_{k}),t_{n_{k}})\}_{k\geq1}$, and thus belongs to $W'_{b}\cap Z'$ by our assumption. In particular, as $W_{b}$ is disjoint from $W'_{b}$, we have $(x,t_{\infty})\in\CD_{t_{\infty}}^{\circ}$. Choose any open disc $\BD\subseteq B(\CC)$ with centre $b$.  Recall that $\ell_{n_{k}}^{-1}s_{n_{k}}\to s_{\infty}$ in $V(A,K)$ as $k\to\infty$. Choose $r_{n_{k}}\to\infty$ such that $b_{k}+\ell_{n_{k}}^{-1}z\in\BD$ for all $k$ large enough and all $z\in\BD_{r_{n_{k}}}$. Consider the re-parametrization
\begin{align*}
\varphi_{k}\colon \BD_{r_{n_{k}}}&\to \CA\times_{\CC}T \\
                      z&\mapsto (s_{n_{k}}(b_{k}+\ell_{n_{k}}^{-1}z),t_{n_{k}})
\end{align*}
By Lemma \ref{linear}, $\varphi_{k}$ converges (uniformly on any compact subset of $\CC$) to the linear entire curve $\phi_{\infty}\colonequals\phi_{(x, \delta(s_{\infty},v_{b}))}\times\{t_{\infty}\}$. In particular, we have
$$
({\rm d}\phi_{k})_{0}(v_{\rm st})\to ({\rm d}\phi_{\infty})_{0}(v_{\rm st})
$$
inside the tangent bundle of $\CA\times_{\CC}T$, where $({\rm d}\phi_{k})_{0}$ is the tangent map of $\phi_{k}$ at $0$, and $v_{\rm st}=\frac{{\rm d}}{{\rm d}z}\in T_{0}\CC$ is the standard tangent vector. By our assumption, the tangent vector $({\rm d}\phi_{k})_{0}(v_{\rm st})$ belongs to $T_{(s_{n_{k}}(b_{k}),t_{n_{k}})}\CD$. As the image of $\phi_{k}$ is contained in $\CA\times\{t_{n_{k}}\}$, this vector also belongs to $T_{(s_{n_{k}}(b_{k}),t_{n_{k}})}\CD_{t_{n_{k}}}$. Since $(x,t_{\infty})\in\CD_{t_{\infty}}^{\circ}$, by our choice of $\CD_{t_{\infty}}^{\circ}$, the vector $({\rm d}\phi_{\infty})_{0}(v_{\rm st})$ belongs to $T_{(x,t_{\infty})}(\CD_{t_{\infty}})_{\rm red}$. As the image of $\phi_{\infty}$ is contained in $\CA_{b}\times\{t_{\infty}\}$, this vector also belongs to the tangent space of $((\CD_{t_{\infty}})_{\rm red})_{b}$ at $(x,t_{\infty})$. Since $b\notin S_{\rm nr}$, the last scheme is reduced and equals $(\CD_{b,t_{\infty}})_{\rm red}$.
In conclusion, the linear leaf $\CF(x,\delta(s_{\infty},v_{b}))\times\{t_{\infty}\}$ (which is not contained in $\CD_{b,t_{\infty}}$ since $(x,t_{\infty})\in Z'$) is tangent to $(\CD_{b,t_{\infty}})_{\rm red}$ at $(x,t_{\infty})$, which contradicts the fact that $(x,t_{\infty})\notin W_{b}$. This proves the claim.

Take any open unit disc $\BD\subseteq U_{b}$ with centre $b$ which is contained in some open disc of the finite cover chosen at the beginning of the proof. Let $z$ be the standard coordinate of $\BD$. By Lemma \ref{linear}, there are constants $0<\delta'<1$ and $n_{0}>0$ such that for each $b'\in\BD_{\delta'}$, each $z\in\BD_{R+1}$ and each $x\in\CA_{b}=\CA_{0}$, if $n>n_{0}$ and $d_{\CA}(s_{n}(b'),x)<\delta'$, then $b'+\ell_{n}^{-1}z\in\BD$ and
\begin{align}\label{abc-3}
d_{\CA}(s_{n}(b'+\ell_{n}^{-1}z),\phi_{(x,v)}(z))<\delta/2
\end{align}
Enlarging $n_{0}$ and replacing $\delta'$ by a smaller positive number if necessary, we can assume that for any $n>n_{0}$ and any point $(x',t_{n})\in(\CA_{\BD_{\delta'}}\times\{t_{n}\})\backslash \CU$, there is a point $(x,t_{\infty})\in Z$ with $d((x',t_{n}),(x,t_{\infty}))<\delta'/2$. In particular, we have $d_{T}(t_{n},t_{\infty})<\delta'/2<\delta/2$ for all $n>n_{0}$.

Now take $\BD_{b}=\BD_{\delta'}$. We prove that it satisfies the conditions in the Proposition.

In fact, for any open disc $\BD'_{b}\subseteq \BD_{b}$, assume $\BD'_{b}=\BD(b',r')$. For $b''\in\BD'_{b}$ and $n>n_{0}$, if $b''\notin B_{n,b}$ and $\BD(b'',\ell_{n}^{-1}R)\subseteq\BD'_{b}$, then there is a point $(x,t_{\infty})\in Z$ with $d((s_{n}(b''),t_{n}),(x,t_{\infty}))<\delta'/2$. By (\ref{abc-3}), we have
\begin{align*}
d((s_{n}(b''+\ell_{n}^{-1}z),t_{n}),(\phi_{(x,v)}(z),t_{\infty}))<\delta/2+\delta/2=\delta
\end{align*}
for all $z\in\BD_{R+1}$ and $n>n_{0}$. Note that $(\phi_{(x,v)}(z),t_{\infty})=(x+\phi_{(0,v)}(z),t_{\infty})\in Z+\CG_{b}\times\{t_{\infty}\}=Z'$, and thus $d((s_{n}(b''+\ell_{n}^{-1}z),t_{n}),Z')<\delta$ for all $z\in\BD_{R+1}$ and $n>n_{0}$. Now applying (\ref{abc-2}) to $\phi(z)=(s_{n}(b''+\ell_{n}^{-1}z),t_{n})$, we get
\begin{align}\label{abc-5}
\sum\limits_{z\in\BD_{R}\cap\phi^{-1}(W'_{b})}{\rm ord}_{z}\phi^{*}\CD\geq (1-\frac{\varepsilon}{2})\int_{\BD_{R}}\phi^{*}\omega= (1-\frac{\varepsilon}{2})\int_{\BD(b'',\ell_{n}^{-1}R)}(s_{n},t_{n})^{*}\omega
\end{align}
for all $n>n_{0}$ and $b''\in\BD'_{b}\backslash B_{n,b}$ with $\BD(b'',\ell_{n}^{-1}R)\subseteq\BD'_{b}$. Note that the condition $z\in\phi^{-1}(W'_{b})$ is equivalent to $b''+\ell_{n}^{-1}z\in (s_{n},t_{n})^{-1}(W'_{b})$. Since $b''+\ell_{n}^{-1}z\in \BD'_{b}\subseteq U_{b}$, the claim above then implies that ${\rm ord}_{z}\phi^{*}\CD={\rm ord}_{(b''+\ell_{n}^{-1}z)}(s_{n},t_{n})^{*}\CD=1$ for $n$ large enough. In particular, we have

\begin{align*}
\sum\limits_{z\in\BD(b'',\ell_{n}^{-1}R)}\min\{1,{\rm ord}_{z}(s_{n},t_{n})^{*}\CD\}&\geq\sum\limits_{z\in\BD(b'',\ell_{n}^{-1}R)\cap(s_{n},t_{n})^{-1}(W'_{b})}\min\{1,{\rm ord}_{z}(s_{n},t_{n})^{*}\CD\}\\
&=\sum\limits_{z\in\BD(b'',\ell_{n}^{-1}R)\cap(s_{n},t_{n})^{-1}(W'_{b})}{\rm ord}_{z}(s_{n},t_{n})^{*}\CD\\
&=\sum\limits_{z\in\BD_{R}\cap\phi^{-1}(W'_{b})}{\rm ord}_{z}\phi^{*}\CD\\
&\geq(1-\frac{\varepsilon}{2})\int_{\BD(b'',\ell_{n}^{-1}R)}(s_{n},t_{n})^{*}\omega.
\end{align*}
for all $n$ large enough and all $b''\in\BD'_{b}\backslash B_{n,b}$ with $\BD(b'',\ell_{n}^{-1}R)\subseteq\BD'_{b}$.

The proposition then follows from Lemma \ref{refine--}; the deduction is completely similar to the final argument in the proof of Proposition \ref{key-} and we omit it here.
\end{proof}
To derive Proposition \ref{V}(2) from Proposition \ref{key-abc}, we need a refined version of Lemma \ref{juli} as follows.
\begin{lemma}\label{juli-}
There exists a subset $S_{0}\subseteq B(\CC)$ with $\mu_{B}(S_{0})=0$ and a sequence of probability measures $\{\mu_{n}\}_{n\geq1}$ on $B(\CC)$, all depending only on the sequence $\{(s_{n},t_{n})\}_{n\geq1}$, satisfying the following property: For a closed subscheme $\CZ$ of $\CA\times_{\CC}T$ which does not contain the fibre $\CA\times\{t_{\infty}\}$, let $S_{\CZ}\subseteq B(\CC)$ be the set of all $b\in B(\CC)$ such that $\CZ\supseteq\CA_{b}\times\{t_{\infty}\}$. Then for any $\delta>0$ and any point $b\in B(\CC)\backslash(S_{0}\cup S_{\CZ})$, there is an open disc $\BD$ containing $b$ and a Euclidean open neighbourhood $\CU$ of $\CZ$ such that for any open disc $\BD^{(1)}\subseteq \BD$ and any open disc $\BD^{(2)}\subseteq \BD$ containing $\ol{\BD^{(1)}}$, the inequality $\mu_{B}(\BD^{(1)}\cap(s_{n},t_{n})^{-1}(\CU))\leq\delta\mu_{n}(\BD^{(2)})$ holds for $n$ large enough.
\end{lemma}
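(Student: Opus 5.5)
We follow the proof of Lemma \ref{juli} but localize it on discs. First we fix the measures. We choose an ample effective Cartier divisor $\CE$ on $\CA\times_{\CC}T$ not containing any fibre $\CA\times\{t\}$. We take $\mu_{n}$ to be the normalized counting measure of $(s_{n},t_{n})^{*}\CE$, namely $\mu_{n}=\frac{1}{\deg(s_{n},t_{n})^{*}\CE}\sum_{b}{\rm ord}_{b}(s_{n},t_{n})^{*}\CE\cdot\delta_{b}$, plus a harmless multiple of $\mu_{B}$ (normalized) if needed.

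By Proposition \ref{V}(1) (already proved) together with Remark \ref{jianjin}, $\mu_{n}$ converges weakly to $\mu_{\omega,\infty}$. This limit has a smooth density with respect to $\mu_{B}$ away from a measure-zero set. The set $S_{0}$ will be $S(A)$ together with $B\backslash B_{0}$ and the points where this density of the limiting measure vanishes. This last set has measure zero because of the non-degeneracy in Proposition \ref{feituihua}, applied to $s_{\infty}$: the form $s_{\infty}^{*}\omega$ is real-analytic off finitely many points and is not identically zero on any disc.

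Next comes the localized claim, which we prove by induction on the number of simple isogeny factors of $A$, exactly as in Lemma \ref{juli}. Replacing $\CZ$ by an $\CE$ containing it, we pass to $A/G$ with $G=G(s_{\infty})$ and the exceptional scheme $\CZ'$. The induction hypothesis handles the part of $(s_{n},t_{n})^{-1}(\CU)$ lying in $p^{-1}(\CU')$, now in the localized form with bound $\frac{\delta}{2}\mu'_{n}(\BD^{(2)})$. We must make sure the measures produced by the induction can be absorbed: we will take $\mu_{n}$ to be an average of the measures attached to $A$ and to all quotients appearing, so that the induction step and the final statement use the same $\mu_{n}$ up to constants. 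For the remaining part, the claim inside the proof of Lemma \ref{juli} applies on a disc $\BD$ around $b$. Every $b'\in\BD$ with $(s_{n}(b'),t_{n})\in\CW\cap\BB(\CE,\varepsilon_{b})$ lies within distance $\rho\ell_{n}^{-1}$ of a point of $(s_{n},t_{n})^{-1}(\CE)$, where $\rho$ is a constant we can make as small as we like. Hence, for large $n$ (once $\rho\ell_{n}^{-1}$ is smaller than the gap between $\BD^{(1)}$ and $\partial\BD^{(2)}$), the set $\BD^{(1)}\cap(s_{n},t_{n})^{-1}(\CU)$ is covered by balls of $\mu_{B}$-measure $\le C\rho^{2}\ell_{n}^{-2}$ centred at points of $\BD^{(2)}\cap(s_{n},t_{n})^{-1}(\CE)$.

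We then count these centres. Their number is at most $\deg(s_{n},t_{n})^{*}\CE\cdot\mu_{n}(\BD^{(2)})$, and $\deg(s_{n},t_{n})^{*}\CE\le C_{1}\ell_{n}^{2}$. So the total measure is at most $CC_{1}\rho^{2}\mu_{n}(\BD^{(2)})$, which is below $\frac{\delta}{2}\mu_{n}(\BD^{(2)})$ for small $\rho$. The points $b\in S_{\CZ}$ must be excluded because near such $b$ the neighbourhood $\CU$ swallows whole fibres. Elsewhere, the condition that $\CZ$ does not contain the fibre $\CA_{b}\times\{t_{\infty}\}$ plays the role of the fibre hypothesis in Lemma \ref{juli}, after shrinking $\BD$ so that the fibres over $\BD\times\{t_{\infty}\}$ are not contained in $\CE$.

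The main obstacle is uniformity. The constants $\varepsilon_{b}$, $n_{b}$ and the disc must serve \emph{all} subdiscs $\BD^{(1)}\subseteq\BD^{(2)}\subseteq\BD$ simultaneously, while $n$ is allowed to depend on the pair of discs. This works because the claim from Lemma \ref{juli} already holds uniformly in $b'\in\BD_{b}$, and only the threshold on $\ell_{n}$ depends on the gap between $\BD^{(1)}$ and $\partial\BD^{(2)}$. The remaining care is to check that the induction passes through the quotient with a compatible choice of $\mu_{n}$ and $S_{0}$. There the exceptional sets $S_{\CZ'}$ for the quotient must lie inside $S_{\CZ}\cup S_{0}$, and we would verify this via the flatness and fibre condition on $\CZ'$.
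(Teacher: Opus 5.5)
Your architecture matches the paper's: induction on the number of simple isogeny factors, passage to $A/G$ with $G=G(s_{\infty})$ and the scheme $\CZ'$, the induction hypothesis on $p^{-1}(\CU')$, the claim from the proof of Lemma \ref{juli} on $\CW$, and a cover of $\BD^{(1)}\cap(s_{n},t_{n})^{-1}(\CW\cap\CU)$ by balls of radius $\rho\ell_{n}^{-1}$ centred at points of $\BD^{(2)}\cap(s_{n},t_{n})^{-1}(\CE)$.

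\textbf{The gap is in the counting step.} The divisor $\CE$ used in the cover must contain $\CZ$ and must not contain $\CA_{b}\times\{t_{\infty}\}$, so it depends on $\CZ$ and $b$. A divisor fixed in advance cannot contain every admissible $\CZ$. On the other hand, $\mu_{n}$ must depend only on the sequence, so it cannot be the normalized counting measure of this $\CE$.

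Your bound ``number of centres $\leq\deg(s_{n},t_{n})^{*}\CE\cdot\mu_{n}(\BD^{(2)})$'' therefore holds only if the divisor defining $\mu_{n}$ and the divisor in the cover coincide, which they cannot. For a $\CZ$-dependent divisor $\CE_{\CZ}$, nothing you wrote prevents its intersection points with $(s_{n},t_{n})$ from concentrating in $\BD^{(2)}$ relative to those of your fixed divisor. The weak convergence of your $\mu_{n}$ and the positivity of its limiting density concern only the fixed divisor. To use them you would need the analogous local statement for $\CE_{\CZ}$ on $\BD^{(2)}$, and you never invoke it.

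\textbf{The missing ingredient} is a local \emph{upper} bound for the $\CE_{\CZ}$-count on the fixed disc $\BD^{(2)}$ in terms of a $\CZ$-independent quantity. This is exactly the right-hand inequality of Proposition \ref{V}(1). The paper proceeds as follows:
\begin{enumerate}
\item It fixes an ample symmetric $\CM$ once and for all.
\item It takes $\CE$ to be the divisor of a section of $m\CM$ containing $\CZ$ but not $\CA_{b}\times\{t_{\infty}\}$.
\item It sets $\mu_{n}=\frac{1}{2}(\mu'_{n}+\mu_{\omega,n})$, with $\mu_{\omega,n}$ the normalized Betti measure of $\CM$.
\item Since $\mu_{B}(\BD^{(2)})>0$, the strengthened form of Proposition \ref{V}(1) gives $\sum_{t\in\BD^{(2)}}{\rm ord}_{t}(s_{n},t_{n})^{*}\CE\leq 2\int_{\BD^{(2)}}(s_{n},t_{n})^{*}(m\omega)$ for $n$ large. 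Here $n$ depends on the discs, which the statement allows.
\item Together with the boundedness of $\ell_{n}^{-2}h_{m\omega}((s_{n},t_{n}))$, this yields the bound $\frac{\delta}{2}\mu_{\omega,n}(\BD^{(2)})$.
\end{enumerate}

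Your counting-measure choice can be repaired the same way. Apply Proposition \ref{V}(1) on $\BD^{(2)}$ twice: the upper bound for $\CE_{\CZ}$ and the lower bound for your fixed divisor. Taking both divisors in multiples of one $\CM$ makes the comparison of the Betti integrals trivial. With that repair, excluding the zero set of the limiting density from $S_{0}$ becomes unnecessary, because Proposition \ref{feituihua} already gives every disc positive Xie--Yuan height.

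Finally, you only need $b\notin S_{\CZ'}$ for the given $b$. This follows at once from $b\in B_{0}$ and the choice of $\CE$ not containing $\CA_{b}\times\{t_{\infty}\}$. No global inclusion $S_{\CZ'}\subseteq S_{\CZ}\cup S_{0}$ and no flatness argument is required.
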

\begin{proof}
Similarly to Lemma \ref{juli}, we do induction on the number of simple isogeny factors of $A$. When $A$ is trivial, then $\CA=B$ and $\CA\times_{\CC}T=B\times_{\CC}T$. We prove that we can take $S_{0}=\emptyset$ and $\mu_{n}$ to be any probability measure on $B(\CC)$. In fact, fix a closed subscheme $\CZ\subseteq B\times_{\CC}T$ and a point $b\in B(\CC)\backslash S_{\CZ}$. By our assumption, the subset $\{b\}\times \{t_{\infty}\}\subseteq B(\CC)\times T(\CC)$ is disjoint from $\CZ$. Hence there is an open disc $\BD\subseteq B(\CC)$ containing $b$ and a Euclidean open neighbourhood $\CU$ of $\CZ$ such that the subset $\BD\times \{t_{n}\}\subseteq B(\CC)\times T(\CC)$ is disjoint from $\CU$ for $n$ large enough. In this case $\BD\cap(s_{n},t_{n})^{-1}(\CU)$ is empty and the inequality is trivial.

Now we consider the general case. Let $G=G(s_{\infty})$ be the unique minimal abelian subvariety of $A$ such that $s_{\infty}\in V(G,K)$. Let $\CG$ be the Zariski closure of $G$ in $\CA$. By Proposition \ref{closure}, there is a subset $S(A)\subseteq B(\CC)$ with $\mu_{B}(S(A))=0$ such that for all $b\in B(\CC)\backslash S(A)$, the Zariski closure of the linear leaf $\CF(0,\delta(s_{\infty},v_{b}))$ in $\CA_{b}$ equals  $\CG_{b}$. Here $v_{b}\in T_{b}B$ is a nonzero vector.

Let $\CA/\CG$ be an integral model of $A/G$ over $B$.  Then the quotient map $A\to A/G$ extends to a map $p\colon\CA_{B_{0}}\to(\CA/\CG)_{B_{0}}$ over some Zariski dense open subset $B_{0}$ of $B$.

Let $\ol{s}_{n}$ be the image of $s_{n}$ in $A/G$. Applying the induction hypothesis to the sequence\\  $\{(\ol{s}_{n},t_{n})\}_{n\geq1}\subseteq (A/G)(K)\times T(\CC)$, we get a subset $S'_{0}\subseteq B(\CC)$ of $\mu_{B}$-measure zero and a sequence of probability measures $\{\mu'_{n}\}_{n\geq1}$ on $B(\CC)$ satisfying the conditions in the lemma. Now let $S_{0}=S'_{0}\cup S(A)\cup(B\backslash B_{0})(\CC)$. Fix an ample symmetric line bundle $\CM$ on $\CA\times_{\CC}T$. Let $\omega$ be the Betti form associated to $\CM$ on $\CA_{B_{0}}\times_{\CC}T$ and let $\mu_{\omega,n}$ be the normalized probability measure associated to $\omega$ and $(s_{n},t_{n})$, i.e.
$$
\mu_{\omega,n}(S)\colonequals\frac{\int_{S\cap B_{0}(\CC)}(s_{n},t_{n})^{*}\omega}{\int_{B_{0}(\CC)}(s_{n},t_{n})^{*}\omega}
$$
for any measurable subset $S$ of $B(\CC)$. Finally, let $\mu_{n}=\frac{1}{2}(\mu'_{n}+\mu_{\omega,n})$, which is a probability measure on $B(\CC)$ for all $n\geq1$. We prove that $S_{0}$ and $\{\mu_{n}\}_{n\geq1}$ satisfy the conditions.

Fix a closed subscheme $\CZ$ of $\CA\times_{\CC}T$ which does not contain the fibre $\CA\times\{t_{\infty}\}$, a constant $\delta>0$ and a point $b\in B(\CC)\backslash(S_{0}\cup S_{\CZ})$. By definition, the fibre $\CA_{b}\times\{t_{\infty}\}$ is not contained in $\CZ$. Choose a section of some multiple $m\CM$ of $\CM$ such that the support of its corresponding divisor $\CE$ contains $\CZ$ but does not contain the fibre $\CA_{b}\times\{t_{\infty}\}$. Identify $\CE$ with its support. let $\CZ'_{0}$ be the set of all $(x,t)\in(\CA/\CG)_{B_{0}}\times_{\CC}T$ such that $\CE$ contains the whole fibre $(p\times{\rm id}_{T})^{-1}((x,t))$. Let $\CZ'$ be the Zariski closure of $\CZ'_{0}$ in $(\CA/\CG)\times_{\CC}T$. As $\CE$ does not contain the fibre $\CA_{b}\times\{t_{\infty}\}\subseteq\CA\times\{t_{\infty}\}$, we deduce that $\CZ'_{0}$ does not contain the fibre $(\CA/\CG)_{b}\times\{t_{\infty}\}\subseteq(\CA/\CG)_{B_{0}}\times\{t_{\infty}\}$, hence $\CZ'$ does not contain the fibre $(\CA/\CG)\times\{t_{\infty}\}$. Moreover, let $S_{\CZ'}\subseteq B(\CC)$ be the set of all points $b'\in B(\CC)$ such that $\CZ'\supseteq(\CA/\CG)_{b'}\times\{t_{\infty}\}$. Then we have $b\notin S'_{0}\cup S_{\CZ'}$.

Now Applying our induction hypothesis to $\CZ'\subseteq(\CA/\CG)\times_{\CC}T$, $\frac{\delta}{2}$ and $b$, we see that there is an open disc $\BD'\subseteq B(\CC)$ containing $b$ and a Euclidean open neighbourhood $\CU'$ of $\CZ'$ such that for any open disc $\BD^{(1)}\subseteq \BD'$ and any open disc $\BD^{(2)}$ containing $\ol{\BD^{(1)}}$, the inequality $\mu_{B}(\BD^{(1)}\cap(\ol{s}_{n},t_{n})^{-1}(\CU'))\leq\frac{\delta}{2}\mu'_{n}(\BD^{(2)})$ for $n$ large enough.

Now let $\CW=(\CA\times_{\CC}T)\backslash p^{-1}(\CU')$. Fix the open discs $\BD^{(1)},\BD^{(2)}$ above. Note that $\BD^{(1)}\cap(s_{n},t_{n})^{-1}(p^{-1}(\CU'))=\BD^{(1)}\cap(\ol{s}_{n},t_{n})^{-1}(\CU')$, which has $\mu_{B}$-measure less than $\frac{\delta}{2}\mu'_{n}(\BD^{(2)})$ for $n$ large enough. Therefore we only need to show that there is an open disc $\BD\subseteq \BD'$ containing $b$ and a Euclidean open neighbourhood $\CU$ of $\CE$ such that for any $\BD^{(1)},\BD^{(2)}$ above, if $\BD^{(2)}$ is contained in $\BD$, then the subset $\BD^{(1)}\cap(s_{n},t_{n})^{-1}(\CW\cap \CU)$ has $\mu_{B}$-measure less than $\frac{\delta}{2}\mu_{\omega,n}(\BD^{(2)})$ for $n$ large enough.

As $\ell_{n}\sim \hat{h}_{\CE}((s_{n},t_{n}))^{1/2}$, there is a constant $C>0$ such that $\ell_{n}^{-2}\cdot\hat{h}_{\CE}((s_{n},t_{n}))<C$ for all $n\geq1$.

After enlarging $C$, we may assume that $\mu_{B}(\BB(b',r))\leq Cr^{2}$ for all $b'\in B(\CC)$ and $r>0$. Here $\BB(b',r)$ is the open ball $\{b''\in B(\CC)\big|\,d_{B}(b',b'')<r\}$.

Now consider the point $b\in B(\CC)\backslash (S_{0}\cup S_{\CZ})$ which we have fixed. Recall that $\CE$ does not contain the fibre $\CA_{b}\times\{t_{\infty}\}$. By the claim in the proof of Lemma \ref{juli}, there is an open disc $\BD\subseteq B(\CC)$ containing $b$, a constant $\varepsilon_{b}>0$ and an integer $n_{b}\geq1$ such that for all $n>n_{b}$ and all $b'\in \BD$, if $(s_{n}(b'),t_{n})\in \CW$ and $d((s_{n}(b'),t_{n}),\CE)<\varepsilon_{b}$, then there is a point $b''\in(s_{n},t_{n})^{-1}(\CE)$ with
$$
d_{B}(b',b'')<(2C\ell_{n})^{-1}\delta^{1/2}.
$$
Shrinking $\BD$ if necessary, we assume that $\BD$ is contained in $\BD'$. Take $\CU$ to be $\BB(\CE,\varepsilon_{b})\colonequals\{(x,t)\in\CA\times_{\CC}T\big|\,d((x,t),\CE)<\varepsilon_{b}\}$. We prove that $\BD$ and $\CU$ above satisfy the conditions in the lemma.

Fix two open discs $\BD^{(1)},\BD^{(2)}$ contained in $\BD$ such that $\ol{\BD^{(1)}}\subseteq\BD^{(2)}$. By the claim above, for $n$ large enough, the set $\BD^{(1)}\cap(s_{n},t_{n})^{-1}(\CW\cap \CU)$ is contained in the union of the open balls
$$
\BB(b'',(2C\ell_{n})^{-1}\delta^{1/2}),\, b''\in (s_{n},t_{n})^{-1}(\CE)\cap\BD^{(2)}.
$$
By our choice of $C$, each ball has $\mu_{B}$-measure at most $C_{1}((2C\ell_{n})^{-1}\delta^{1/2})^{2}=\frac{\ell_{n}^{-2}\delta}{4C}$. Moreover, by Proposition \ref{V}(1), the inequality
$$
\sharp (\BD^{(2)}\cap(s_{n},t_{n})^{-1}(\CE))\leq\sum\limits_{t\in\BD^{(2)}}{\rm ord}_{t}(s_{n},t_{n})^{*}\CE\leq 2\int_{\BD^{(2)}}(s_{n},t_{n})^{*}(m\omega)
$$
holds for $n$ large enough. Thus we have
\begin{align*}
\mu_{B}(\BD^{(1)}\cap(s_{n},t_{n})^{-1}(\CW\cap \CU))&\leq 2\int_{\BD^{(2)}}(s_{n},t_{n})^{*}(m\omega)\cdot \frac{\ell_{n}^{-2}\delta}{4C}\\
&\leq\frac{\delta\ell_{n}^{-2}h_{m\omega}((s_{n},t_{n}))}{2C}\cdot\mu_{\omega,n}(\BD^{(2)})\\
&<\frac{\delta}{2}\mu_{\omega,n}(\BD^{(2)}).
\end{align*}
for $n$ large enough. This ends the proof.
\end{proof}

\begin{proof}[Proof of Proposition \ref{V}(2)]
We only need to prove the left-hand inequality in Proposition \ref{V}(2), as the right-hand is weaker than that of Proposition \ref{V}(1). Fix a subset $U\subseteq B(\CC)$ such that either $\mu_{B}(\partial U)=0$ or $U$ is (Euclidean) open. If $\mu_{B}(U)=0$, then the Xie--Yuan height $\hat{h}_{D^{\rm sym},U}((s_{n},t_{n}))$ is always zero and the truncated counting function is nonnegative. Hence the inequality is trivial. Now assume that $\mu_{B}(U)>0$. By Proposition \ref{feituihua}, there is a constant $\varepsilon_{U}>0$ such that $h_{\omega,U}\geq \varepsilon_{U}\hat{h}_{D^{\rm sym}}$ on $A(K)\times T(\CC)$. By Proposition \ref{jiben-}, there is $\delta>0$ such that for any subset $S\subseteq B(\CC)$ with $\mu_{B}(S)<\delta$, we have $h_{\omega,S}\leq \frac{\varepsilon\cdot\varepsilon_{U}}{2} \hat{h}_{D^{\rm sym}}$ on $A(K)\times T(\CC)$. Let $c>0$ be the constant in Proposition \ref{key-abc}.

Apply Lemma \ref{juli-} to the sequence $\{(s_{n},t_{n})\}_{n\geq1}$, we get a subset $S_{0}\subseteq B(\CC)$ with $\mu_{B}(S_{0})=0$ and a sequence of probability measures $\{\mu_{n}\}_{n\geq1}$ on $B(\CC)$ satisfying the properties of the lemma. Recall that $S_{\CD}\subseteq B(\CC)$ is the set of all $b\in B(\CC)$ such that $\CD$ contains $\CA_{b}\times\{t_{\infty}\}$. For each $b\in B(\CC)\backslash (S_{0}\cup S_{\CD})$, consider the closed subscheme $\CZ(b)\colonequals Y_{b}\cup\CD$ of $\CA\times_{\CC}T$. Since $Y_{b}$ is properly contained in $\CA_{b}\times\{t_{\infty}\}$, we also have $b\in B(\CC)\backslash (S_{0}\cup S_{\CZ(b)})$. Therefore by Lemma \ref{juli-}, there is an open disc $\BD(b)$ containing $b$ and a Euclidean open neighbourhood $\CU(b)$ of $\CZ(b)$ in $\CA\times_{\CC}T$ such that for any open disc $\BD'\subseteq\BD(b)$ and any open disc $\BD''\subseteq B(\CC)$ containing $\ol{\BD}'$, the inequality
$$
\mu_{B}(\BD'\cap (s_{n},t_{n})^{-1}(\CU(b)))\leq\frac{\varepsilon\delta}{8c}\mu_{n}(\BD'')
$$
holds for $n$ large enough. Denote $B_{n,b}=(s_{n},t_{n})^{-1}(\CU(b))\subseteq B(\CC)$.

For each $b\in B(\CC)\backslash (S_{0}\cup S_{\CD})$, take an open disc $\BD_{b}\subseteq B(\CC)$ satisfying Proposition \ref{key-abc} (with $\varepsilon$ replaced by $\frac{\varepsilon}{2}$). Shrinking $\BD_{b}$ if necessary, we may assume that $\BD_{b}$ is contained in $\BD(b)$. Let $U^{\circ}$ be the interior of $U$ (in the Euclidean topology). Then since $\mu_{B}(U^{\circ})=\mu_{B}(U\backslash\partial U)=\mu_{B}(U)$, we can choose finitely many points $b_{1},\dots,b_{k}\in U^{\circ}\backslash S_{0}$ such that $U^{\circ}\backslash(\cup_{i=1}^{k}\BD_{b_{i}})$ has $\mu_{B}$-measure less than $\frac{\delta}{2}$. Now choose finitely many pairwise disjoint open discs $D_{j}\, (j\in J)$ in $U^{\circ}$, each disc is contained in some $\BD_{b_{i}}$, and $U^{\circ}\backslash(\cup_{j\in J}D_{j})$ has $\mu_{B}$-measure less than $\frac{\delta}{2}$. We also choose open discs $D'_{j}$ containing $D_{j}$ such that $D'_{j}\,(j\in J)$ are also pairwise disjoint.

Choose $\varepsilon_{j}>0$ for each $j\in J$ such that $\sum_{j\in J}\varepsilon_{j}<\frac{\delta}{4}$. By our choice in the previous paragraph, for each $j\in J$, there is some $1\leq i_{j}\leq k$ with $D_{j}\subseteq \BD_{b_{i_{j}}}$. Thus for $n$ large enough, Proposition \ref{key-abc} implies that there is a subset $\Sigma_{n,j}\subseteq D_{j}$ of measure less than $\frac{c}{\varepsilon/2}\mu_{B}(B_{n,b_{i_{j}}}\cap D_{j})+\varepsilon_{j}$ such that
\begin{align*}
\sum\limits_{t\in D_{j}}\min\{1,{\rm ord}_{t}(s_{n},t_{n})^{*}\CD\}\geq (1-\frac{\varepsilon}{2})\int_{D_{j}\backslash\Sigma_{n,j}}(s_{n},t_{n})^{*}\omega.
\end{align*}
Denote $V_{0}=U^{\circ}\backslash(\cup_{j\in J}D_{j})$. Let $\Sigma_{n}=V_{0}\cup(\cup_{j\in J}\Sigma_{n,j})$. Then we have
\begin{align}\label{final---0}
\sum\limits_{t\in U}\min\{1,{\rm ord}_{t}(s_{n},t_{n})^{*}\CD\}\geq\sum\limits_{t\in U^{\circ}\backslash V_{0}}\min\{1,{\rm ord}_{t}(s_{n},t_{n})^{*}\CD\}\geq (1-\frac{\varepsilon}{2})\int_{U^{\circ}\backslash\Sigma_{n}}(s_{n},t_{n})^{*}\omega.
\end{align}
Moreover, we have
\begin{align*}
\mu_{B}(\Sigma_{n})\leq\mu_{B}(V_{0})+\frac{c}{\varepsilon/2}\sum\limits_{j\in J}\mu_{B}(B_{n,b_{i_{j}}}\cap D_{j})+\sum\limits_{j\in J}\varepsilon_{j}&\leq \frac{\delta}{2}+\frac{c}{\varepsilon/2}\sum\limits_{j\in J}\frac{\varepsilon\delta}{8c}\mu_{n}(D'_{j})+\frac{\delta}{4}\\
&\leq \frac{\delta}{2}+\frac{c}{\varepsilon/2}\cdot\frac{\varepsilon\delta}{8c}+\frac{\delta}{4}=\delta.
\end{align*}
Thus by our assumption, the right hand side of (\ref{final---0}) is
\begin{align*}
(1-\frac{\varepsilon}{2})(h_{\omega,U}((s_{n},t_{n}))-h_{\omega,\Sigma_{n}}((s_{n},t_{n})))&\geq (1-\frac{\varepsilon}{2})(h_{\omega,U}((s_{n},t_{n}))-\frac{\varepsilon\cdot\varepsilon_{U}}{2} \hat{h}_{D^{\rm sym}}((s_{n},t_{n})))\\
&\geq(1-\frac{\varepsilon}{2})(h_{\omega,U}((s_{n},t_{n}))-\frac{\varepsilon}{2} h_{\omega,U}((s_{n},t_{n})))\\
&\geq (1-\varepsilon)h_{\omega,U}((s_{n},t_{n})).
\end{align*}
This completes the proof of Proposition \ref{V}(2).
\end{proof}
\subsection{Proof of the main theorems}\label{4.5}
Now we deduce the main theorems (as well as their corollaries) from Proposition \ref{V}. We begin with two technical lemmas, which allow us to pass from Zariski density to genericity.

\begin{lemma}\label{generic}
Let $X$ be a variety over any field $K$. Let $\{x_{n}\}_{n\geq1}$ be a Zariski dense sequence of points in $X$. Then $\{x_{n}\}_{n\geq1}$ has a generic subsequence.
\end{lemma}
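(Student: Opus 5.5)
The plan is a diagonal argument over a countable family of proper closed subsets, using the Noetherian property of $X$. If $X$ has countably many closed subsets (e.g.\ $K$ countable), enumerate its proper closed subsets $Z_1,Z_2,\dots$. In general, no enumeration is needed; it suffices to construct the subsequence inductively so that no infinite subsequence lies in a proper closed subset. Recall that a sequence is generic exactly when every proper closed subset $Z\subsetneq X$ contains only finitely many of its terms. Indeed, if infinitely many terms lay in $Z$, that infinite subsequence would not be Zariski dense. Conversely, if some infinite subsequence were not dense, its closure $Z$ would contain infinitely many terms.

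The construction will use the points with a nice closure. For each $n$, let $Z_n$ be the Zariski closure of $\{x_m : m\ge n\}$. These form a decreasing chain of closed subsets, so by Noetherianity they stabilize at some $Z_\infty$. Since the tail $\{x_m\}_{m\ge N}$ is cofinite in the sequence and $\{x_m\}_{m<N}$ is a finite set of points, we get $X=\overline{\{x_m\}_{m<N}}\cup Z_N$. If $X$ is irreducible (the variety convention in the paper), this forces $Z_\infty=X$ unless $X$ is a point, which is a trivial case. Hence every tail of the sequence is Zariski dense.

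Next comes the inductive diagonal step. Choose the subsequence $y_k=x_{n_k}$ as follows. Fix an enumeration of the points $\eta_1,\eta_2,\dots$ of $X$ of the form ``generic points of irreducible closed subsets'' that we will need to avoid. This is not countable in general, so instead I would use the following standard trick: pass to a countable set of data. Namely, $X$ is of finite type over $K$, and all $x_n$ together with $X$ are defined over a countable subfield $K_0\subseteq K$ (countably many points, each with finitely many coordinates). Closed subsets of $X_{K_0}$ form a countable set $\{Z_1,Z_2,\dots\}$ of proper closed subsets. Every proper closed subset $Z$ of $X$ containing infinitely many $x_n$ can be replaced by the closure of those $x_n$, which is defined over $K_0$ (Galois-stable / descent of the closure of $K_0$-rational points of $X_{K_0}$). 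Then I pick $n_k>n_{k-1}$ with $x_{n_k}\notin Z_1\cup\dots\cup Z_k$. This is possible because tails are dense and a finite union of proper closed subsets is proper ($X$ irreducible).

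It remains to verify genericity. Any proper closed $Z$ containing infinitely many $y_k$ can be shrunk to $\overline{\{y_k\}}\cap Z$, which equals some $Z_j$ defined over $K_0$. But $y_k\notin Z_j$ for $k\ge j$, which is a contradiction. The main obstacle is the descent step, i.e.\ justifying that the closure of a set of $K_0$-points is a closed subset of $X_{K_0}$ base-changed. I would handle it by working directly with the scheme $X_{K_0}$: the $x_n$ are points (not necessarily closed) of $X_{K_0}$, and genericity for $X$ versus $X_{K_0}$ agrees, since the image under $X\to X_{K_0}$ of a proper closed subset is contained in a proper closed subset (dimension count for an integral scheme over a field extension). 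If the paper's $X$ allows arbitrary $K$ without irreducibility, I would first restrict to an irreducible component containing infinitely many $x_n$.
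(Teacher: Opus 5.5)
Your overall route is the same as the paper's. You descend $X$ and the $x_n$ to a model $X_0$ over a countable field $K_0$, diagonalize against the countably many proper closed subsets $Z_1,Z_2,\dots$ of $X_0$, and conclude from the fact that the Zariski closure in $X$ of a set of points coming from $X_0(K_0)$ has the form $W\times_{K_0}K$ with $W\subseteq X_0$ closed. The paper simply asserts this last fact, as ``$Y$ is also defined over $k$''. Your tail-density step, which lets you take the $n_k$ strictly increasing, supplies a detail the paper skips. No Noetherian stabilization is needed there, since $X$ is the union of $\overline{\{x_m\}_{m<N}}$ and the closure of the $N$-th tail for every $N$.

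The justification you give for the descent step, which you rightly call the crux, does not work. You claim that the image under $\pi\colon X\to X_0$ of a proper closed subset lies in a proper closed subset, and this is false. Take $K_0=\QQ$, $K=\QQ(t)$ and $X_0=\mathbb{A}^1_{\QQ}$: the closed point $\{x=t\}$ of $\mathbb{A}^1_K$ maps to the generic point of $X_0$, because $t$ is transcendental. No dimension count is available, since $\pi$ is not of finite type. Galois-stability is also irrelevant, since $K/K_0$ is in general transcendental.

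What is true, and all you need, is the statement for $K_0$-rational points, and it follows from linear independence. On an affine chart $\mathrm{Spec}\,R_0$ of $X_0$, write $f\in R_0\otimes_{K_0}K$ as $\sum_i f_i\otimes c_i$ with the $c_i\in K$ linearly independent over $K_0$. If $f$ vanishes at the point coming from some $p\in X_0(K_0)$, then $\sum_i f_i(p)c_i=0$ with all $f_i(p)\in K_0$, so each $f_i(p)=0$. Hence the functions vanishing on a set $P$ of such points form $I_0\otimes_{K_0}K$, where $I_0$ is the ideal of $W=\overline{\pi(P)}$; that is, $\overline{P}=W\times_{K_0}K$.

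Now apply this to the infinitely many $y_k$ lying in a proper closed subset $Z\subsetneq X$. It gives $W\times_{K_0}K\subseteq Z$, so $W\subsetneq X_0$ is a proper closed subset containing infinitely many $\pi(y_k)$. This contradicts your choice $\pi(y_k)\notin Z_1\cup\dots\cup Z_k$. With this replacement your argument is complete.
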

\begin{proof}
When $K$ is countable, we can use the same argument as in \cite[Lemma 4.1]{zbMATH01192411}. Namely, the set of all closed subvarieties of $X$ is countable. Arrange all proper closed subvarieties of $X$ as $Z_{1},Z_{2},\dots$. For each $k\geq1$, take $n_{k}\geq1$ such that $x_{n_{k}}$ does not belong to the union $\cup_{i=1}^{k}Z_{i}$. This can be done since $\{x_{n}\}_{n\geq1}$ is Zariski dense in $X$ and $Z_{i}\subsetneq X$. Then the subsequence  $\{x_{n_{k}}\}_{k\geq1}$ is generic in $X$.

Now we consider the general case. The variety $X$ and all the points $x_{n}(n\geq1)$ are defined over a countably generated field $k\subseteq K$. In particular, $k$ is countable. Let $X=(X_{0})\times_{k}K$, where $X_{0}$ is a $k$-variety. Let $x_{0,n}$ be the image of $x_{n}$ in $X_{0}$ (under the natural projection). By our assumption, $x_{0,n}\in X_{0}(k)$ for all $n\geq1$. Since $\{x_{n}\}_{n\geq1}$ is Zariski dense in $X$, $\{x_{0,n}\}_{n\geq1}$ is also Zariski dense in $X_{0}$. By the previous paragraph, $\{x_{0,n}\}_{n\geq1}$ has a generic subsequence $\{x_{0,n_{k}}\}_{k\geq1}$. We claim that the subsequence $\{x_{n_{k}}\}_{k\geq1}$ is generic in $X$. In fact, for any infinite subsequence $\{y_{n}\}_{n\geq1}$ of $\{x_{n_{k}}\}_{k\geq1}$, let $Y$ be the Zariski closure of $\cup_{n\geq1}\{y_{n}\}$ in $X$. Then $Y$ is also defined over $k$. Let $y_{0,n}$ be the image of $y_{n}$ in $X_{0}$. Then $\{y_{0,n}\}_{n\geq1}$ is an infinite subsequence of $\{x_{0,n_{k}}\}_{k\geq1}$, which is Zariski dense in $X_{0}$ by our assumption. This implies that the image of $Y$ in $X_{0}$ is exactly $X_{0}$. Thus we have $Y=(X_{0})\times_{k}K=X$ and $\{y_{n}\}_{n\geq1}$ is Zariski dense in $X$.
\end{proof}

\begin{lemma}\label{real}
Let $X$ be a variety over a field $K$. If there is a real valued function $h\colon X(K)\to\RR$ such that for any nonempty Zariski open subset $U$ of $X$, the restriction $h\big|_{U(K)}$ is unbounded from the above. Then there is a generic sequence $\{x_{n}\}_{n\geq1}\subseteq X(K)$ in $X$ with $h(x_{n})\to+\infty$ as $n\to\infty$.
\end{lemma}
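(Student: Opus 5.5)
The plan is to build the generic sequence by a countable diagonal argument, as in the proof of Lemma \ref{generic}, with one extra step to force $h\to+\infty$. First suppose $K$ is countable. Then the set of proper closed subvarieties of $X$ is countable; enumerate them as $Z_{1},Z_{2},\dots$. For each $k\geq1$ set $U_{k}\colonequals X\backslash(Z_{1}\cup\dots\cup Z_{k})$. This is a nonempty Zariski open subset, since $X$ is irreducible and a finite union of proper closed subsets is proper. By hypothesis $h|_{U_{k}(K)}$ is unbounded above, so we can pick $x_{k}\in U_{k}(K)$ with $h(x_{k})>k$. Clearly $h(x_{k})\to+\infty$. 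For genericity, take any infinite subsequence and any proper closed subset $Z\subsetneq X$. Each irreducible component of $Z$ equals some $Z_{i}$, so $Z\subseteq Z_{1}\cup\dots\cup Z_{m}$ for some $m$. Hence every $x_{k}$ with $k\geq m$ lies outside $Z$, and the subsequence cannot be contained in $Z$. So every infinite subsequence is Zariski dense.

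For general $K$ the set of closed subvarieties may be uncountable, so I would reduce to the countable case as in Lemma \ref{generic}. The plan has three steps.

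\textbf{Step 1: an unbounded, Zariski dense sequence.} Since $h$ is unbounded on $X(K)$ itself, pick $y_{1}$ with $h(y_{1})>1$. Having chosen $y_{1},\dots,y_{k}$, let $W_{k}$ be the Zariski closure of $\{y_{1},\dots,y_{k}\}$. This is a finite set of points, hence proper when $\dim X>0$. Apply the hypothesis to $X\backslash W_{k}$ to pick $y_{k+1}\notin W_{k}$ with $h(y_{k+1})>k+1$. I then argue that the Zariski closure $Y$ of $\{y_{k}\}$ is all of $X$. If $Y$ were a proper closed subset, then $X\backslash Y$ would be a nonempty open set on which $h$ is still unbounded, but nothing in the construction forces any $y_{k}$ to land there.

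\textbf{Step 2: fix the density.} The repair is to interleave. Enlarge the sequence so that it is Zariski dense while $h$ still tends to infinity. Since $X(K)$ is dense in $X$ (otherwise $h$ on the complement of its closure would be defined on an empty set, contradicting unboundedness), each open set of a countable dense family contains points of large $h$. Concretely: take a countable subfield $k_{0}\subseteq K$ over which $X$ is defined, with $X=X_{0}\times_{k_{0}}K$. The proper closed subvarieties of $X_{0}$ form a countable family $Z_{0,1},Z_{0,2},\dots$. Choose $x_{k}\in X(K)$ avoiding $(Z_{0,1}\cup\dots\cup Z_{0,k})_{K}$ with $h(x_{k})>k$. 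Then the images of the $x_{k}$ in $X_{0}$ eventually avoid every proper closed subset of $X_{0}$.

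\textbf{Step 3: transfer back to $X$.} This transfer is the expected main obstacle. Let $k\subseteq K$ be the countable field generated by $k_{0}$ and the coordinates of all the $x_{k}$, and enumerate the proper closed subvarieties of $X_{k}$. Choosing the points inductively while enlarging $k$ is circular, so instead I would run the countable-case diagonal argument on the countable field $k$ with the restricted function $h|_{X(k)}$. This requires that $h$ be unbounded on $U(k)$ for every nonempty open $U$ defined over $k$, which need not hold. The cleaner route is to apply Lemma \ref{generic} directly. The sequence from Step 2 has $h(x_{k})\to\infty$. Its subsequences are Zariski dense in $X$, because any proper closed subset of $X$ is contained in the base change of a proper closed subset of $X_{0}$: take the closure of its image under $X\to X_{0}$, which is proper by dimension count. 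Therefore the sequence is already generic, every subsequence still has $h\to\infty$, and the lemma follows.
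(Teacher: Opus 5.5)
Your countable case is correct, and it is shorter than the paper's argument; the paper never splits off the countable case. The gap is in the general case, at Step 3. The claim that every proper closed subset $W\subsetneq X$ lies in $(W_0)_K$ for some proper closed $W_0\subsetneq X_0$ is false.

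The projection $p\colon X=X_0\times_{k_0}K\to X_0$ is not of finite type, and it can send a closed point of $X$ to the generic point of $X_0$. So $\overline{p(W)}$ may be all of $X_0$, and no dimension count applies. Here is a concrete instance.

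\begin{itemize}
\item Let $X=\mathbb{A}^2_{\CC}$, $k_0=\QQ$, and $h(x,y)=|y|$. This $h$ is unbounded on every nonempty open set, since such a set contains all but finitely many points of some vertical line.
\item Fix $\tau\in\CC$ transcendental and pick $\alpha_k\in\CC$ with $|\alpha_k|>k$ and $\tau,\alpha_k$ algebraically independent over $\QQ$.
\item Then $x_k=(\tau,\alpha_k)$ satisfies no nonzero polynomial over $\QQ$. Hence it avoids $(Z_{0,1}\cup\dots\cup Z_{0,k})_{\CC}$ and has $h(x_k)>k$, so it is an admissible choice in your Step 2.
\item But every $x_k$ lies on the line $\{x=\tau\}$, so the resulting sequence is not even Zariski dense.
\end{itemize}

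So Step 2 does not produce a generic sequence, and the justification in Step 3 fails.

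The reduction in Lemma \ref{generic} works because there the countable field is chosen \emph{after} the given sequence. Every point is then rational over it, and the Zariski closure of any subsequence descends to it. In your Step 2 the field $k_0$ is fixed before the points are chosen, so that descent is lost.

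The field enlargement you dismissed as circular can be made to work by dovetailing:
\begin{itemize}
\item Suppose countable fields $k_0\subseteq\dots\subseteq k_{n-1}\subseteq K$ are fixed, each with an enumeration of the proper closed subsets of $X_{k_i}$.
\item Choose $x_n$ with $h(x_n)>n$ outside the base changes to $K$ of the first $n$ proper closed subsets of $X_{k_i}$, for every $i<n$.
\item Let $k_n$ be $k_{n-1}$ with the coordinates of $x_n$ adjoined.
\end{itemize}
All $x_n$ are then rational over the countable field $k_\infty=\bigcup_n k_n$. As in Lemma \ref{generic}, the closure of any subsequence is defined over $k_\infty$, hence over some $k_i$ by finite generation, and your diagonal argument then gives genericity.

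Alternatively, follow the paper. Show by induction on $l$ that every nonempty open $U$ contains a sequence with $h\to+\infty$ whose Zariski closure has dimension at least $l$; the induction uses that infinitely many distinct irreducible closed subsets of dimension $\ge l$ have closure of dimension $\ge l+1$. Take $l=\dim X$ to get a Zariski dense sequence, then extract a generic subsequence by Lemma \ref{generic}.
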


\begin{proof}
For convenience, we call the function $h$ in the lemma ``height''. We use induction to prove the following claim:

\emph{For any nonempty (Zariski) open subset $U$ of $X$ and any integer $1\leq k\leq\dim X$, there is a sequence of points $\{y_{n}\}_{n\geq1}\subseteq U(K)$ such that $h(y_{n})\to+\infty$ as $n\to\infty$, and the Zariski closure of $\cup_{n\geq1}\{y_{n}\}$ in $X$ is of dimension at least $k$.}\medskip

In fact, by our hypothesis, for any nonempty open subset $U\subseteq X$ there is a sequence of points in $U(K)$ with height tending to infinity. This sequence satisfies the condition above for $k=1$. Now assume that the claim is true for $l\,(1\leq l\leq\dim X-1)$. Then for any nonempty open subset $U\subseteq X$, take a sequence $\{y_{n,0}\}_{n\geq1}\subseteq U(K)$ with height tending to infinity and $\dim\ol{\cup_{n\geq1}\{y_{n,0}\}}^{\rm zar}\geq l$. Set $Z_{0}=\ol{\cup_{n\geq1}\{y_{n,0}\}}^{\rm zar}$. After replacing $Z_{0}$ by one of its irreducible components of maximal dimension and replacing $\{y_{n,0}\}_{n\geq1}$ by a subsequence, we may assume that $Z_{0}$ is irreducible. Now set $U_{1}=U\backslash Z_{0}$.

If $U_{1}$ is empty, the sequence $\{y_{n,0}\}_{n\geq1}$ satisfies the conditions above for all $1\leq k\leq\dim X$. Now assume $U_{1}$ is a nonempty open subset of $X$. Take a sequence $\{y_{n,1}\}_{n\geq1}\subseteq U_{1}(K)$ with height tending to infinity and $\dim\ol{\cup_{n\geq1}\{y_{n,1}\}}^{\rm zar}\geq l$. Set $Z_{1}=\ol{\cup_{n\geq1}\{y_{n,1}\}}^{\rm zar}$. Similarly as above, we may assume that $Z_{1}$ is irreducible. Set $U_{2}=U_{1}\backslash Z_{1}$.

If $U_{2}$ is empty, the sequence $\{y_{n,1}\}_{n\geq1}$ satisfies the conditions above for all $1\leq k\leq\dim X$. Now if it is nonempty, then we can similarly construct a sequence $\{y_{n,2}\}_{n\geq1}\subseteq U_{2}(K)$ with height tending to infinity such that $Z_{2}=\ol{\cup_{n\geq1}\{y_{n,2}\}}^{\rm zar}$ is irreducible of dimension at least $l$. Set  $U_{3}=U_{2}\backslash Z_{2}$.

Repeat this process again and again, if $U_{i}$ is empty for some $i\geq1$, then the sequence $\{y_{n,i-1}\}_{n\geq1}$ satisfies the conditions above for all $1\leq k\leq\dim X$. Now assume that $U_{i}\neq\emptyset$ for all $i\geq1$. Then for all $i\geq1$ we have a sequence $\{y_{n,i}\}_{n\geq1}\subseteq U_{i}(K)$ with height tending to infinity such that $Z_{i}=\ol{\cup_{n\geq1}\{y_{n,i}\}}^{\rm zar}$ is irreducible of dimension at least $l$. Moreover, we have $U_{i+1}=U_{i}\backslash Z_{i}$.

Now for each $i\geq1$, by Lemma \ref{generic}, after taking a subsequence we may further assume that $\{y_{n,i}\}_{n\geq1}$ is generic in $Z_{i}$. By our assumption, we can further replace $\{y_{n,i}\}_{n\geq1}$ by a subsequence to make sure that $h(y_{n,i})>n+i$ for all $n\geq1$. This process does not change the fact that $\{y_{n,i}\}_{n\geq1}$ is generic in $Z_{i}$.

Consider the union $\cup_{i,n\geq1}\{y_{n,i}\}$, which is a countable subset of $U$. Now let $\{y_{n}\}\subseteq U(K)$ be a sequence such that $y_{n}\neq y_{m}$ if $n\neq m$, and the union $\cup_{n\geq1}\{y_{n}\}$ is exactly $\cup_{i,n\geq1}\{y_{n,i}\}$. Then $h(y_{n,i})>n+i$ implies that $h(y_{n})$ tends to infinity as $n\to\infty$. Moreover, by construction, the set $\ol{\cup_{n\geq1}\{y_{n}\}}^{\rm zar}$ contains the union $\cup_{i\geq1}Z_{i}$, which is an infinite union of distinct closed subsets of $X$ of dimension at least $l$, and which is therefore of dimension at least $l+1$. Hence the sequence $\{y_{n}\}$ satisfies the conditions in the claim with $k=l+1$. The claim follows by induction.

Now take $U=X,k=\dim X$ in the claim and apply Lemma \ref{generic} again; we then get the desired generic sequence $\{x_{n}\}_{n\geq1}$.
\end{proof}

\begin{proof}[Proof of Theorem \ref{main}]
Choose any ample effective Cartier divisor $D$ on $A$. We only need to prove that for any $\varepsilon>0$, there is a constant $c>0$ such that the inequality $h_{Y}\leq\varepsilon h_{D}+c$ holds for a general point on $A(K)$.

Assume to the contrary. Then there exists an $\varepsilon>0$ such that the function $h_{Y}-\varepsilon h_{D}$ is unbounded from the above on any $U(K)$ (where $U$ is a nonempty Zariski open subset of $A$). By Lemma \ref{real}, there is a generic sequence $\{s_{n}\}_{n\geq1}\subseteq A(K)$ in $A$ with $h_{Y}(s_{n})-\varepsilon h_{D}(s_{n})\to+\infty$ as $n\to\infty$. In particular, we have $h_{Y}(s_{n})\to+\infty$.

On the other hand, applying Proposition \ref{V}(3) to $T={\rm Spec}\,\CC$ and $D,Y,\{s_{n}\}_{n\geq1}$, we see that $h_{Y}(s_{n})\leq\varepsilon h_{D}(s_{n})$ for $n$ large enough, which contradicts our assumption.
\end{proof}
Corollary \ref{main'} is a direct consequence of Theorem \ref{main} and the Mordell--Lang theorem. We omit the proof here.

\begin{proof}[Proof of Theorem \ref{main1}]
First, we reduce to the case when $D$ is ample, which is the essential case. In fact, let $L=\CO_{A}(D)$ and let $G$ be the identity component of the subset $\{x\in A\big|\,t_{x}^{*}L\cong L\}$ (equipped with reduced induced subscheme structure). Then $G$ is an abelian subvariety of $A$. Let $A'=A/G$ and $q\colon A\to A/G$ be the quotient map. By definition, the restriction of $L$ to any fibre of $q$ is invariant under translation by points in $G$, and thus numerically trivial. Since $D$ is effective, the restriction of $D$ to general fibre of $q$ is still effective. Combining numerical triviality with effectivity, we see that $D$ restricts to zero on general fibre of $q$. In particular, the divisor $D$ is exactly the pullback of an effective divisor $D'$ on $A'$. Moreover, by our definition of $G$, the set $\{a'\in A'\big|\,t_{a'}^{*}D'=D'\}$ is finite. This implies that $D'$ is ample on $A'$. As $D$ is reduced, $D'$ is also reduced. By projection formula, we have $ N_{D,S}^{(1)}(s)=N_{D',S}^{(1)}(p(s))+O(1)$ and $h_{D}(s)=h_{D'}(p(s))+O(1)$ for all $s\in A(K)$. Therefore the theorem for $(A',D')$ implies the theorem for $(A,D)$. This reduces the problem to the ample case.

Now assume $D$ is ample. Let $D^{\rm sym}=\frac{1}{2}(D+[-1]^{*}D)$ be the symmetrization of $D$, which is also ample. Let $\hat{h}_{D},\hat{h}_{D^{\rm sym}}$ be the canonical height function associated to $D,D^{\rm sym}$ respectively. Since $D^{\rm sym}$ is ample and $D-D^{\rm sym}$ is anti-symmetric, we have $h_{D}=\hat{h}_{D}+O(1)=\hat{h}_{D^{\rm sym}}+O(1+\sqrt{\hat{h}_{D^{\rm sym}}})$. Hence we only need to prove the theorem with $h_{D}$ replaced by $\hat{h}_{D^{\rm sym}}$.

Assume to the contrary. Then there is an $\varepsilon>0$ such that the function $(1-\varepsilon)\hat{h}_{D^{\rm sym}}-N_{D,S}^{(1)}$ is unbounded from the above on any $U(K)$ (where $U$ is a nonempty Zariski open subset of $A$). By Lemma \ref{real}, there is a generic sequence $\{s_{n}\}_{n\geq1}\subseteq A(K)$ in $A$ with $(1-\varepsilon)\hat{h}_{D^{\rm sym}}(s_{n})-N_{D,S}^{(1)}(s_{n})\to+\infty$ as $n\to\infty$. In particular, we have $\hat{h}_{D^{\rm sym}}(s_{n})\to+\infty$.

On the other hand, as $S$ is finite, the boundary $\partial(B(\CC)\backslash S)=\partial S=S$ has zero measure. Hence applying Proposition \ref{V}(2) to $T={\rm Spec}\,\CC$ and $D,Y,\{s_{n}\}_{n\geq1},U=B(\CC)\backslash S$, we see that there are infinitely many $n\geq1$ such that $N_{D,S}^{(1)}(s_{n})\geq (1-\varepsilon)\hat{h}_{D^{\rm sym},U}(s_{n})=(1-\varepsilon)\hat{h}_{D^{\rm sym}}(s_{n})$, which contradicts our assumption.
\end{proof}

\begin{remark}\label{jiaqiang}
The above proof only uses the fact that $\partial S=S$ has measure 0, and thus Theorem \ref{main1} is also valid for all (Euclidean) closed subsets $S$ of $B(\CC)$ of measure 0.

More generally, the theorem also holds for any (Euclidean) closed subset $S\subseteq B(\CC)$ of sufficiently small $\mu_{B}$-measure (depending only on $\varepsilon$). Here we do not need $\mu_{B}(\partial S)=0$. In fact, by Proposition \ref{jiben-}, the condition $\mu_{B}(S)<<1$ implies that $\hat{h}_{D^{\rm sym},S}\leq \frac{\varepsilon}{2}\hat{h}_{D^{\rm sym}}(s)$ for all $s\in A(K)$. Fix such a subset $S\subseteq B(\CC)$ which is (Euclidean) closed. Similarly to the above proof, we only need to show the inequality on any generic sequence $\{s_{n}\}_{n\geq1}$ in $A(K)$ with $h_{D^{\rm sym}}(s_{n})\to\infty$. By Proposition \ref{V}(2), we have $N_{D,S}^{(1)}(s_{n})\geq (1-\frac{\varepsilon}{2})\hat{h}_{D^{\rm sym},U}(s_{n})$ for $n$ large enough. Here $U=B(\CC)\backslash S$. We conclude that the inequality $N_{D,S}^{(1)}(s_{n})\geq (1-\frac{\varepsilon}{2})^{2}\hat{h}_{D^{\rm sym}}(s_{n})\geq (1-\varepsilon)\hat{h}_{D^{\rm sym}}(s_{n})$ holds for $n$ large enough.

Similar remarks apply to Proposition \ref{buium} below; we will only prove the original version.
\end{remark}
\medskip

\begin{proof}[Proof of Proposition \ref{buium}]
Using the same argument as in the proof of Theorem \ref{main1}, we may assume that $D$ is ample.

Assume to the contrary. Then for each $n\geq1$, there is a point $x_{n}\in (A\backslash{\rm Supp}\,D)(K)$ such that $N^{(1)}_{D,S}(x_{n})< \frac{1}{n}h_{D}(x_{n})-n$. In particular, we have $h_{D}(x_{n})\to\infty$ as $n\to\infty$.

Let $X$ be the Zariski closure of $\cup_{n\geq1}\{x_{n}\}$ in $A$. In particular, the set $X(K)$ is Zariski dense in $X$. After replacing $X$ by one of its irreducible components and passing to a subsequence, we may assume that $X$ is irreducible. By Lemma \ref{generic}, we can further assume that $\{x_{n}\}_{n\geq1}$ is generic in $X$. Let
$$
A'\colonequals \{a\in A\big|\,a+X\subseteq X\}_{\rm red}^{0}.
$$
This is an abelian subvariety of $A$. Let $X/A'$ be the image of $X$ under the mapping $A\to A/A'$. By Ueno's fibration theorem, $X/A'$ is of general type.

Choose an isogeny $i\colon A'\times(A/A')\to A$ such that the composition of the inclusion $A'\hookrightarrow A'\times(A/A')\colon a'\mapsto (a',0)$ with $i$ is equal to the multiplication $a'\mapsto ma'$ by some integer $m$ on $A'$. Let $X'$ be an irreducible component of the inverse image $i^{-1}(X)\subseteq A'\times(A/A')$. Since $i$ is finite \'{e}tale, so is $X'\to X$. Moreover, we have $X'+(A'\times\{0\})\subseteq i^{-1}(X)$, and thus $X'+ (A'\times\{0\})=X'$ in $A'\times(A/A')$. Let $X'/A'$ be the image of $X'$ under the projection $A'\times(A/A')\to A/A'$, then we have $X'=A'\times(X'/A')$.

Now applying (geometric) Chevalley--Weil theorem (see \cite[Lemma 2.1]{arXiv:2507.21468}) to $i\big|_{X'}\colon X'\to X$, there is a finite field extension $K'/K$ such that $X(K)\subseteq i(X'(K'))$. Thus we can lift all points $x_{n}\in X(K)$ to $x'_{n}\in X'(K')$. Since $X(K)$ is Zariski dense in $X$, we have $X'(K')$ is Zariski dense in $X'$, and thus $(X'/A')(K')$ is also Zariski dense in $X'/A'$. As $X'/A'$ is a subvariety of general type of an abelian variety, the resolved (geometric) Bombieri--Lang conjecture (see \cite[Theorem 1.3]{arXiv:2511.17010}) for $X'/A'$ implies that the normalization of $X'/A'$ is isomorphic to a constant variety $T_{K}$, where $T$ is a projective variety over $\CC$. Moreover, \cite[Theorem 1.3]{arXiv:2511.17010} also concludes that the set $T_{K}(K')\backslash T(\CC)$ is not Zariski dense in $T_{K}(K')$.

Consider the composition $A'\times T_{K}\to A'\times (X'/A')=X'\to X$, which is generically finite. Since $\{x_{n}\}_{n\geq1}$ is generic in $X$, the sequence $\{x'_{n}\}_{n\geq1}$ is generic in $X'$. Hence for $n$ large enough, the point $x'_{n}$ has a unique lift $(s_{n},t_{n})$ on $A'(K')\times T_{K}(K')$, where $t_{n}$ belongs to $T(\CC)$. Moreover, $\{(s_{n},t_{n})\}_{n\geq1}$ is generic in $A'\times T_{K}$.

Since $D$ is an ample effective Cartier divisor on $A$, the pullback $(i^{*}D)\big|_{X'}$ on $X'=A'\times(X'/A')$ is also ample effective. Let $D'$ be the pullback of $(i^{*}D)\big|_{X'}$ to $A'\times T_{K}$, which is relatively ample over $T_{K}$. By the classical theory, the ampleness of an effective divisor on an abelian variety depends only on its support. Therefore the reduced divisor $D'_{\rm red}$ is also relatively ample over $T_{K}$. Let $B'\to B$ be the finite surjective map corresponding to the field extension $K'/K$ and let $S'$ be the inverse image of $S$ in $B'$, which is also finite. Applying Proposition \ref{V}(2) to $\{(s_{n},t_{n})\}_{n\geq1}\subseteq A'(K')\times T(\CC), D'_{\rm red},U=B'(\CC)\backslash S'$, we see that for any $\varepsilon>0$, the inequality
$$
N^{(1)}_{D',S'}((s_{n},t_{n}))=N^{(1)}_{D'_{\rm red},S'}((s_{n},t_{n}))\geq (1-\varepsilon)\hat{h}_{(D'_{\rm red})^{\rm sym}}((s_{n},t_{n}))
$$
holds for $n$ large enough.

By Proposition \ref{bijiao-}, we have $h_{D'_{\rm red}}((s,t))=\hat{h}_{(D'_{\rm red})^{\rm sym}}((s,t))+O\left(1+\sqrt{\hat{h}_{(D'_{\rm red})^{\rm sym}}((s,t))}\right)$ for all $(s,t)\in A'(K')\times T(\CC)$. Thus we also have
\begin{align*}
N^{(1)}_{D',S'}((s_{n},t_{n}))\geq (1-\varepsilon)h_{D'_{\rm red}}((s_{n},t_{n}))
\end{align*}
for $n$ large enough. Choose an integer $N\geq1$ such that $N\cdot D'_{\rm red}\geq D'$. Then the projection formula asserts that
\begin{align*}
N^{(1)}_{D,S}(x_{n})&= N^{(1)}_{D',S'}((s_{n},t_{n}))+O(1)\geq (1-\varepsilon)h_{D'_{\rm red}}((s_{n},t_{n}))+O(1)\\
&\geq (1-\varepsilon)(\frac{1}{N}h_{D'}((s_{n},t_{n}))+O(1))+O(1)\\
&= (1-\varepsilon)(\frac{1}{N}h_{D}(x_{n})+O(1))+O(1)
\end{align*}
for $n$ large enough. Here $O(1)$ are bounded functions. This contradicts our assumption at the beginning when $n$ is large enough.
\end{proof}

\subsection{Complements}\label{4.6}
In this final subsection, we use the main theorems to show that its exceptional set is geometric and hence independent of $K$, as predicted by Vojta's conjecture.

First note that by the Lefschetz principle, the field $\CC$ in the main theorems can be replaced by any field $k$ of characteristic 0. In particular, when $K$ is the function field of some curve $B$ over $k$ and $A$ is an abelian variety defined over $k$, we can deduce the following corollary.
\begin{corollary}\label{lefschetz}
Let $A$ be an abelian variety over a field $k$ of characteristic $0$. Let $L$ be an ample line bundle on $A$ and let $Y$ be a closed subscheme of $A$ of codimension at least $2$. Let $B$ be a smooth projective curve over $k$. Fix $\varepsilon>0$. Then the union in $A_{k(B)}$ of all maps $f\colon B\to A$ (viewed as points in $A(k(B))$) with $f(B)\nsubseteq Y$ and
\begin{align}\label{lef}
\sum\limits_{b\in |B|}{\rm ord}_{b}f^{*}Y>\varepsilon\deg_{B}f^{*}L
\end{align}
is not Zariski dense.
\end{corollary}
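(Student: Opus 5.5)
The plan is to reduce Corollary \ref{lefschetz} to Theorem \ref{main}, using the Lefschetz principle to replace $k$ by $\CC$. The reduction goes through in three steps.

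\textbf{Step 1 (reduce to $\CC$).} All the data $A, L, Y, B$ are of finite type, so they are defined over a finitely generated subfield $k_{0}\subseteq k$, which embeds into $\CC$. Each map $f\colon B\to A$ is itself defined over some finitely generated extension of $k_{0}$, and such an extension also embeds into $\CC$ (extending the embedding of $k_{0}$). Both sides of (\ref{lef}) are unchanged under base field extension: orders are computed after base change, and a closed point $b$ of degree $[k(b)\colon k]$ splits into that many geometric points. So the inequality can be checked after base change to $\CC$.

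The Zariski closure of the union of the relevant points is compatible with base change. Concretely, if the union were Zariski dense over $k$, then after base change to $\ol{k}$ and then to a large algebraically closed field the corresponding set stays dense. Here I would take the union over all $\ol{k}$-maps, which contains the $k$-maps. It is cleanest to work directly with an algebraically closed $k$ of cardinality at most $|\CC|$ (enlarging $k$ only enlarges the set of maps), and to embed $k\hookrightarrow\CC$. This reduction is routine bookkeeping.

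\textbf{Step 2 (interpret as rational points).} Over $k=\CC$, put $K=\CC(B)$. A map $f\colon B\to A$ is the same as a point $x_{f}\in A_{K}(K)$ of the constant abelian variety $A_{K}$. Using the constant model $A\times B$, together with the model $\CY=Y\times B$ and the line bundle $L$ pulled back to $A\times B$, the height functions of \S\ref{1.2} are exactly
$$
h_{Y_{K}}(x_{f})=\sum_{b}{\rm ord}_{b}f^{*}Y, \qquad h_{L_{K}}(x_{f})=\deg_{B}f^{*}L .
$$
Moreover $Y_{K}$ still has codimension $\geq 2$ in $A_{K}$, and $L_{K}$ is ample.

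\textbf{Step 3 (apply Theorem \ref{main}).} Theorem \ref{main}, applied with $\varepsilon/2$, gives a constant $c$ and a proper closed subset $Z\subsetneq A_{K}$ such that $h_{Y}\leq\frac{\varepsilon}{2}h_{L}+c$ on $(A_{K}\backslash Z)(K)$. Any $f$ satisfying (\ref{lef}) and lying outside $Z$ must then have $\frac{\varepsilon}{2}\deg f^{*}L<c$, i.e. bounded degree.

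\textbf{Main obstacle.} The bounded-degree maps are not obviously few, since constant maps and translates are infinite in number. This is the step where the argument can fail, and I would handle it as follows. Maps $B\to A$ of bounded degree with respect to $L$ form a finite-type scheme (an open piece of a Hom scheme), and this parameter space has only finitely many components. For a constant map $f\equiv a$, both sides of (\ref{lef}) vanish: $\deg f^{*}L=0$, and ${\rm ord}_{b}f^{*}Y=0$ because $a\notin Y$ is forced by $f(B)\nsubseteq Y$. So constant maps never satisfy (\ref{lef}).

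A non-constant map $f$ has $\deg f^{*}L\geq 1$. Write any map as $f=f_{0}+a$ with $f_{0}(b_{0})=0$. Then $\deg f^{*}L$ is independent of $a$, since it is a numerical quantity, and the $f_{0}$ of bounded degree form finitely many discrete families. In fact, homomorphisms ${\rm Jac}(B)\to A$ are discrete, so there are only finitely many $f_{0}$. For each such $f_{0}$, the union of the points $f_{0}+a$ for $a\in A(k)$ is contained in $f_{0}+A$. That is a translate of the constant part, which is all of $A_{K}$, so density has not yet been excluded.

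I would therefore refine the argument. For a fixed non-constant $f_{0}$, the condition (\ref{lef}) on $a$ defines a constructible subset of $A$, by semicontinuity of intersection multiplicities. Condition (\ref{lef}) forces $f_{0}(B)+a$ to meet $Y$. The set of $a$ with $(f_{0}(B)+a)\cap Y\neq\emptyset$ is $Y-f_{0}(B)$, which has dimension $\leq\dim Y+1<\dim A$. Hence these points lie in a proper closed subset, and combined with $Z$ this gives non-density.
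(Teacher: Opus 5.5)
Your proposal is correct and follows the paper's proof essentially step by step. You apply Theorem \ref{main} with $\varepsilon/2$ to force bounded degree, observe that bounded-degree maps lie in finitely many cosets $f_0+A(k)$, and exclude each coset using the proper closed set $Y-f_0(B)$ of translates meeting $Y$. The only difference is how you get finiteness of the cosets: the paper uses Lang--N\'eron and positive definiteness of the N\'eron--Tate height on $A(k(B))/A(k)$, while you use finite type of the bounded-degree Hom scheme. Note that discreteness of $\mathrm{Hom}(\mathrm{Jac}(B),A)$ alone would not give finiteness, so the finite-type input is what actually does the work in your version.
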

\begin{proof}
By Lefschetz principle and Theorem \ref{main}, the inequality
$$
\sum\limits_{b\in |B|}{\rm ord}_{b}f^{*}Y\leq\frac{\varepsilon}{2}\deg_{B}f^{*}L+O(1)
$$
holds for all $f\in A(k(B))$ outside a proper closed subset of $A_{k(B)}$. It suffices to cancel the $O(1)$ term. When $\deg_{B}f^{*}L$ is large enough, the right hand side above is less than $\varepsilon\deg_{B}f^{*}L$, and in this case $f$ doesn't satisfy (\ref{lef}). Now we consider all $f\colon B\to A$ with $\deg_{B}f^{*}L\leq C$ for some $C>0$. We claim that all these $f$ lie in a finite union of cosets of the form $f_{0}+A(k)\,(f_{0}\in A(k(B)))$. In fact, by Lang--N\'{e}ron theorem, the group $A(k(B))/A(k)$ is finitely generated, and the N\'{e}ron--Tate height function associated to an ample symmetric line bundle on $A_{k(B)}$ induces a positive definite quadratic form on $(A(k(B))/A(k))\otimes_{\BZ}\RR$. In particular, the elements in $A(k(B))/A(k)$ with bounded N\'{e}ron--Tate height is finite. Since $\deg_{B}f^{*}L\leq C$ implies that the height of $f\in A(k(B))$ with respect to $L$ (which is ample) is bounded, so is its image in $A(k(B))/A(k)$. This proves our claim above.

Now for any $f_{0}\colon B\to A$, the set of all points $a\in A$ such that $(f_{0}(B)+a)\cap Y\neq\emptyset$ is proper closed in $A$. In fact, this set is exactly the image of the map $B\times Y\to A$ sending $(b,y)$ to $y-f_{0}(b)$, which is closed of codimension at least $1$ in $A$. For all $a\in A(k)$ outside this proper closed subset, we have $(f_{0}(B)+a)$ disjoints from $Y$, and thus  $\sum_{b\in |B|}{\rm ord}_{b}(f_{0}+a)^{*}Y=0\leq \varepsilon\deg_{B}(f_{0}+a)^{*}L$. This means that the set of points $f\in f_{0}+A(k)$ satisfying (\ref{lef}) is not Zariski dense in $A_{k(B)}$. Combining this with the previous paragraph, we get the corollary.
\end{proof}
Now we state the main result in this subsection.
\begin{theorem}\label{kwuguan}
Let $B$ be a smooth projective geometrically integral curve over a field $k$ of characteristic $0$ and let $K=k(B)$ be its function field. Let $A$ be an abelian variety over $K$. Let $L$ be an ample line bundle on $A$ and let $Y\subseteq A$ be a closed subscheme of $A$ of codimension $>1$. Then for any $\varepsilon>0$, there is a proper Zariski closed subset $Z_{\varepsilon}$ of $A$ such that for any finite extension $K'/K$, the inequality
$$
h_{Y}(x)\leq\varepsilon h_{L}(x)+O_{\varepsilon,K'}(1)
$$
holds for all $x\in (A\backslash Z_{\varepsilon})(K')$. Here $O_{\varepsilon,K'}(1)$ is a bounded term depending on $\varepsilon$ and $K'$.
\end{theorem}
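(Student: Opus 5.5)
The plan is to build the exceptional set $Z_\varepsilon$ by noetherian induction and geometric arguments, so that it does not depend on $K'$. Theorem \ref{main}, applied over each $K'$, gives an exceptional set, but that set could a priori grow with $K'$.

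\textbf{Step 1: reduction to $\bar k$.} By the Lefschetz principle we may take $k=\CC$ or $k=\ol{k}$; base changing $k$ to $\ol{k}$ only enlarges the set of rational points, so we may assume $k$ is algebraically closed.

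\textbf{Step 2: candidate exceptional locus.} We let $Z_\varepsilon$ be the Zariski closure of the union, over \emph{all} finite extensions $K'/K$, of the points $x\in A(K')$ with $h_Y(x)>\varepsilon h_L(x)+c$. The difficulty is that $c$ depends on $K'$. Instead we define $Z_\varepsilon$ geometrically. It is the union of the translates $a+H$ ($H$ a proper abelian subvariety, $a\in A(\ol K)$) that are "degenerate" in the sense that $(a+H)\cap Y$ has codimension $\le1$ in $a+H$, together with the loci arising inductively from Corollary \ref{lefschetz}-type constant parts. Concretely, first use the reduction of \S\ref{4.1}: after a finite extension and an isogeny, write $A\sim A_1\times A_0$ with $A_0$ constant ($=A^{(K/\CC)}$ up to isogeny) and $A_1$ traceless. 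Since $\{a+H\}$ with $(a+H)\cap Y$ of codimension $\le1$ is parametrized by finitely many bounded families (Hilbert scheme of $Y$ intersected with translates), its closure is a proper closed subset defined over a finite extension of $K$. We then take its Galois-stable closure, which descends to $K$.

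\textbf{Step 3: verification by contradiction using Proposition \ref{V}(3).} Suppose that for some $K'$ the inequality fails on a Zariski-dense set of $x\in(A\backslash Z_\varepsilon)(K')$. By Lemma \ref{real} we can choose a generic sequence in $A_{K'}$ with $h_Y-\varepsilon h_L\to\infty$; this contradicts Theorem \ref{main} applied over $K'$. So the real content lies in the non-dense bad points. Their Zariski closure $W\subsetneq A_{K'}$ is a finite union of irreducible $W_i$, each containing a dense set of bad points. For each $W_i$ we argue as in the proof of Proposition \ref{buium}. Ueno fibration plus the geometric Bombieri--Lang theorem (\cite[Theorem 1.3]{arXiv:2511.17010}) show that $W_i$ is, up to finite étale cover, of the form $A'\times T_{K'}$ with $T$ constant and a generic sequence $(s_n,t_n)$ with $t_n\in T(\CC)$. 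We then apply Proposition \ref{V}(3) to $Y|_{W_i}$ pulled back to $A'\times T_{K'}$ whenever this pullback has codimension $\ge2$ fibrewise. That application contradicts the badness, so every $W_i$ must meet $Y$ in codimension $\le1$ along the translates of $A'$, i.e. $W_i\subseteq Z_\varepsilon$ by the geometric definition. In the case where $A'$ is trivial and $W_i$ is constant, the points lie in $T(\CC)$ and have bounded height, so only finitely many cosets matter; the argument of Corollary \ref{lefschetz} (translating by constant points avoids $Y$ generically) places them in $Z_\varepsilon$ as well or shows the inequality holds.

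\textbf{Main obstacle.} The hardest step is showing that the geometric locus of Step 2 is a \emph{proper} closed subset, independent of $K'$, and that it really captures every $W_i$ produced in Step 3. Two issues have to be handled together here. First, $\varepsilon$ does not enter the geometric definition, and I expect that to be harmless. Second, the possible constant factors $T$ vary with $K'$. I would try first to bound these via the finitely many abelian subvarieties $A'$ (there are countably many, but only finitely many satisfy the degeneracy condition with respect to $Y$, by rigidity). The constant-part issue I would control using the fact that $T(\CC)$-points of $T_{K'}$ come from $T_K$ already.
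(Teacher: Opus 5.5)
\textbf{There is a genuine gap in Step 2, and it is fatal.} The $\varepsilon$-free locus you define is in general Zariski dense, so it cannot serve as $Z_{\varepsilon}$.

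Take $A=E\times E$ and $Y=\{y\}$ a point. Each of the infinitely many elliptic curves $H\subset A$ (for example the graphs of $x\mapsto nx$) has the translate $y+H$. That translate meets $Y$ in codimension $1$, so your union contains infinitely many distinct curves and equals $A$. Abelian subvarieties form a countable, unbounded family. No Hilbert-scheme boundedness or rigidity statement cuts the ``degenerate'' ones down to finitely many.

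Nor is $\varepsilon$ harmless. On $y+H\cong H$ one has $h_{Y}\approx h_{L}/(L\cdot H)$. So $y+H$ can genuinely violate $h_{Y}\le\varepsilon h_{L}+O(1)$ only when $L\cdot H\lesssim 1/\varepsilon$. The exceptional locus must therefore compare the \emph{divisorial} part of $Y$ on a translate with $\varepsilon L$. The paper takes $Z_{\varepsilon}=Z'_{\varepsilon}\cup Y$, where $Z'_{\varepsilon}$ is the closure of the images of closed immersions $i\colon A'\hookrightarrow A_{\ol{K}}$ for which $\frac{\varepsilon}{2}i^{*}L-[i^{*}Y]_{\rm lb}$ is not ample.

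\textbf{The missing idea is how properness of this locus is proved.} It does not come from a finiteness-of-families argument; it is deduced from the GCD inequality itself (Corollary \ref{bl}):
\begin{enumerate}
\item Up to isomorphism, only finitely many $A'$ embed in $A$.
\item For each such $A'$, Lemma \ref{quxian} gives a single auxiliary curve $B$ such that every non-nef divisor on $A'$ has negative degree along some morphism $B\to A'$.
\item If the bad translates were dense, this would produce a Zariski-dense set of maps $f\colon B\to A$ with $h_{Y}(f)>\varepsilon' h_{L}(f)$ for a fixed $\varepsilon'>0$.
\item That contradicts the constant case, Corollary \ref{lefschetz}, where the $O(1)$ term has been removed using Lang--N\'eron and generic translates.
\end{enumerate}

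\textbf{Your Step 3 needs the matching refinement.} After Ueno, Bombieri--Lang and the lift to $g\colon A'\times T'_{K}\to A$, split $g^{-1}(Y)$ into its divisorial part $[g^{*}Y]_{\rm div}$ and a codimension-$\ge 2$ part $Y'$. Proposition \ref{V}(3) bounds $h_{Y'}$ by $\frac{\varepsilon}{2}h_{L}$. If $\frac{\varepsilon}{2}g^{*}L-[g^{*}Y]_{\rm lb}$ were relatively ample over $T'_{K}$, the divisorial part would contribute at most $\frac{\varepsilon}{2}h_{L}+O(1)$, which is a contradiction. Hence that class is non-ample on general fibres, and the component lies in $Z'_{\varepsilon}$.

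As written, you only handle the case with no divisorial part and declare every other component exceptional. That is exactly why your candidate set is too large.
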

We need the following lemma on abelian varieties.
\begin{lemma}\label{quxian}
Let $A$ be an abelian variety over an algebraically closed field $k$ of characteristic $0$. Then there is a smooth projective curve $B$ over $k$ such that for any divisor $D$ on $A$ which is not nef, there exists a non-constant $k$-morphism $f\colon B\to A$ such that $\deg_{B}f^{*}D<0$.
\end{lemma}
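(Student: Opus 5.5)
The plan is to construct a single curve $B$ whose maps to $A$ produce enough curve classes to test nefness. The test is the following: a divisor $D$ on $A$ is nef exactly when it has nonnegative degree against every class in the closed cone of curves. It therefore suffices to find one $B$ for which the classes $f_{*}[B]$, with $f\colon B\to A$ nonconstant, span a convex cone whose closure equals the closure of the cone of curves $\overline{NE}(A)$.

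\textbf{Step 1: linear-algebra description of both cones.} Put $g=\dim A$, fix an ample symmetric $\Theta$, and write $A(\CC)=V/\Lambda$ after the Lefschetz principle. The group $\NS(A)_{\RR}$ is identified with the Rosati-symmetric part of $\End(A)\otimes\RR$. Equivalently, it is the space of Hermitian forms $H_{D}$ on $V$ that are compatible with the rational structure. Under this identification $D$ is nef exactly when $H_{D}$ is positive semidefinite. For a morphism $f\colon B\to A$ with analytic representation $\rho\colon H^{0}(B,\Omega^{1})^{\vee}\to V$, the degree of $f^{*}D$ is the integral of the translation-invariant form $\omega_{D}$ over $B$. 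This integral equals a positive constant times $\operatorname{tr}(\rho P_{B}\rho^{*}H_{D})$, where $P_{B}$ is the positive definite Hodge form on $H^{0}(B,\Omega^{1})^{\vee}$. Hence $\deg f^{*}D=\operatorname{tr}(Q_{f}H_{D})$ with $Q_{f}=\rho P_{B}\rho^{*}$ a positive semidefinite form, and the task becomes producing enough such $Q_{f}$.

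\textbf{Step 2: choice of $B$.} I take $B\subseteq A^{g}$ to be a smooth complete intersection of ample divisors. By Lefschetz, $\Jac(B)\to A^{g}$ is surjective, and by Poincar\'{e} reducibility $A^{g}$ is, up to isogeny, a direct factor of $\Jac(B)$. For $\phi=(\phi_{1},\dots,\phi_{g})\in\End(A)^{g}$, let $f_{\phi}=\sum_{i}\phi_{i}\circ{\rm pr}_{i}|_{B}$. Its $Q_{f_{\phi}}$ has the form $\sum_{i,j}d\phi_{i}\,P_{ij}\,d\phi_{j}^{*}$, where $(P_{ij})$ is the positive definite restriction of $P_{B}$ to the $V^{g}$-part. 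As $\phi$ ranges over $\End(A)^{g}$ and these forms are summed, one obtains all forms $M P M^{*}$ with $M\in\Hom(V^{g},V)$ lying in $\End(A)_{\RR}^{g}$; I would allow rational scalings here, since degrees scale accordingly.

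\textbf{Step 3: duality, the main obstacle.} Let $\mathcal C$ be the closed cone generated by these $Q_{f_{\phi}}$. I need that $\operatorname{tr}(QH_{D})\ge0$ for all $Q\in\mathcal C$ forces $H_{D}\ge0$, for every Rosati-symmetric $H_{D}$. The approach I would try first is to decompose $\End(A)_{\RR}$ into simple factors, which are matrix algebras over $\RR$, $\CC$ or $\BH$, and observe that the Rosati-symmetric elements pair with the cone $\{MPM^{*}\}$ exactly as positive semidefinite matrices pair with themselves under the trace form. Because $g$ copies give matrices $M$ of full rank in each factor, $\mathcal C$ then projects onto the entire positive semidefinite cone of $\NS(A)_{\RR}$. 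This cone is self-dual, which gives the claim. Verifying this projection carefully, including the averaging over the unitary group of each simple factor, is where I expect the real work to be.

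\textbf{Step 4: conclusion.} Granting Step 3, a non-nef $D$ has $\operatorname{tr}(QH_{D})<0$ for some $Q$ in the open generating cone. By density of rational points this $Q$ may be taken as a positive combination of $Q_{f_{\phi}}$ with $\phi\in\End(A)^{g}$, so one of those summands already has $\deg f_{\phi}^{*}D<0$. Such an $f_{\phi}$ is nonconstant, since otherwise the degree would be $0$.
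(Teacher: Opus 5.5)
Your outline is correct, and it takes a genuinely different route from the paper's. The paper works entirely with numerical classes:
\begin{itemize}
\item under $N^{1}(A)_{\RR}\simeq\prod_j\operatorname{Herm}_{n_j}(F_j)$ it identifies $\ol{\mathrm{NE}}(A)$ with the self-dual cone $\CP$;
\item it lets $u\in(\End(A)\otimes\RR)^{\times}$ act on curve classes by $u_{*}Y=uYu^{\dagger}$, whose orbits on $\CP$ are classified by ranks and so are finite in number;
\item it takes $B$ dominating the normalizations of one integral curve chosen from each orbit that contains one;
\item for a $D$-negative curve $Z$ it writes $[Z]=u_{*}c_i$ and approximates $u$ by a rational element.
\end{itemize}
You instead fix one complete-intersection curve, compose it with homomorphisms, and compute degrees through the Hodge form. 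Implicitly this uses only the open orbit.

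The step you left open (Step 3) closes without any averaging. Put $E_{\RR}=\End(A)\otimes\RR$, let $\rho$ be the analytic representation, and define $\psi_D\in\operatorname{Herm}(E_{\RR})$ by $H_D=H_{\Theta}(\rho(\psi_D)\cdot,\cdot)$.
\begin{itemize}
\item The linear form $X\mapsto\operatorname{tr}(\rho(X)P)$ on $\operatorname{Herm}_g(E_{\RR})$ is strictly positive on nonzero positive semidefinite $X$, because $P$ is positive definite.
\item By self-duality it therefore equals $\langle\cdot,Y_B\rangle$ for some positive definite $Y_B$; this is just the interior class $\iota_{*}[B]\in\ol{\mathrm{NE}}(A^{g})$.
\item Hence $\deg f_{\phi}^{*}D$ is a positive multiple of $\langle M^{\dagger}\psi_D M,Y_B\rangle=\langle\psi_D,MY_BM^{\dagger}\rangle$.
\item Writing $Y_B=RR^{\dagger}$ with $R$ invertible gives $\{MY_BM^{\dagger}\colon M\in E_{\RR}^{1\times g}\}=\{NN^{\dagger}\colon N\in E_{\RR}^{1\times g}\}=\CP$.
\end{itemize}
So the operative input is the positive definiteness of $P$, not the number of factors. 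Already $g=1$ (a complete-intersection curve in $A$ itself) works, and no sums of the $Q_{f_{\phi}}$ are needed: a single $\phi$ does the job after rational approximation and clearing denominators.

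Your route gives an explicit $B$. It also shows that degenerate orbits are irrelevant, since a linear form negative somewhere on $\CP$ is negative at an interior point. The paper's route avoids the Lefschetz principle and the comparison between $\operatorname{tr}(QH_D)$ on Hermitian forms on $V$ and the trace pairing on $\operatorname{Herm}(E_{\RR})$. In fact your computation transcribes directly into its language as $\deg f_{\phi}^{*}D=D\cdot\Phi_{*}\iota_{*}[B]$ with $\Phi=\sum_i\phi_i\circ\mathrm{pr}_i$, using $\Phi_{*}Y=MYM^{\dagger}$.
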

\begin{proof}
Set
\[
E_{\QQ}
  =\operatorname{End}(A)\otimes_{\BZ}\QQ,
\qquad
E_{\RR}
  =E_{\QQ}\otimes_{\QQ}\RR,
\qquad
G=E_{\RR}^{\times}.
\]
Fix a polarization on $A$, and denote the associated Rosati involution by $\dagger$. By \cite[Corollary~3.5 and Theorem~4.3]{PS12}, there are identifications
\[
E_{\RR}\simeq\prod_{j=1}^{r}M_{n_j}(F_j),
\qquad
N^1(A)_{\RR}\simeq
V\colonequals\prod_{j=1}^{r}\operatorname{Herm}_{n_j}(F_j),
\qquad
F_j\in\{\RR,\CC,\BH\},
\]
under which $\dagger$ is conjugate transpose. The nef cone ${\rm Nef}(A)\subseteq N^{1}(A)_{\RR}$ corresponds to
\[
\CP
  \colonequals\prod_{j=1}^{r}\operatorname{Herm}_{n_j}^{+}(F_j),
\]
where the superscript $+$ denotes positive semidefinite matrices.

Consider the nondegenerate pairing on $V$
\[
\langle X,Y\rangle
  =\sum_{j=1}^{r}
    \operatorname{Re}\operatorname{tr}(X_jY_j).
\]
The cone $\mathcal P$ is self-dual with respect to this pairing. Thus, identifying $N_1(A)_{\RR}=\bigl(N^1(A)_{\RR}\bigr)^\vee$ with $V$ through this pairing, the duality
\[
\ol{\operatorname{NE}}(A)
  =\operatorname{Nef}(A)^\vee
\]
identifies the closed cone of curves with $\mathcal P$.

The pullback action on divisor classes is
$X\mapsto u^\dagger Xu$. Since $\langle u^\dagger Xu,Y\rangle=\langle X,uYu^\dagger\rangle$, the projection formula shows that the pushforward action on curve classes extends to the action
\[
u_*Y=uYu^\dagger
\qquad (u\in G).
\]
Its orbits on $\CP$ are classified by the ranks in the respective factors. In particular, there are only finitely many orbits.

From each orbit containing the class of an integral curve (that is, an element $[Z]\in \ol{\operatorname{NE}}(A)$, where $Z\subseteq A$ is a curve) on $A$, choose one such curve $Z_i$. Let
\[
\nu_i:C_i\longrightarrow Z_i\hookrightarrow A,
\qquad i=1,\ldots,m,
\]
be the normalizations, and write $c_i=(\nu_i)_*[C_i]$. Fix a smooth projective curve $B$ admitting finite surjective morphisms $p_i:B\to C_i$ for all $i$.

Now let $D$ be a non-nef divisor on $A$. Choose a curve $Z\subset A$ with $D\cdot Z<0$. Then there exist $i$ and $u\in G$ such that
\[
[Z]=u_*c_i.
\]
Since $E_{\QQ}^{\times}$ is dense in $G$, continuity gives
an element $v\in E_{\QQ}^{\times}$ such that
\[
D\cdot v_*c_i<0.
\]
Choose a positive integer $N$ with $\phi=Nv\in\operatorname{End}(A)$. The action on curve classes is quadratic in $v$, so
\[
\deg(\phi\circ\nu_i)^*D
  =D\cdot\phi_*c_i
  =N^2D\cdot v_*c_i<0.
\]
Therefore the morphism $f=\phi\circ\nu_i\circ p_i:B\to A$ satisfies
\[
\deg f^*D
  =\deg(p_i)\,\deg(\phi\circ\nu_i)^*D<0.
\]
This proves the assertion.
\end{proof}
Before proving Theorem \ref{kwuguan}, we generalize the GCD height functions to the high-dimensional setting as follows.

For a morphism $f\colon X'\to X$ between smooth $k$-varieties and a closed subscheme $Y$ of $X$ with $f(X')\nsubseteq Y$, define the \emph{generalized pullback divisor}
$$
[f^{*}Y]_{\rm div}\colonequals \sum\limits_{D'\subseteq X'}({\rm ord}_{D'}f^{*}Y)\cdot D',
$$
where $D'$ runs over all prime divisors on $X'$, and the order ${\rm ord}_{D'}f^{*}Y$ is defined as follows: write $Y$ as a scheme-theoretic intersection $D_{1}\cap\dots\cap D_{l}$ of effective Cartier divisors, and then let ${\rm ord}_{D'}f^{*}Y$ be the minimum of the coefficients of $f^{*}D_{i}\,(1\leq i\leq l)$ in $D'$. This definition is independent of the choice of $D_{i}$. Since $X',X$ are assumed to be smooth, $[f^{*}Y]_{\rm div}$ can be viewed as an effective Cartier divisor on $X'$. We also denote by $[f^{*}Y]_{\rm lb}$ the associated line bundle of $[f^{*}Y]_{\rm div}$, and call it the \emph{generalized pullback line bundle}.

When $X'$ is a smooth projective curve, then $\deg_{X'}[f^{*}Y]_{\rm div}$ is just the GCD height $h_{Y}(f)$ of $f$ (viewed as a rational point in $A(k(X'))$).

\begin{corollary}\label{bl}
Let $A$ be an abelian variety over an algebraically closed field $k$ of characteristic $0$. Let $L$ be an ample line bundle on $A$ and let $Y$ be a closed subscheme of $A$ of codimension at least $2$. Fix $\varepsilon>0$. Then the union in $A$ of the images of all closed immersions $i\colon A'\hookrightarrow A$ from abelian varieties to $A$ with $i(A')\nsubseteq Y$ and such that $\varepsilon i^{*}L-[i^{*}Y]_{\rm lb}$ is not ample on $A'$ is not Zariski dense in $A$.
\end{corollary}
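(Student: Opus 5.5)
Our plan is to argue by contradiction: combine Lemma \ref{quxian} with Corollary \ref{lefschetz}, and use the fact that abelian subvarieties of $A$ come in only countably many families, each up to translation.

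\textbf{Step 1: pass from ampleness to a test curve.} Suppose the union of the bad images $i(A')$ is Zariski dense. Every closed immersion $i\colon A'\hookrightarrow A$ of an abelian variety is a translate $a+H$ of an abelian subvariety $H\subseteq A$, and $A$ has only countably many abelian subvarieties. Consider a bad immersion $i$. The class $\varepsilon i^{*}L-[i^{*}Y]_{\rm lb}$ is not ample on $A'$. On an abelian variety, ample is equivalent to nef and big, and in fact a nef divisor on an abelian variety is ample unless it is degenerate. I will first handle the non-nef case. The degenerate-nef case can be absorbed by replacing $\varepsilon$ with $\varepsilon/2$: if $\varepsilon i^{*}L-[i^{*}Y]_{\rm lb}$ is nef, then $\frac{\varepsilon}{2}i^{*}L+(\text{nef})$ is ample. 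So I will run the argument with the hypothesis that $\frac{\varepsilon}{2}i^{*}L-[i^{*}Y]_{\rm lb}$ is not nef.

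Apply Lemma \ref{quxian} to $A'$. This needs one curve $B$ that works for all $A'$ simultaneously. There are only countably many isomorphism classes of the $H$ involved, but a uniform $B$ is still required. I would take the union of the finitely many $B_H$ over $H$ in a finite set, obtained as follows: since the bad locus is dense, some single $H$ already has bad translates $a+H$ that are dense. (The countable union of the non-dense parts cannot be everything when $k$ is uncountable; after base change to a large algebraically closed field, I will argue that one $H$ suffices.) With $B=B_H$ fixed, Lemma \ref{quxian} gives a nonconstant $f\colon B\to A'$ with $\deg f^{*}(\frac{\varepsilon}{2}i^{*}L-[i^{*}Y]_{\rm lb})<0$.

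\textbf{Step 2: compare divisors along the curve.} I need
$$\deg_B[(i\circ f)^{*}Y]_{\rm div}\ \ge\ \deg_B f^{*}[i^{*}Y]_{\rm div}.$$
This holds because pulling back a minimum of orders along $f$ can only increase the order at each point. Hence $h_Y(i\circ f)>\frac{\varepsilon}{2}\deg_B (i\circ f)^{*}L$. Translating $f$ by $A'(k)$ does not change $f^{*}$ numerically, so every translate $f+a'$ with $a'\in A'(k)$ satisfies the same inequality, provided $(i\circ(f+a'))(B)\nsubseteq Y$, which holds for general $a'$. Therefore the points $i\circ(f+a')\in A(k(B))$, viewed as maps $B\to A$, sweep out a subset of the constant points of $A_{k(B)}$ whose Zariski closure contains $(a+H)_{k(B)}$.

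\textbf{Step 3: conclude.} Taking the union over the dense family of bad translates $a+H$ yields a Zariski dense set of $f\in A(k(B))$ satisfying (\ref{lef}) with $\varepsilon/2$. This contradicts Corollary \ref{lefschetz}.

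\textbf{Main obstacle.} The delicate step is the uniformity: producing one curve $B$ valid for a Zariski dense family of bad $A'$, and handling the "not ample but nef" case. For the latter, a cleaner fix is to use Lemma \ref{quxian} on $\varepsilon' i^{*}L-[i^{*}Y]_{\rm lb}$ with a slightly smaller $\varepsilon'$, noting that the non-ample classes of this form are non-nef after such a perturbation.
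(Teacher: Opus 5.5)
Your outline follows the paper's argument: the passage from ``not ample'' to ``$\frac{\varepsilon}{2}i^{*}L-[i^{*}Y]_{\rm lb}$ not nef'', the test curve from Lemma \ref{quxian}, translation by $A'(k)$ to get density in $i(A')_{k(B)}$, and the contradiction with Corollary \ref{lefschetz}. Two small corrections. Your sentence on the first step is garbled; the correct version is that if $\frac{\varepsilon}{2}i^{*}L-[i^{*}Y]_{\rm lb}$ were nef, adding the ample $\frac{\varepsilon}{2}i^{*}L$ would make $\varepsilon i^{*}L-[i^{*}Y]_{\rm lb}$ ample. Also, the translated points are non-constant points of $A_{k(B)}$, not ``constant points''. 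Your Step 2 inequality $[(i\circ f)^{*}Y]_{\rm div}\geq f^{*}[i^{*}Y]_{\rm div}$ is correct and makes explicit a step the paper leaves implicit.

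The genuine gap is the uniformity step you flag, and your proposed resolution does not work. You claim that if the union of all bad $i(A')$ is dense, then a single $H$ (or finitely many) already has a Zariski dense family of bad translates. Your justification is that, over an uncountable field, a countable union of non-dense sets cannot be everything. That fact is irrelevant, because Zariski density is far weaker than covering: a countable union of proper closed subsets is typically dense, and infinitely many distinct curves in a surface already are. The scenario to exclude is exactly of this kind. In $A=E\times E$ there are infinitely many abelian subvarieties $H_{m,n}=\{(mx,nx)\}$ with $\gcd(m,n)=1$, all isomorphic to $E$. If $Y$ is a point, each $H_{m,n}$ has a translate through it that is a priori a candidate bad translate, and these translates together are dense. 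Only finitely many are actually bad, because $L\cdot H_{m,n}$ grows with $(m,n)$. Applying Corollary \ref{lefschetz} with a curve $B_{H}$ that depends on $H$ only shows that the bad translates of each fixed $H$ are non-dense, so it never detects this growth.

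What is missing is a single curve $B$ that serves infinitely many $H$ at once. The paper gets this from the finiteness theorem \cite{LOZY}: up to isomorphism, only finitely many abelian varieties $A'$ admit a closed immersion into $A$. For a fixed $A'$, Lemma \ref{quxian} gives one curve $B$ that works for every closed immersion $A'\hookrightarrow A$, however many distinct images these have (all the $H_{m,n}$ above at once). One application of Corollary \ref{lefschetz} then shows the union over all bad immersions of that $A'$ is non-dense, and a finite union over isomorphism classes finishes. The paper never reduces to a single $H$.

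You could also bypass \cite{LOZY} by applying Lemma \ref{quxian} once to $A$ itself to get $B_{A}$.
\begin{itemize}
\item If $i(A')=a+H$, Poincar\'{e} reducibility gives a surjective homomorphism $A\to H$; composing with the translation-isomorphism $H\cong A'$ gives a surjective $\psi\colon A\to A'$.
\item If $D'$ is not nef on $A'$, then $\psi^{*}D'$ is not nef on $A$: lift a curve $C$ with $D'\cdot C<0$ to a curve in $A$ mapping finitely onto it.
\item So there is $g\colon B_{A}\to A$ with $\deg g^{*}\psi^{*}D'<0$, and $\psi\circ g\colon B_{A}\to A'$ is the required test curve.
\end{itemize}
With either fix, your Steps 2 and 3 go through if you run them over all bad immersions simultaneously rather than over translates of one $H$.
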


\begin{proof}
By \cite{LOZY}, up to isomorphism, there are only finitely many abelian varieties $A'$ that admit a closed immersion into $A$. Therefore we only need to show that for any fixed abelian variety $A'$, the union in $A$ of the images of all closed immersions $i\colon A'\hookrightarrow A$ from $A'$ to $A$ with $i(A')\nsubseteq Y$ and $\varepsilon i^{*}L-[i^{*}Y]_{\rm lb}$ is not ample on $A'$ is not Zariski dense in $A$.

Fix an abelian variety $A'$. Applying Lemma \ref{quxian} to $A'$, there is a smooth projective curve $B$ over $k$ such that for any divisor $D'$ on $A'$ which is not nef, there exists a morphism $g\colon B\to A'$ with $\deg_{B}g^{*}D'<0$.

Now for any closed immersion $i\colon A'\hookrightarrow A$ such that $i(A')\nsubseteq Y$ and $\varepsilon i^{*}L-[i^{*}Y]_{\rm lb}$ is not ample on $A'$, the divisor $D(i)\colonequals \frac{\varepsilon}{2} i^{*}L-[i^{*}Y]_{\rm lb}$ is not nef on $A'$. Thus there is a morphism $g\colon B\to A'$ such that $\deg_{B}g^{*}D(i)<0$. Moreover, since composing $g$ with a (general) translation of $A'$ does not change the numerical class of $g^{*}D(i)$, we conclude that the union in $A'(k(B))$ of all possible $g$ (viewed as a point in $A'(k(B))$) above is Zariski dense in $A'_{k(B)}$. Let $f=i\circ g\colon B\to A$ be the composition. Then we have $\deg_{B}(\frac{\varepsilon}{2}f^{*}L-[f^{*}Y]_{\rm lb})<0$. Viewing $f$ as a rational point of the constant abelian variety $A_{k(B)}$ over $k(B)$, we see that this just means that $h_{Y}(f)>\frac{\varepsilon}{2}h_{L}(f)$. Moreover, the union in $i(A'(k(B)))\subseteq A(k(B))$ of all such $f$ is Zariski dense in $i(A'_{k(B)})$.

Assume to the contrary that the union in $A$ of images of all closed immersions $i\colon A'\hookrightarrow A$ is Zariski dense in $A$. Then by the argument in the previous paragraph, we see that the union of all maps $f\colon B\to A$ (viewed as points in $A(k(B))$) with $h_{Y}(f)>\frac{\varepsilon}{2}h_{L}(f)$ in $A(k(B))$ is Zariski dense. This contradicts Corollary \ref{lefschetz}.
\end{proof}

\begin{proof}[Proof of Theorem \ref{kwuguan}]
Let $Z'_{\varepsilon}$ be the Zariski closure of the union in $A$ of the images of all closed immersions $i\colon A'\hookrightarrow A_{\ol{K}}$ from abelian varieties over $\ol{K}$ to $A_{\ol{K}}$ with $i(A')\nsubseteq Y$ and $\frac{\varepsilon}{2} i^{*}L_{\ol{K}}-[i^{*}Y_{\ol{K}}]_{\rm lb}$ is not ample on $A'_{\ol{K}}$, which is a proper Zariski closed subset of $A_{\ol{K}}$ by Corollary \ref{bl}. It can be shown that $Z'_{\varepsilon}$ is invariant under the action of ${\rm Gal}(\ol{K}/K)$, and thus it is defined over $K$. Let $Z_{\varepsilon}=Z'_{\varepsilon}\cup Y$, which depends only on $\varepsilon$. We show that $Z_{\varepsilon}$ satisfies the theorem.

It suffices to show that
\begin{align}\label{yizhi}
h_{Y}(x)\leq \varepsilon h_{L}(x)+O_{\varepsilon,K}(1)
\end{align}
holds for all $x\in (A\backslash Z_{\varepsilon})(K)$. A similar conclusion holds for any finite extension $K'/K$. By Theorem \ref{main}, there is a proper Zariski closed subset $Z$ of $A$ such that the inequality (\ref{yizhi}) holds for all $x\in (A\backslash Z)(K)$. We need to show that $Z$ can be chosen so that it is contained in $Z_{\varepsilon}$. In fact, replacing $Z$ by the Zariski closure of $Z(K)$ in $A$, we may assume that $Z(K)$ is Zariski dense in $Z$. Let $Z_{0}$ be an irreducible component of $Z$. Then $Z_{0}(K)$ is also Zariski dense in $Z_{0}$. If $Z_{0}\subseteq Y$, then $Z_{0}$ is also contained in $Z_{\varepsilon}$. Now assume that $Z_{0}\nsubseteq Y$.

If there is a proper Zariski closed subset $W_{0}$ of $Z_{0}$ such that $h_{Y}(x)-\varepsilon h_{L}(x)$ is bounded above for all rational points in $(Z_{0}\backslash W_{0})(K)$ , then we can replace $Z_{0}$ by $W_{0}$ and replace $O_{\varepsilon,K}(1)$ by a larger bounded term. Now we assume that $h_{Y}(x)-\varepsilon h_{L}(x)$ is unbounded from above on any $U(K)$ (where $U$ is a nonempty open subset of $Z_{0}$). Then by Lemma \ref{real}, there is a generic sequence $\{x_{n}\}_{n\geq1}$ in $Z_{0}(K)$ with $h_{Y}(x_{n})-\varepsilon h_{L}(x_{n})\to\infty$. In particular, we have $h_{L}(x_{n})\to\infty$. We show that $Z_{0}$ is contained in $Z'_{\varepsilon}$ in this case.

Let $A'$ be the identity component of the set $\{a\in A\big|\, a+Z_{0}\subseteq Z_{0}\}$, which is an abelian subvariety of $A$. Let $Z_{0}/A'$ be the image of $Z_{0}$ in $A/A'$. By Ueno's fibration theorem, $Z_{0}/A'$ is of general type. Moreover, the Zariski density of $Z_{0}(K)$ in $Z_{0}$ implies the Zariski density of $(Z_{0}/A')(K)$ in $Z_{0}/A'$.

Similarly to the proof of Proposition \ref{buium}, choose an isogeny $i\colon A'\times(A/A')\to A$ such that the composition of the inclusion $A'\hookrightarrow A'\times(A/A')$ with $i$ is equal to the multiplication $a'\mapsto ma'$ on $A'$ by some integer $m$. Let $Z'_{0}$ be an irreducible component of the inverse image $i^{-1}(Z_{0})\subseteq A'\times(A/A')$ and let $Z'_{0}/A'$ be the image of $Z'_{0}$ in $A/A'$ under the natural projection. Then we have $Z'_{0}=A'\times(Z'_{0}/A')$. Note that $Z'_{0}/A'$ has a finite map to $Z_{0}/A'$, and thus $Z'_{0}/A'$ is of general type. Applying (geometric) Chevalley--Weil theorem to $i\big|_{Z'_{0}}\colon Z'_{0}\to Z_{0}$, we obtain a finite field extension $K'/K$ such that $Z_{0}(K)\subseteq i(Z'_{0}(K'))$. Thus we can lift each point $x_{n}\in Z_{0}(K)$ to $x'_{n}\in Z'_{0}(K')$. Since $Z_{0}(K)$ is Zariski dense in $Z_{0}$, we have $Z'_{0}(K')$ is Zariski dense in $Z'_{0}$, and thus $(Z'_{0}/A')(K')$ is Zariski dense in $Z'_{0}/A'$. As $Z'_{0}/A'$ is a subvariety of general type of the abelian variety $A/A'$, by the resolved (geometric) Bombieri--Lang conjecture, the normalization of $Z'_{0}/A'$ is isomorphic to a constant variety $T_{K}$, where $T$ is a projective variety over $k$. Moreover, the complement $T_{K}(K')\backslash T(k)$ is not Zariski dense in $T_{K}$. Take a resolution of singularities of $T'\to T$ over $k$. Consider the composition $f\colon A'\times T'_{K}\to A'\times T_{K}\to A'\times (Z'_{0}/A')=Z'_{0}\to Z_{0}$, which is generically finite. Since $\{x_{n}\}_{n\geq1}$ is generic in $Z_{0}$, $\{x'_{n}\}_{n\geq1}$ is also generic in $Z'_{0}$. Hence for $n$ large enough, the point $x'_{n}$ has a unique lift $(s_{n},t'_{n})$ on $A'(K')\times T'_{K}(K')$, where $t'_{n}$ belongs to $T'(k)$.

Let $g\colon A'\times T'_{K}\to A$ be the composition $A'\times T'_{K}\overset{f}\to Z_{0}\hookrightarrow A$. We show that $\frac{\varepsilon}{2} g^{*}L-[g^{*}Y]_{\rm lb}$ is not relatively ample over $T'_{K}$.  In fact, assume to the contrary that it is relatively ample over $T'_{K}$. Then for $n$ large enough, we have
\begin{align*}
\frac{\varepsilon}{2}h_{L}(x_{n})-h_{[g^{*}Y]_{\rm div}}((s_{n},t'_{n}))=\frac{\varepsilon}{2}h_{g^{*}L}((s_{n},t'_{n}))-h_{[g^{*}Y]_{\rm div}}((s_{n},t'_{n}))\geq O(1).
\end{align*}
Let $Y'$ be the (scheme-theoretic) union of all irreducible components of $g^{-1}(Y)$ which are of codimension at least 2 in $A'\times T'_{K}$. Then we have $h_{Y}(x_{n})\leq h_{Y'}((s_{n},t'_{n}))+h_{[g^{*}Y]_{\rm div}}((s_{n},t'_{n}))$ for $n$ large enough. Since $L$ is ample, $g^{*}L$ is relatively ample over $T'_{K}$. By Proposition \ref{V}(3), we have
\begin{align*}
h_{Y'}((s_{n},t'_{n}))\leq \frac{\varepsilon}{2}h_{g^{*}L}((s_{n},t'_{n}))=\frac{\varepsilon}{2}h_{L}(x_{n})
\end{align*}
for $n$ large enough. Combining all inequalities above, we see that $h_{Y}(x_{n})\leq\varepsilon h_{L}(x_{n})+O(1)$ for $n$ large enough. This contradicts our assumption that $h_{Y}(x_{n})-\varepsilon h_{L}(x_{n})\to\infty$.

Now we have $\frac{\varepsilon}{2} g^{*}L-[g^{*}Y]_{\rm lb}$ not relatively ample over $T'_{K}$. This implies that for general $t'\in T'_{K}$, the generalized pullback of $\frac{\varepsilon}{2} L-Y$ to $A'\times\{t'\}$ is not ample. Hence for general $z\in Z_{0}/A'$, the generalized pullback of $\frac{\varepsilon}{2} L-Y$ to $q^{-1}(z)$ is not ample. Here $q\colon A\to A/A'$ is the quotient map. Note that $q^{-1}(z)$ is a translation of $A'$ in $A$. Thus by definition, we have $Z_{0}\subseteq Z'_{\varepsilon}$. This completes the proof.
\end{proof}

By using the same methods, we can prove that the exceptional subset in Theorem \ref{main1} can also be chosen to be independent of $K$. In fact, in Corollary \ref{bl}, instead of considering the ampleness of the generalized pullback $\varepsilon i^{*}L-[i^{*}Y]_{\rm lb}$, we consider the ampleness of the divisor $(i^{*}D)_{\rm red}-(1-\varepsilon)i^{*}D$. All remaining arguments are the same and we omit the detailed proof here.\medskip

\noindent\textbf{Acknowledgements.} I would like to express my sincere gratitude to my advisors Junyi Xie and Xinyi Yuan for their consistent encouragement. I am especially indebted to Junyi Xie for valuable insights shared in our discussions and for his assistance in using ChatGPT to review the proofs. I am grateful to Pietro Corvaja, Umberto Zannier and Francesco Zucconi for sharing their recent related paper \cite{CZZ}, and to Katsutoshi Yamanoi for many helpful communications on an earlier version of this article. The discussion in \S\ref{4.6} concerning the $K$-independence of the exceptional set originated from my email discussions with Pietro Corvaja and Umberto Zannier, and from related questions raised by William Cherry and Riccardo Pengo; I thank them all. I would also like to thank Laura Capuano, Paolo Dolce, Carlo Gasbarri, Ruoyi Guo and Amos Turchet for their helpful comments on this work.

The research was partially completed while the author was visiting the Vietnam Institute for Advanced Study in Mathematics and Westlake University in 2026. I would like to thank these institutions for their hospitality. This work is supported by the National Natural Science Foundation of China Grants No. 12271007, No. 12250004 and No. 12321001.
\medskip\medskip

\noindent\textbf{AI disclosure.} After the first draft, the author has used DeepSeek to check grammar and typos. Junyi Xie has also used ChatGPT 5.6 Pro Sol to assist with the review of the proofs and with searching for relevant literature. All mathematical arguments and the final text were written by the author, who takes full responsibility for the content of the article.

\bibliographystyle{alpha}
\bibliography{refs}

\

\noindent \small{ Peking University, Beijing 100871, China}

\noindent \small{\it Email: gaoguoquan@pku.edu.cn}

\end{document}